\newcommand{\N}{\mathbb{N}}
\newcommand{\R}{\mathbb{R}}
\newcommand{\Leb}{\mathscr{L}}
\newcommand{\TrFun}{\mathcal{T}}
\newcommand{\E}{\mathcal{E}}
\newcommand{\gfun}{\mathscr{F}}
\newcommand{\hfun}{\mathscr{G}}
\newcommand{\TrDis}{{\widetilde{Wb}_{2}}}
\newcommand{\cone}{\mathscr{S}}
\newcommand{\const}{\mathfrak{c}}
\newcommand{\cost}{\mathcal{C}}
\DeclareMathOperator{\Id}{Id}
\newcommand{\ovom}{{\overline \Omega}}
\DeclareMathOperator{\Adm}{Adm}
\DeclareMathOperator{\Tr}{Tr}
\DeclareMathOperator{\Opt}{Opt}
\DeclareMathOperator{\Lip}{Lip}
\DeclareMathOperator{\dive}{div}
\DeclareMathOperator{\diam}{diam}
\newcommand{\Ent}{\mathcal{H}}
\newcommand{\KR}{\mathrm{\widetilde{KR}}}
\newcommand{\1}{\mathds{1}}
\newcommand{\comma}{\,\mathrm{,}\;\,}
\newcommand{\semicolon}{\,\mathrm{;}\;\,}
\newcommand{\fstop}{\,\mathrm{.}}
\DeclareMathOperator*{\essinf}{-ess\,inf}
\newcommand{\slope}[2]{\left\lvert \partial_{\, #2} #1 \right\rvert}
\newcommand{\slopesmall}[2]{\bigl\lvert \partial_{\, #2} #1 \bigr\rvert}
\DeclareMathOperator*{\argmin}{arg\,min}
\newtheorem{theorem}{Theorem}[section]
\newtheorem{proposition}[theorem]{Proposition}
\newtheorem{lemma}[theorem]{Lemma}
\newtheorem{corollary}[theorem]{Corollary}
\newtheorem{example}[theorem]{Example}%
\newtheorem{remark}[theorem]{Remark}%
\newtheorem{definition}[theorem]{Definition}%
\numberwithin{equation}{section}
\begin{document}

\title[Variational structures for Fokker--Planck with Dirichlet BC]{Variational structures \\ for the Fokker--Planck equation \\ with general Dirichlet boundary conditions}

\author{Filippo Quattrocchi}
\address{Institute of Science and Technology Austria, Am Campus 1, 3400 Klosterneuburg, Austria}
\curraddr{}
\email{filippo.quattrocchi@ista.ac.at}
\thanks{
	The author would like to thank Jan~Maas for suggesting this project and for many helpful comments, Antonio~Agresti, Lorenzo~Dello~Schiavo and Julian~Fischer for several fruitful discussions, Oliver~Tse for pointing out the reference~\cite{KimKooSeo23}, and the anonymous reviewer for carefully reading this manuscript and providing valuable suggestions. He also gratefully acknowledges support from the Austrian Science Fund (FWF) project \href{https://doi.org/10.55776/F65}{10.55776/F65}.
}
\thanks{This version of the article  has been accepted for publication, after peer review but is not the Version of Record and does not reflect post-acceptance improvements, or any corrections. The Version of Record is available online at: \url{http://dx.doi.org/10.1007/s00526-025-03193-1}.}

\subjclass[2020]{49Q20 (Primary), 49Q22, 35A15, 35K20, 35Q84.}

\keywords{gradient flows;  Jordan--Kinderlehrer--Otto scheme; curves of maximal slope; optimal transport; Dirichlet boundary conditions; Fokker--Planck equation}

\date{\today}

\dedicatory{}

\begin{abstract}

	We prove the convergence of a modified Jordan--Kinderlehrer--Otto scheme to a solution to the Fokker--Planck equation in~$\Omega \Subset \R^d$ with general---strictly positive and temporally constant---Dirichlet boundary conditions. We work under mild assumptions on the domain, the drift, and the initial datum.

	In the special case where~$\Omega$ is an interval in~$\R^1$, we prove that such a solution is a gradient flow---curve of maximal slope---within a suitable space of measures, endowed with a modified Wasserstein distance.
	
	Our discrete scheme and modified distance draw inspiration from contributions by A.~Figalli and N.~Gigli [J.~Math.~Pures Appl.~94, (2010), pp.~107--130], and J.~Morales [J.~Math.~Pures Appl.~112, (2018), pp.~41--88] on an optimal-transport approach to evolution equations with Dirichlet boundary conditions. Similarly to these works, we allow the mass to flow from/to the boundary~$\partial \Omega$ throughout the evolution. However, our leading idea is to also keep track of the mass at the boundary by working with measures defined on the whole closure~$\ovom$.
	
	The driving functional is a modification of the classical relative entropy that also makes use of the information at the boundary.
	As an intermediate result, when~$\Omega$ is an interval in~$\R^1$, we find a formula for the descending slope of this geodesically nonconvex functional.
\end{abstract}

\maketitle

\setcounter{tocdepth}{1}

\section{Introduction}
The subject of this paper is the linear Fokker--Planck equation
\begin{equation} \label{eq:fp0}
	\frac{\dif}{\dif t} \rho_t 
	=
	\dive \left( \nabla \rho_t + \rho_t \nabla V \right)
\end{equation}
on a bounded Euclidean domain~$\Omega \subseteq \R^d$ combined with general---strictly positive and constant in time---Dirichlet boundary conditions, and with nonnegative initial data. We want to approach this problem by applying the theory of \emph{optimal transport}, which, since the seminal works of R.~Jordan, D.~Kinderlehrer, and F.~Otto~\cite{JordanKinderlehrerOtto98,Otto99,Otto01}, has proven effective in the study of a number of evolution equations.

Existence, uniqueness, and appropriate estimates are often consequence of a peculiar structure of the problem. Important instances are those PDEs which can be seen as \emph{gradient flows}. In fact, it has been proven that several equations, including Fokker--Planck on~$\R^d$, are gradient flows in a space of probability measures endowed with the~$2$-Wasserstein distance
\[
W_2(\mu,\nu) 
\coloneqq
\inf_{\gamma} \sqrt{\int \abs{x-y}^2 \dif \gamma(x,y) } \comma
\]
where the infimum is taken among all couplings~$\gamma$ between~$\mu$ and~$\nu$, i.e., measures with marginals~$\pi^1_\# \gamma = \mu$ and~$\pi^2_\# \gamma = \nu$.
For such PDEs, existence can be deduced from the convergence of the discrete-time approximations given by the Jordan--Kinderlehrer--Otto variational scheme (also known, in a more general metric setting, as De~Giorgi's minimizing movement scheme~\cite{DeGiorgi93})
\begin{equation} \label{eq:jko}
	\rho_{(n+1)\tau}^\tau \dif x
	\in
	{\arg\min}_{\mu} \left(\mathcal F(\mu) + \frac{W_2^2(\mu, \rho_{n \tau}^\tau \dif x)}{2\tau} \right) \comma \qquad n \in \N_0\comma
\end{equation}
where~$\mathcal F$ is a functional that depends on the equation, and~$\tau > 0$ is the time step.

When applied on a bounded Euclidean domain, this approach produces solutions \emph{with Neumann boundary conditions}. This fact is inherent in the choice of the metric space (probability measures with the distance~$W_2$) in which the flow evolves. Intuitively, Neumann boundary conditions are natural because a curve of probability measures, by definition, conserves the total mass; see also the discussion in~\cite{Santambrogio17}.

In order to deal with Dirichlet boundary conditions, A.~Figalli and N.~Gigli defined in~\cite{FigalliGigli10} a modified Wasserstein distance~$Wb_2$ that gives a special role to the boundary~$\partial \Omega$. Despite measuring a distance between nonnegative measures on~$\Omega$, the metric~$Wb_2$ is defined as an infimum over measures~$\gamma$ on the product of the topological closures~$\ovom \times \ovom$, and only the restrictions of the marginals~$\pi^1_\# \gamma$ and~$\pi^2_\# \gamma$ to~$\Omega$ are prescribed (see the original paper \cite{FigalliGigli10} or \Cref{sec:figalli-gigli}). In this sense, the boundary~$\partial \Omega$ can be interpreted as an infinite reservoir, where mass can be taken and deposited freely. The main result in~\cite{FigalliGigli10} is the convergence of the scheme
\[
\rho^\tau_{(n+1)\tau}
\in
{\arg\min}_{\rho} \left( \int_{\Omega} \bigl(\rho \log \rho -\rho +1\bigr) \dif x  + \frac{Wb_2^2(\rho\dif x, \rho^\tau_{n \tau} \dif x )}{2\tau}\right)
\comma \qquad n \in \N_0 \comma 
\]
as~$\tau \downarrow 0$, to a solution to the heat equation with the \emph{constant} Dirichlet boundary condition~$\rho|_{\partial \Omega} = 1$. More generally, it was observed in~\cite[Section~4]{FigalliGigli10} that the same scheme with a suitably modified entropy functional %
converges to solutions to the linear Fokker--Planck equation~\eqref{eq:fp0} with the boundary condition~$\rho|_{\partial \Omega} = e^{-V}$. In particular, this theory covers the heat equation with \emph{any constant and strictly positive} Dirichlet boundary condition.

In a more recent contribution, J.~Morales~\cite{Morales18} proved convergence of a similar discrete scheme for a family of reaction-diffusion equations with drift, subject to rather \emph{general} Dirichlet boundary conditions. In this scheme, the distance between measures is replaced by~$\tau$-dependent transportation costs. %
Morales' work, together with~\cite{FigalliGigli10}, is the starting point of the present paper.

\subsubsection*{Related literature}

The case of the heat flow with \emph{vanishing} Dirichlet boundary conditions was studied by A.~Profeta and K.-T.~Sturm in~\cite{ProfetaSturm20}. They defined `charged probabilities' and a suitable distance on them. This metric is built upon the idea that mass can touch the boundary and be reflected, as with the classical Wasserstein distance, but possibly changing the charge (positive to negative or vice versa). One of their results is the \emph{Evolution Variational Inequality} (see~\cite{AmbrosioGigliSavare08}) for such a heat flow.

D.~Kim, D.~Koo and G.~Seo~\cite{KimKooSeo23} adapted the setting of~\cite{FigalliGigli10} to porous medium equations~$\partial_t \rho_t = \Delta \rho^\alpha$ ($\alpha>1$) with \emph{constant} boundary conditions.

M.~Erbar and G.~Meglioli~\cite{ErbarMeglioli24} generalized the result of~\cite{KimKooSeo23} to a larger class of diffusion equations with constant boundary conditions. They also established a dynamical characterization of~$Wb_2$, in the spirit of the Benamou--Brenier formula for~$W_2$~\cite{BenamouBrenier00}.

J.-B.~Casteras, L.~Monsaingeon, and F.~Santambrogio~\cite{CasterasMonsaingeonSantambrogio24} found the Wasserstein gradient flow structure for the equation arising from the so-called Sticky Brownian Motion, i.e.,~the Fokker--Planck equation together with boundary conditions of Dirichlet type that also evolve in time subject to diffusion and drift on the boundary. Namely, denoting by~$\partial_{\boldsymbol n}$ the outer normal derivative,
\begin{equation} \label{eq:CastMonSant}
	\begin{cases}
		\partial_t \rho = \Delta \rho &\text{in } \Omega \comma \\
		\rho = \gamma &\text{on } \partial \Omega \comma \\
		\partial_t \gamma = \Delta_{\partial \Omega} \gamma - \partial_{\boldsymbol n} \rho &\text{in } \partial \Omega \fstop
	\end{cases}
\end{equation}

M.~Bormann, L.~Monsaingeon, D.~R.~M.~Renger, and M.~von~Renesse~\cite{BormannMonsaingeonRengerVonRenesse25} recently proved a {negative} result. If we modify~\eqref{eq:CastMonSant} by weakening the diffusion on the boundary (i.e.,~we multiply~$\Delta_{\partial \Omega} \gamma$ by a factor~$a \in (0,1)$) the resulting problem is \emph{not} a gradient flow of the entropy in the $2$-Wasserstein space built from any reasonably regular metric on~$\overline \Omega$.

\subsection*{Our contribution}

In this work, we present two novel results:
\begin{enumerate}
	\item We prove convergence of a modified Jordan--Kinderlehrer--Otto scheme to a solution to the Fokker--Planck equation with general Dirichlet boundary conditions under mild regularity assumptions. To do this, we adopt a \emph{different point of view} compared to~\cite{FigalliGigli10,Morales18,KimKooSeo23}: our scheme is defined on a subset~$\cone$ of the signed measures \emph{on the closure~$\ovom$}, rather than on measures on~$\Omega$. %
	\item In dimension~$d=1$, we determine that this solution is also a \emph{curve of maximal slope} for a functional~$\Ent$ in an appropriate metric space~$(\cone,\TrDis)$.
\end{enumerate}
Let us now explain in detail the extent of these contributions and provide precise statements.

\subsubsection*{Convergence of a modified JKO scheme}
We look at the boundary-value problem
\begin{equation} \label{eq:fpStrong}
	\begin{cases}
		\displaystyle \frac{\dif}{\dif t} \rho_t 
		=
		\dive \left( \nabla \rho_t + \rho_t \nabla V \right) &\text{in } \Omega \comma \\
		\rho_t|_{\partial \Omega} = e^{\Psi-V} &\text{on } \partial \Omega \comma \\
		\rho_{t=0} = \rho_0 \fstop
	\end{cases}
\end{equation}
Here,~$\Omega \subseteq \R^d$ is a \emph{bounded} open set and~$\rho_0,\Psi,V$ are given functions, with~$\rho_0 \ge 0$. The function~$\Psi$ can be tuned to obtain the desired boundary condition.

We introduce the set~$\cone$ of all signed measures on~$\ovom$ with
\begin{equation} \label{eq:S}
	\mu|_\Omega \ge 0 \quad \text{and} \quad \mu(\ovom) = 0 \fstop
\end{equation}
We also define
\begin{equation} \label{eq:E}
	\E(\rho) \coloneqq \int_\Omega \bigl(\rho \log \rho + (V-1) \rho + 1 \bigr) \dif x \comma \qquad \rho \colon \Omega \to \R_+ \comma
\end{equation}
and, for~$\mu \in \cone$,
\begin{equation} \label{eq:Ent} \Ent (\mu) \coloneqq \begin{cases}
		\displaystyle \E(\rho)  + \int \Psi \dif \mu|_{\partial \Omega} &\text{if } \mu|_\Omega = \rho \dif x \comma \\
		\infty &\text{otherwise.}
\end{cases} \end{equation}
In \Cref{sec:functionals}, we will define a transportation-cost functional~$\TrFun$ on~$\cone$. With it, we can consider the scheme 
\begin{equation} \label{eq:jkoTrFun0}
	\mu_{(n+1) \tau}^{\tau} \in \argmin_{\mu \in \cone} \, \left(\Ent(\mu) + \frac{\TrFun^2(\mu, \mu_{n\tau}^\tau)}{2\tau} \right) \comma \qquad n \in \N_0 \comma \tau > 0 \comma
\end{equation}
starting from some~$\mu_0^\tau = \mu_0 \in \cone$, independent of~$\tau$, such that the restriction~$\mu_0|_\Omega$ is absolutely continuous with density~$\rho_0$. These sequences are extended to maps~$t \mapsto \mu^\tau_t$, constant on the intervals~$\bigl[n\tau,(n+1)\tau\bigr)$ for every~$n \in \N_0$, namely:
\begin{equation} \label{eq:defextension}
	\mu_t^\tau \coloneqq \mu_{\lfloor t/\tau \rfloor \tau}^\tau \comma \qquad t \in [0,\infty) \fstop
\end{equation}

\begin{theorem}
	\label{Theorem_1.1}
	Assume that~$\int_\Omega \rho_0 \log \rho_0 \dif x < \infty$, that~$\Psi \colon \ovom \to \R$ is Lipschitz continuous, and that\footnote{By~$V \in W^{1,d+}_\mathrm{loc}(\Omega)$ we mean that for every~$\omega \Subset \Omega$ open there exists~$p = p(\omega) > d$ such that~$V \in W^{1,p}(\omega)$; see also \Cref{sec:functions}.} $V \in W^{1,d+}_\mathrm{loc}(\Omega) \cap L^\infty(\Omega)$.
	Then:
	\begin{enumerate}
		\item \emph{Well-posedness:} The maps~$( t \mapsto \mu_t^\tau )_\tau$ resulting from the scheme~\eqref{eq:jkoTrFun0} are well-defined and uniquely defined: for every~$n$ and~$\tau$, there exists a minimizer in~\eqref{eq:jkoTrFun0} and it is unique.
		\item \emph{Convergence:} When~$\tau \to 0$, up to subsequences, the maps~$\bigl(t \mapsto \mu^\tau_t|_\Omega\bigr)_\tau$ converge pointwise w.r.t.~the Figalli--Gigli distance~$Wb_2$ to a curve of absolutely continuous measures~$t \mapsto \rho_t \dif x$. For every~$q \in [1,\frac{d}{d-1})$, convergence holds also in~$L^1_\mathrm{loc}\bigl( (0,\infty); L^q(\Omega))$.
		\item \emph{Equation:} This limit curve is a weak solution to the Fokker--Planck equation~\eqref{eq:fp0}; see \Cref{sec:defFokker}.
		\item \emph{Boundary condition:} The function~$t \mapsto \left(\sqrt{\rho_t e^V} - e^{\Psi/2}\right)$ belongs to the space~$L^2_\mathrm{loc}\bigl([0,\infty ); W^{1,2}_0(\Omega)\bigr)$.
	\end{enumerate}
\end{theorem}

\begin{remark}
	We assume that~$\Psi$ is defined on the whole set~$\ovom$ in order to make sense of the inclusion~$\sqrt{\rho_t e^V} - e^{\Psi/2} \in W^{1,2}_0(\Omega)$ also when~$\partial \Omega$ is not smooth enough to have a trace operator. Note that, if we are given a Lipschitz continuous function~$\Psi_0 \colon \partial \Omega \to \R$, we can extend it to a Lipschitz function on~$\ovom$ via
	\[
	\Psi(x) \coloneqq \inf_{y \in \partial \Omega} \left( \Psi_0(y) + (\Lip \Psi_0 ) \abs{x-y} \right) \fstop
	\]
\end{remark}

\begin{remark}
	If~$V$ is Lipschitz continuous \emph{only in a neighborhood of~$\partial \Omega$}, then it is possible to find~$\Psi$, Lipschitz as well, in order for~$e^{\Psi-V}$ to match \emph{any} uniformly positive and Lipschitz boundary condition.
\end{remark}

\begin{remark}
	Throughout the proof of \Cref{Theorem_1.1}, we also show:
	\begin{itemize}
		\item time contractivity of suitably truncated and weighted~$L^q$ norms of~$\mu_t^\tau|_\Omega$ (see \Cref{prop:Lq}),
		\item upper bounds on the~$L^q$ norms of~$\mu^\tau_t|_\Omega$, for every~$t > 0$ (see \Cref{lemma:lebesgueBound}),
		\item upper bounds on time averages of the~$W^{1,2}$ norm of~$\sqrt{\rho^\tau_t e^V}$, where~$\rho^\tau_t$ is the density of~$\mu^\tau_t|_\Omega$ (see \Cref{lemma:sobolev}).
	\end{itemize}
	Furthermore, these estimates (assuming~$q \in [1,\frac{d}{d-1})$ in the first two) pass to the limit as~$\tau \to 0$, i.e.,~analogous properties hold for the curve~$t \mapsto \rho_t$.
\end{remark}

As mentioned, the conceptual difference between the present work and~\cite{FigalliGigli10,Morales18,KimKooSeo23} is that we make use of signed measures on the full closure~$\ovom$. In this regard, our approach is similar to those of~\cite{CasterasMonsaingeonSantambrogio24,Monsaingeon21}. The idea is that, due to the boundary condition we have to match, it is convenient to keep track of the mass at the boundary and to consider a functional that makes use of this information (namely,~$\Ent$).

On a more technical note, although \Cref{Theorem_1.1} is similar to~\cite[Theorem 4.1]{Morales18}, the latter is not applicable to the Fokker--Planck equation~\eqref{eq:fp0} without reaction term due to \cite[Assumptions~(C1)-(C9)]{Morales18} (see in particular~(C7)). Furthermore, we achieve significant improvements in the hypotheses:
\begin{itemize}
	\item The boundary~$\partial \Omega$ does not need to have \emph{any} regularity, as opposed to Lipschitz and with the interior ball condition. 
	\item There is no uniform bound on~$\rho_0$ from above or below by positive constants. Only nonnegativity and the integrability of~$\rho_0 \log \rho_0$ are assumed.
	\item The function~$V$ is not necessarily Lipschitz continuous. Rather, it is required to be bounded and to have suitable local Sobolev regularity.
\end{itemize}
These weak assumptions make it more involved to prove Lebesgue and Sobolev bounds for~$\mu_t^\tau$, as well as the strong convergence of the scheme, which in turn allows us to characterize the limit. Indeed:
\begin{itemize}
	\item When~$\rho_0$ is bounded, or lies in some~$L^q$, it is possible to propagate these properties along~$t \mapsto \mu^\tau_t|_\Omega$; see~\cite[Proposition~5.3]{Morales18} and \Cref{prop:Lq}. With our weak assumptions on~$\rho_0$, we are still able to propagate the~$L^1$ bound, but also need to establish suitable Sobolev estimates (see \Cref{prop:sobolevReg} and \Cref{lemma:sobolev}) and make use of the Sobolev embedding theorem in order to get stronger integrability (see \Cref{lemma:lebesgueBound}) and convergence in~$L^1_\mathrm{loc}\bigl((0,\infty);L^q(\Omega)\bigr)$ (see \Cref{lemma:improvedconv}).
	\item If~$\partial \Omega$ is not regular enough, we cannot directly apply the Sobolev embedding theorem for~$W^{1,2}$ functions. Since the Sobolev continuous embedding holds for~$W^{1,2}_0$ functions regardless of the domain regularity, we are still able to apply it after establishing suitable boundary conditions for~$\mu^\tau_t|_\Omega$; see \Cref{prop:sobolevReg}.
	\item When~$V$ is not Lipschitz, we need an extra approximation procedure to prove that~$\mu^\tau_t|_\Omega$ is Sobolev regular and satisfies a precursor of the Fokker--Planck equation; see \Cref{prop:sobolevReg} and \Cref{lem:sobReg}.
	\item Another issue with~$\partial \Omega$ not being regular is in applying (a variant of) the Aubin--Lions lemma to prove convergence of the scheme. One of its assumptions is a compact embedding of functional spaces, which would follow from the Rellich--Kondrachov theorem if~$\Omega$ were regular enough. To overcome it, we use the Rellich--Kondrachov theorem on \emph{smooth subdomains} and take advantage of the integrability estimates to promote local~$L^q$ convergence to convergence in~$L^q(\Omega)$; see \Cref{lemma:improvedconv}.
\end{itemize}

\subsubsection*{Curve of maximal slope}

Our second main result is a strengthened version of \Cref{Theorem_1.1} in the case where~$\Omega$ is an interval in~$\R^1$ and~$V \in W^{1,2}(\Omega)$. In this setting, we are able to define a \emph{true} metric~$\TrDis$ on~$\cone$, construct piecewise constant maps with the scheme 
\begin{align} \label{eq:jkoTrDis0}
	\begin{split}
		\mu_{(n+1) \tau}^{\tau} &\in \argmin_{\mu \in \cone} \, \left( \Ent(\mu) + \frac{\TrDis^2(\mu, \mu_{n\tau}^\tau)}{2\tau} \right) \comma \qquad n \in \N_0 \comma \tau > 0 \comma \\
		\mu^\tau_0 &= \mu_0 \comma
	\end{split}
\end{align}
for a fixed~$\mu_0$ with~$\mu_0|_\Omega = \rho_0 \dif x$, show that they \emph{coincide} with those of \Cref{Theorem_1.1}, and prove that their limit is a \emph{curve of maximal slope} in~$(\cone,\TrDis)$.

\begin{theorem} \label{Theorem_1.5}
	Assume that~$\Omega = ( 0,1 )$, that~$\int_0^1 \rho_0 \log \rho_0 \dif x < \infty$, and that~$V \in W^{1,2}(0,1)$. Then:
	\begin{enumerate}
		\item \label{st:main201} If~$\tau$ is sufficiently small, the maps~$( t \mapsto \mu_t^\tau )_\tau$ resulting from the scheme \eqref{eq:jkoTrDis0} are well-defined, uniquely defined, and \emph{coincide with those of~\Cref{Theorem_1.1}}.
		\item \label{st:main202} When~$\tau \to 0$, up to subsequences, the maps~$( t \mapsto \mu_t^\tau )_\tau$ converge pointwise w.r.t.~$\TrDis$ to a curve~$t \mapsto \mu_t$.
		\item \label{st:main203} The convergence~$\mu^\tau|_\Omega \to_\tau \mu|_\Omega$ also holds in~$L^1_\mathrm{loc} \bigl( (0,\infty); L^q(0,1) \bigr)$ for every~$q \in [1,\infty)$. The curve~$t \mapsto \mu_t|_\Omega$ is a weak solution to the Fokker--Planck equation. Denoting by~$\rho_t$ the density of~$\mu_t|_\Omega$, the map~$t \mapsto \left(\sqrt{\rho_t e^V} - e^{\Psi/2}\right)$ belongs to~$L^2_\mathrm{loc}\bigl([0,\infty ); W^{1,2}_0(0,1)\bigr)$.
		\item \label{st:main204} The map~$t \mapsto \mu_t$ is a curve of maximal slope for the functional~$\Ent$ in the metric space~$(\cone, \TrDis)$, with respect to the descending slope~$\slope{\Ent}{\TrDis}$;
		see \Cref{sec:metricGF}.
	\end{enumerate}
\end{theorem}

Within the general theory of gradient flows in metric spaces developed by L.~Ambrosio, N.~Gigli, and G.~Savar\'e in~\cite{AmbrosioGigliSavare08} (see~\cite{Santambrogio17} for an overview), the `curve of maximal slope' is one of the metric counterparts of the gradient flow in the Euclidean space. In the context of PDEs with Dirichlet boundary conditions, other proofs of this metric characterization in a (Wasserstein-like) space of measures are given in~\cite{ProfetaSturm20,KimKooSeo23,ErbarMeglioli24}. To be precise, the result of~\cite[Proposition~1.20]{ProfetaSturm20} is an `Evolution Variational Inequality' (EVI) characterization, which \emph{implies} a formulation as curve of maximal slope by~\cite[Proposition~4.6]{AmbrosioGigli13}. By \Cref{prop:notconv}, our functional~$\Ent$ is not semiconvex and, therefore, we do not expect an EVI characterization in our setting; see \cite[Theorem~3.2]{DaneriSavare08}. Let us also point out that the `curve of maximal slope' characterizations in~\cite{KimKooSeo23,ErbarMeglioli24} %
use the \emph{relaxed} descending slope (see~\cite[Equation~(2.3.1)]{AmbrosioGigliSavare08}), which yields a weaker notion of gradient flow compared to ours. In fact, establishing that the descending slope is lower semicontinuous is the main difficulty in proving \Cref{Theorem_1.5}. Indeed, the lower semicontinuity of the slope is usually derived from the geodesic (semi)convexity of the functional via \cite[Corollary~2.4.10]{AmbrosioGigliSavare08}, but~$\Ent$ is not geodesically semiconvex by \Cref{prop:notconv}.%

Nonetheless, in dimension~$d=1$, we are able to find an \emph{explicit formula} for the descending slope of~$\Ent$ in~$(\cone,\TrDis)$ without resorting to geodesic convexity. As a corollary, we also give an answer, again in dimension~$d=1$, to the problem left open in~\cite{FigalliGigli10} of identifying the descending slope~$\slope{\E}{Wb_2}$ of~$\E$ %
with respect to the Figalli--Gigli distance~$Wb_2$.%

\begin{theorem}[see \Cref{cor:main3}] \label{thm:main30}
	Assume that~$V \in W^{1,2}(0,1)$. For every~$\rho \in L^1_{+}(0,1)$, we have the formula
	\begin{equation}
		\slope{\E}{Wb_2}^2 (\rho)
		=
		\begin{cases}
			4 \displaystyle \int_0^1 \left( \partial_x \sqrt{\rho e^V} \right)^2 e^{-V} \dif x &\text{if }  \sqrt{\rho e^V}-1 \in W^{1,2}_0(0,1) \comma \\ 
			\infty &\text{otherwise.}
		\end{cases}
	\end{equation}
	Additionally,~$\slope{\E}{Wb_2}$ is lower semicontinuous with respect to~$Wb_2$.
\end{theorem}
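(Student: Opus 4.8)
The plan is to obtain \Cref{thm:main30} (i.e.\ \Cref{cor:main3}) as the specialization to $\Psi\equiv 0$ of the slope formula for $\Ent$ in the metric space $(\cone,\TrDis)$ announced in the abstract, transported via the restriction map $R\colon\mu\mapsto\mu|_\Omega$. With $\Psi\equiv 0$ the Lipschitz extension is again $\equiv 0$ on $\ovom$, so $e^{\Psi/2}\equiv 1$, $\Ent=\E\circ R$, and the elementary identity $\partial_x\sqrt{\rho e^V}=(\partial_x\rho+\rho V')e^{V/2}/(2\sqrt\rho)$ rewrites $4\int_0^1(\partial_x\sqrt{\rho e^V})^2e^{-V}\,\dif x=\int_0^1(\partial_x\rho+\rho V')^2/\rho\,\dif x$, while the boundary condition $\sqrt{\rho e^V}-e^{\Psi/2}\in W^{1,2}_0$ becomes $\sqrt{\rho e^V}-1\in W^{1,2}_0(0,1)$. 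So it suffices to prove (a) that $\slope{\E}{Wb_2}(\rho)=\slope{\Ent}{\TrDis}(\mu)$ for every lift $\mu\in\cone$ of $\rho$, and (b) the slope formula $\slope{\Ent}{\TrDis}^2(\mu)=\int_0^1(\partial_x\rho+\rho V')^2/\rho\,\dif x$ (resp.\ $=\infty$). For (a): the definitions in \Cref{sec:functionals} and \Cref{sec:figalli-gigli} make $R$ a $1$-Lipschitz map from $(\cone,\TrDis)$ onto the nonnegative measures on $(0,1)$ endowed with $Wb_2$, because any competitor in the definition of $\TrDis(\mu,\nu)$ restricts to one in the definition of $Wb_2(\mu|_\Omega,\nu|_\Omega)$; together with $\Ent=\E\circ R$ this already gives $\slope{\E}{Wb_2}(\rho)\ge\slope{\Ent}{\TrDis}(\mu)$, by projecting the competitors in the $\limsup$ defining $\slope{\Ent}{\TrDis}(\mu)$. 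The reverse inequality rests on the observation that a competitor $\sigma$ with $\E(\sigma)<\E(\rho)$ must differ from $\rho$ away from $\partial\Omega$: wherever the datum $\sqrt{\rho e^V}=1$ is respected, the integrand of $\E$ is asymptotically at its pointwise minimum, so near-boundary perturbations either raise $\E$ or lower it only at a rate negligible against their $Wb_2$-cost. The relevant $\sigma$ can thus be approached from $\rho$ by interior transport, without exchanging mass with the reservoir, so the lift $\nu$ of $\sigma$ with the same boundary part as $\mu$ satisfies $\TrDis(\mu,\nu)\le Wb_2(\rho\,\dif x,\sigma\,\dif x)+o(Wb_2)$; in dimension one, where $\partial\Omega=\{0,1\}$ has no internal geometry, the error is genuinely lower order, whence $\slope{\E}{Wb_2}(\rho)\le\slope{\Ent}{\TrDis}(\mu)$.

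For the lower bound in (b), I would use interior competitors: with $\sigma_\varepsilon=(\Id+\varepsilon\xi)_\#(\rho\,\dif x)$ for $\xi\in C_c^\infty(0,1)$ one has $\TrDis(\mu,\mu_\varepsilon)\le\varepsilon\|\xi\|_{L^2(\rho\,\dif x)}$ while $\E(\rho)-\E(\sigma_\varepsilon)=-\varepsilon\int_0^1(\partial_x\rho+\rho V')\,\xi\,\dif x+o(\varepsilon)$, and the supremum over $\xi$ recovers $\int_0^1(\partial_x\rho+\rho V')^2/\rho\,\dif x$ (and $+\infty$ when $\sqrt{\rho e^V}\notin W^{1,2}$, since then $\sup_\xi(\int(\partial_x\rho+\rho V')\xi)^2/\|\xi\|_{L^2(\rho\,\dif x)}^2=\infty$). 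If instead $\sqrt{\rho e^V}\in W^{1,2}$ but the datum fails at, say, $0$ — so $\log(\rho e^V)(0^+)\ne 0$ — I would exchange a little mass between $[0,h]$ and the boundary point $0$: the entropy drops at rate $\sim|\log(\rho e^V)(0^+)|\cdot(\text{mass})$ whereas the $\TrDis$-cost is only $\sim(\text{mass})^{1/2}h$, so $h\downarrow 0$ forces $\slope{\Ent}{\TrDis}(\mu)=\infty$, matching the formula.

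The main obstacle is the matching upper bound $\slope{\Ent}{\TrDis}^2(\mu)\le\int_0^1(\partial_x\rho+\rho V')^2/\rho\,\dif x$ under the hypothesis $\sqrt{\rho e^V}-1\in W^{1,2}_0(0,1)$: since $\Ent$ is geodesically nonconvex one cannot simply expand $\Ent$ along a $\TrDis$-geodesic, which would require an unavailable one-sided control of the integrand along it. My plan is to pass to the variable $u:=\sqrt{\rho e^V}$, in which $\Ent(\mu)=\int_0^1 F(u)\,e^{-V}\,\dif x$ with $F(u)=u^2(2\log u-1)+1\ge0=F(1)$ and $F'(1)=0$, and the target quantity is the weighted Dirichlet energy $4\int_0^1(\partial_x u)^2e^{-V}\,\dif x$ with Dirichlet datum $u|_{\partial\Omega}=1$. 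For an arbitrary $\nu$ near $\mu$ I would bound $(\Ent(\mu)-\Ent(\nu))^+$ by combining the explicit form of near-optimal $\TrDis$-plans (essentially monotone, in one dimension) with the Cauchy--Schwarz inequality for the weight $e^{-V}$, using crucially that $u|_{\partial\Omega}=1$ places $F$ at its minimum on $\partial\Omega$, so that the contribution of the mass exchanged with the boundary — precisely the term that blew up in the lower bound when the datum was violated — is negligible, leaving only $4\int_0^1(\partial_x u)^2e^{-V}\,\dif x$ to be matched against $\TrDis(\mu,\nu)^2$. Once (b) is established, (a) with $\Psi\equiv 0$ yields \Cref{thm:main30} and, as a byproduct, the one-dimensional answer to the question raised in \cite{FigalliGigli10}.
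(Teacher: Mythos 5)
Your overall route is the same as the paper's: deduce the theorem from the slope formula for $\Ent$ in $(\cone,\TrDis)$ with $\Psi\equiv 0$, transferring competitors between the two metric settings via lifts. Two remarks on your part (a). The transfer is an exact, purely transport-theoretic fact (the paper's \Cref{lemma:proj}): given a $Wb_2$-plan $\gamma$ between $\rho\,\dif x$ and $\sigma$, discard $\gamma_{\partial\Omega}^{\partial\Omega}$ and define the lift by $\tilde\nu\coloneqq\tilde\mu+\pi^2_\#\tilde\gamma-\pi^1_\#\tilde\gamma$; no entropy considerations and no $o(Wb_2)$ error enter, and in particular the lift is \emph{not} the one ``with the same boundary part as $\mu$'' (that choice generally violates $\tilde\nu(\ovom)=0$ when $\sigma(\Omega)\neq\rho(\Omega)$). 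Your argument that near-boundary perturbations ``either raise $\E$ or lower it only at a negligible rate'' is therefore both off-track and unnecessary for (a). Your lower-bound constructions in (b) (interior push-forwards, and mass exchange with an endpoint when the datum fails) coincide with the paper's, up to the caveat that when the datum is violated one must rescale the density by $(1-\epsilon)$ rather than remove mass outright, otherwise the drop is $m(\log(\rho e^V)(0^+)-1)$ and the argument fails for $0<\log(\rho e^V)(0^+)<1$.

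The genuine gap is the upper bound $\slope{\Ent}{\TrDis}^2(\mu)\le 4\int_0^1(\partial_x\sqrt{\rho e^V})^2e^{-V}\dif x$, which you yourself identify as the main obstacle but leave as a plan. ``$F$ attains its minimum at the boundary datum'' is the right intuition but does not yield a proof: the competitors $\mu^n$ need not satisfy the boundary condition and may deposit or withdraw arbitrarily large amounts of mass near the endpoints, so one must show their contribution is $o\bigl(\TrDis(\mu,\mu^n)\bigr)$, not merely sign-favourable. Concretely, what is missing is: (i) for the part of an optimal plan staying in $\Omega\times\Omega$, the change-of-variables (Jacobian) identity for the monotone map $T_n$, the estimate $\log\partial_xT_n\le\partial_xT_n-1$ with an integration by parts, and a maximal-function/Lebesgue-point argument to replace $V\circ T_n-V$ by $(T_n-\Id)\partial_xV$ up to $o(\TrDis)$; (ii) for the mass exchanged with $\partial\Omega$, a quantitative localization — using $\rho\ge\bar\lambda$ near the endpoints one gets that the exchange region has length $O\bigl(\TrDis(\mu,\mu^n)^{2/3}\bigr)$ — together with an a priori truncation $\rho^n\le\Lambda$ on $S_n^{-1}(\partial\Omega)$ (obtained by replacing $\mu^n$ with a better competitor) and Hardy-type estimates exploiting that $\partial_x\log(\rho e^V)\in L^2$ near the endpoints, which is exactly where the hypothesis $\sqrt{\rho e^V}-1\in W^{1,2}_0$ is used. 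Without these estimates the ``Cauchy--Schwarz with weight $e^{-V}$'' step has nothing to act on, so as written the proposal does not establish the formula in the finite case.
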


We believe that the same formula should hold true also in higher dimension. A similar open problem is~\cite[Conjecture~2]{CasterasMonsaingeonSantambrogio24}.

\subsection*{Plan of the work}
In \Cref{sec:heuristics}, we formally derive the objects (entropy and transportation functionals) that appear in the schemes~\eqref{eq:jkoTrFun0} and~\eqref{eq:jkoTrDis0}.%

In \Cref{sec:preliminaries}, we introduce notation, terminology, and assumptions that are in place throughout the paper, we recall some definitions from the theory of gradient flows in metric spaces, as well as the Figalli--Gigli distance of~\cite{FigalliGigli10}, and we define rigorously the transportation functionals~$\TrFun$ and~$\TrDis$.

In \Cref{sec:properties}, we gather the main properties of these functionals and of the corresponding admissible transport plans. In particular, we show that~$\TrDis$ is a true metric when~$\Omega$ is a finite union of one-dimensional intervals.

In \Cref{sec:main10}, we prove \Cref{Theorem_1.1}.

In Sections~\ref{sec:slope}-\ref{sec:main20}, we focus on the case where~$\Omega = (0,1) \subseteq \R^1$. In \Cref{sec:slope}, we find a formula for the slope of~$\Ent$ in the metric space~$(\cone, \TrDis)$ and prove, as a corollary, \Cref{thm:main30}. In \Cref{sec:main20}, making use of \Cref{Theorem_1.1} and of the slope formula, we prove \Cref{Theorem_1.5}.

Appendix~\ref{sec:appendix} contains some additional results on~$\TrDis$. Particularly, we prove the lack of geodesic $\lambda$-convexity for~$\Ent$ when~$\Omega = (0,1)$.

\section{Formal derivation} \label{sec:heuristics}
Let us work at a completely formal level and postulate that a solution to the Fokker--Planck equation~\eqref{eq:fpStrong} is the ``Wasserstein-like'' gradient flow of some functional~$\mathcal F$. By this we mean the following:
\begin{enumerate}
	\item the motion of~$\rho_t$ in~$\Omega$ is governed by the continuity equation%
	\begin{equation} \label{eq:ce}
		\frac{\dif}{\dif t} \rho_t = -\dive(\rho_t \boldsymbol v_t) \comma
	\end{equation}
	for some velocity field~$\boldsymbol v_t$,
	\item the time-derivative of~$\rho_t$ equals the inverse of the Wasserstein gradient of~$\mathcal F$ at~$\rho_t$ for every~$t$, in the sense that for every sufficiently nice curve~$s \mapsto f_s$ of functions on~$\Omega$ starting at~$f_0 = \rho_t$ we have
	\begin{equation} \label{eq:heuristics1}
		\frac{\dif}{\dif s} \mathcal F(f_s \dif x)\Big|_{s=0} = -\int_\Omega \langle \boldsymbol v_t, \nabla \psi \rangle \rho_t \dif x \comma \quad \text{where } \frac{\dif}{\dif s} f_s \Big|_{s=0} = -\dive ( \rho_t \nabla \psi ) \fstop
	\end{equation}
\end{enumerate}
As we want to retrieve the Fokker--Planck equation, a reasonable choice for~$\mathcal F$ seems to be
\begin{equation} \label{eq:ent0}
	\mathcal F_0(\rho \dif x) \coloneqq \int_{\Omega} \bigl( \rho \log \rho + (V-1) \rho + 1 \bigr) \dif x \fstop
\end{equation}
For a fixed~$t \ge 0$ and a curve~$s \mapsto f_s$, we have
\[ \frac{\dif}{\dif s} \mathcal F_0 (f_s \dif x) = \int_{\Omega} (V + \log f_s) \frac{\dif}{\dif s} f_s \dif x \comma \]
and, therefore,
\begin{align*}
	\frac{\dif}{\dif s} \mathcal F_0 (f_s \dif x)\Big|_{s=0}
	&=
	-\int_{\Omega} (V + \log \rho_t) \dive( \rho_t \nabla \psi ) \dif x \\
	&=
	\int_{\Omega} \langle (\nabla V + \nabla \log \rho_t), \nabla \psi \rangle \rho_t \dif x - \int_{\partial \Omega} \Psi \rho_t \langle \nabla \psi, \boldsymbol{n} \rangle \dif \mathscr H^{d-1} \comma
\end{align*}
where, in the last identity, we used the boundary conditions in~\eqref{eq:fpStrong}. Let us choose
\[
\boldsymbol v_t \coloneqq -\nabla V - \nabla \log \rho_t \comma
\]
which makes the continuity equation~\eqref{eq:ce} true, since~$\rho_t$ solves~\eqref{eq:fpStrong}. Then,
\[ 
\frac{\dif}{\dif s} \mathcal F_0 (f_s)\Big|_{s=0} = -\int_\Omega \langle \boldsymbol v_t, \nabla \psi \rangle \rho_t \dif x - \int_{\partial \Omega} \Psi \rho_t \langle \nabla \psi, \boldsymbol{n} \rangle \dif \mathscr H^{d-1} \comma
\]
and we see that~$\mathcal F_0$ is not the right functional because of the integral on the boundary. The measure~$\langle \nabla \psi, \boldsymbol n \rangle \rho_t  \mathscr H^{d-1}$ on~$\partial \Omega$ can be seen as the flux of mass (coming from~$f_0=\rho_t$) that is moving away from~$\Omega$ along the flow~$s \mapsto f_s$ at~$s=0$. Thus, if we let this mass settle on the boundary,~$\langle \nabla \psi, \boldsymbol n \rangle \rho_t  \mathscr H^{d-1}$ is the time-derivative of the mass on~$\partial \Omega$. For this reason, it makes sense to consider not just measures on~$\Omega$, but rather on the closure~$\ovom$, and to define
\[
\mathcal F (\mu) \coloneqq \mathcal F_0(\mu|_\Omega) + \int \Psi \dif \mu|_{\partial \Omega} \fstop
\]
Our entropy functional~$\Ent$ is defined precisely like this, and, as we will see in \Cref{sec:preliminaries}, the transportation functionals~$\TrFun$ and~$\TrDis$ are extensions of~$Wb_2$ to the subset~$\cone$ of the signed measures on~$\ovom$, constructed so as to encode the idea that mass can leave~$\Omega$ to settle on~$\partial \Omega$ (and vice versa).

This argument is simple, but let us also emphasize the hidden difficulties:
\begin{itemize}
	\item we assume low regularity on~$\partial \Omega$ and on the functions~$\rho_0$ and~$V$;
	\item the transportation-cost functionals~$\TrDis$ and~$\TrFun$ will not be, in general, distances;
	\item the functional~$\Ent$ is not bounded from below on~$\cone$ (if~$\Psi$ is nonconstant), nor it is strictly convex. Indeed, it is linear along lines of the form~$\R \ni l \mapsto \mu + l \eta$ with~$\mu,\eta \in \cone$ and~$\eta$ concentrated on~$\partial \Omega$;
	\item when~$(\cone,\TrDis)$ is a geodesic metric space, the functional~$\Ent$ is \emph{not} geodesically semiconvex; see~\cite[Remark~3.4]{FigalliGigli10} and \Cref{sec:geod}.
\end{itemize}

\section{Preliminaries} \label{sec:preliminaries}

\subsection{Setting} \label{sec:setting}

Throughout the paper,~$\Omega$ is an open, bounded, and nonempty subset of $\R^d$. Without loss of generality, we assume that~$0 \in \Omega$. No assumption is made on the regularity of its boundary. %

Three functions are given: the initial datum~$\rho_0 \colon \Omega \to \R_+$, the potential~$V \colon \Omega \to \R$, and the function~$\Psi : \ovom \to \R$ that determines the boundary condition. We assume that~$\Psi$ is Lipschitz continuous and that the integral~$\int_{\Omega} \rho_0 \log \rho_0 \dif x$ is finite. In addition, we suppose that~$V$ is bounded (i.e.,~in~$L^\infty(\Omega)$) and in the set of locally Sobolev functions~$W^{1,d+}_\mathrm{loc}(\Omega)$.\footnote{In particular,~$V \in C(\Omega)$.}

\begin{definition} \label{sec:functions}
	We say that~$V \in W^{1,d+}_\mathrm{loc}(\Omega)$ if, for every~$\omega \Subset \Omega$ open, there exists~$p = p(\omega) > d$ such that~$V \in W^{1,p}(\omega)$.
\end{definition}

The set~$\cone$ is the convex cone of all finite and signed Borel measures~$\mu$ on $\ovom$ such that~\eqref{eq:S} holds.

\begin{proposition}
	The set~$\cone$ is closed w.r.t.~the weak convergence, i.e., in duality with continuous and bounded functions on~$\ovom$.
\end{proposition}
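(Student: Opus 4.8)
The plan is to realize $\cone$ as an intersection of sets that are manifestly stable under weak limits. Since $\ovom$ is compact, bounded continuous functions on $\ovom$ are simply continuous functions, and for every $f \in C(\ovom)$ the map $\mu \mapsto \int_{\ovom} f \dif \mu$ is continuous along weakly convergent sequences. Hence, if one can establish the identity
\[
\cone = \bigl\{ \mu : \textstyle\int_{\ovom} 1 \dif \mu = 0 \bigr\} \;\cap\; \bigcap_{\varphi} \bigl\{ \mu : \textstyle\int_{\ovom} \varphi \dif \mu \ge 0 \bigr\},
\]
where $\varphi$ runs over the nonnegative functions in $C_c(\Omega)$, each extended by $0$ to an element of $C(\ovom)$, the conclusion is immediate: if $\mu_n \to \mu$ weakly with $\mu_n \in \cone$, then $\int 1 \dif \mu = \lim_n \int 1 \dif \mu_n = 0$ and $\int \varphi \dif \mu = \lim_n \int \varphi \dif \mu_n \ge 0$ for every such $\varphi$, so $\mu$ lies in the displayed intersection, i.e.\ in $\cone$.

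What remains is to verify that set identity, and there only the inclusion ``$\supseteq$'': namely, that $\int_{\ovom} \varphi \dif \mu \ge 0$ for all nonnegative $\varphi \in C_c(\Omega)$ forces $\mu|_\Omega \ge 0$. (The reverse inclusion and the mass condition are trivial, since such a $\varphi$ vanishes on a neighbourhood of $\partial\Omega$, so $\int_{\ovom}\varphi\dif\mu = \int_\Omega \varphi \dif\mu|_\Omega$.) I would argue by contradiction on the Jordan decomposition $\mu|_\Omega = \sigma^+ - \sigma^-$ into finite, nonnegative, mutually singular measures. If $\sigma^- \ne 0$, pick a negative set $B \subseteq \Omega$ with $\sigma^+(B) = 0 < \sigma^-(B)$. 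Finite Borel measures on the Polish space $\Omega$ are Radon, so inner regularity yields a compact $K \subseteq B$ with $\sigma^-(K) > 0$, and outer regularity of $\sigma^+$ (using $\sigma^+(K) = 0$) yields an open set $U$ with $K \subseteq U \subseteq \Omega$ and $\sigma^+(U) < \sigma^-(K)$. Urysohn's lemma then provides $\varphi \in C_c(\Omega)$ with $\1_K \le \varphi \le \1_U$ pointwise, whence $\int_{\ovom} \varphi \dif \mu = \int \varphi \dif \sigma^+ - \int \varphi \dif \sigma^- \le \sigma^+(U) - \sigma^-(K) < 0$, contradicting the assumed positivity. Therefore $\sigma^- = 0$ and $\mu|_\Omega = \sigma^+ \ge 0$.

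I do not expect any genuine obstacle here; the argument is routine measure theory. The only point worth emphasising is conceptual: one must \emph{not} attempt to show that $\mu \ge 0$ on all of $\ovom$, because the boundary part $\mu|_{\partial\Omega}$ is deliberately allowed to carry negative mass (this is exactly the mechanism by which mass ``leaks'' to and from $\partial\Omega$). Consequently the test functions in the argument must be confined to $C_c(\Omega)$, and the step that upgrades positivity-against-test-functions to positivity of the restricted measure is where the Radon regularity of finite Borel measures, together with Urysohn's lemma, is actually used.
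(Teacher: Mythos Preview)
Your proof is correct and follows essentially the same approach as the paper: test against the constant $1$ to get $\mu(\ovom)=0$, and test against nonnegative $\varphi\in C_c(\Omega)$ to get $\mu|_\Omega\ge 0$. The only difference is that where you unpack the implication ``positivity against $C_c(\Omega)$ $\Rightarrow$ $\mu|_\Omega\ge 0$'' by hand via the Jordan decomposition and Urysohn's lemma, the paper simply invokes the Riesz--Markov--Kakutani theorem.
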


\begin{proof}
	If~$\cone \ni \mu^n \to_n \mu$, then
	$ \mu(\ovom) = \lim_{n \to \infty} \mu^n(\ovom)= 0 $
	and, for every~$f \colon \ovom \to \R_+$ continuous and compactly supported in~$\Omega$,
	\[ \int f \dif \mu_\Omega = \int f \dif \mu = \lim_{n \to \infty} \int f \dif \mu^n = \lim_{n \to \infty} \int f \dif \mu^n_\Omega \ge 0 \fstop \]
	The conclusion follows from the Riesz--Markov--Kakutani  theorem.
\end{proof}

The {entropy functionals}~$\E \colon L^1_+(\Omega) \to \R \cup \set{\infty}$ and~$\Ent \colon \cone \to \R \cup \set{\infty}$ are defined in~\eqref{eq:E} and~\eqref{eq:Ent}, respectively.

\subsection{Convention on constants}
The symbol~$\const$ is reserved for strictly positive real constants. The number it represents \emph{may change from formula to formula} and possibly depends on the dimension~$d$, the set~$\Omega$, the functions~$V$ and~$\Psi$, and the initial datum~$\rho_0$. We also allow~$\const$ to depend on other quantities, which are, in case, explicitly displayed as a subscript.

\subsection{Measures} \label{subs:not:measures}
For every signed Borel measure~$\mu$ and Borel set~$A$, we write $\mu_A = \mu|_A$ for the restriction of~$\mu$ to~$A$. Similarly, and following the notation of~\cite{FigalliGigli10,Morales18}, if~$\gamma$ is a measure on a product space and $A,B$ are Borel, we write $\gamma_A^B = \gamma_{A \times B}$ for the restriction of~$\gamma$ to~$A \times B$. We use the notation~$\mu_+, \mu_-$ for the positive and negative parts of a given measure~$\mu$, and ~$\norm{\mu}$ for the total-variation norm of~$\mu$, i.e., the total mass of~$\mu_+ + \mu_-$.

For every Borel function~$f$ and signed Borel measure~$\mu$, we denote by~$\mu(f)$ the integral~$\int f \dif \mu$.

On the set of the finite signed Borel measures on~$\ovom$, we also consider the (modified) Kantorovich--Rubinstein norm (see~\cite[Section~8.10(viii)]{Bogachev07})
\begin{equation}
	\norm{\mu}_{\KR} \coloneqq \abs{\mu(\ovom)} + \sup \set{\mu(f) \, : \, f \colon \ovom \to \R \comma \Lip(f) \le 1 \text{ and } f(0)=0} \fstop
\end{equation}

We write~$F_\# \mu$ for the push-forward of a (signed) Borel measure~$\mu$ via a Borel map~$F$. Often, we use as~$F$ the projection onto some coordinate: we write~$\pi^i$ for the projection on the~$i^\mathrm{th}$ coordinate (or~$\pi^{ij}$ for the projection on the two coordinates~$i$ and~$j$).

We denote by~$\Leb^d$ the $d$-dimensional Lebesgue measure on~$\R^d$. We also use the notation~$\abs{A} \coloneqq \Leb^d(A)$ when~$A \subseteq \R^d$ is a Borel set. We write~$\delta_x$ for the Dirac delta measure at~$x$.

\subsection{Weak solution to the Fokker--Planck equation} \label{sec:defFokker}
We say that a family of nonnegative measures~$(\mu_t)_{t \ge 0}$ on~$\Omega$ is a weak solution to  the Fokker--Planck equation if:
\begin{enumerate}
	\item it is continuous in duality with the space of continuous and compactly supported functions~$C_c(\Omega)$;
	\item for every open set~$\omega \Subset \Omega$, both~$t\mapsto  \mu_t(\omega)$ and~$t \mapsto \int \abs{\nabla V} \dif  \mu_t|_\omega $ belong to~$L^1_\mathrm{loc}\bigl( [0,\infty) \bigr)$, i.e.,~their restrictions to~$(0,\bar t \, )$ are integrable for every~$\bar t > 0$;
	\item for every~$\varphi \in C_c^2(\Omega)$ and~$0 \le s \le t$, the following identity holds:
	\begin{equation} \label{eq:fp} 
		\int \varphi \dif  \mu_t - \int \varphi \dif  \mu_s = \int_s^t \int \bigl( \Delta \varphi - \langle \nabla \varphi, \nabla V \rangle \bigr) \dif  \mu_r \dif r \fstop
	\end{equation}
\end{enumerate}

\subsection{Metric gradient flows} \label{sec:metricGF}
The general theory of gradient flows in metric spaces was developed  in~\cite{AmbrosioGigliSavare08}; we refer to this book and to the survey~\cite{Santambrogio17} for a comprehensive exposition of the topic. We collect here only the definitions we need from this theory.

Let~$(X, \mathsf d)$ be a metric space, let~$[0,\infty) \ni t \mapsto x_t$ be an~$X$-valued map, and let~$f \colon X \to \R \cup \set{\infty}$ be a function.

\begin{definition}[{Metric derivative~\cite[Theorem~1.1.2]{AmbrosioGigliSavare08}}]
	We say that~$(x_t)_{t \in [0,\infty) }$ is~\emph{locally absolutely continuous} if there exists a function~$m \in L^1_\mathrm{loc}\bigl([0,\infty)\bigr)$ such that
	\begin{equation} \label{eq:metricDer}
		\mathsf d(x_s,x_t) \le \int_s^t m(r) \dif r
	\end{equation}
	for every~$0 \le s < t$. If~$(x_t)_{t \in [0,\infty)}$ is locally absolutely continuous, for~$\Leb^1_{[0,\infty)}$-a.e.~$t$ there exists the limit
	\begin{equation}
		\abs{\dot x_t } \coloneqq \lim_{s \to t} \frac{\mathsf{d}(x_s,x_t)}{\abs{s-t}} \comma
	\end{equation}
	and this function, called~\emph{metric derivative}, is the~$\Leb^1_{[0,\infty)}$-a.e. minimal function~$m$ that satisfies \eqref{eq:metricDer}; see~\cite[Theorem 1.1.2]{AmbrosioGigliSavare08}.\footnote{In~\cite[Theorem 1.1.2]{AmbrosioGigliSavare08}, the completeness of the space is assumed but not necessary, as can be easily checked.}
\end{definition}

\begin{definition}[{Descending slope~\cite[Definition~1.2.4]{AmbrosioGigliSavare08}}]
	The \emph{descending slope} of~$f$ at~$x \in X$ is the number
	\begin{equation}
		\slopesmall{f}{}(x) = \slopesmall{f}{\mathsf d}(x) \coloneqq \limsup_{y \stackrel{\mathsf d}{\to} x} \frac{\bigl(f(x)-f(y)\bigr)_+}{\mathsf d(x,y)} \comma
	\end{equation}
	where~$a_+ \coloneqq \max\set{0,a}$ is the positive part of~$a \in \R \cup \set{\pm \infty}$. The slope is conventionally set equal to~$\infty$ if~$f(x) = \infty$, and to~$0$ if~$x$ is isolated and~$f(x) < \infty$.
\end{definition}

\begin{definition}[{Curve of maximal slope~\cite[Definition~1.3.2]{AmbrosioGigliSavare08}}] \label{def:maxSlope}
	We say that a locally absolutely continuous~$X$-valued map~$(x_t)_{t \in [0,\infty)}$ is a \emph{curve of maximal slope} (with respect to~$\slope{f}{\mathsf d}$) if~$t \mapsto f (x_t)$ is a.e.~equal to a nonincreasing map~$\phi \colon [0,\infty) \to \R$ such that
	\begin{equation} \label{eq:ediGeneral}
		\dot \phi(t) \le - \frac{1}{2} \abs{\dot x_t}^2 - \frac{1}{2} \slopesmall{f}{\mathsf d}^2(x_t) \quad \text{for } \Leb^1_{[0,\infty)} \text{-a.e.~} t \fstop
	\end{equation}
\end{definition}

\Cref{def:maxSlope} is motivated by the observation that, when~$(X,\mathsf d)$ is a Euclidean space and~$f$ is smooth, the inequality~\eqref{eq:ediGeneral} is equivalent to the gradient-flow equation
\[
\frac{\dif}{\dif t} x_t = -\nabla f(x_t) \comma \qquad t \ge 0 \comma
\]
see for instance~\cite[Section 2.2]{Santambrogio17}. As noted in~\cite[Remark~1.3.3]{AmbrosioGigliSavare08},\footnote{Once again, completeness is not necessary.} even in the general metric setting,~\eqref{eq:ediGeneral} actually implies the identities
\[
-\dot \phi(t) = \abs{\dot  x_t}^2 = \slopesmall{f}{\mathsf d}^2(x_t) \quad \text{for a.e.~$t \ge 0$} \fstop
\]

\subsection{The Figalli--Gigli distance} \label{sec:figalli-gigli}
We briefly recall the definition and some properties of the distance $Wb_2$ introduced in~\cite{FigalliGigli10}.
We denote by~$\mathcal M_2(\Omega)$ the set of nonnegative Borel measures~$ \mu$ on~$\Omega$ such that
\begin{equation}
	\int \inf_{y \in \partial \Omega} \abs{x-y}^2 \dif  \mu(x) < \infty \comma
\end{equation}
and, for every nonnegative Borel measure~$\gamma$ on~$\ovom \times \ovom$, define the cost functional
\begin{equation}
	\cost(\gamma) \coloneqq \int \abs{x-y}^2 \dif \gamma(x,y) \fstop
\end{equation}

\begin{definition}[{\cite[Problem 1.1]{FigalliGigli10}}]
	Let~$ \mu,  \nu \in \mathcal M_2(\Omega)$. We say that a nonnegative Borel measure~$\gamma$ on $\ovom \times \ovom$ is a \emph{$Wb_2$-admissible transport plan} between~$ \mu$ and~$ \nu$, and write~$\gamma \in \Adm_{Wb_2}( \mu,  \nu)$, if
	\begin{equation} \label{eq:projGF} \bigl( \pi^1_\# \gamma \bigr)_\Omega =  \mu \quad \text{and} \quad \bigl( \pi^2_\# \gamma \bigr)_\Omega =  \nu \fstop \end{equation}
	The distance~$Wb_2(\mu,\nu)$ is then defined as
	\begin{equation} \label{eq:defGF} Wb_2( \mu,  \nu) \coloneqq \inf\set{\sqrt{\cost(\gamma)} \, : \, \gamma \in \Adm_{Wb_2}( \mu,  \nu)} \fstop \end{equation}
\end{definition}

In~\cite[Section~2]{FigalliGigli10}, it was observed that for every~$ \mu,  \nu \in \mathcal M_2(\Omega)$ there exists at least one $Wb_2$-optimal transport plan, that is, a measure~$\gamma \in \Adm_{Wb_2}( \mu,  \nu)$ that attains the infimum in~\eqref{eq:defGF}.

Later, we will make use of the following consequences of~\cite[Proposition 2.7]{FigalliGigli10}: the convergence w.r.t.~the metric~$Wb_2$ implies the convergence in duality with~$C_c(\Omega)$, and it is implied by the convergence in duality with~$C_b(\Omega)$.

\subsection{Transportation functionals} \label{sec:functionals}

We now define the transportation functionals~$\TrFun$ and~$\TrDis$ that appear in the schemes~\eqref{eq:jkoTrFun0} and~\eqref{eq:jkoTrDis0}.

\begin{definition} \label{def:TrDis}
	For every~$\mu,\nu \in \cone$, let~$\Adm_{\TrDis}(\mu,\nu)$ be the set of all finite nonnegative Borel measures~$\gamma$ on $\ovom \times \ovom$ such that
	\begin{enumerate}[(1)]
		\item \label{(1)}$\bigl(\pi^1_\# \gamma \bigr)_\Omega = \mu_\Omega$,
		\item \label{(2)} $\bigl(\pi^2_\# \gamma \bigr)_\Omega = \nu_\Omega$,
		\item \label{(3)}$\pi^1_\# \gamma - \pi^2_\# \gamma = \mu - \nu$.
	\end{enumerate} 
	We call such measures \emph{$\TrDis$-admissible transport plans} between~$\mu$ and~$\nu$.
	We set
	\begin{equation} \label{eq:TrDis1}
		\TrDis(\mu,\nu) \coloneqq \inf\set{\sqrt{\cost(\gamma)} \, : \, \gamma \in \Adm_{\TrDis}(\mu,\nu)} \comma
	\end{equation}
	and write
	\begin{equation} \label{eq:TrDis2}
		\Opt_\TrDis(\mu,\nu) \coloneqq \argmin_{\gamma \in \Adm_{\TrDis}(\mu,\nu)} \cost(\gamma)
	\end{equation}
	for the set of all \emph{$\TrDis$-optimal tranport plans} between~$\mu$ and~$\nu$.
\end{definition}

\begin{remark}
	There is some redundancy in the properties~\ref{(1)}-\ref{(3)}, indeed,
	\[
	\ref{(1)} + \ref{(3)} \Rightarrow \ref{(2)} \quad \text{and} \quad \ref{(2)} + \ref{(3)} \Rightarrow \ref{(1)} \fstop
	\]
\end{remark}

\begin{definition} \label{def:TrFun}
	For every~$\mu,\nu \in \cone$, let~$\Adm_{\TrFun}(\mu,\nu)$ be the set of all measures~$\gamma \in \Adm_{\TrDis}(\mu,\nu)$ such that, additionally,
	\begin{enumerate}[(4)]
		\item \label{(4)} $\gamma_{\partial \Omega}^{\partial \Omega} = 0$.
	\end{enumerate}
	We define the $\TrFun$-admissible/optimal tranport plans as in~\eqref{eq:TrDis1} and~\eqref{eq:TrDis2}, by replacing~$\TrDis$ with~$\TrFun$.
\end{definition}

\begin{remark} \label{rmk:boundGamma}
	If~$\gamma \in \Adm_{\TrFun}(\mu, \nu)$ for some~$\mu, \nu \in \cone$, then
	\begin{equation} \label{eq:boundGamma}
		\norm{\gamma} \le \norm{\gamma_\Omega^\ovom} + \norm{\gamma_\ovom^\Omega} = \norm{\mu_\Omega} + \norm{\nu_\Omega} \fstop
	\end{equation}
\end{remark}

\begin{figure}
	\centering
	\subfloat[][$Wb_2$-admissible]
	{\includegraphics[width=.3\textwidth, trim={17mm 26mm 65mm 16mm},clip]{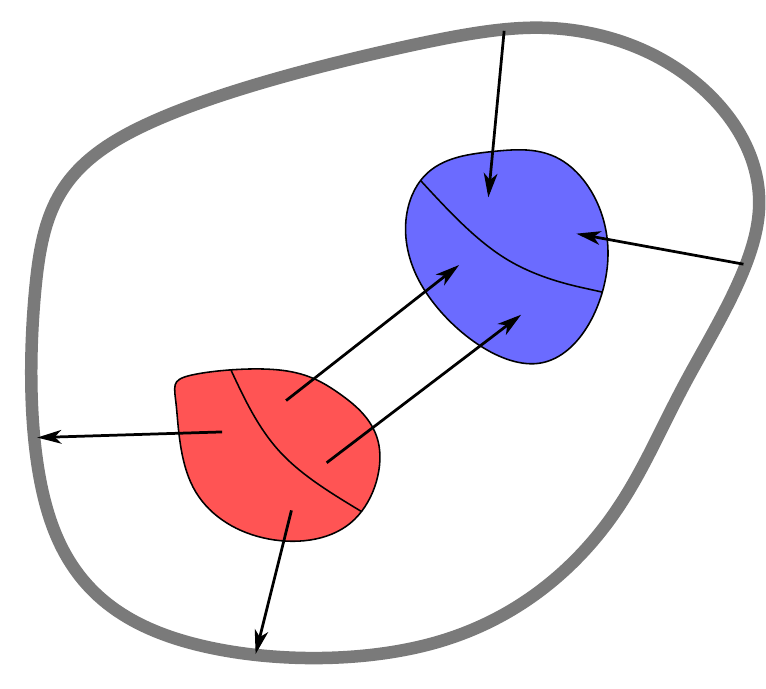}}
	\hspace{10pt}
	\subfloat[][$\TrDis$-admissible]
	{\includegraphics[width=.3\textwidth, trim={17mm 26mm 65mm 16mm},clip]{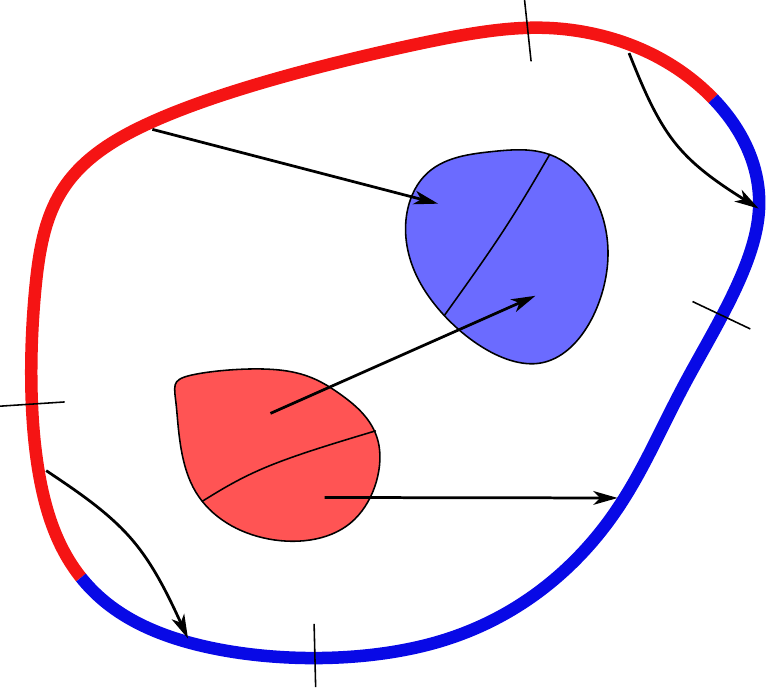}} \hspace{10pt}
	\subfloat[][$\TrFun$-admissible]
	{\includegraphics[width=.3\textwidth, trim={17mm 26mm 65mm 16mm},clip]{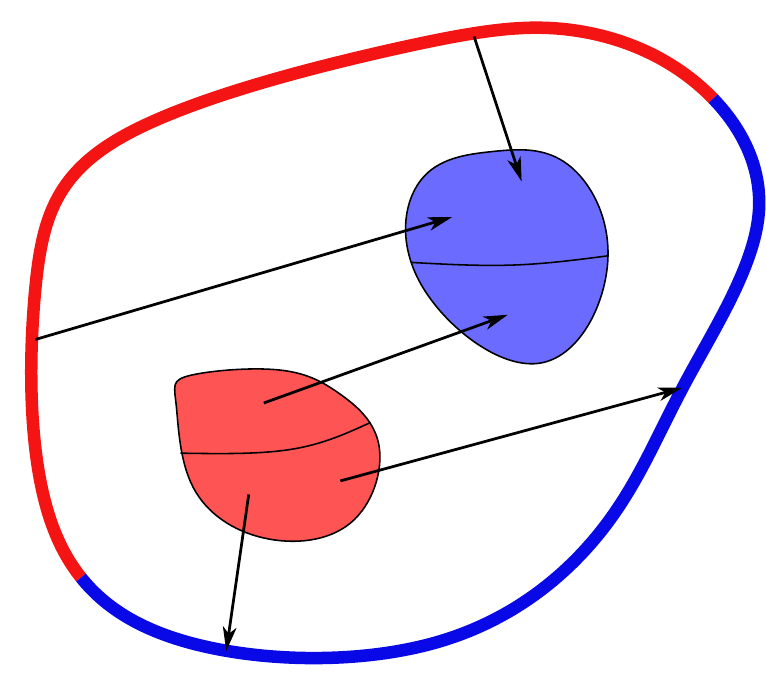}}
	\caption{Examples of admisssible plans. Red (resp.~blue) regions are those with an abundance of initial (resp.~final) mass~$\mu$ (resp.~$\nu$). Admissible plans for~$Wb_2$ do not have any restriction on the mass departing from and arriving to~$\partial \Omega$. Admissible plans for~$\TrDis$ must agree---in the sense of Condition~\ref{(3)}---with the configurations~$\mu,\nu$ also on~$\partial \Omega$. Admissible plans for~$\TrFun$ are~$\TrDis$-admissible and, additionally, do not move mass from~$\partial \Omega$ to~$\partial \Omega$.}
	\label{fig:plans}
\end{figure}

\begin{remark} \label{rmk:changeBoundary}
	Fix~$\mu, \nu \in \cone$. For every~$\eta \in \cone$ concentrated on~$\partial \Omega$, it is easy to check that
	\[
	\Adm_{\TrDis}(\mu+\eta, \nu+\eta) = \Adm_{\TrDis}(\mu, \nu) \quad \text{and} \quad \Adm_{\TrFun}(\mu+\eta, \nu+\eta) = \Adm_{\TrFun}(\mu, \nu) \fstop
	\]
	Hence,
	\begin{equation}
		\TrDis(\mu+\eta,\nu+\eta) = \TrDis(\mu,\nu) \quad \text{and} \quad \TrFun(\mu+\eta,\nu+\eta) = \TrFun(\mu,\nu) \fstop
	\end{equation}
\end{remark}

Let us briefly comment on these definitions. Conditions~\ref{(1)} and~\ref{(2)} are precisely the same as~\eqref{eq:projGF}. They are needed to ensure that the mass that departs from (resp.~arrives in) $\Omega$ is precisely $\mu_\Omega$ (resp.~$\nu_\Omega$). Condition~\ref{(3)} is needed to also keep track of the mass that is exchanged with the boundary. Namely, it ensures that, on each subregion of~$A \subseteq \overline \Omega$ (possibly including part of the boundary, which was neglected by Conditions~\ref{(2)}-\ref{(3)}), the mass~$\nu(A)$ after the transportation equals the initial mass~$\mu(A)$, plus the imported mass~$\gamma(\overline \Omega \times A)$, minus the exported mass~$\gamma(A \times \overline \Omega)$. Observe that, since~$\mu$ and~$\nu$ normally have a negative mass on some subregions of $\partial \Omega$, it does not make sense to naively impose $\pi^1_\# \gamma = \mu$ and $\pi^2_\# \gamma = \nu$.

The difference between~$\TrDis$ and~$\TrFun$ is Condition~\ref{(4)}:~$\TrFun$-admissible transport plans cannot move mass from~$\partial \Omega$ to~$\partial \Omega$. This results in the loss of the triangle inequality.

\begin{example} \label{ex:notri}
	Consider, for the domain~$\Omega \coloneqq (0,1)$, the measures
	\[ \mu_1 \coloneqq \delta_0 - \delta_1 \in \cone \comma \quad \mu_2 \coloneqq \delta_{1/2} - \delta_1 \in \cone \comma \quad \mu_3 \coloneqq 0 \in \cone \fstop \]
	The transport plans~$\gamma_{12} \coloneqq \delta_{( 0,1/2 ) }$ and~$\gamma_{23} \coloneqq \delta_{(1/2,1)}$ are $\TrFun$-admissible, between~$\mu_1$ and~$\mu_2$, and between~$\mu_2$ and~$\mu_3$, respectively.
	Thus, both~$\TrFun(\mu_1,\mu_2)$ and~$\TrFun(\mu_2,\mu_3)$ are bounded above by~$1/2$. However, there is no~$\gamma_{13} \in \Adm_{\TrFun}(\mu_1,\mu_3)$, whence $\TrFun(\mu_1,\mu_3) = \infty$. Indeed, Conditions~\ref{(1)} and~\ref{(2)} in \Cref{def:TrDis} would imply~$(\gamma_{13})_\Omega^{\ovom} = (\gamma_{13})_\ovom^\Omega = 0$. Together with~\ref{(4)} in \Cref{def:TrFun}, this means that~$\gamma_{13}$ equals the zero measure, which contradicts~\ref{(3)} in \Cref{def:TrDis}.
\end{example}

Nonetheless, it is shown in~\Cref{prop:informloss} that Condition~\ref{(4)} is needed in dimension~$d \ge 2$, because the information about $\mu_{\partial \Omega}$ and $\nu_{\partial \Omega}$ may otherwise be lost. This does not happen when~$\Omega$ is just a finite union of intervals in~$\R^1$, because points in~$\partial \Omega$ are distant from each other. We will see that, in this case, \Cref{def:TrDis} defines a distance.

These remarks reveal part of the difficulties in building cost functionals for signed measures that behave like~$W_2$. See~\cite{Mainini12} for further details. However, it seems at least convenient to use signed measures, given that a modified JKO scheme that mimics~\cite{FigalliGigli10} should allow for a virtually unlimited amount of mass to be taken from points of~$\partial \Omega$, step after step.

\section{Properties of the transportation functionals} \label{sec:properties}

We gather some useful properties of~$\TrFun$ and~$\TrDis$.

\subsection{Relation with the Figalli--Gigli distance}
For every~$\mu, \nu \in \cone$, we have the inclusions
\begin{equation*}
	\Adm_{\TrFun}(\mu,\nu) \subseteq \Adm_{\TrDis}(\mu,\nu) \subseteq \Adm_{Wb_2}(\mu_\Omega,\nu_\Omega) \fstop
\end{equation*}
As a consequence,
\begin{equation} \label{eq:distIneq}
	Wb_2(\mu_\Omega, \nu_\Omega) \le \TrDis(\mu,\nu) \le \TrFun(\mu,\nu) \comma \qquad \mu, \nu \in \cone \fstop
\end{equation}

In fact,~$\TrDis$ and~$\TrFun$ can be seen as extensions of~$Wb_2$ in the following sense.

\begin{lemma} \label{lemma:proj}
	Let~$\mu, \nu$ be finite nonnegative Borel measures on~$\Omega$. For every~$\tilde \mu \in \cone$ with~$\tilde \mu_\Omega = \mu$, we have the identities
	\begin{align}
		Wb_2( \mu,  \nu) &= \inf_{\tilde \nu \in \cone} \set{ \TrDis(\tilde \mu, \tilde \nu) \, : \, \tilde \nu_\Omega = \nu } = \inf_{\tilde \nu \in \cone} \set{ \TrFun(\tilde \mu, \tilde \nu) \, : \,  \tilde \nu_\Omega = \nu }  \fstop
	\end{align}
\end{lemma}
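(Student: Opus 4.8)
The plan is to prove the chain of identities by establishing two inequalities. Recall from \eqref{eq:distIneq} that $Wb_2(\mu_\Omega,\nu_\Omega) \le \TrDis(\tilde\mu,\tilde\nu) \le \TrFun(\tilde\mu,\tilde\nu)$ for every admissible pair $\tilde\mu,\tilde\nu \in \cone$ with $\tilde\mu_\Omega = \mu$, $\tilde\nu_\Omega = \nu$. Taking the infimum over such $\tilde\nu$ immediately gives
\[
	Wb_2(\mu,\nu) \le \inf_{\tilde\nu}\set{\TrDis(\tilde\mu,\tilde\nu) : \tilde\nu_\Omega = \nu} \le \inf_{\tilde\nu}\set{\TrFun(\tilde\mu,\tilde\nu) : \tilde\nu_\Omega = \nu}.
\]
So it remains to show that the rightmost infimum (over $\TrFun$) is bounded above by $Wb_2(\mu,\nu)$; this closes the loop. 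To do this, I would start from a $Wb_2$-optimal plan $\gamma \in \Adm_{Wb_2}(\mu,\nu)$, which exists by the discussion after \cite[Section~2]{FigalliGigli10}, and show that one can choose $\tilde\nu \in \cone$ with $\tilde\nu_\Omega = \nu$ so that $\gamma$ (possibly slightly modified) lies in $\Adm_{\TrFun}(\tilde\mu,\tilde\nu)$ with the same cost.

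The construction of $\tilde\nu$ is the heart of the argument. Given the fixed $\tilde\mu \in \cone$ and the optimal plan $\gamma$, conditions \ref{(1)} and \ref{(2)} are already satisfied by $\gamma$ with respect to $\mu$ and $\nu$ on $\Omega$. To also enforce \ref{(3)}, i.e.\ $\pi^1_\#\gamma - \pi^2_\#\gamma = \tilde\mu - \tilde\nu$, one is forced to define the boundary part of $\tilde\nu$ by
\[
	\tilde\nu_{\partial\Omega} \coloneqq \tilde\mu_{\partial\Omega} - \bigl(\pi^1_\#\gamma\bigr)_{\partial\Omega} + \bigl(\pi^2_\#\gamma\bigr)_{\partial\Omega},
\]
and to check that $\tilde\nu \coloneqq \nu + \tilde\nu_{\partial\Omega}$ indeed belongs to $\cone$: nonnegativity on $\Omega$ is clear since $\tilde\nu_\Omega = \nu \ge 0$, and the total mass condition $\tilde\nu(\ovom) = 0$ follows from $\tilde\mu(\ovom) = 0$ together with $(\pi^1_\#\gamma)(\ovom) = (\pi^2_\#\gamma)(\ovom) = \norm{\gamma}$. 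There is a subtlety: condition \ref{(4)}, $\gamma_{\partial\Omega}^{\partial\Omega} = 0$, need not hold for an arbitrary $Wb_2$-optimal $\gamma$; but mass transported from $\partial\Omega$ to $\partial\Omega$ can always be removed from $\gamma$ without changing the $\Omega$-marginals or the required difference of full marginals, and without increasing $\cost(\gamma)$ (in fact $\cost$ can only decrease). After this reduction $\gamma \in \Adm_{\TrFun}(\tilde\mu,\tilde\nu)$, hence $\TrFun(\tilde\mu,\tilde\nu) \le \sqrt{\cost(\gamma)} = Wb_2(\mu,\nu)$, and taking the infimum over $\tilde\nu$ completes the proof.

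The step I expect to require the most care is verifying that the modified plan still has the right $\Omega$-marginals after deleting the $\partial\Omega \times \partial\Omega$ part — one must be sure that $\gamma_{\partial\Omega}^{\partial\Omega}$ contributes nothing to $(\pi^i_\#\gamma)_\Omega$, which is immediate, but also that the bookkeeping of the full marginals' difference in \ref{(3)} remains consistent, which it does because removing a symmetric chunk changes $\pi^1_\#\gamma$ and $\pi^2_\#\gamma$ by measures both concentrated on $\partial\Omega$, and one simply redefines $\tilde\nu_{\partial\Omega}$ accordingly. A minor point worth stating explicitly is that $\tilde\nu \in \mathcal M_2(\Omega)$ so that $Wb_2(\mu,\nu)$ and all quantities are finite and the optimal plan exists; this is guaranteed since $\tilde\nu_\Omega = \nu$ was assumed to be a finite measure on the bounded set $\Omega$.
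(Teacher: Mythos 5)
Your proposal is correct and follows essentially the same route as the paper: reduce everything to the single inequality $\inf_{\tilde\nu}\TrFun(\tilde\mu,\tilde\nu)\le Wb_2(\mu,\nu)$ via~\eqref{eq:distIneq}, then take a $Wb_2$-admissible plan, strip off its $\partial\Omega\times\partial\Omega$ part (which cannot increase the cost), and define $\tilde\nu\coloneqq\tilde\mu-\pi^1_\#\tilde\gamma+\pi^2_\#\tilde\gamma$ so that Condition~\ref{(3)} holds by construction. The only cosmetic difference is that you start from an optimal plan while the paper works with an arbitrary admissible plan and takes the infimum at the end; both variants are valid.
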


\begin{proof}
	In light of~\eqref{eq:distIneq}, it suffices to prove that
	\[
	\inf_{\tilde \nu \in \cone} \set{ \TrFun(\tilde \mu, \tilde \nu) \, : \, \tilde \nu_\Omega = \nu } \le Wb_2(\mu, \nu) \fstop
	\]
	Let~$\gamma \in \Adm_{Wb_2}(\mu, \nu)$. Define~$\tilde \gamma \coloneqq \gamma - \gamma_{\partial \Omega}^{\partial \Omega}$ and
	\[
	\tilde \nu \coloneqq \tilde \mu + \pi^2_\# \tilde \gamma - \pi^1_\# \tilde \gamma \fstop
	\]
	It is easy to check that~$\tilde \nu_\Omega = \nu$, that~$\tilde \gamma \in \Adm_{\TrFun}(\tilde \mu, \tilde \nu)$, and that~$\cost(\tilde \gamma) \le \cost(\gamma)$. As a consequence,
	\[
	\inf_{\tilde \nu \in \cone}\set{\TrFun(\tilde \mu, \tilde \nu) \, : \, \tilde \nu_\Omega = \nu} \le \sqrt{\cost(\gamma)} \comma
	\]
	and we conclude by arbitrariness of~$\gamma$.
\end{proof}

\subsection{Relation with the Kantorovich--Rubinstein norm}
Interestingly, an inequality relates~$\TrDis$ and~$\norm{\cdot}_\KR$.
\begin{lemma} \label{lemma:TrDisKR}
	For every~$\mu, \nu \in \cone$, we have
	\begin{equation} \label{eq:TrDisKR}
		\TrDis^2(\mu, \nu) \le \diam(\Omega) \norm{\mu - \nu}_\KR \fstop
	\end{equation}
\end{lemma}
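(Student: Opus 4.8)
The plan is to exhibit a single plan $\gamma\in\Adm_\TrDis(\mu,\nu)$ with $\cost(\gamma)\le\diam(\Omega)\,\norm{\mu-\nu}_\KR$; the bound on $\TrDis^2(\mu,\nu)$ is then immediate from~\eqref{eq:TrDis1}. Since every pair $x,y\in\ovom$ satisfies $\abs{x-y}^2\le\diam(\Omega)\abs{x-y}$, the quadratic cost of any admissible $\gamma$ is at most $\diam(\Omega)$ times its \emph{linear} cost $\int\abs{x-y}\dif\gamma$, so it is enough to find an admissible $\gamma$ with $\int\abs{x-y}\dif\gamma\le\norm{\mu-\nu}_\KR$, and this I would obtain from Kantorovich--Rubinstein duality.

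For the construction, first I would record that, because $\mu\in\cone$ has $\mu_\Omega\ge 0$ and $\mu(\ovom)=0$, its boundary part $\mu_{\partial\Omega}$ is a nonpositive measure with $\mu_{\partial\Omega}(\partial\Omega)=-\mu_\Omega(\Omega)$, and likewise for $\nu$. Set $\beta\coloneqq\mu_{\partial\Omega}-\nu_{\partial\Omega}$, a finite signed measure on $\partial\Omega$, with Jordan decomposition $\beta=\beta_+-\beta_-$, and put
\[
	\hat\mu\coloneqq\mu_\Omega+\beta_+\comma\qquad\hat\nu\coloneqq\nu_\Omega+\beta_-\fstop
\]
These are finite nonnegative measures on $\ovom$, and a short computation using $\mu(\ovom)=\nu(\ovom)=0$ gives $\hat\mu(\ovom)=\hat\nu(\ovom)$. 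Let $\gamma$ be a coupling of $\hat\mu$ and $\hat\nu$ minimizing $\int\abs{x-y}\dif\gamma$ (a minimizer exists since $\ovom$ is compact and $(x,y)\mapsto\abs{x-y}$ is bounded continuous). Because $\beta_\pm$ live on $\partial\Omega$, restricting $\pi^1_\#\gamma=\hat\mu$ and $\pi^2_\#\gamma=\hat\nu$ to $\Omega$ returns $\mu_\Omega$ and $\nu_\Omega$, so conditions~\ref{(1)} and~\ref{(2)} hold; moreover $\pi^1_\#\gamma-\pi^2_\#\gamma=\hat\mu-\hat\nu=(\mu_\Omega-\nu_\Omega)+(\mu_{\partial\Omega}-\nu_{\partial\Omega})=\mu-\nu$, which is~\ref{(3)}. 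Hence $\gamma\in\Adm_\TrDis(\mu,\nu)$.

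It then remains to bound the cost. By the Kantorovich--Rubinstein theorem on the compact metric space $\ovom$,
\[
	\int\abs{x-y}\dif\gamma=\sup\set{\hat\mu(f)-\hat\nu(f)\,:\,\Lip(f)\le 1}=\sup\set{(\mu-\nu)(f)\,:\,\Lip(f)\le 1}\comma
\]
where the last equality uses $\hat\mu-\hat\nu=\mu-\nu$; since $(\mu-\nu)(\ovom)=0$, adding constants to $f$ leaves $(\mu-\nu)(f)$ unchanged, so the supremum is not affected by additionally imposing $f(0)=0$, and it therefore equals $\norm{\mu-\nu}_\KR$. Combining this with $\cost(\gamma)=\int\abs{x-y}^2\dif\gamma\le\diam(\Omega)\int\abs{x-y}\dif\gamma$ and with $\TrDis^2(\mu,\nu)\le\cost(\gamma)$ yields the claim.

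I do not expect a genuine obstacle here; the only points that need a little care are the sign bookkeeping on $\partial\Omega$ (to verify that $\hat\mu$ and $\hat\nu$ are nonnegative and of equal total mass) and invoking Kantorovich--Rubinstein duality in the appropriate generality, namely for finite nonnegative measures of equal mass on a compact metric space, with the normalization $f(0)=0$ absorbed thanks to $(\mu-\nu)(\ovom)=0$.
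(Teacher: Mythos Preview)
Your argument is essentially the paper's own proof: you build the same auxiliary measures $\hat\mu=\mu_\Omega+(\mu_{\partial\Omega}-\nu_{\partial\Omega})_+$ and $\hat\nu=\nu_\Omega+(\mu_{\partial\Omega}-\nu_{\partial\Omega})_-$, observe that any coupling of them is $\TrDis$-admissible, bound the quadratic cost by $\diam(\Omega)$ times the linear cost, and invoke Kantorovich--Rubinstein duality via $\hat\mu-\hat\nu=\mu-\nu$.

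One correction, though it does not affect the logic: it is \emph{not} true that $\mu_{\partial\Omega}$ is a nonpositive measure for $\mu\in\cone$. The definition only forces $\mu_\Omega\ge 0$ and $\mu(\ovom)=0$; the boundary part may well be signed (e.g.\ on $\Omega=(0,1)$ take $\mu=\delta_{1/2}+\delta_1-2\delta_0$). Fortunately you never actually use this claim: the nonnegativity of $\hat\mu,\hat\nu$ follows just from $\mu_\Omega,\nu_\Omega\ge 0$ and $\beta_\pm\ge 0$, and equal total mass follows from $\mu(\ovom)=\nu(\ovom)=0$ exactly as you compute.
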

\begin{proof}
	Define the nonnegative measures
	\[
	\tilde \mu \coloneqq \mu_\Omega + (\mu_{\partial \Omega}-\nu_{\partial \Omega})_{+} \comma \quad \tilde \nu \coloneqq \nu_\Omega + (\mu_{\partial \Omega}-\nu_{\partial \Omega})_{-} \comma
	\]
	and note that~$\tilde\mu-\tilde\nu = \mu - \nu$. In particular,~$\tilde \mu(\ovom) = \tilde \nu(\ovom)$.
	
	Let~$\gamma$ be a coupling between~$\tilde \mu$ and~$\tilde \nu$, i.e.,~$\gamma$ is a nonnegative Borel measure on~$\ovom \times \ovom$ such that~$\pi^1_\# \gamma = \tilde \mu$ and~$\pi^2_\# \gamma = \tilde \nu$. Notice that~$\gamma$ is $\TrDis$-admissible between~$\mu$ and~$\nu$. Therefore,
	\[
	\TrDis^2(\mu, \nu) \le \cost(\gamma) =  \int \abs{x-y}^2 \dif \gamma \le \diam(\Omega) \int \abs{x-y} \dif \gamma \fstop
	\]
	After taking the infimum over~$\gamma$, the Kantorovich--Rubinstein duality~\cite[Theorem~8.10.45]{Bogachev07} implies
	\[
	\TrDis^2(\mu, \nu) \le \diam(\Omega) \norm{\tilde\mu-\tilde\nu}_\KR = \diam(\Omega) \norm{\mu-\nu}_\KR \fstop \qedhere
	\]
\end{proof}

\subsection{$\TrFun$ is an extended semimetric}
The functional~$\TrFun$ may take the value infinity and does not satisfy the triangle inequality; see \Cref{ex:notri}. Nonetheless, we have the following proposition, which we prove together with two useful lemmas.

\begin{proposition} \label{prop:semimetric}
	The functional~$\TrFun$ is an extended semimetric, i.e., it is nonnegative, symmetric, and we have
	\begin{equation} \label{eq:distancePropTrFun} \TrFun(\mu,\nu) = 0 \quad \Longleftrightarrow \quad \mu = \nu \fstop \end{equation}
\end{proposition}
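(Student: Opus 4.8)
The plan is to verify the three asserted properties — nonnegativity, symmetry, and the identification \eqref{eq:distancePropTrFun} — in turn. Nonnegativity is immediate from the definition \eqref{eq:TrDis1}, since $\cost(\gamma) = \int \abs{x-y}^2 \dif \gamma \ge 0$ for every nonnegative $\gamma$, and the infimum is taken over a nonempty set (nonemptiness of $\Adm_\TrFun(\mu,\nu)$ must itself be recorded: given $\mu,\nu \in \cone$, write $\mu - \nu = \sigma_+ - \sigma_-$ for its Jordan decomposition and build an admissible plan by coupling $\mu_\Omega$ with $\nu_\Omega$ plus a correction supported away from $\partial\Omega \times \partial\Omega$ — this is essentially the construction already used in the proof of \Cref{lemma:proj}, so I would phrase it as one of the ``two useful lemmas'' the proposition is to be proved alongside). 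Symmetry follows because the map $(x,y) \mapsto (y,x)$ sends $\Adm_\TrFun(\mu,\nu)$ bijectively onto $\Adm_\TrFun(\nu,\mu)$ — conditions \ref{(1)} and \ref{(2)} swap, condition \ref{(3)} changes sign on both sides, and condition \ref{(4)} is symmetric — while leaving $\cost$ invariant.

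For \eqref{eq:distancePropTrFun}, the implication $\mu = \nu \Rightarrow \TrFun(\mu,\nu) = 0$ uses the plan $\gamma \coloneqq (\Id,\Id)_\# \mu_\Omega$, i.e.\ the diagonal coupling of the nonnegative measure $\mu_\Omega$ with itself: it satisfies \ref{(1)}--\ref{(3)} trivially (with $\mu = \nu$, condition \ref{(3)} reads $\pi^1_\#\gamma - \pi^2_\#\gamma = 0$, true since both marginals equal $\mu_\Omega$), it is supported on the diagonal of $\Omega \times \Omega$ hence satisfies \ref{(4)}, and $\cost(\gamma) = 0$. The substantive direction is $\TrFun(\mu,\nu) = 0 \Rightarrow \mu = \nu$. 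Here I would first extract, from a minimizing sequence $\gamma_n \in \Adm_\TrFun(\mu,\nu)$ with $\cost(\gamma_n) \to 0$, a weak limit: by \Cref{rmk:boundGamma} the masses $\norm{\gamma_n}$ are uniformly bounded by $\norm{\mu_\Omega} + \norm{\nu_\Omega}$, so on the compact space $\ovom \times \ovom$ Prokhorov gives a subsequential weak limit $\gamma$. Lower semicontinuity of $\cost$ (the integrand $\abs{x-y}^2$ is continuous and bounded on $\ovom \times \ovom$) forces $\cost(\gamma) = 0$, so $\gamma$ is concentrated on the diagonal $\{x = y\}$; in particular $\gamma_{\partial\Omega}^{\ovom} = \gamma_{\partial\Omega}^{\partial\Omega}$ and $\pi^1_\#\gamma = \pi^2_\#\gamma$. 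The remaining point is that the constraints \ref{(1)}--\ref{(4)} pass to the weak limit: conditions \ref{(1)}, \ref{(2)} are closed under weak convergence when tested against $C_c(\Omega)$ (the marginals' restrictions to the open set $\Omega$ are determined by integration against such functions, and $\mu_\Omega, \nu_\Omega$ are fixed), condition \ref{(3)} passes because $\pi^i_\#$ is weakly continuous on $\ovom$, and \ref{(4)} passes because $\partial\Omega \times \partial\Omega$ is closed, so $\gamma_{\partial\Omega}^{\partial\Omega}(\ovom\times\ovom) \le \liminf_n (\gamma_n)_{\partial\Omega}^{\partial\Omega}(\ovom\times\ovom) = 0$. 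Hence $\gamma \in \Adm_\TrFun(\mu,\nu)$ with $\pi^1_\#\gamma = \pi^2_\#\gamma$; combined with \ref{(3)} this yields $\mu - \nu = 0$.

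The main obstacle is the limiting argument for the constraint \ref{(4)} together with the ``diagonal'' conclusion: one must be careful that $\cost(\gamma) = 0$ only gives $\gamma(\{x \ne y\}) = 0$, so a priori $\gamma$ could still charge the diagonal of $\partial\Omega$ — which is exactly what \ref{(4)} forbids, and one needs the (upper semicontinuous, since the set is closed) passage to the limit of \ref{(4)} to rule this out, so that $\gamma$ is supported on the diagonal of $\Omega$ and $\pi^1_\#\gamma = \pi^2_\#\gamma$ is a genuinely nonnegative measure on $\Omega$. A subtlety worth flagging is that weak convergence on $\ovom\times\ovom$ does \emph{not} a priori control the marginals' restrictions to the open set $\Omega$ (mass could escape to $\partial\Omega$ in the limit); but this only makes $\pi^1_\#\gamma$ smaller on $\Omega$, and since condition \ref{(3)} is preserved exactly, one still concludes $\mu = \nu$ on all of $\ovom$. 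I would present this as a short self-contained argument rather than invoking a general compactness theorem for $\Adm_\TrFun$, since at this stage of the paper such machinery has not been set up.
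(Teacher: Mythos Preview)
Your approach matches the paper's: extract a weak limit of a minimizing sequence (the paper packages this as two preparatory lemmas, \Cref{lemma:admTrFunclosed} and \Cref{lemma:optTrFun}, yielding $\gamma \in \Opt_\TrFun(\mu,\nu)$ directly), observe that $\cost(\gamma)=0$ forces $\gamma$ onto the diagonal, and read off $\mu=\nu$ from Condition~\ref{(3)}. Two specific steps in your writeup are wrong, however.

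First, $\Adm_\TrFun(\mu,\nu)$ need \emph{not} be nonempty: the Example immediately preceding the proposition exhibits $\mu_1=\delta_0-\delta_1$, $\mu_3=0$ in $\Omega=(0,1)$ with $\Adm_\TrFun(\mu_1,\mu_3)=\emptyset$. Nonnegativity is unaffected (the infimum over the empty set is $+\infty$), so drop the proposed construction.

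Second, your Portmanteau step for Condition~\ref{(4)} points the wrong way. For a \emph{closed} set $F$, weak convergence gives $\limsup_n \gamma_n(F) \le \gamma(F)$, not your claimed $\gamma(F) \le \liminf_n \gamma_n(F)$; so $\gamma_n(\partial\Omega\times\partial\Omega)=0$ does not by itself prevent mass from accumulating on $\partial\Omega\times\partial\Omega$ in the limit (e.g.\ $\gamma_n=\delta_{(1/n,1/n)}$ on $[0,1]^2$). The paper's \Cref{lemma:admTrFunclosed} handles this via the \emph{setwise} convergence of the fixed marginals $\mu_\Omega,\nu_\Omega$: for any open neighborhood $A$ of $\partial\Omega$ in $\ovom$, lower semicontinuity on the open set $A\times\ovom$ together with $(\gamma_n)_\Omega^{\ovom}(A\times\ovom)=\mu_\Omega(A)$ bounds the limit mass near $\partial\Omega$ by $\mu_\Omega(A)$, which shrinks to $0$ as $A\downarrow\partial\Omega$. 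That said, your own ``subtlety'' paragraph already contains the cleaner fix: Condition~\ref{(3)} is tested against $C(\ovom)$ and hence passes to the weak limit regardless of whether \ref{(1)}, \ref{(2)}, \ref{(4)} survive, and $\cost(\gamma)=0$ gives $\pi^1_\#\gamma=\pi^2_\#\gamma$, so $\mu-\nu=0$ follows without ever needing $\gamma\in\Adm_\TrFun(\mu,\nu)$.
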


\begin{lemma} \label{lemma:admTrFunclosed}
	Let~$(\mu^n)_{n \in {\N_0}}$ and~$(\nu^n)_{n \in {\N_0}}$ be two sequences in~$\cone$, and let~$\gamma^n \in \Adm_\TrFun(\mu^n,\nu^n)$ for every~$n \in {\N_0}$. Assume that
	\begin{enumerate}[(a)]
		\item $\mu^n \to_n \mu$ and~$\nu^n \to_n \nu$ weakly for some~$\mu,\nu$,
		\item $\mu^n_\Omega \to_n \mu_\Omega$ and~$\nu^n_\Omega \to_n \nu_\Omega$ \emph{setwise}, i.e., on all Borel sets,
		\item $\gamma^n \to_n \gamma$ weakly.
	\end{enumerate}
	Then~$\mu,\nu \in \cone$ and $\gamma \in \Adm_\TrFun(\mu,\nu)$.
	
	In particular, for any~$\mu,\nu \in \cone$, the set~$\Adm_\TrFun(\mu,\nu)$ is sequentially closed with respect to the weak convergence.
\end{lemma}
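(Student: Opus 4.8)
The plan is to verify, one at a time, the four defining conditions of $\TrFun$-admissibility for the limit plan $\gamma$, using the three convergences in the hypotheses. First I would check that $\mu, \nu \in \cone$: the condition $\mu(\ovom) = 0$ passes to the limit because $1 \in C_b(\ovom)$ and $\mu^n \to \mu$ weakly, and nonnegativity of $\mu_\Omega$ (resp.\ $\nu_\Omega$) on all Borel subsets of $\Omega$ is immediate from the setwise convergence in~(b) (this is, in any case, already covered by the closedness of $\cone$ proved earlier, but setwise convergence makes it trivial). Next, for condition~\ref{(4)}, $\gamma^n{}^{\partial\Omega}_{\partial\Omega} = 0$ for every $n$; since $\partial\Omega \times \partial\Omega$ is closed in $\ovom \times \ovom$, the Portmanteau theorem gives $\gamma^{\partial\Omega}_{\partial\Omega}(\partial\Omega\times\partial\Omega) \le \liminf_n \gamma^n(\partial\Omega\times\partial\Omega) = 0$, hence $\gamma^{\partial\Omega}_{\partial\Omega} = 0$.

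The substantive part is conditions~\ref{(1)} and~\ref{(2)} (with~\ref{(3)} then following from the remark on redundancy, once~\ref{(1)} and~\ref{(3)}, or~\ref{(2)} and~\ref{(3)}, are in hand; in fact I would obtain~\ref{(3)} directly and then deduce the remaining marginal condition). For~\ref{(3)}: for any $f \in C_b(\ovom)$, the map $(x,y) \mapsto f(x) - f(y)$ is in $C_b(\ovom\times\ovom)$, so $\int (f(x)-f(y))\,\dif\gamma^n = \int f \dif(\pi^1_\#\gamma^n - \pi^2_\#\gamma^n) = \int f \dif(\mu^n - \nu^n) \to \int f\,\dif(\mu-\nu)$ by~(a), while the left side converges to $\int(f(x)-f(y))\,\dif\gamma$ by~(c); hence $\pi^1_\#\gamma - \pi^2_\#\gamma = \mu - \nu$. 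For~\ref{(1)}, I need $(\pi^1_\#\gamma)_\Omega = \mu_\Omega$; testing against $f \in C_c(\Omega)$ (extended by $0$ to $\ovom$), the function $(x,y)\mapsto f(x)$ is bounded and continuous on $\ovom\times\ovom$ except possibly where $x \in \partial\Omega$, but there $f(x)=0$, so it is genuinely in $C_b(\ovom\times\ovom)$; thus $\int f\,\dif(\pi^1_\#\gamma^n) = \int f(x)\,\dif\gamma^n(x,y) \to \int f(x)\,\dif\gamma(x,y) = \int f\,\dif(\pi^1_\#\gamma)$, and also $\int f\,\dif(\pi^1_\#\gamma^n) = \int f\,\dif\mu^n_\Omega \to \int f\,\dif\mu_\Omega$ by~(b) (or by~(a), since $f$ is supported in $\Omega$). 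Matching the two limits over all $f \in C_c(\Omega)$ and invoking Riesz--Markov--Kakutani gives $(\pi^1_\#\gamma)_\Omega = \mu_\Omega$; the argument for~\ref{(2)} is identical.

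The point requiring the most care — and the reason hypothesis~(b) is imposed rather than just~(a) — is the mass on $\partial\Omega$ in conditions~\ref{(1)}/\ref{(2)}: weak convergence alone does not control how mass of $\pi^1_\#\gamma^n$ sitting near $\partial\Omega$ behaves in the limit (mass could escape to the boundary), so one cannot test $(\pi^1_\#\gamma)_\Omega = \mu_\Omega$ against functions that are nonzero on $\partial\Omega$. Restricting to test functions $f \in C_c(\Omega)$ sidesteps this, and the setwise convergence of the restrictions $\mu^n_\Omega \to \mu_\Omega$ guarantees the right-hand side converges to the correct object without boundary contamination. Once~\ref{(1)}, \ref{(2)}, \ref{(3)}, \ref{(4)} are verified, $\gamma \in \Adm_\TrFun(\mu,\nu)$ and $\mu,\nu\in\cone$. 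For the final sentence: if $\mu,\nu \in \cone$ are fixed and $\gamma^n \in \Adm_\TrFun(\mu,\nu)$ with $\gamma^n \to \gamma$ weakly, apply the statement with $\mu^n \equiv \mu$, $\nu^n \equiv \nu$ (for which~(a) and~(b) hold trivially) to conclude $\gamma \in \Adm_\TrFun(\mu,\nu)$, so the set is sequentially weakly closed.
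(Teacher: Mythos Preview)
Your verification of conditions~\ref{(1)}, \ref{(2)}, and~\ref{(3)} is fine, but the argument for condition~\ref{(4)} is wrong: the Portmanteau inequality goes the other way. For a closed set~$F$, weak convergence gives $\limsup_n \gamma^n(F) \le \gamma(F)$, not $\gamma(F)\le\liminf_n\gamma^n(F)$. So from $\gamma^n(\partial\Omega\times\partial\Omega)=0$ you only get the vacuous bound $0\le\gamma(\partial\Omega\times\partial\Omega)$. Mass \emph{can} escape to the closed set $\partial\Omega\times\partial\Omega$ in the limit; consider $\Omega=(0,1)$, $\mu^n=\delta_{1/n}-\delta_0$, $\nu^n=0$, $\gamma^n=\delta_{(1/n,0)}$, which satisfies~\ref{(1)}--\ref{(4)} for each~$n$ but has weak limit $\gamma=\delta_{(0,0)}$ concentrated on $\partial\Omega\times\partial\Omega$. (Of course this example violates~(b), which is the whole point.)

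This is precisely where hypothesis~(b) is needed, and your proof never actually uses it --- your checks of~\ref{(1)},~\ref{(2)},~\ref{(3)} go through with~(a) and~(c) alone, as you yourself note. The paper's argument for~\ref{(4)} is more delicate: it passes to a subsequence along which the three pieces $(\gamma^n)_\Omega^\Omega$, $(\gamma^n)_\Omega^{\partial\Omega}$, $(\gamma^n)_{\partial\Omega}^\Omega$ converge separately, and then uses the \emph{setwise} convergence in~(b) to show that none of these limits charges $\partial\Omega\times\partial\Omega$ (for instance, $\sigma_1(\partial\Omega\times\ovom)\le\sigma_1(A\times\ovom)\le\liminf_n(\gamma^n)_\Omega^\Omega(A\times\ovom)\le\liminf_n\mu^n_\Omega(A)=\mu_\Omega(A)$ for any relatively open $A\supset\partial\Omega$, and $\mu_\Omega(A)\to 0$ as $A\downarrow\partial\Omega$). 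Alternatively, once you have~\ref{(1)} and~\ref{(2)}, you can combine setwise convergence with total-mass convergence: $\gamma^n(\Omega\times\ovom)=\mu^n_\Omega(\Omega)\to\mu_\Omega(\Omega)=\gamma(\Omega\times\ovom)$ by~(b) and~\ref{(1)}, similarly for $\gamma(\ovom\times\Omega)$, and inclusion--exclusion then forces $\gamma(\partial\Omega\times\partial\Omega)=0$. Either way, the setwise hypothesis is essential.
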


The proof of this lemma is inspired by part of that of \cite[Lemma~3.1]{Morales18}.

\begin{proof}
	The total mass of~$\gamma^n$ is bounded and, therefore, the same can be said for the total mass of~$(\gamma^n)_\Omega^\Omega, (\gamma^n)_{\Omega}^{\partial \Omega}, (\gamma^n)_{\partial \Omega}^\Omega$. Hence, up to taking a subsequence, we may assume that
	\begin{align*} &(\gamma^n)_{\Omega}^{\Omega} \to_n \sigma_1 \quad \text{in duality with } C(\ovom \times \ovom) \comma \\
		&(\gamma^n)_{\Omega}^{\partial \Omega} \to_n \sigma_2 \quad \text{in duality with } C(\ovom \times \partial \Omega) \comma \\
		&(\gamma^n)_{\partial \Omega}^{\Omega} \to_n \sigma_3 \quad \text{in duality with } C(\partial \Omega \times \ovom)
	\end{align*}
	for some~$\sigma_1,\sigma_2,\sigma_3$. In particular,~$\gamma^n \to_n \gamma \coloneqq \sigma_1 + \sigma_2 + \sigma_3$.
	
	We \emph{claim} that~$\sigma_1,\sigma_2, \sigma_3$ are concentrated on~$\Omega \times \Omega, \Omega \times \partial \Omega, \partial \Omega \times \Omega$ respectively. If this is true, then Condition~\ref{(4)} in \Cref{def:TrFun} for~$\gamma$ is obvious, and those in \Cref{def:TrDis} follow by testing them with a function $f \in C_b(\overline \Omega)$ for every $n$ and passing to the limit. For instance, to prove~Condition~\ref{(1)} in \Cref{def:TrDis}, we write the chain of equalities
	\begin{align*} \mu_\Omega(f) &= \lim_{n \to \infty} \mu_\Omega^n(f) = \lim_{n \to \infty} \int f(x) \dif \, (\gamma^n)_{\Omega}^{\ovom}(x,y) \\ &= \int f(x) \dif \, (\sigma_1+\sigma_2)(x,y)  = \int f(x) \dif \gamma_\Omega^{\ovom}(x,y) = \bigl(\pi^1_{\#} \gamma_{\Omega}^{\ovom}\bigr)(f) \fstop \end{align*}
	
	Let us prove the claim. Let~$A \subseteq \ovom$ be an open set, in the relative topology of~$\ovom$, that contains~$\partial \Omega$. We have
	\begin{align*} \sigma_1(\partial \Omega \times \overline \Omega) &\le \sigma_1(A \times \overline \Omega) \le \liminf_{n \to \infty} (\gamma^n)_{\Omega}^{\Omega}(A \times \overline \Omega)\\
		&\le \liminf_{n \to \infty} (\gamma^n)_{\Omega}^{\overline \Omega}(A \times \overline \Omega) = \liminf_{n \to \infty} \mu_\Omega^n(A) =  \mu_{\Omega}(A) \comma \end{align*}
	where the second inequality follows from the semicontinuity of the mass on open sets (in the topology of~$\ovom \times \ovom$) and the last equality from the setwise convergence.
	Since~$\mu_{\Omega}$ has finite total mass and~$\mu_{\Omega}(\partial \Omega) = 0$, we have~$\sigma_1(\partial \Omega \times \ovom) = 0$. Analogously, using Condition~\ref{(2)} in place of Condition~\ref{(1)}, we obtain~$\sigma_1(\ovom \times \partial \Omega) = 0$. For~$\sigma_2$ and~$\sigma_3$, the proof is similar.
\end{proof}

\begin{lemma} \label{lemma:optTrFun}
	If~$\TrFun(\mu,\nu) < \infty$, then~$\Opt_\TrFun(\mu,\nu) \neq \emptyset$.
\end{lemma}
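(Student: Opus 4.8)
The plan is to run the direct method of the calculus of variations, with the two preceding lemmas as the main inputs. Since $\TrFun(\mu,\nu)<\infty$, the admissible set $\Adm_\TrFun(\mu,\nu)$ is nonempty and contains plans of finite cost, so I would fix a minimizing sequence $(\gamma^n)_{n\in\N_0}\subseteq\Adm_\TrFun(\mu,\nu)$ with $\cost(\gamma^n)\to\TrFun^2(\mu,\nu)$. By \Cref{rmk:boundGamma}, each $\gamma^n$ is a nonnegative measure on the compact set $\ovom\times\ovom$ whose total mass is at most $\norm{\mu_\Omega}+\norm{\nu_\Omega}$, a bound independent of $n$. Hence the sequence is tight, and by weak-$*$ sequential compactness of norm-bounded subsets of the dual of $C(\ovom\times\ovom)$, a subsequence (not relabelled) converges weakly to some finite nonnegative Borel measure $\gamma$ on $\ovom\times\ovom$.

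Next I would check that the limit $\gamma$ is still $\TrFun$-admissible between $\mu$ and $\nu$. This is exactly \Cref{lemma:admTrFunclosed} applied to the constant sequences $\mu^n\equiv\mu$ and $\nu^n\equiv\nu$: hypotheses (a) (weak convergence of $\mu^n,\nu^n$) and (b) (setwise convergence of $\mu^n_\Omega,\nu^n_\Omega$) then hold trivially, and (c) is the weak convergence $\gamma^n\to\gamma$ just obtained, so $\gamma\in\Adm_\TrFun(\mu,\nu)$. Finally, since the integrand $(x,y)\mapsto\abs{x-y}^2$ is continuous and bounded on the compact set $\ovom\times\ovom$, weak convergence yields $\cost(\gamma)=\lim_n\cost(\gamma^n)=\TrFun^2(\mu,\nu)$ (lower semicontinuity alone would already be enough). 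As $\gamma$ is admissible, it attains the infimum in \eqref{eq:TrDis1} with $\TrFun$ in place of $\TrDis$, i.e.\ $\gamma\in\Opt_\TrFun(\mu,\nu)$, which is therefore nonempty.

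As for the difficulty: there is essentially no genuine obstacle left at this point. The delicate part — ensuring that no mass of the limiting plan ``leaks'' onto $\partial\Omega$ in a way that would violate Conditions~\ref{(1)}--\ref{(4)} — is already packaged inside \Cref{lemma:admTrFunclosed}, whose proof handles precisely that; and the uniform mass bound needed for compactness is supplied by \Cref{rmk:boundGamma}. So the argument amounts to assembling compactness, \Cref{lemma:admTrFunclosed}, and the (semi)continuity of $\cost$ under weak convergence.
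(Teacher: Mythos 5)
Your proof is correct and follows essentially the same route as the paper: a minimizing sequence, the uniform mass bound from \Cref{rmk:boundGamma} giving weak sequential compactness of $\Adm_{\TrFun}(\mu,\nu)$, closedness of the admissible set via \Cref{lemma:admTrFunclosed} (with constant $\mu^n\equiv\mu$, $\nu^n\equiv\nu$), and continuity of $\cost$ under weak convergence since $\abs{x-y}^2$ is continuous and bounded on the compact set $\ovom\times\ovom$. The paper's proof is just a compressed version of exactly this argument.
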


\begin{proof}
	It suffices to prove that~$\Adm_{\TrFun}(\mu,\nu)$ is nonempty and weakly sequentially compact. It is nonempty if~$\TrFun(\mu,\nu) < \infty$. It is sequentially compact because
	\[ \gamma \in \Adm_{\TrFun}(\mu,\nu) \quad \stackrel{\eqref{eq:boundGamma}}{\Longrightarrow} \quad \norm{\gamma} \le \norm{\mu_\Omega} + \norm{\nu_\Omega} \comma \]
	and thanks to \Cref{lemma:admTrFunclosed}.
\end{proof}

\begin{proof}[Proof of \Cref{prop:semimetric}]
	Only the implication~$\Rightarrow$ in~\eqref{eq:distancePropTrFun} is not immediate. Let us assume that~$\TrFun(\mu,\nu) = 0$ and let~$\gamma \in \Opt_\TrFun(\mu,\nu)$. Since~$\cost(\gamma) = 0$,
	the measure~$\gamma$ is concentrated on the diagonal of~$\ovom \times \ovom$. Thus, the equality~$\mu = \nu$ follows from Condition~\ref{(3)} in \Cref{def:TrDis}.
\end{proof}

We conclude with a corollary of \Cref{lemma:admTrFunclosed}: a semicontinuity property of~$\TrFun$.

\begin{corollary} \label{cor:sciT}
	Let~$(\mu^n)_{n \in {\N_0}}$ and~$(\nu^n)_{n \in {\N_0}}$ be two sequences in~$\cone$. Assume that
	\begin{enumerate}[(a)]
		\item $\mu^n \to_n \mu$ and~$\nu^n \to_n \nu$ weakly for some~$\mu,\nu$,
		\item $\mu^n_\Omega \to_n \mu_\Omega$ and~$\nu^n_\Omega \to_n \nu_\Omega$ \emph{setwise}, i.e., on all Borel sets.
	\end{enumerate}
	Then
	\begin{equation} \label{eq:sciT}
		\TrFun(\mu,\nu) \le \liminf_{n \to \infty} \TrFun(\mu^n, \nu^n) \fstop
	\end{equation}
\end{corollary}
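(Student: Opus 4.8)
The plan is to run the usual compactness-plus-lower-semicontinuity argument, with \Cref{lemma:admTrFunclosed} doing the essential work. First I would pass to a subsequence (not relabelled) along which $\TrFun(\mu^n,\nu^n)$ converges to $\liminf_{n} \TrFun(\mu^n,\nu^n)$; if this number is $+\infty$ there is nothing to prove, so we may assume it finite, whence $\TrFun(\mu^n,\nu^n) < \infty$ for all large~$n$. For each such~$n$, use \Cref{lemma:optTrFun} to select an optimal plan~$\gamma^n \in \Opt_\TrFun(\mu^n,\nu^n)$.

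Next I would establish a uniform mass bound on the plans. By~\eqref{eq:boundGamma} we have $\norm{\gamma^n} \le \norm{\mu^n_\Omega} + \norm{\nu^n_\Omega}$; since $\mu^n_\Omega$ and $\nu^n_\Omega$ are nonnegative and converge setwise, testing the setwise convergence on the set~$\Omega$ itself gives $\norm{\mu^n_\Omega} = \mu^n_\Omega(\Omega) \to \mu_\Omega(\Omega) = \norm{\mu_\Omega}$ and likewise for~$\nu$, so that $\sup_n \norm{\gamma^n} < \infty$. As $\ovom \times \ovom$ is compact, a further subsequence satisfies $\gamma^n \to \gamma$ weakly for some finite nonnegative measure~$\gamma$ on~$\ovom\times\ovom$. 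Now \Cref{lemma:admTrFunclosed}, applied with hypotheses (a) $\mu^n\to\mu$ and $\nu^n\to\nu$ weakly, (b) $\mu^n_\Omega\to\mu_\Omega$ and $\nu^n_\Omega\to\nu_\Omega$ setwise, and (c) $\gamma^n\to\gamma$ weakly, yields $\gamma \in \Adm_\TrFun(\mu,\nu)$. Finally, $(x,y)\mapsto\abs{x-y}^2$ is continuous and bounded on the compact set~$\ovom\times\ovom$, so weak convergence gives $\cost(\gamma) = \lim_n \cost(\gamma^n)$ (mere lower semicontinuity of $\gamma \mapsto \int \abs{x-y}^2 \dif\gamma$ would already suffice), and therefore
\begin{align*}
	\TrFun(\mu,\nu) &\le \sqrt{\cost(\gamma)} = \lim_{n\to\infty} \sqrt{\cost(\gamma^n)} \\
	&= \lim_{n\to\infty} \TrFun(\mu^n,\nu^n) = \liminf_{n\to\infty} \TrFun(\mu^n,\nu^n) \comma
\end{align*}
the last equality holding because the first subsequence was chosen so as to realize the liminf. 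This is~\eqref{eq:sciT}.

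I do not expect a genuine obstacle here: the substantive point — that a weak limit of $\TrFun$-admissible plans remains $\TrFun$-admissible, which is exactly where one must rule out mass escaping to, or concentrating on, $\partial\Omega$ in the wrong marginal — has already been isolated in \Cref{lemma:admTrFunclosed}. The only thing needing care is the two-stage subsequence extraction (first to turn the liminf into a genuine limit, then to extract a weakly convergent sequence of plans), which is harmless since any subsequence of a convergent sequence has the same limit.
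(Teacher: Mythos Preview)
Your proof is correct and follows essentially the same route as the paper's: reduce to a subsequence realizing the liminf, bound the mass of the (near-)optimal plans via~\eqref{eq:boundGamma}, extract a weak limit, and invoke \Cref{lemma:admTrFunclosed}. The only cosmetic difference is that you take exactly optimal plans via \Cref{lemma:optTrFun}, whereas the paper uses $\tfrac{1}{n}$-approximate minimizers; both work equally well here.
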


\begin{proof}
	We may assume that the right-hand side in~\eqref{eq:sciT} exists as a finite limit and that, for every~$n \in \N_0$, there exists~$\gamma^n \in \Adm_{\TrFun}(\mu,\nu)$ such that
	\[ \cost(\gamma^n) \le \TrFun^2(\mu^n,\nu^n) + \frac{1}{n} \fstop \]
	The total variation of each measure~$\gamma^n$ is bounded by $\norm{\mu^n_\Omega} + \norm{\nu^n_\Omega}$, which is in turn bounded thanks to the assumption. Therefore, we can extract a subsequence~$(\gamma^{n_k})_{k \in \N_0}$ that converges weakly to a measure~$\gamma$. We know from \Cref{lemma:admTrFunclosed} that~$\gamma \in \Adm_\TrFun(\mu,\nu)$; thus,
	\[ \TrFun^2(\mu,\nu) \le \cost(\gamma) = \lim_{k \to \infty} \cost(\gamma^{n_k}) = \lim_{k \to \infty} \TrFun^2(\mu^{n_k},\nu^{n_k}) = \lim_{n \to \infty} \TrFun^2(\mu^n,\nu^n) \fstop \qedhere \]
\end{proof}

\subsection{$\Ent$ is ``semicontinuous w.r.t~$\TrFun$''} \label{sec:semicont1}
Albeit not being a distance, the transportation functional~$\TrFun$ makes~$\Ent$ lower semicontinuous, in the following sense.

\begin{proposition} \label{prop:lscI}
	Let~$(\mu^n)_{n \in \N_0}$ be a sequence in~$\cone$ and suppose that
	\begin{equation} \lim_{n \to \infty} \TrFun(\mu^n, \mu) = 0 \end{equation}
	for some~$\mu \in \cone$. Then
	\begin{equation} \label{eq:lscI} \Ent(\mu) \le \liminf_{n \to \infty} \Ent(\mu^n) \fstop \end{equation}
\end{proposition}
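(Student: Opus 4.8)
The plan is to split the entropy into its two parts, $\Ent(\mu) = \E(\mu_\Omega) + \int \Psi \dif \mu_{\partial\Omega}$, and treat them separately. For the interior part $\E$, I would first upgrade the hypothesis $\TrFun(\mu^n,\mu) \to 0$ to an appropriate form of convergence of the restrictions $\mu^n_\Omega$. By the inequality \eqref{eq:distIneq}, $\TrFun(\mu^n,\mu)\to 0$ forces $Wb_2(\mu^n_\Omega, \mu_\Omega) \to 0$, hence $\mu^n_\Omega \to \mu_\Omega$ in duality with $C_c(\Omega)$ by the properties of the Figalli--Gigli distance recalled in \Cref{sec:figalli-gigli}. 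If along a subsequence $\liminf_n \E(\mu^n_\Omega)$ is finite, then each $\mu^n_\Omega$ has a density $\rho^n$, and the classical lower semicontinuity of the relative-entropy-type functional $\E$ (which has a superlinear integrand $\rho\log\rho$, and the bounded perturbation $(V-1)\rho$) under weak-$\ast$ convergence of measures on $\Omega$, together with de la Vallée-Poussin / lower semicontinuity of integral functionals, gives $\E(\mu_\Omega) \le \liminf_n \E(\mu^n_\Omega)$. Passing to a subsequence realizing the liminf is the standard device here.

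For the boundary part, I would use that $\Psi$ is Lipschitz on $\ovom$, so $\int \Psi \dif \mu_{\partial\Omega}$ is a continuous linear functional provided $\mu^n_{\partial\Omega} \to \mu_{\partial\Omega}$ in a strong enough topology. The subtlety is that $\TrFun$-convergence controls the interior restriction exactly (via conditions \ref{(1)},\ref{(2)}) but only controls $\mu_{\partial\Omega}$ through condition \ref{(3)}, i.e. $\pi^1_\#\gamma^n - \pi^2_\#\gamma^n = \mu^n - \mu$. The key point is that if $\gamma^n \in \Opt_\TrFun(\mu^n,\mu)$ with $\cost(\gamma^n)\to 0$, then $\gamma^n$ concentrates near the diagonal, and $\mu^n - \mu = \pi^1_\#\gamma^n - \pi^2_\#\gamma^n$ is small in a Kantorovich--Rubinstein / flat sense: indeed $\norm{\mu^n - \mu}_\KR \le \norm{\gamma^n}\cdot(\text{something} \to 0)$ since testing against a $1$-Lipschitz $f$ gives $|\int f\,d(\pi^1_\#\gamma^n) - \int f\,d(\pi^2_\#\gamma^n)| \le \int |x-y|\dif\gamma^n \le \sqrt{\norm{\gamma^n}}\sqrt{\cost(\gamma^n)} \to 0$ by Cauchy--Schwarz, using the uniform bound $\norm{\gamma^n} \le \norm{\mu^n_\Omega} + \norm{\mu_\Omega}$ from \eqref{eq:boundGamma} (with the convention that if $\TrFun(\mu^n,\mu) = \infty$ for infinitely many $n$ the inequality \eqref{eq:lscI} is trivial, and the masses $\norm{\mu^n_\Omega}$ stay bounded because they converge). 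Thus $\mu^n \to \mu$ in $\norm{\cdot}_\KR$, hence weakly-$\ast$ on $\ovom$; combined with $\mu^n_\Omega \to \mu_\Omega$ in duality with $C_c(\Omega)$ and total-mass control, one deduces $\mu^n_{\partial\Omega} \to \mu_{\partial\Omega}$ weakly-$\ast$ on $\partial\Omega$, so $\int\Psi\dif\mu^n_{\partial\Omega} \to \int\Psi\dif\mu_{\partial\Omega}$ by continuity of $\Psi$.

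I expect the main obstacle to be the interior term, specifically making the lower semicontinuity of $\E$ rigorous under only the weak-in-$C_c(\Omega)$ convergence of $\mu^n_\Omega$: one needs the sequence of restrictions to not lose mass at the boundary in a way that spoils semicontinuity, and one needs the bounded potential term $\int (V-1)\rho^n \dif x$ to pass to the limit or at least not destroy the estimate. The clean way around this is to note that $\rho\log\rho - \rho + 1 \ge 0$ and $(V-1)\rho$ is controlled by $\norm{V}_{L^\infty}$ times the (bounded) total mass, then invoke the standard result that $\mu \mapsto \int_\Omega \phi(\tfrac{d\mu}{dx})\dif x$ is sequentially lower semicontinuous for weak-$\ast$ convergence of nonnegative measures on an open set when $\phi$ is convex, lower semicontinuous, and superlinear (treating the singular part as contributing $+\infty$ via the recession function, which is $+\infty$ here). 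Assembling: pass to a subsequence with $\Ent(\mu^n) \to \liminf$, finite WLOG; then $\liminf \E(\mu^n_\Omega) \ge \E(\mu_\Omega)$ and $\int\Psi\dif\mu^n_{\partial\Omega} \to \int\Psi\dif\mu_{\partial\Omega}$ give $\liminf \Ent(\mu^n) \ge \E(\mu_\Omega) + \int\Psi\dif\mu_{\partial\Omega} = \Ent(\mu)$.
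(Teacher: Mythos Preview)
Your overall strategy matches the paper's: split $\Ent = \E + \int_{\partial\Omega}\Psi$, control the boundary term via a Lipschitz/Kantorovich-type estimate on $\mu^n-\mu$ through the optimal plan $\gamma^n$, and use convexity/superlinearity for the lower semicontinuity of $\E$. Your Cauchy--Schwarz bound $\bigl|\int f\,d(\mu^n-\mu)\bigr| \le \sqrt{\norm{\gamma^n}}\sqrt{\cost(\gamma^n)}$ is essentially the paper's \Cref{lemma:boundaryVSinterior} (which uses Young's inequality instead).

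There is, however, a genuine gap. You assert that ``the masses $\norm{\mu^n_\Omega}$ stay bounded because they converge,'' but neither $Wb_2$-convergence nor $C_c(\Omega)$-duality convergence controls total mass: on $\Omega=(0,1)$ the measures $n^2\1_{(0,1/n)}\,dx$ satisfy $Wb_2(\cdot,0)\to 0$ while their mass is $n$. Without a mass bound, your KR estimate carries the possibly-diverging factor $\sqrt{\norm{\mu^n_\Omega}+\norm{\mu_\Omega}}$, and you cannot upgrade $\mu^n_\Omega\to\mu_\Omega$ in $C_c(\Omega)$-duality to $\int_\Omega\Psi\,\rho^n\to\int_\Omega\Psi\,\rho$ either, since $\Psi$ is not compactly supported. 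Finiteness of $\Ent(\mu^n)$ alone does not help, because $\mu^n_{\partial\Omega}(\Psi)$ can be arbitrarily negative.

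The fix is what the paper does: use your own Lipschitz estimate with $f=\Psi$ \emph{first}, inserting it into $\Ent(\mu^n)\le M$ to obtain an inequality of the form $M \ge \int_\Omega \rho^n(\log\rho^n - C)\,dx + \text{const}$ (the $\norm{\mu^n_\Omega}$-dependence of the error term is linear with small coefficient and can be absorbed). Superlinearity of $s\log s$ then gives uniform integrability of $(\rho^n)$, hence bounded mass and weak-$L^1$ compactness. After that bootstrap your argument goes through cleanly: $\E$ is weakly-$L^1$ lower semicontinuous, and the Lipschitz estimate (now with bounded $\norm{\gamma^n}$) gives convergence of the boundary integral.
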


For the proof we need a lemma, to which we will also often refer later. This lemma, inspired by~\cite[Lemma 5.8]{Morales18} allows to control $(\mu-\nu)_{\partial \Omega}$ in terms of~$\TrFun(\mu,\nu)$ and of the restrictions~$\mu_\Omega$ and~$\nu_\Omega$. This fact is convenient for two reasons:
\begin{itemize}
	\item the part of the functional~$\Ent$ that depends on~$\mu_\Omega$ is superlinear,
	\item we will see (\Cref{rmk:massbound}) that \emph{the restrictions to $\Omega$} of the measures produced by the scheme~\eqref{eq:jkoTrFun0} have bounded (in time) mass.
\end{itemize}

\begin{lemma} \label{lemma:boundaryVSinterior}
	Let~$\tau > 0$, let~$\mu,\nu \in \cone$, and let~$\Phi \colon \ovom \to \R$ be Lipschitz continuous. Then,
	\begin{equation}
		\abs{\mu(\Phi) - \nu(\Phi)} \le \tau (\Lip \Phi)^2 \bigl( \norm{\mu_\Omega} + \norm{\nu_\Omega} \bigr) + \frac{\TrFun^2(\mu,\nu)}{4\tau} \fstop
	\end{equation}
	In particular,
	\begin{equation}
		\mu_{\partial \Omega}(\Phi) - \nu_{\partial \Omega}(\Phi) \le \nu_\Omega(\Phi) - \mu_\Omega(\Phi) + \tau (\Lip \Phi)^2  \bigl( \norm{\mu_\Omega} + \norm{\nu_\Omega} \bigr) + \frac{\TrFun^2(\mu,\nu)}{4\tau} \fstop
	\end{equation}
\end{lemma}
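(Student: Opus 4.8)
The plan is to produce, for any given $\varepsilon>0$, a single $\TrFun$-admissible transport plan $\gamma\in\Adm_\TrFun(\mu,\nu)$ with $\cost(\gamma)\le\TrFun^2(\mu,\nu)+\varepsilon$ (using \Cref{lemma:optTrFun} if $\TrFun(\mu,\nu)<\infty$, or working with a near-optimal plan otherwise; if $\TrFun(\mu,\nu)=\infty$ the inequality is trivial), and to read off $\mu(\Phi)-\nu(\Phi)$ from it. First I would use Condition~\ref{(3)} of \Cref{def:TrDis}, which gives $\mu-\nu=\pi^1_\#\gamma-\pi^2_\#\gamma$, so that
\[
	\mu(\Phi)-\nu(\Phi)=\int\bigl(\Phi(x)-\Phi(y)\bigr)\dif\gamma(x,y).
\]
Then, using the Lipschitz bound $|\Phi(x)-\Phi(y)|\le(\Lip\Phi)\,|x-y|$ and Young's inequality $ab\le\tfrac14 a^2/\tau+\tau b^2$ pointwise with $a=|x-y|$, $b=\Lip\Phi$, I get
\[
	\abs{\mu(\Phi)-\nu(\Phi)}\le(\Lip\Phi)\int|x-y|\dif\gamma\le\frac{1}{4\tau}\int|x-y|^2\dif\gamma+\tau(\Lip\Phi)^2\,\norm{\gamma}.
\]
Now $\int|x-y|^2\dif\gamma=\cost(\gamma)\le\TrFun^2(\mu,\nu)+\varepsilon$, and the total mass is controlled by \Cref{rmk:boundGamma}: $\norm{\gamma}\le\norm{\mu_\Omega}+\norm{\nu_\Omega}$. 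Setting $\const_\Phi\coloneqq(\Lip\Phi)^2$ and letting $\varepsilon\downarrow0$ yields the first inequality.

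For the second inequality I would simply split $\mu(\Phi)=\mu_\Omega(\Phi)+\mu_{\partial\Omega}(\Phi)$ and likewise for $\nu$, rearrange
\[
	\mu_{\partial\Omega}(\Phi)-\nu_{\partial\Omega}(\Phi)=\bigl(\mu(\Phi)-\nu(\Phi)\bigr)+\bigl(\nu_\Omega(\Phi)-\mu_\Omega(\Phi)\bigr),
\]
and bound the first bracket by the first inequality (dropping the absolute value, keeping only the upper bound).

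There is essentially no serious obstacle here; the only point requiring a little care is ensuring a legitimate admissible plan exists to integrate against. If $\TrFun(\mu,\nu)=\infty$ the claimed bounds hold vacuously, and otherwise \Cref{lemma:optTrFun} guarantees $\Opt_\TrFun(\mu,\nu)\ne\emptyset$, so I may take $\gamma$ optimal and set $\varepsilon=0$ throughout. Everything else is the Kantorovich–Rubinstein-type estimate combined with the key structural identity~\ref{(3)}, which is exactly the reason Condition~\ref{(3)} was built into \Cref{def:TrDis}: it converts the signed difference $\mu-\nu$ on all of $\ovom$ into a transport-cost quantity even though the naive marginal conditions $\pi^i_\#\gamma=\mu,\nu$ would be meaningless for signed measures.
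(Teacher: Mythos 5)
Your proposal is correct and follows essentially the same route as the paper: take a ($\TrFun$-optimal or near-optimal) plan $\gamma$, use Condition~\ref{(3)} to write $\mu(\Phi)-\nu(\Phi)=\int(\Phi(x)-\Phi(y))\dif\gamma$, apply the Lipschitz bound with Young's inequality in the form $(\Lip\Phi)\abs{x-y}\le\tau(\Lip\Phi)^2+\tfrac{1}{4\tau}\abs{x-y}^2$, and control $\norm{\gamma}$ via \Cref{rmk:boundGamma}. Your extra care about existence of an admissible plan (trivial case $\TrFun(\mu,\nu)=\infty$, otherwise \Cref{lemma:optTrFun}) is a harmless refinement of what the paper leaves implicit.
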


\begin{proof}
	Let~$\gamma \in \Opt_\TrFun(\mu,\nu)$. By \Cref{def:TrDis} and \Cref{def:TrFun}, we have
	\begin{align*}
		\abs{\mu(\Phi) - \nu(\Phi)} &= \abs{(\pi^1_\# \gamma - \pi^2_\# \gamma) (\Phi)} = \abs{\int \bigl( \Phi(x)-\Phi(y) \bigr) \dif \gamma(x,y)} \\
		&\le \int \sqrt{2\tau}(\Lip \Phi) \cdot \frac{\abs{x-y}}{\sqrt{2\tau}} \dif \gamma(x,y) \\
		&\le \tau (\Lip\Phi)^2 \norm{\gamma} + \frac{1}{4\tau} \int \abs{x-y}^2 \dif \gamma(x,y) \\
		&\le \tau (\Lip \Phi)^2 \bigl( \norm{\mu_\Omega} + \norm{\nu_\Omega} \bigr) + \frac{\TrFun^2(\mu,\nu)}{4\tau} \fstop \qedhere
	\end{align*}
\end{proof}

\begin{proof}[Proof of \Cref{prop:lscI}]
	We may assume that the right-hand side in~\eqref{eq:lscI} exists as a finite limit and that~$\Ent(\mu^n)$ is finite for every~$n$. In particular,~$\mu^n_\Omega$ is absolutely continuous w.r.t.~$\Leb^d_\Omega$. Denote by~$\rho^n$ its density. Owing to \Cref{lemma:boundaryVSinterior}, for every~$\tau > 0$ and~$n$, we have
	\begin{align*}
		\Ent(\mu^n) &= \E(\rho^n) + \mu^n_{\partial \Omega}(\Psi) \\ 
		&\ge \int_{\Omega} (\log \rho^n + V-1-\const \tau -\Psi)\rho^n \dif x + \abs{\Omega} + \mu(\Psi) - \const \tau \norm{\mu_\Omega} - \frac{\TrFun^2(\mu^n, \mu)}{4\tau} \fstop
	\end{align*}
	It follows that the sequence~$(\rho^n)_{n}$ is uniformly integrable. By the Dunford--Pettis theorem, it admits a (not relabeled) subsequence that converges, weakly in~$L^1(\Omega)$, to some function~$\rho$. From~\eqref{eq:distIneq} and \cite[Proposition 2.7]{FigalliGigli10}, we infer that~$\mu^n_\Omega \to \mu_\Omega$ in duality with~$C_c(\Omega)$ and, therefore,~$\rho$ is precisely the density of~$\mu_\Omega$. The functional~$\E$ is convex and lower semicontinuous on~$L^1(\Omega)$ (by Fatou's lemma), hence weakly lower semicontinuous. Thus, we are only left with proving that
	\[ \mu_{\partial \Omega}(\Psi) \le \liminf_{n \to \infty} \mu^n_{\partial \Omega}(\Psi) \fstop \]
	Once again, we make use of \Cref{lemma:boundaryVSinterior} and of the weak convergence in~$L^1(\Omega)$ to write, for every~$\tau > 0$,
	\[
	\limsup_{n \to \infty} (\mu-\mu^n)_{\partial \Omega}(\Psi) \le \limsup_{n \to \infty} \const \tau \bigl(\norm{\mu^n_\Omega} + \norm{\mu_\Omega} \bigr) + \limsup_{n \to \infty} \frac{\TrFun^2(\mu^n,\mu)}{4\tau} \le \const \tau \norm{\mu_\Omega} \fstop
	\]
	We conclude by arbitrariness of~$\tau$.
\end{proof}

\subsection{$\TrDis$ is a pseudodistance}
The functional~$\TrDis$ is a pseudodistance on~$\cone$, meaning that it fulfills the properties of a distance, except, possibly,~$\mu = \nu$ when $\TrDis(\mu,\nu) = 0$. As before, nonnegativity, symmetry, and the implication \[ \mu = \nu \quad \Longrightarrow \quad  \TrDis(\mu,\nu) = 0\] are obvious. To prove finiteness, it suffices to produce a single~$\gamma \in \Adm_{\TrDis}(\mu,\nu)$ for every~$\mu,\nu \in \cone$. Let us arbitrarily fix a probability measure~$\zeta$ on~$\partial \Omega$ and set
\[ \eta \coloneqq \mu_{\partial\Omega}-\nu_{\partial \Omega} + \bigl(\norm{\mu_\Omega}-\norm{\nu_\Omega}\bigr)\zeta \fstop \] The following is $\TrDis$-admissible:
\[ \gamma \coloneqq \begin{cases}
	\mu_\Omega \otimes \zeta + \zeta \otimes \nu_\Omega + \frac{\eta_+ \otimes \eta_-}{\norm{\eta_+}} &\text{if } \eta \neq 0 \comma \\
	\mu_\Omega \otimes \zeta + \zeta \otimes \nu_\Omega &\text{if } \eta = 0 \fstop
\end{cases}
\]

Only the triangle inequality is still missing.

\begin{proposition}
	The functional~$\TrDis$ satisfies the triangle inequality. Hence, it is a pseudodistance.
\end{proposition}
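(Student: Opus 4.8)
The plan is a gluing argument, adapted to the signed setting. Fix $\mu_1, \mu_2, \mu_3 \in \cone$. Since we have just seen that $\TrDis$ is everywhere finite on $\cone$, for every $\varepsilon > 0$ we may choose $\gamma_{12} \in \Adm_{\TrDis}(\mu_1, \mu_2)$ and $\gamma_{23} \in \Adm_{\TrDis}(\mu_2, \mu_3)$ with $\cost(\gamma_{12}) \le \TrDis^2(\mu_1, \mu_2) + \varepsilon$ and $\cost(\gamma_{23}) \le \TrDis^2(\mu_2, \mu_3) + \varepsilon$. It suffices to build $\gamma_{13} \in \Adm_{\TrDis}(\mu_1, \mu_3)$ with $\sqrt{\cost(\gamma_{13})} \le \sqrt{\cost(\gamma_{12})} + \sqrt{\cost(\gamma_{23})}$; taking the infimum over $\gamma_{12}, \gamma_{23}$ and letting $\varepsilon \downarrow 0$ then yields the triangle inequality.

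\emph{Matching the middle marginals.} Set $\alpha \coloneqq \pi^2_\# \gamma_{12}$ and $\beta \coloneqq \pi^1_\# \gamma_{23}$. By Conditions~\ref{(1)}--\ref{(2)} in \Cref{def:TrDis}, $\alpha_\Omega = \beta_\Omega = (\mu_2)_\Omega$, so $\alpha$ and $\beta$ can differ only on $\partial\Omega$. Writing $(\Id,\Id)\colon \ovom \to \ovom\times\ovom$ for $x \mapsto (x,x)$, I replace the two plans by
\[
	\tilde\gamma_{12} \coloneqq \gamma_{12} + (\Id,\Id)_\# \beta_{\partial\Omega} \comma \qquad \tilde\gamma_{23} \coloneqq \gamma_{23} + (\Id,\Id)_\# \alpha_{\partial\Omega} \fstop
\]
Adding mass on the diagonal leaves the cost unchanged, $\cost(\tilde\gamma_{12}) = \cost(\gamma_{12})$ and $\cost(\tilde\gamma_{23}) = \cost(\gamma_{23})$, and now the second marginal of $\tilde\gamma_{12}$ and the first marginal of $\tilde\gamma_{23}$ coincide:
\[
	\pi^2_\# \tilde\gamma_{12} = \alpha + \beta_{\partial\Omega} = (\mu_2)_\Omega + \alpha_{\partial\Omega} + \beta_{\partial\Omega} = \beta + \alpha_{\partial\Omega} = \pi^1_\# \tilde\gamma_{23} \eqqcolon \lambda \fstop
\]

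\emph{Gluing.} Since $\ovom$ is a compact (hence Polish) metric space, I disintegrate $\tilde\gamma_{12} = \int (\tilde\gamma_{12})_z \otimes \delta_z \, \dif\lambda(z)$ and $\tilde\gamma_{23} = \int \delta_z \otimes (\tilde\gamma_{23})_z \, \dif\lambda(z)$ along their common marginal $\lambda$, set $\Gamma \coloneqq \int (\tilde\gamma_{12})_z \otimes \delta_z \otimes (\tilde\gamma_{23})_z \, \dif\lambda(z)$ on $\ovom\times\ovom\times\ovom$, and put $\gamma_{13} \coloneqq \pi^{13}_\# \Gamma$. Then $\pi^{12}_\# \Gamma = \tilde\gamma_{12}$ and $\pi^{23}_\# \Gamma = \tilde\gamma_{23}$, so $\gamma_{13}$ is a finite nonnegative measure with $\pi^1_\# \gamma_{13} = \pi^1_\# \gamma_{12} + \beta_{\partial\Omega}$ and $\pi^2_\# \gamma_{13} = \pi^2_\# \gamma_{23} + \alpha_{\partial\Omega}$. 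Because $\alpha_{\partial\Omega}$ and $\beta_{\partial\Omega}$ are concentrated on $\partial\Omega$, Conditions~\ref{(1)}--\ref{(2)} for $\gamma_{13}$ follow from those for $\gamma_{12}$ and $\gamma_{23}$; and invoking Condition~\ref{(3)} for $\gamma_{12}$ and $\gamma_{23}$,
\[
	\pi^1_\# \gamma_{13} - \pi^2_\# \gamma_{13} = (\mu_1 - \mu_2 + \alpha + \beta_{\partial\Omega}) - (\beta - \mu_2 + \mu_3 + \alpha_{\partial\Omega}) = \mu_1 - \mu_3 + \alpha_\Omega - \beta_\Omega = \mu_1 - \mu_3 \comma
\]
so $\gamma_{13} \in \Adm_{\TrDis}(\mu_1, \mu_3)$. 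Finally, writing $\cost(\gamma_{13}) = \int_{\ovom^3} \abs{x_1 - x_3}^2 \dif\Gamma(x_1,x_2,x_3)$ and applying Minkowski's inequality in $L^2(\Gamma;\R^d)$,
\[
	\sqrt{\cost(\gamma_{13})} \le \Bigl( \int \abs{x_1-x_2}^2 \dif\Gamma \Bigr)^{1/2} + \Bigl( \int \abs{x_2-x_3}^2 \dif\Gamma \Bigr)^{1/2} = \sqrt{\cost(\tilde\gamma_{12})} + \sqrt{\cost(\tilde\gamma_{23})} = \sqrt{\cost(\gamma_{12})} + \sqrt{\cost(\gamma_{23})} \fstop
\]

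The argument is classical except for one point, which is also the only real obstacle: the second marginal of $\gamma_{12}$ and the first marginal of $\gamma_{23}$ need not agree on $\partial\Omega$ — Conditions~\ref{(1)}--\ref{(2)} constrain only their restrictions to $\Omega$ — so the usual gluing lemma does not apply verbatim; the remedy is precisely the zero-cost diagonal correction above, after which everything reduces to the standard disintegration-and-Minkowski computation. The only other thing worth keeping in mind is that the disintegration step is legitimized by $\ovom$ being Polish.
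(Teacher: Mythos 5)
Your proof is correct and follows essentially the same route as the paper's: repair the mismatch of the middle marginals on~$\partial\Omega$ by adding zero-cost diagonal mass, then glue along the common marginal and apply Minkowski's inequality. The only (immaterial) differences are that you add the full boundary restrictions $\beta_{\partial\Omega}$ and $\alpha_{\partial\Omega}$ to each plan where the paper adds only the positive and negative parts of their difference, and that you carry out the gluing by disintegration rather than citing the gluing lemma.
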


\begin{proof}
	Let~$\mu_1,\mu_2,\mu_3 \in \cone$, and let us view them as measures on three different copies of~$\ovom$, that we denote by~$\ovom_1,\ovom_2,\ovom_3$, respectively. We write~$\pi^2$ for both the projections from~$\ovom_1 \times \ovom_2$ and~$\ovom_2 \times \ovom_3$ onto~$\ovom_2$.
	
	Choose two transport plans~$\gamma_{12} \in \Adm_\TrDis(\mu_1,\mu_2)$ and~$\gamma_{23} \in \Adm_\TrDis(\mu_2,\mu_3)$. Let~$\eta \coloneqq (\pi^2_\# \gamma_{23} - \pi^2_\# \gamma_{12})_{\partial \Omega}$ and consider
	\[ \tilde \gamma_{12} \coloneqq \gamma_{12} + (\Id,\Id)_\# \eta_+, \quad \tilde \gamma_{23} \coloneqq \gamma_{23} + (\Id, \Id)_\# \eta_- \fstop \]
	It is easy to check that these are admissible too, i.e.,~$\tilde \gamma_{12} \in \Adm_\TrDis(\mu_1,\mu_2)$ and~$\tilde \gamma_{23} \in \Adm_\TrDis(\mu_2,\mu_3)$, as well as that~$\cost(\gamma_{12}) = \cost(\tilde \gamma_{12})$ and $\cost(\gamma_{23}) = \cost(\tilde \gamma_{23})$. Furthermore,~$\pi^2_\# \tilde \gamma_{12}$ equals~$\pi^2_\# \tilde \gamma_{23}$. The gluing lemma \cite[Lemma 5.3.2]{AmbrosioGigliSavare08} supplies a nonnegative Borel measure~$\tilde\gamma_{123}$ such that
	\[ \pi^{12}_\# \tilde \gamma_{123} = \tilde\gamma_{12} \quad \text{and} \quad \pi^{23}_\# \tilde \gamma_{123} = \tilde\gamma_{23} \fstop \] \pagebreak
	The measure~$\gamma \coloneqq \pi^{13}_\# \tilde \gamma_{123}$ is $\TrDis$-admissible between~$\mu_1$ and~$\mu_2$. By the Minkowski inequality,
	\begin{equation*}
		\TrDis(\mu_1,\mu_2) \le  \sqrt{\cost(\gamma)} \le \sqrt{\cost(\tilde \gamma_{12})} + \sqrt{\cost(\tilde \gamma_{23})} = \sqrt{\cost(\gamma_{12})} + \sqrt{\cost(\gamma_{23})} \comma
	\end{equation*}
	from which, by arbitrariness of~$\gamma_{12}$ and~$\gamma_{23}$, the triangle inequality follows.
\end{proof}

In general,~$\TrDis$ is \emph{not} a true metric on~$\cone$. This is proven in~\Cref{prop:informloss}. However, an analogue of~\Cref{lemma:admTrFunclosed} holds (proof omitted).

\begin{lemma} \label{lemma:admDisFunclosed}
	Let~$(\mu^n)_{n \in {\N_0}}$ and~$(\nu^n)_{n \in {\N_0}}$ be two sequences in~$\cone$, and let~$\gamma^n \in \Adm_\TrDis(\mu^n,\nu^n)$ for every~$n \in {\N_0}$. Assume that
	\begin{enumerate}[(a)]
		\item $\mu^n \to_n \mu$ and~$\nu^n \to_n \nu$ weakly for some~$\mu,\nu$,
		\item $\mu^n_\Omega \to_n \mu_\Omega$ and~$\nu^n_\Omega \to_n \nu_\Omega$ \emph{setwise}, i.e., on all Borel sets,
		\item $\gamma^n \to_n \gamma$ weakly.
	\end{enumerate}
	Then~$\mu,\nu \in \cone$ and $\gamma \in \Adm_\TrDis(\mu,\nu)$.
	
	In particular, for any~$\mu,\nu \in \cone$, the set~$\Adm_\TrDis(\mu,\nu)$ is sequentially closed with respect to the weak convergence.
\end{lemma}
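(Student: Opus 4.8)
The plan is to follow the proof of \Cref{lemma:admTrFunclosed} almost verbatim, the one structural change being that a $\TrDis$-admissible plan may charge $\partial\Omega\times\partial\Omega$, so one must split $\gamma^n$ into the \emph{four} restrictions $(\gamma^n)_\Omega^\Omega$, $(\gamma^n)_\Omega^{\partial\Omega}$, $(\gamma^n)_{\partial\Omega}^\Omega$, $(\gamma^n)_{\partial\Omega}^{\partial\Omega}$ rather than the three available in the $\TrFun$-case. First I would observe that the weak convergence $\gamma^n\to_n\gamma$ already forces $\sup_n\gamma^n(\ovom\times\ovom)<\infty$ (test against the constant function $1$); here we cannot invoke the a priori bound of \Cref{rmk:boundGamma}, which is special to $\TrFun$, but we do not need it. Consequently each of the four restrictions has uniformly bounded mass, so by weak sequential compactness of bounded sets of nonnegative measures on the compact metric space $\ovom\times\ovom$ we may pass to a subsequence along which $(\gamma^n)_\Omega^\Omega\to\sigma_1$, $(\gamma^n)_\Omega^{\partial\Omega}\to\sigma_2$, $(\gamma^n)_{\partial\Omega}^\Omega\to\sigma_3$, and $(\gamma^n)_{\partial\Omega}^{\partial\Omega}\to\sigma_4$ weakly; since $\gamma^n\to\gamma$ along the full sequence, $\gamma=\sigma_1+\sigma_2+\sigma_3+\sigma_4$.

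The next step is to show, exactly as in \Cref{lemma:admTrFunclosed}, that $\sigma_1,\sigma_2,\sigma_3,\sigma_4$ are concentrated on $\Omega\times\Omega$, $\Omega\times\partial\Omega$, $\partial\Omega\times\Omega$, and $\partial\Omega\times\partial\Omega$ respectively. For $\sigma_1$: for any set $A$ open in $\ovom$ with $\partial\Omega\subseteq A$, lower semicontinuity of mass on the open set $A\times\ovom$, the domination $(\gamma^n)_\Omega^\Omega\le(\gamma^n)_\Omega^\ovom$, Condition \ref{(1)} of \Cref{def:TrDis}, and the setwise convergence $\mu^n_\Omega\to\mu_\Omega$ give $\sigma_1(\partial\Omega\times\ovom)\le\sigma_1(A\times\ovom)\le\liminf_n\mu^n_\Omega(A)=\mu_\Omega(A)$; letting $A$ shrink and using outer regularity of the finite measure $\mu_\Omega$ together with $\mu_\Omega(\partial\Omega)=0$ yields $\sigma_1(\partial\Omega\times\ovom)=0$, and symmetrically $\sigma_1(\ovom\times\partial\Omega)=0$ using Condition \ref{(2)}. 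The arguments for $\sigma_2$ and $\sigma_3$ are the same (each uses only one of the two marginal conditions), and $\sigma_4$ needs no argument.

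Finally, with these concentration properties, $\gamma$ restricted to $\Omega\times\ovom$ is $\sigma_1+\sigma_2$ and $\gamma$ restricted to $\ovom\times\Omega$ is $\sigma_1+\sigma_3$; testing Conditions \ref{(1)} and \ref{(2)} for $\gamma^n$ against an arbitrary $f\in C_b(\ovom)$ and passing to the limit (using the setwise convergence of the restrictions) gives those two conditions for $\gamma$, precisely as in the displayed chain of equalities in the proof of \Cref{lemma:admTrFunclosed}. For Condition \ref{(3)} I would test $\pi^1_\#\gamma^n-\pi^2_\#\gamma^n=\mu^n-\nu^n$ against $f\in C_b(\ovom)$: the left-hand side equals $\int\bigl(f(x)-f(y)\bigr)\dif\gamma^n(x,y)$, which converges to $\int\bigl(f(x)-f(y)\bigr)\dif\gamma(x,y)$ since $(x,y)\mapsto f(x)-f(y)$ belongs to $C_b(\ovom\times\ovom)$, while the right-hand side converges to $\mu(f)-\nu(f)$; hence $\pi^1_\#\gamma-\pi^2_\#\gamma=\mu-\nu$. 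Combined with the weak closedness of $\cone$, this gives $\mu,\nu\in\cone$ and $\gamma\in\Adm_\TrDis(\mu,\nu)$, and no analogue of Condition \ref{(4)} intervenes. The ``in particular'' statement then follows by taking the constant sequences $\mu^n\equiv\mu$ and $\nu^n\equiv\nu$, whose restrictions converge setwise trivially. I do not expect any genuine obstacle: the only departures from \Cref{lemma:admTrFunclosed} are cosmetic (one extra summand, no Condition \ref{(4)} to check) together with the minor point that the uniform mass bound on $\gamma^n$ now comes from the weak-convergence hypothesis itself rather than from \Cref{rmk:boundGamma}.
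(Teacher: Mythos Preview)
Your proposal is correct and matches the paper's own approach: the paper omits the proof of this lemma entirely, noting only that it is the analogue of \Cref{lemma:admTrFunclosed}, and your argument carries out precisely that analogy with the two cosmetic adjustments you identify (four summands instead of three, and deriving the uniform mass bound from the weak convergence hypothesis rather than from \Cref{rmk:boundGamma}).
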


\subsection{When~$\Omega$ is a finite union of intervals,~$\TrDis$ is a distance} \label{sec:distance}

When~$\Omega$ is a finite union of~$1$-dimensional intervals (equivalently, when~$\partial \Omega$ is a finite set) we also have
\[ \TrDis(\mu,\nu) = 0 \quad \Longleftrightarrow \quad \mu = \nu \fstop \]

\begin{proposition} \label{prop:distance}
	If~$d=1$ and~$\Omega$ is a finite union of intervals, then~$(\cone,\TrDis)$ is a metric space.
\end{proposition}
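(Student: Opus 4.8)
The plan is to show that if $\TrDis(\mu,\nu)=0$ then $\mu=\nu$; by the previous subsection, $\TrDis$ is already known to be a nonnegative, symmetric pseudodistance satisfying the triangle inequality, so only this implication remains. Since $\partial\Omega$ is a finite set, say $\partial\Omega=\{p_1,\dots,p_N\}$, the key geometric fact is that distinct boundary points are a positive distance apart, and moreover every point of $\Omega$ close to one $p_i$ is far from all the others. Let $\delta>0$ be smaller than half the minimum pairwise distance between the $p_i$'s.

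First I would take $\mu,\nu\in\cone$ with $\TrDis(\mu,\nu)=0$. Choose, for each $\varepsilon>0$, an admissible plan $\gamma^\varepsilon\in\Adm_\TrDis(\mu,\nu)$ with $\cost(\gamma^\varepsilon)<\varepsilon$; by Remark~\ref{rmk:boundGamma}-type mass bounds the total masses $\norm{\gamma^\varepsilon}$ stay bounded (indeed $\norm{\gamma^\varepsilon}$ may grow with the boundary masses, but one can first reduce to the case $\mu_{\partial\Omega}=\nu_{\partial\Omega}=0$ up to subtracting a common boundary measure using Remark~\ref{rmk:changeBoundary}, or alternatively work directly). The smallness of the cost forces $\gamma^\varepsilon$ to be concentrated, in the limit, on the diagonal: more precisely, for any $r>0$, $\gamma^\varepsilon\bigl(\{(x,y):\abs{x-y}\ge r\}\bigr)\le \cost(\gamma^\varepsilon)/r^2\to 0$. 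Passing to a weak limit $\gamma^\varepsilon\rightharpoonup\gamma$ along a subsequence (possible by the mass bound), Lemma~\ref{lemma:admDisFunclosed} — applied with constant sequences $\mu^n\equiv\mu$, $\nu^n\equiv\nu$, whose restrictions converge setwise trivially — gives $\gamma\in\Adm_\TrDis(\mu,\nu)$, while lower semicontinuity of $\cost$ under weak convergence gives $\cost(\gamma)=0$, so $\gamma$ is concentrated on the diagonal $\{x=y\}$ of $\ovom\times\ovom$. Then Condition~\ref{(3)} in Definition~\ref{def:TrDis} reads $\pi^1_\#\gamma-\pi^2_\#\gamma=\mu-\nu$, and since $\gamma$ lives on the diagonal, $\pi^1_\#\gamma=\pi^2_\#\gamma$, whence $\mu=\nu$.

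The point where the finiteness of $\partial\Omega$ is genuinely used — and where the argument differs from the $\TrFun$ case of Proposition~\ref{prop:semimetric} — is in guaranteeing that the weak limit $\gamma$ still satisfies Condition~\ref{(3)}, i.e.\ that no mass "escapes to the boundary in a degenerate way" along the limit. For $\TrFun$ this was handled via Condition~\ref{(4)} (no boundary-to-boundary transport); for $\TrDis$ without that condition, a priori a sequence $\gamma^\varepsilon$ could route mass $p_i\mapsto x^\varepsilon\mapsto p_j$ with $x^\varepsilon\to\partial\Omega$. However, when $\cost(\gamma^\varepsilon)\to 0$, any transported pair $(x,y)$ in the support (up to negligible mass) has $\abs{x-y}\to 0$, so mass near $p_i$ can only be sent near $p_i$; with $\partial\Omega$ finite this rules out the degenerate rerouting, and the argument via Lemma~\ref{lemma:admDisFunclosed} closes cleanly. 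I expect the main obstacle to be precisely this bookkeeping: verifying that the weak limit $\gamma$ is $\TrDis$-admissible requires the setwise-convergence hypothesis of Lemma~\ref{lemma:admDisFunclosed}, which here is automatic since the sequences $(\mu^n),(\nu^n)$ are constant, so in fact the proposition follows almost directly from Lemma~\ref{lemma:admDisFunclosed} combined with the diagonal-concentration argument above, and the role of "$\Omega$ a finite union of intervals" is to ensure the setwise (not merely weak) behaviour of the marginal restrictions is not an issue — one should double-check that the restriction maps $\gamma\mapsto\gamma_\Omega^\Omega$ etc.\ behave well under weak limits, which is exactly the content of the claim proved inside Lemma~\ref{lemma:admTrFunclosed}/\ref{lemma:admDisFunclosed}, valid because $\partial\Omega$ (being finite, hence $\mu_\Omega(\partial\Omega)=0$ trivially and $\partial\Omega$ is simultaneously open and closed in $\ovom$) is well-separated from $\Omega$.
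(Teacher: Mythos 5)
Your overall strategy is the same as the paper's: produce an admissible plan between $\mu$ and $\nu$ of zero cost (as a weak limit of near-optimal plans, using \Cref{lemma:admDisFunclosed} with constant sequences), observe it is concentrated on the diagonal of $\ovom\times\ovom$, and conclude $\mu=\nu$ from Condition~\ref{(3)} of \Cref{def:TrDis}. The paper packages this as existence of optimal plans (\Cref{lemma:optTrDis}) plus the same argument as for $\TrFun$, but the substance is identical. However, there is a genuine gap in the one step where you need a uniform bound on $\norm{\gamma^\varepsilon}$ to extract a weakly convergent subsequence. A $\TrDis$-admissible plan can carry arbitrarily large mass on the diagonal of $\partial\Omega\times\partial\Omega$ at zero cost (e.g.\ replace $\gamma^\varepsilon$ by $\gamma^\varepsilon+\varepsilon^{-1}\delta_{(p_1,p_1)}$, still admissible with the same cost), so the masses $\norm{\gamma^\varepsilon}$ need not be bounded, and your cost estimate $\gamma^\varepsilon(\{\abs{x-y}\ge r\})\le \cost(\gamma^\varepsilon)/r^2$ says nothing about this diagonal part. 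Your proposed remedy does not work: \Cref{rmk:changeBoundary} only allows adding/subtracting a \emph{common} $\eta\in\cone$ concentrated on $\partial\Omega$, which necessarily has $\eta(\ovom)=0$, whereas $\mu_{\partial\Omega}(\ovom)=-\norm{\mu_\Omega}$ and $\nu_{\partial\Omega}(\ovom)=-\norm{\nu_\Omega}$; so one cannot reduce to $\mu_{\partial\Omega}=\nu_{\partial\Omega}=0$, and in any case the unboundedness comes from the plan's own boundary-to-boundary part, not from the boundary parts of $\mu$ and $\nu$.

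The correct (and short) fix is the paper's: first replace $\gamma^\varepsilon$ by $\gamma^\varepsilon-\gamma^\varepsilon|_\Delta$, where $\Delta$ is the diagonal of $\partial\Omega\times\partial\Omega$; this preserves admissibility (the discarded part contributes equally to both marginals and nothing to the restrictions to $\Omega$) and does not change the cost. For such plans the finiteness of $\partial\Omega$ gives the mass bound of \Cref{rmk:boundGamma2},
\begin{equation*}
	\norm{\gamma^\varepsilon}\le \norm{\mu_\Omega}+\norm{\nu_\Omega}+\frac{\cost(\gamma^\varepsilon)}{\min_{x\neq y\in\partial\Omega}\abs{x-y}^2}\comma
\end{equation*}
which is exactly where the hypothesis ``$\Omega$ a finite union of intervals'' enters (not, as you suggest at the end, through the setwise-convergence hypothesis of \Cref{lemma:admDisFunclosed}, which is automatic for constant sequences). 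With this modification the rest of your argument goes through as written.
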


This proposition is an easy consequence of the following remark and lemma, analogous to \Cref{rmk:boundGamma} and \Cref{lemma:optTrFun}, respectively.

\begin{remark} \label{rmk:boundGamma2}
	Fix~$\mu, \nu \in \cone$ and pick~$\gamma \in \Adm_{\TrDis}(\mu,\nu)$. If~$\partial \Omega$ is finite and the diagonal of~$\partial \Omega \times \partial \Omega$ is~$\gamma$-negligible, then
	\begin{align} \begin{split}
			\norm{\gamma} &\le \norm{\gamma_\Omega^\ovom} + \norm{\gamma_\ovom^\Omega} + \norm{\gamma_{\partial \Omega}^{\partial \Omega}} \le \norm{\mu_\Omega} + \norm{\nu_\Omega} + \frac{1}{\min_{\stackrel{x,y \in \partial \Omega}{x \neq y}} \abs{x-y}^2} \int \abs{x-y}^2 \dif \gamma(x,y) \\
			&\le \norm{\mu_\Omega} + \norm{\nu_\Omega} + \const \, \cost(\gamma) \fstop
		\end{split}
	\end{align}
\end{remark}

\begin{lemma} \label{lemma:optTrDis}
	Assume that~$d=1$ and that $\Omega$ is a finite union of intervals. Then the set~$\Opt_\TrDis(\mu,\nu)$ is nonempty for every~$\mu,\nu \in \cone$.
\end{lemma}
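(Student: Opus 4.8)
The plan is to follow the same scheme as in \Cref{lemma:optTrFun}: show that $\Adm_\TrDis(\mu,\nu)$ is nonempty and weakly sequentially compact, and then obtain a minimizer of $\cost$ by the direct method, using that $\cost$ is weakly lower semicontinuous (as the integral of the nonnegative continuous function $|x-y|^2$ against nonnegative measures). Nonemptiness was already established right before \Cref{prop:distance} by the explicit construction of a $\TrDis$-admissible plan, so only compactness requires work, and unlike in the $\TrFun$ case we no longer have the uniform bound \eqref{eq:boundGamma}: a $\TrDis$-admissible $\gamma$ may carry arbitrarily much mass on the diagonal of $\partial\Omega\times\partial\Omega$, so $\Adm_\TrDis(\mu,\nu)$ itself is not tight.

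First I would reduce to plans with no mass on the diagonal of $\partial\Omega\times\partial\Omega$. Given any $\gamma\in\Adm_\TrDis(\mu,\nu)$, subtracting $(\Id,\Id)_\#\bigl(\gamma^{\partial\Omega}_{\partial\Omega}\!\circ(\Id,\Id)^{-1}\text{-part on the diagonal}\bigr)$ — more precisely, removing from $\gamma$ its restriction to the diagonal of $\partial\Omega\times\partial\Omega$ — produces $\tilde\gamma$ with $\tilde\gamma^{\partial\Omega}_{\partial\Omega}$ vanishing on the diagonal, which is still $\TrDis$-admissible (Conditions \ref{(1)}, \ref{(2)} are untouched since the removed part projects into $\partial\Omega$ on both coordinates, and Condition \ref{(3)} is preserved because the removed diagonal measure has equal first and second marginals), and satisfies $\cost(\tilde\gamma)\le\cost(\gamma)$ since $|x-y|^2=0$ on the diagonal. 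Hence the infimum in \eqref{eq:TrDis1} is unchanged if we restrict to this subclass, and it suffices to find a minimizer among such plans.

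Next I would take a minimizing sequence $(\gamma^n)$ within that subclass. Because $\cost(\gamma^n)$ is bounded (say by $\TrDis^2(\mu,\nu)+1$) and each $\gamma^n$ has no diagonal mass on $\partial\Omega\times\partial\Omega$, \Cref{rmk:boundGamma2} gives a uniform bound $\norm{\gamma^n}\le\norm{\mu_\Omega}+\norm{\nu_\Omega}+\const\,\cost(\gamma^n)$, so the sequence is bounded in total variation on the compact space $\ovom\times\ovom$ and, up to a subsequence, converges weakly to some nonnegative measure $\gamma$. Now I invoke \Cref{lemma:admDisFunclosed}: its hypotheses (a) and (c) hold trivially with $\mu^n\equiv\mu$, $\nu^n\equiv\nu$, and (b) holds trivially as well, so $\gamma\in\Adm_\TrDis(\mu,\nu)$. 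By weak lower semicontinuity of $\gamma\mapsto\cost(\gamma)$ we get $\cost(\gamma)\le\liminf_n\cost(\gamma^n)=\TrDis^2(\mu,\nu)$, whence $\gamma\in\Opt_\TrDis(\mu,\nu)$.

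The only genuinely delicate point is the first reduction — making sure the truncation of the diagonal part of $\partial\Omega\times\partial\Omega$ really preserves $\TrDis$-admissibility and does not increase the cost — together with the bookkeeping in \Cref{rmk:boundGamma2} that the $\partial\Omega\times\partial\Omega$ off-diagonal mass is controlled by $\cost$ precisely because $\partial\Omega$ is finite (so $\min_{x\ne y\in\partial\Omega}|x-y|^2>0$). Everything else is the standard direct-method argument already used for \Cref{lemma:optTrFun}.
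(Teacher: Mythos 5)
Your proposal is correct and follows essentially the same route as the paper's proof: remove the diagonal part of $\gamma^n$ on $\partial\Omega\times\partial\Omega$ (which preserves admissibility and does not increase the cost), then use \Cref{rmk:boundGamma2} for the total-variation bound, weak compactness, \Cref{lemma:admDisFunclosed} for the limit's admissibility, and semicontinuity of $\cost$ for optimality. No gaps.
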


\begin{proof}
	We already know that~$\Adm_\TrDis(\mu,\nu) \neq \emptyset$. Let us take a minimizing sequence~$(\gamma^n)_{n \in {\N_0}} \subseteq \Adm_{\TrDis}(\mu,\nu)$ for the cost functional~$\cost$. Let~$\Delta$ be the diagonal of $\partial \Omega \times \partial\Omega$. It is easy to see that~$(\gamma^n - \gamma^n|_\Delta)_{n}$ is still an admissible and minimizing sequence. Therefore, we can assume that~$\gamma^n|_\Delta = 0$. By \Cref{rmk:boundGamma2}, the total variation of~$\gamma^n$ is bounded. Therefore, there exists a subsequence of~$(\gamma^n)_n$ that converges weakly to a limit~$\gamma$ and, by \Cref{lemma:admDisFunclosed},~$\gamma \in \Adm_\TrDis(\mu,\nu)$. Since the sequence is minimizing,~$\gamma$ is also~$\TrDis$-optimal.
\end{proof}

Two further useful facts about~$\TrDis$ are the counterparts of \Cref{lemma:boundaryVSinterior} and \Cref{prop:lscI} in the case where~$\Omega$ is a finite union of intervals.

\begin{lemma} \label{lemma:boundaryVSinteriorTrDis}
	Assume that~$d=1$ and that $\Omega$ is a finite union of intervals. Let~$\mu, \nu \in \cone$ and let~$\Phi \colon \ovom \to \R$ be Lipschitz continuous. Then,
	\begin{equation} \label{boundaryVSinteriorTrDis0}
		\abs{\mu(\Phi) - \nu(\Phi)} \le \const_{\Phi} \TrDis(\mu, \nu) \sqrt{\norm{\mu_\Omega} + \norm{\nu_\Omega} + \TrDis^2(\mu, \nu)} \fstop
	\end{equation}
\end{lemma}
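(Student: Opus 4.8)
The plan is to follow the scheme of \Cref{lemma:boundaryVSinterior}, but, since $\partial\Omega$ is now finite, to trade the Young-type splitting for a plain Cauchy--Schwarz estimate and to control the total mass of the transport plan through its cost by means of \Cref{rmk:boundGamma2}.

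First I would dispose of the degenerate cases. If $\TrDis(\mu,\nu)=0$, then $\mu=\nu$ by \Cref{prop:distance}, and both sides of \eqref{boundaryVSinteriorTrDis0} vanish; if $\norm{\mu_\Omega}+\norm{\nu_\Omega}=\infty$, the right-hand side is $+\infty$. So assume $0<\TrDis(\mu,\nu)<\infty$ (finiteness is always true) and $\norm{\mu_\Omega}+\norm{\nu_\Omega}<\infty$. Next, invoke \Cref{lemma:optTrDis} to choose $\gamma\in\Opt_\TrDis(\mu,\nu)$, and, exactly as in the proof of that lemma, note that subtracting the restriction of $\gamma$ to the diagonal $\Delta$ of $\partial\Omega\times\partial\Omega$ preserves both $\TrDis$-admissibility (conditions \ref{(1)}--\ref{(3)} are unaffected, since on $\Delta$ the two projections coincide and are concentrated on $\partial\Omega$) and the cost; hence we may assume $\gamma|_\Delta=0$, so that $\cost(\gamma)=\TrDis^2(\mu,\nu)$.

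Then, using condition \ref{(3)} in \Cref{def:TrDis}, the Lipschitz bound on $\Phi$, and the Cauchy--Schwarz inequality,
\[
  \abs{\mu(\Phi)-\nu(\Phi)}
  = \abs{\int \bigl(\Phi(x)-\Phi(y)\bigr)\dif\gamma(x,y)}
  \le (\Lip\Phi)\int\abs{x-y}\dif\gamma
  \le (\Lip\Phi)\sqrt{\norm{\gamma}}\sqrt{\cost(\gamma)} .
\]
Since $\gamma|_\Delta=0$ and $\partial\Omega$ is finite, \Cref{rmk:boundGamma2} gives
\[
  \norm{\gamma}\le \norm{\mu_\Omega}+\norm{\nu_\Omega}+\const\,\cost(\gamma)
  = \norm{\mu_\Omega}+\norm{\nu_\Omega}+\const\,\TrDis^2(\mu,\nu)
  \le \const\bigl(\norm{\mu_\Omega}+\norm{\nu_\Omega}+\TrDis^2(\mu,\nu)\bigr).
\]
Substituting this bound and $\cost(\gamma)=\TrDis^2(\mu,\nu)$ into the previous display, and renaming the constant, yields \eqref{boundaryVSinteriorTrDis0} with $\const_\Phi$ depending only on $\Lip\Phi$ and on the minimal gap between points of $\partial\Omega$.

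There is no genuine difficulty here: the only two points that need care are the existence of a $\TrDis$-optimal plan that places no mass on the diagonal of $\partial\Omega\times\partial\Omega$ (already obtained in \Cref{lemma:optTrDis}) and the finiteness $\norm{\gamma}<\infty$, which is precisely what \Cref{rmk:boundGamma2} provides and what legitimizes the Cauchy--Schwarz step. This is also exactly the step that fails in dimension $d\ge 2$, where $\norm{\gamma}$ is no longer controlled by $\cost(\gamma)$, and which is why the finer splitting is retained in \Cref{lemma:boundaryVSinterior}.
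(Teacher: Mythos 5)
Your proof is correct and follows essentially the same route as the paper: Condition~\ref{(3)} plus Cauchy--Schwarz gives $\abs{\mu(\Phi)-\nu(\Phi)} \le (\Lip\Phi)\sqrt{\cost(\gamma)\,\norm{\gamma}}$ for a $\TrDis$-optimal plan with no mass on the diagonal of $\partial\Omega\times\partial\Omega$, and \Cref{rmk:boundGamma2} then bounds $\norm{\gamma}$. The preliminary discussion of degenerate cases is harmless but unnecessary, since measures in $\cone$ are finite and $\TrDis$ is a finite pseudodistance.
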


\begin{proof}
	By Condition~\ref{(3)} in \Cref{def:TrDis}, for every~$\mu,\nu \in \cone$ and every~$\gamma \in \Opt_{\TrDis}(\mu,\nu)$, we have
	\begin{align*} %
		\begin{split}
			\abs{\mu(\Phi) - \nu(\Phi)} &= \abs{\int \bigl(\Phi(x)-\Phi(y) \bigr) \dif \gamma(x,y)} 
			\le (\Lip \Phi) \int \abs{x-y} \dif \gamma(x,y) \\ &\le (\Lip \Phi)  \sqrt{\cost(\gamma) \, \norm{\gamma}}  = (\Lip \Phi) \TrDis(\mu,\nu) \sqrt{\norm{\gamma}} \fstop
		\end{split}
	\end{align*}
	We can assume that the diagonal of~$\partial \Omega \times \partial \Omega$ is~$\gamma$-negligible; hence, we conclude by \Cref{rmk:boundGamma2}.
\end{proof}

\begin{proposition} \label{prop:lscIbis}
	Assume that~$d=1$ and that $\Omega$ is a finite union of intervals. Then~$\Ent$ is lower semicontinuous w.r.t.~$\TrDis$.
\end{proposition}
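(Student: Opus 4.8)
The plan is to mirror the proof of \Cref{prop:lscI}, replacing \Cref{lemma:boundaryVSinterior} by its $\TrDis$-analogue \Cref{lemma:boundaryVSinteriorTrDis} and using $Wb_2 \le \TrDis$ from~\eqref{eq:distIneq}. So I would take $(\mu^n)_n \subseteq \cone$ with $\TrDis(\mu^n,\mu)\to_n 0$; I may assume $L \coloneqq \liminf_n \Ent(\mu^n) < \infty$, pass to a subsequence with $\Ent(\mu^n)\to_n L$, and assume $\Ent(\mu^n)<\infty$, so that $\mu^n_\Omega = \rho^n\dif x$ and $\Ent(\mu^n) = \E(\rho^n) + \mu^n_{\partial\Omega}(\Psi)$. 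Applying \Cref{lemma:boundaryVSinteriorTrDis} with $\Phi = \Psi$ to the pair $(\mu,\mu^n)$ bounds $\mu^n_{\partial\Omega}(\Psi)$ from below by $\mu(\Psi) - \mu^n_\Omega(\Psi) - r_n$, where $r_n \coloneqq \const\,\TrDis(\mu^n,\mu)\sqrt{\norm{\mu^n_\Omega}+\norm{\mu_\Omega}+\TrDis^2(\mu^n,\mu)}$, whence
\[
	\Ent(\mu^n) \ge \int_\Omega \bigl(\rho^n\log\rho^n + (V-1-\Psi)\rho^n + 1\bigr)\dif x + \mu(\Psi) - r_n \fstop
\]

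The hard part — and the only real deviation from \Cref{prop:lscI} — will be to extract from this a uniform mass bound $\sup_n\norm{\mu^n_\Omega} < \infty$: here one cannot fall back on geodesic convexity, and \Cref{lemma:boundaryVSinteriorTrDis} couples the boundary exchange to $\norm{\mu^n_\Omega}$, so the entropy's superlinearity must absorb it. Concretely, since $V$ and $\Psi$ are bounded, the integral above is at least $\int_\Omega\rho^n\log\rho^n\dif x - \const\norm{\mu^n_\Omega}$; Jensen's inequality applied to $t\mapsto t\log t$ gives $\int_\Omega\rho^n\log\rho^n\dif x \ge \norm{\mu^n_\Omega}\log(\norm{\mu^n_\Omega}/\abs{\Omega})$; and $r_n \le \const(1+\norm{\mu^n_\Omega})$ because $\TrDis(\mu^n,\mu)$ is bounded and $\sqrt t\le(1+t)/2$. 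Combining these, $\norm{\mu^n_\Omega}\log(\norm{\mu^n_\Omega}/\abs{\Omega})$ is bounded above by $\Ent(\mu^n)$ plus a quantity at most linear in $\norm{\mu^n_\Omega}$; since $\Ent(\mu^n)\to_n L$ and $t\log t$ is superlinear, this forces $M\coloneqq\sup_n\norm{\mu^n_\Omega}<\infty$. With the masses under control, $r_n\to_n 0$, so $\sup_n\int_\Omega\rho^n\log\rho^n\dif x<\infty$, the family $(\rho^n)_n$ is uniformly integrable, and some (not relabeled) subsequence converges weakly in $L^1(\Omega)$ to a function $\rho$. Moreover $Wb_2(\mu^n_\Omega,\mu_\Omega)\to_n 0$ by~\eqref{eq:distIneq}, so by \cite[Proposition~2.7]{FigalliGigli10} we get $\mu^n_\Omega\to_n\mu_\Omega$ in duality with $C_c(\Omega)$, and therefore $\mu_\Omega=\rho\dif x$.

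It then remains to pass to the limit in $\Ent(\mu^n)=\E(\rho^n)+\mu^n_{\partial\Omega}(\Psi)$. The functional $\E$ is convex and, by Fatou's lemma, lower semicontinuous on $L^1(\Omega)$, hence weakly lower semicontinuous, so $\E(\rho)\le\liminf_n\E(\rho^n)$. For the boundary term, write $\mu^n_{\partial\Omega}(\Psi)=\mu^n(\Psi)-\int_\Omega\Psi\rho^n\dif x$: the first summand tends to $\mu(\Psi)$ by \Cref{lemma:boundaryVSinteriorTrDis} together with the now-available bound $M$ (which sends the right-hand side there to $0$), and the second tends to $\int_\Omega\Psi\rho\dif x=\mu_\Omega(\Psi)$ by weak $L^1$-convergence, since $\Psi\in L^\infty(\Omega)$; hence $\mu^n_{\partial\Omega}(\Psi)\to_n\mu_{\partial\Omega}(\Psi)$. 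Putting the two together, $\Ent(\mu)=\E(\rho)+\mu_{\partial\Omega}(\Psi)\le\liminf_n\E(\rho^n)+\lim_n\mu^n_{\partial\Omega}(\Psi)=\lim_n\Ent(\mu^n)=L$, which is the claim. (These last limits are along the subsequence chosen for the weak $L^1$-convergence, which suffices because $\Ent(\mu^n)\to L$ along it as well.)
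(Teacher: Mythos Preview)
Your proof is correct and follows essentially the same route as the paper, which merely says ``Similar to the proof of \Cref{prop:lscI}, making use of \Cref{lemma:boundaryVSinteriorTrDis} in place of \Cref{lemma:boundaryVSinterior}.'' You have correctly filled in the one point that is slightly more delicate here than in \Cref{prop:lscI}: since the error term in \Cref{lemma:boundaryVSinteriorTrDis} depends on $\norm{\mu^n_\Omega}$ (there is no free parameter $\tau$ to absorb it), a uniform mass bound must be extracted first via Jensen and the superlinearity of $t\log t$, exactly as you do.
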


\begin{proof}
	Similar to the proof of \Cref{prop:lscI}, making use of \Cref{lemma:boundaryVSinteriorTrDis} in place of \Cref{lemma:boundaryVSinterior}.
\end{proof}

When~$\TrDis$ defines a metric, a natural question is whether or not this metric is complete. In general, the answer is \emph{no}; this is proven in \Cref{prop:notComplete}. %
Nonetheless, we prove in \Cref{lemma:completeSublev} that the \emph{sublevels} of~$\Ent$ are complete for~$\TrDis$. %

Another interesting problem is to find a convergence criterion for~$\TrDis$. Exploiting \Cref{lemma:TrDisKR}, we find a simple sufficient condition for convergence in the~$1$-dimensional setting.

\begin{lemma} \label{lemma:convergCrit}
	Assume that~$d=1$ and that $\Omega$ is a finite union of intervals. If~$(\mu^n)_{n \in \N_0} \subseteq \cone$ converges weakly to~$\mu \in \cone$, then~$\mu^n \stackrel{\TrDis}{\to}_n \mu$.
\end{lemma}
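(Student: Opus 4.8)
The plan is to derive the statement entirely from \Cref{lemma:TrDisKR}. That inequality gives $\TrDis^2(\mu^n,\mu)\le\diam(\Omega)\,\norm{\mu^n-\mu}_\KR$ for all~$n$ (and, incidentally, it uses neither~$d=1$ nor the structure of~$\Omega$), so it is enough to prove that $\norm{\mu^n-\mu}_\KR\to_n 0$. Since~$\mu^n,\mu\in\cone$, we have $(\mu^n-\mu)(\ovom)=0$, so the first term in the Kantorovich--Rubinstein norm vanishes and $\norm{\mu^n-\mu}_\KR$ equals $\sup\set{(\mu^n-\mu)(f)\,:\,\Lip(f)\le 1,\ f(0)=0}$. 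Thus the task reduces to upgrading the pointwise (weak) convergence $\mu^n(f)\to\mu(f)$ to convergence that is \emph{uniform} over the class $\mathcal K\coloneqq\set{f\colon\ovom\to\R\,:\,\Lip(f)\le 1,\ f(0)=0}$.

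The key preliminary step---and the one point I do not expect to be routine---is a uniform total-variation bound $M\coloneqq\sup_n\norm{\mu^n}<\infty$. This is genuinely needed and is not automatic for sequences of signed measures, but it follows from the uniform boundedness principle: the set~$\ovom$ is compact, so each~$\mu^n$ is a continuous linear functional on the Banach space~$C(\ovom)$, whose dual norm is exactly the total-variation norm by the Riesz--Markov--Kakutani theorem; since $\bigl(\mu^n(f)\bigr)_n$ is convergent, hence bounded, for every fixed~$f\in C(\ovom)$, Banach--Steinhaus yields the uniform bound.

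With this bound at hand the rest is classical. First I would note that~$\mathcal K$ is relatively compact in~$\bigl(C(\ovom),\norm{\cdot}_\infty\bigr)$: its elements are uniformly $1$-Lipschitz, and they are uniformly bounded because $\abs{f(x)}=\abs{f(x)-f(0)}\le\abs{x}\le\const$ for~$x\in\ovom$ (here I use that~$\Omega$ is bounded and~$0\in\Omega$), so Arzelà--Ascoli applies. Then I would argue by contradiction: if $\norm{\mu^n-\mu}_\KR\not\to 0$, there are $\varepsilon>0$, a subsequence~$(n_k)_k$, and $f_k\in\mathcal K$ with $(\mu^{n_k}-\mu)(f_k)\ge\varepsilon$; passing to a further subsequence, $f_k\to f$ uniformly for some~$f\in C(\ovom)$, and writing $(\mu^{n_k}-\mu)(f_k)=(\mu^{n_k}-\mu)(f)+(\mu^{n_k}-\mu)(f_k-f)$ one sees the first summand tends to~$0$ by weak convergence while the second is bounded by $(M+\norm{\mu})\norm{f_k-f}_\infty\to 0$, a contradiction. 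Hence $\norm{\mu^n-\mu}_\KR\to_n 0$, and \Cref{lemma:TrDisKR} concludes the proof.
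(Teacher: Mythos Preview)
Your proof is correct. Both your argument and the paper's reduce, via \Cref{lemma:TrDisKR} and the uniform boundedness principle, to showing that $\norm{\mu^n-\mu}_\KR\to 0$; the difference is in how this last step is carried out. The paper exploits the hypothesis that~$\partial\Omega$ is finite: it shifts each~$\mu^n$ by the fixed nonnegative measure $\sum_{x\in\partial\Omega}a_x\delta_x$ (with $a_x=-\inf_n\mu^n(x)$, finite by the same Banach--Steinhaus argument you use) so as to obtain a weakly convergent sequence of \emph{nonnegative} measures, and then cites the known fact that~$\norm{\cdot}_\KR$ metrizes weak convergence on nonnegative measures. You instead prove the metrization directly for signed measures with uniformly bounded total variation, via Arzel\`a--Ascoli compactness of the test class~$\mathcal K$ and a standard subsequence/contradiction argument. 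Your route is slightly longer but more self-contained, and---as you noticed---it does not use the one-dimensional structure at all: the conclusion $\TrDis(\mu^n,\mu)\to 0$ in fact holds for any bounded~$\Omega\subset\R^d$, whereas the paper's shift-by-Diracs trick genuinely needs~$\partial\Omega$ to be finite.
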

\begin{proof}
	The idea is to use \Cref{lemma:TrDisKR} together with the measure-theoretic result~\cite[Theorem~8.3.2]{Bogachev07}: the metric induced by~$\norm{\cdot}_{\KR}$ metrizes the weak convergence\footnote{In~\cite{Bogachev07}, two Kantorovich--Rubinstein norms are defined. Here, we implicitly use that they are equivalent on measures on a bounded metric space; see~\cite[Section~8.10(viii)]{Bogachev07}.} of \emph{nonnegative} Borel measures on~$\ovom$. For every~$x \in \partial \Omega$, let~$a_x \coloneqq -\inf_{n} \mu_n(x)$. Every number~$a_x$ is finite because, by the uniform boundedness principle, the total variation of~$\mu^n$ is bounded. By the considerations above, we have
	\begin{multline*}
		\mu^n \to_n \mu \text{ weakly} \quad \Longrightarrow \quad \mu^n + \sum_{x \in \partial \Omega} a_x \delta_x \to_n \mu + \sum_{x \in \partial \Omega} a_x \delta_x \text{ weakly} \\
		\Longrightarrow \quad \norm{\mu^n - \mu}_\KR \to_n 0 \quad \stackrel{\eqref{eq:TrDisKR}}{\Longrightarrow} \quad \TrDis(\mu^n, \mu) \to_n 0 \fstop\qedhere
	\end{multline*}
\end{proof}

\begin{remark}
	The converse of \Cref{lemma:convergCrit} is not true: in the case~$\Omega \coloneqq (0,1)$, consider the sequence
	\[
	\mu^n \coloneqq n(\delta_{1/n}-\delta_0) \comma \qquad n \in \N_1 \comma
	\]
	which converges to~$\mu \coloneqq 0$ w.r.t.~$\TrDis$.
\end{remark}

\subsection{Estimate on the directional derivative}
The following lemma will be used in \Cref{prop:sobolevReg} to characterize the solutions of the variational problem~\eqref{eq:jkoTrFun0}. We omit its simple proof, almost identical to that of~\cite[Proposition~2.11]{FigalliGigli10}.

\begin{lemma} \label{lemma:dirDer}
	Let~$\mu,\nu \in \cone$ and~$\gamma \in \Opt_{\TrFun}(\mu,\nu)$. Let~$\boldsymbol w\colon \Omega \to \R^d$ be a bounded and Borel vector field with compact support. For~$t > 0$ sufficiently small, define~$\mu_t \coloneqq (\Id + t\boldsymbol{w})_\# \mu$. Then
	\begin{equation} \label{eq:dirDer1}
		\limsup_{t \to 0^+} \frac{\TrFun^2(\mu_t,\nu) - \TrFun^2(\mu, \nu)}{t} \le -2 \int \langle \boldsymbol{w}(x), y-x \rangle \dif \gamma(x,y) \fstop
	\end{equation}
\end{lemma}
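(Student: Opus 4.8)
The plan is to adapt verbatim the argument of \cite[Proposition~2.11]{FigalliGigli10}: I would produce an explicit competitor for $\TrFun(\mu_t,\nu)$ by transporting the optimal plan $\gamma$ through the \emph{same} displacement that defines $\mu_t$, and then bound its cost. First I extend $\boldsymbol w$ to a bounded Borel vector field on $\ovom$ by setting $\boldsymbol w \equiv 0$ off its compact support, write $M \coloneqq \sup_\Omega \abs{\boldsymbol w} < \infty$ and $T_t \coloneqq \Id + t\boldsymbol w \colon \ovom \to \R^d$, and pick $\delta_0 > 0$ with $\operatorname{dist}(\operatorname{supp}\boldsymbol w, \partial\Omega) \ge \delta_0$. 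Then for every $t \in \bigl(0, \delta_0/(M+1)\bigr)$ the map $T_t$ restricts to the identity on (a neighborhood of) $\partial\Omega$, sends $\Omega$ into $\Omega$, and hence satisfies $T_t^{-1}(\partial\Omega) = \partial\Omega$. For such $t$ I set $\gamma_t \coloneqq (S_t)_\# \gamma$, where $S_t(x,y) \coloneqq \bigl(T_t(x), y\bigr)$.

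The crux of the proof is to check that $\gamma_t \in \Adm_\TrFun(\mu_t,\nu)$. Decomposing $\mu_t = (T_t)_\# \mu = (T_t)_\# \mu_\Omega + \mu_{\partial\Omega}$ and using $T_t = \Id$ on $\partial\Omega$ together with $T_t(\Omega)\subseteq\Omega$, one reads off that $(\mu_t)_\Omega = (T_t)_\#\mu_\Omega \ge 0$, $(\mu_t)_{\partial\Omega} = \mu_{\partial\Omega}$, and $\mu_t(\ovom) = \mu(\ovom) = 0$, so $\mu_t \in \cone$. Then: $\pi^2_\#\gamma_t = \pi^2_\#\gamma$ gives Condition~\ref{(2)}; splitting $\pi^1_\#\gamma$ into $\mu_\Omega$ plus a nonnegative measure concentrated on $\partial\Omega$ and noting that $T_t$ fixes the latter while pushing $\mu_\Omega$ into $\Omega$ yields $(\pi^1_\#\gamma_t)_\Omega = (T_t)_\#\mu_\Omega = (\mu_t)_\Omega$, i.e.\ Condition~\ref{(1)}, and also $(T_t)_\#(\pi^1_\#\gamma) - \pi^1_\#\gamma = (T_t)_\#\mu_\Omega - \mu_\Omega = \mu_t - \mu$, whence $\pi^1_\#\gamma_t - \pi^2_\#\gamma_t = (\mu_t - \mu) + (\mu - \nu) = \mu_t - \nu$, i.e.\ Condition~\ref{(3)}; finally $T_t^{-1}(\partial\Omega) = \partial\Omega$ together with $\gamma_{\partial\Omega}^{\partial\Omega} = 0$ gives $(\gamma_t)_{\partial\Omega}^{\partial\Omega} = (S_t)_\#\gamma_{\partial\Omega}^{\partial\Omega} = 0$, i.e.\ Condition~\ref{(4)}.

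Once admissibility is established, the estimate is immediate: by optimality of $\gamma$,
\begin{align*}
	\TrFun^2(\mu_t,\nu) &\le \cost(\gamma_t) = \int \abs{x + t\boldsymbol w(x) - y}^2 \dif\gamma(x,y) \\
	&= \TrFun^2(\mu,\nu) + 2t\int \langle \boldsymbol w(x), x - y\rangle \dif\gamma(x,y) + t^2 \int \abs{\boldsymbol w(x)}^2 \dif\gamma(x,y) \comma
\end{align*}
and since $\boldsymbol w$ is bounded and $\gamma$ has finite total mass, the last term is $O(t^2)$; dividing by $t > 0$ and letting $t \downarrow 0$ yields \eqref{eq:dirDer1}. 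The only genuinely delicate point is the admissibility check — concretely, the bookkeeping of where $T_t$ moves boundary versus interior mass — and it is handled entirely by the choice of $t$ small enough that $T_t$ is the identity near $\partial\Omega$ and never carries interior mass to the boundary; everything else is elementary.
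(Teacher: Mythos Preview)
Your proof is correct and follows exactly the route the paper indicates: the paper omits the argument entirely, noting only that it is ``almost identical to that of~\cite[Proposition~2.11]{FigalliGigli10}'', which is precisely what you have carried out. Your admissibility verification---in particular the careful check that $T_t$ fixes a neighborhood of $\partial\Omega$ so that Conditions~\ref{(1)}, \ref{(3)}, and~\ref{(4)} transfer from $\gamma$ to $\gamma_t$---is the only place any work is needed, and you have done it correctly.
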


\subsection{Existence of transport maps} \label{sec:exTranspPlans}

\begin{proposition} \label{prop:trPlans}
	Let~$\mu,\nu \in \cone$, let~$A,B \subseteq \ovom \times \ovom$ be Borel sets, and let~$\gamma$ be a nonnegative Borel measure on~$\ovom \times \ovom$. If
	\begin{enumerate}[(a)]
		\item $\gamma \in \Opt_\TrDis(\mu,\nu)$,
		\item or:~$\gamma \in \Opt_{\TrFun}(\mu,\nu)$ and~$(A \times B) \cap (\partial \Omega \times \partial \Omega) = \emptyset$,
	\end{enumerate}
	then~$\gamma_A^B$ is optimal \emph{for the classical $2$-Wasserstein distance} between its marginals.
\end{proposition}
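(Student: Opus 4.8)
The plan is to argue by contradiction using a standard cyclical-monotonicity / local-rearrangement argument, being careful that the rearrangement we perform keeps the measure inside the appropriate admissibility class. Suppose $\gamma_A^B$ is \emph{not} optimal for the classical $2$-Wasserstein distance between its own marginals $\alpha \coloneqq \pi^1_\# \gamma_A^B$ and $\beta \coloneqq \pi^2_\# \gamma_A^B$. Then there exists another coupling $\gamma' $ of $\alpha$ and $\beta$ with $\cost(\gamma') < \cost(\gamma_A^B)$. I would like to replace the ``$A\times B$ piece'' of $\gamma$ by the cheaper plan $\gamma'$, i.e.\ consider
\[
	\widehat \gamma \coloneqq (\gamma - \gamma_A^B) + \gamma' \fstop
\]
Then $\widehat \gamma$ is a nonnegative Borel measure on $\ovom \times \ovom$ with $\cost(\widehat\gamma) < \cost(\gamma)$, so it suffices to check that $\widehat\gamma$ lies in $\Adm_\TrDis(\mu,\nu)$ (case~(a)) or $\Adm_\TrFun(\mu,\nu)$ (case~(b)), which contradicts the optimality of $\gamma$.

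For admissibility, the key point is that passing from $\gamma_A^B$ to $\gamma'$ does not change the first or second marginal: $\pi^1_\# \gamma' = \alpha = \pi^1_\# \gamma_A^B$ and likewise for $\pi^2$. Consequently $\pi^1_\# \widehat\gamma = \pi^1_\# \gamma$ and $\pi^2_\# \widehat\gamma = \pi^2_\# \gamma$ as measures on all of $\ovom$; in particular Conditions~\ref{(1)}, \ref{(2)}, \ref{(3)} of \Cref{def:TrDis} are inherited from $\gamma$ verbatim, settling case~(a). For case~(b) one must additionally verify Condition~\ref{(4)}, i.e.\ $\widehat\gamma_{\partial\Omega}^{\partial\Omega} = 0$. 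Here the hypothesis $(A\times B)\cap(\partial\Omega\times\partial\Omega) = \emptyset$ enters: since $\gamma'$ is a coupling of $\alpha$ and $\beta$, it is concentrated on $(\operatorname{supp}\alpha)\times(\operatorname{supp}\beta) \subseteq A \times B$ — more carefully, $\alpha$ is concentrated on $\pi^1(A)$ and $\beta$ on $\pi^2(B)$, so $\gamma'$ is concentrated on $\pi^1(A)\times\pi^2(B)$, and $(A\times B)\cap(\partial\Omega\times\partial\Omega)=\emptyset$ forces $\bigl(\pi^1(A)\times\pi^2(B)\bigr)\cap(\partial\Omega\times\partial\Omega)$ to carry no mass of $\gamma'$. (Equivalently: $\alpha(\partial\Omega)=0$ or $\beta(\partial\Omega)=0$, because $A\times B$ misses $\partial\Omega\times\partial\Omega$.) Hence $\gamma'_{\partial\Omega}^{\partial\Omega}=0$, and since $(\gamma-\gamma_A^B)_{\partial\Omega}^{\partial\Omega} \le \gamma_{\partial\Omega}^{\partial\Omega} = 0$, we get $\widehat\gamma_{\partial\Omega}^{\partial\Omega}=0$, so $\widehat\gamma \in \Adm_\TrFun(\mu,\nu)$.

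Finally, $\cost(\widehat\gamma) = \cost(\gamma) - \cost(\gamma_A^B) + \cost(\gamma') < \cost(\gamma)$, contradicting $\gamma \in \Opt_\TrDis(\mu,\nu)$ (resp.\ $\Opt_\TrFun(\mu,\nu)$). Therefore $\gamma_A^B$ is $2$-Wasserstein optimal between its marginals. The only genuinely delicate point is the bookkeeping in case~(b): one must be sure that the support of the competitor $\gamma'$ stays clear of $\partial\Omega\times\partial\Omega$, which is exactly what the disjointness hypothesis on $A\times B$ guarantees via the marginal supports; everything else is routine measure algebra, and one should note that $\Opt_\TrDis(\mu,\nu)$ and $\Opt_\TrFun(\mu,\nu)$ are nonempty under the standing hypotheses (in case~(b) by \Cref{lemma:optTrFun}, in case~(a) by \Cref{lemma:optTrDis} when $\Omega$ is a finite union of intervals; otherwise the statement is vacuous unless $\gamma$ is given as optimal), so the contradiction is meaningful.
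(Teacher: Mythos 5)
Your proposal is correct and takes essentially the same route as the paper: replace $\gamma_A^B$ inside $\gamma$ by a competitor coupling of its marginals, observe that the modified plan has the same marginals as $\gamma$ (and, in case~(b), that the competitor is concentrated on $A\times B$, hence puts no mass on $\partial\Omega\times\partial\Omega$), and compare costs — the paper phrases this as a direct comparison with an arbitrary coupling rather than a contradiction with a strictly better one, which is an immaterial difference. One cosmetic remark: since $A,B\subseteq\ovom$ (the ``$\ovom\times\ovom$'' in the statement is a typo), the marginals of $\gamma_A^B$ are concentrated on $A$ and on $B$ respectively, which is the clean way to state the support bookkeeping you describe via projections and supports.
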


Consequently: under the assumptions of this proposition, if one of the two marginals of~$\gamma_A^B$ is absolutely continuous, we can apply Brenier's theorem \cite{Brenier87} and deduce the existence of an optimal transport map. For instance, whenever~$\mu_\Omega$ is absolutely continuous, there exists a Borel map~$T \colon \Omega \to \ovom$ such that~$\gamma_\Omega^\ovom = (\Id,T)_\# \mu_\Omega$.

\begin{proof}[Proof of \Cref{prop:trPlans}]
	Let~$\tilde \gamma$ be any nonnegative Borel coupling between~$\pi^1_\# \gamma_A^B$ and~$\pi^2_\# \gamma_A^B$. In particular,~$\tilde \gamma$ is concentrated on~$A \times B$. Define the nonnegative measure
	\[
	\gamma' \coloneqq \gamma - \gamma_A^B + \tilde \gamma \fstop
	\]
	Note that
	\[
	\pi^1_\# \gamma' = \pi^1_\# \gamma \quad \text{and} \quad \pi^2_\# \gamma' = \pi^2_\# \gamma \comma
	\]
	which yields
	\[
	\gamma \in \Adm_{\TrDis}(\mu,\nu) \quad \Longrightarrow \quad \gamma' \in \Adm_{\TrDis}(\mu,\nu) \fstop
	\]
	Furthermore, if~$\gamma_{\partial \Omega}^{\partial \Omega} = 0$, then~$ (\gamma')_{\partial \Omega}^{\partial \Omega} = \tilde \gamma_{\partial \Omega}^{\partial \Omega}$. Thus,
	\[
	\bigl[\gamma \in \Adm_{\TrFun}(\mu,\nu) \text{ and } (A \times B) \cap (\partial \Omega \times \partial \Omega) = \emptyset \bigr] \quad \Longrightarrow \quad \gamma' \in \Adm_{\TrFun}(\mu,\nu) \fstop
	\]
	Hence, if~$\gamma \in \Opt_\TrDis(\mu,\nu)$, or~$\gamma \in \Opt_{\TrFun}(\mu,\nu)$ and~$(A \times B) \cap (\partial \Omega \times \partial \Omega) = \emptyset$, then, by optimality,~$\cost(\gamma) \le \cost( \gamma')$, and we infer that~$\cost(\gamma_A^B) \le \cost(\tilde \gamma)$. We conclude by arbitrariness of~$\tilde \gamma$.
\end{proof}

In~\cite[Proposition~2.3]{FigalliGigli10} and~\cite[Proposition~3.2]{Morales18}, the authors give more precise characterizations of the optimal plans for their respective transportation functionals in terms of suitable $c$-cyclical monotonicity of the support, as in the classical optimal transport theory; see, e.g.,~\cite[Lecture~3]{AmbrosioBrueSemola21}. Existence of transport plans is then derived as a consequence. We believe that a similar analysis can be carried out for the transport plans in~$\Opt_\TrFun$ and~$\Opt_\TrDis$, but it is not necessary for the purpose of this work.

\section[Proof of Theorem 1.1]{Proof of \Cref{Theorem_1.1}} \label{sec:main10}

Recall the scheme~\eqref{eq:jkoTrFun0}: we first fix a measure~$\mu_0 \in \cone$ such that its restriction to~$\Omega$ is absolutely continuous (w.r.t.~the Lebesgue measure) with density equal to~$\rho_0$. Then, for every~$\tau > 0$ and~$n \in {\N_0}$, we iteratively choose
\begin{equation*} \mu_{(n+1) \tau}^{\tau} \in \argmin_{\mu \in \cone} \, \left(\Ent(\mu) + \frac{\TrFun^2(\mu, \mu_{n\tau}^\tau)}{2\tau} \right) \fstop \end{equation*}
For all~$\tau > 0$, these sequences are extended to maps~$t \mapsto \mu^\tau_t$, constant on the intervals~$\bigl[n\tau,(n+1)\tau\bigr)$ for every~$n \in \N_0$.

\begin{remark}
	The choice of~$(\mu_0)_{\partial \Omega}$ is inconsequential, in the sense that, for every~$t$ and~$\tau$ the restriction~$(\mu_t^\tau)_\Omega$ does not depend on it. In fact, from \Cref{rmk:changeBoundary} and the uniqueness of the minimizer in~\eqref{eq:jkoTrFun0} (i.e.,~\Cref{prop:uniq}), it is possible to infer the following proposition (proof omitted).
\end{remark}

\begin{proposition}
	Fix~$\tau > 0$, and let~$\mu_0, \tilde \mu_0 \in \cone$ be such that~$(\mu_0)_\Omega = (\tilde \mu_0)_\Omega$. Let~$t \mapsto \mu_t^\tau$ and~$t \mapsto \tilde \mu_t^\tau$ be the maps constructed with the scheme~\eqref{eq:jkoTrFun0}, starting from~$\mu_0$ and~$\tilde \mu_0$, respectively. Then, for every~$t \ge 0$,
	\begin{equation}
		\mu_t^\tau - \tilde \mu_t^\tau = \mu_0 - \tilde \mu_0 = (\mu_0)_{\partial \Omega} - (\tilde \mu_0)_{\partial \Omega} \fstop
	\end{equation}
\end{proposition}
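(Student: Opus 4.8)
The plan is to exploit \Cref{rmk:changeBoundary}, which says the admissible-plan sets (hence the transportation functional $\TrFun$) are invariant under adding the same boundary-concentrated measure to both arguments, together with the translation-invariance of the entropy along boundary directions noted in the introduction. Concretely, set $\eta_0 \coloneqq (\mu_0)_{\partial \Omega} - (\tilde \mu_0)_{\partial \Omega}$, which is a signed measure concentrated on $\partial \Omega$ with $\eta_0(\ovom) = \mu_0(\ovom) - \tilde\mu_0(\ovom) = 0$ (since $(\mu_0)_\Omega = (\tilde\mu_0)_\Omega$ and both $\mu_0,\tilde\mu_0 \in \cone$), so $\eta_0 \in \cone$ and $\mu_0 = \tilde\mu_0 + \eta_0$.

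I would then argue by induction on $n$ that $\mu_{n\tau}^\tau = \tilde\mu_{n\tau}^\tau + \eta_0$ for every $n \in \N_0$. The base case $n = 0$ is the hypothesis. For the inductive step, suppose $\mu_{n\tau}^\tau = \tilde\mu_{n\tau}^\tau + \eta_0$. The key observation is that the map $\mu \mapsto \mu + \eta_0$ is a bijection of $\cone$ onto itself that, by \Cref{rmk:changeBoundary}, satisfies $\TrFun(\mu + \eta_0, \nu + \eta_0) = \TrFun(\mu,\nu)$, and that the entropy transforms additively: since $\Ent$ only sees the restriction to $\Omega$ through $\E$ and sees the boundary part linearly through $\int \Psi \dif \mu|_{\partial\Omega}$, and since $(\mu + \eta_0)|_\Omega = \mu|_\Omega$ and $(\mu+\eta_0)|_{\partial\Omega} = \mu|_{\partial\Omega} + \eta_0$, we get $\Ent(\mu + \eta_0) = \Ent(\mu) + \int \Psi \dif \eta_0$, with the added term a \emph{constant} independent of $\mu$. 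Therefore, writing $\nu \coloneqq \tilde\mu_{n\tau}^\tau$,
\[
	\Ent(\mu + \eta_0) + \frac{\TrFun^2(\mu + \eta_0,\ \nu + \eta_0)}{2\tau}
		=
	\Ent(\mu) + \frac{\TrFur^2}{2\tau}
\]
up to the additive constant $\int \Psi \dif \eta_0$; more precisely, $\mu$ minimizes $\mu \mapsto \Ent(\mu) + \frac{\TrFun^2(\mu,\nu)}{2\tau}$ if and only if $\mu + \eta_0$ minimizes $\mu' \mapsto \Ent(\mu') + \frac{\TrFun^2(\mu',\nu+\eta_0)}{2\tau}$, since the two objective functions differ by the constant $\int \Psi \dif \eta_0$ under the substitution $\mu' = \mu + \eta_0$. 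By the inductive hypothesis $\mu_{n\tau}^\tau = \nu + \eta_0$, so the minimization defining $\mu_{(n+1)\tau}^\tau$ is exactly the shifted version of the one defining $\tilde\mu_{(n+1)\tau}^\tau = \argmin_{\mu'}\bigl(\Ent(\mu') + \TrFun^2(\mu',\nu)/(2\tau)\bigr)$. By the uniqueness of the minimizer (\Cref{prop:uniq}, cited in the remark), we conclude $\mu_{(n+1)\tau}^\tau = \tilde\mu_{(n+1)\tau}^\tau + \eta_0$, closing the induction. Extending from the discrete times $n\tau$ to all $t \ge 0$ is immediate since the maps are piecewise constant on the same intervals.

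Finally, the chain $\mu_t^\tau - \tilde\mu_t^\tau = \eta_0 = (\mu_0)_{\partial\Omega} - (\tilde\mu_0)_{\partial\Omega} = \mu_0 - \tilde\mu_0$ follows, the last equality again because $(\mu_0)_\Omega = (\tilde\mu_0)_\Omega$. I do not anticipate a genuine obstacle here: the only point requiring minimal care is to justify $\eta_0 \in \cone$ (needs $\eta_0$ to have total mass zero and nonnegative restriction to $\Omega$, the latter being vacuous since $\eta_0$ is boundary-concentrated), and to correctly track that the entropy picks up only a constant — not a $\mu$-dependent term — under the shift, which is precisely the linearity of $\mu \mapsto \int \Psi \dif \mu|_{\partial\Omega}$ in the boundary variable emphasized in \Cref{sec:heuristics}. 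This is exactly why the remark says the proof can be omitted.
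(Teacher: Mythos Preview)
Your argument is correct and is exactly the one the paper has in mind: the remark preceding the proposition explicitly says the result follows from \Cref{rmk:changeBoundary} and the uniqueness in \Cref{prop:uniq}, and you have unpacked precisely that---the shift $\mu \mapsto \mu+\eta_0$ leaves $\TrFun$ invariant and changes $\Ent$ by the constant $\int \Psi \dif \eta_0$, so the two minimization problems differ by a translation and the unique minimizers match up. (One cosmetic point: in your displayed equation there is a typo, $\TrFur^2$ in place of $\TrFun^2(\mu,\nu)$.)
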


We are going to prove \Cref{Theorem_1.1} in seven steps, corresponding to as many (sub)sections:
\begin{enumerate}[1.]
	\item Existence: The scheme is well-posed, in the sense that there exists a minimizer for the variational problem~\eqref{eq:jkoTrFun0}.
	\item Boundary condition: The minimizers of~\eqref{eq:jkoTrFun0} approximately satisfy the boundary condition~$\rho|_{\partial \Omega} = e^{\Psi-V}$.
	\item Sobolev regularity: There are minimizers such that their restriction to $\Omega$ enjoy some Sobolev regularity, with quantitative estimates, and satisfy a ``precursor'' of the Fokker--Planck equation.
	\item Uniqueness: There is only one minimizer for~\eqref{eq:jkoTrFun0} (given~$\mu^\tau_{n\tau}$).
	\item Contractivity: Suitably truncated~$L^q$ norms decrease in time along~$t \mapsto \mu_t^\tau$. This result is useful in proving convergence of the scheme, both w.r.t.~$Wb_2$ and in~$L^1_\mathrm{loc} \bigl( (0,\infty); L^q(\Omega) \bigr)$.
	\item Convergence w.r.t.~$Wb_2$.
	\item Fokker--Planck with Dirichlet boundary conditions: The limit solves the Fokker--Planck equation with the desired Dirichlet boundary conditions. Moreover, the convergence holds in~$L^1_\mathrm{loc} \bigl( (0,\infty); L^q(\Omega) \bigr)$ for~$q \in [1,\frac{d}{d-1})$.
\end{enumerate}
Each (sub)section starts with the precise statement of the corresponding main proposition and ends with its proof. When needed, some preparatory lemmas precede the proof.

\subsection{One step of the scheme} \label{sec:onestep}
In this section, we gather together the subsections corresponding to the first five steps of our plan for \Cref{Theorem_1.1}. The reason is that they all involve only one step of the discrete scheme. \pagebreak

Throughout this section,~$\bar \mu$ is any measure in~$\cone$ whose restriction to~$\Omega$ is absolutely continuous and such that, denoting by~$\bar \rho$ the density of~$\bar \mu_\Omega$, the quantity~$\E(\bar \rho)$ is finite. We also fix~$\tau > 0$. We aim to find one/all minimizer(s) of
\begin{equation} \label{eq:singlestep} \Ent(\cdot) + \frac{\TrFun^2(\cdot,\bar \mu)}{2\tau} \, : \, \cone \to \R \end{equation}
and determine some of its/their properties.

\subsubsection{Existence}

\begin{proposition} \label{prop:existence}
	There exists at least one minimizer of the function in~\eqref{eq:singlestep}. Every minimizer~$\mu$ satisfies the following:
	\begin{enumerate}
		\item Both~$\Ent(\mu)$ and~$\TrFun(\mu,\bar \mu)$ are finite. In particular,~$\mu_\Omega$ admits a density~$\rho$.
		\item The total variation of~$\mu$ and the integral~$\int_\Omega \rho \log \rho \dif x$ can be bounded by a constant~$\const_{\tau, \bar \mu}$ that depends on~$V$ only through~$\norm{V}_{L^\infty}$.
		\item The following inequality holds:
		\begin{equation} \label{eq:existence:inequality}
			\frac{\TrFun^2(\mu,\bar \mu)}{4\tau} \le \E(\bar \rho) - \E(\rho) + \mu_\Omega(\Psi) - \bar \mu_\Omega(\Psi) + \const \tau \bigl(\norm{\mu_\Omega} + \norm{\bar \mu_\Omega} \bigr) \fstop
		\end{equation}
	\end{enumerate}
\end{proposition}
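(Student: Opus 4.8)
The plan is to run the direct method of the calculus of variations. The only point that is not routine is \emph{coercivity}, because $\Ent$ is \emph{not} bounded below on $\cone$: it is affine, and unbounded below, along directions supported on $\partial\Omega$. The key observation is that on the sublevel $\{\TrFun(\cdot,\bar\mu)<\infty\}$ the transport term controls the offending boundary contribution via \Cref{lemma:boundaryVSinterior}, and the resulting negative quantity $-\TrFun^2/(4\tau)$ is absorbed by the $+\TrFun^2/(2\tau)$ already present, since $\tfrac1{2\tau}-\tfrac1{4\tau}>0$. This is the mechanism behind all three listed properties, and inequality~\eqref{eq:existence:inequality} is essentially the quantitative form of it obtained by comparing a minimizer with~$\bar\mu$.

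\emph{Step 1 (the infimum is finite).} Since $(\Id,\Id)_\#\bar\mu_\Omega\in\Adm_\TrFun(\bar\mu,\bar\mu)$ has zero cost, the infimum $m$ in~\eqref{eq:singlestep} satisfies $m\le\Ent(\bar\mu)=\E(\bar\rho)+\bar\mu_{\partial\Omega}(\Psi)<\infty$. For the lower bound, let $\mu\in\cone$ have finite functional value; then $\TrFun(\mu,\bar\mu)<\infty$, $\Ent(\mu)<\infty$ and $\mu_\Omega=\rho\dif x$. Applying \Cref{lemma:boundaryVSinterior} with $\Phi=\Psi$ and with the roles of the two measures exchanged bounds $\mu_{\partial\Omega}(\Psi)$ from below; inserting this bound into $\Ent(\mu)+\TrFun^2(\mu,\bar\mu)/(2\tau)$ and keeping the leftover $\TrFun^2(\mu,\bar\mu)/(4\tau)\ge0$ gives
\[
	\Ent(\mu)+\frac{\TrFun^2(\mu,\bar\mu)}{2\tau}\;\ge\;\int_\Omega g(\rho)\dif x+\frac{\TrFun^2(\mu,\bar\mu)}{4\tau}-\const_{\tau,\bar\mu}\comma
\]
where $g(r)\coloneqq r\log r-Mr+1$ with $M=M\bigl(\norm{V}_{L^\infty},\Psi,\tau\bigr)$ a function that is bounded below and superlinear at $+\infty$. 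As $\abs\Omega<\infty$ and $g$ is bounded below, the right-hand side is bounded below, so $m\in\R$.

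\emph{Step 2 (compactness and lower semicontinuity).} Take a minimizing sequence $(\mu^n)\subseteq\cone$ with functional value $\le m+1$. From the displayed inequality, $\TrFun^2(\mu^n,\bar\mu)$ and $\int_\Omega g(\rho^n)\dif x$ are bounded; superlinearity of $g$ then gives, via de~la~Vallée-Poussin, uniform integrability of the densities $(\rho^n)$ and a bound on $\norm{\mu^n_\Omega}=\int_\Omega\rho^n\dif x$. Choosing any $\gamma^n\in\Adm_\TrFun(\mu^n,\bar\mu)$, \Cref{rmk:boundGamma} yields $\norm{\gamma^n}\le\norm{\mu^n_\Omega}+\norm{\bar\mu_\Omega}$, and since $\mu^n-\bar\mu=\pi^1_\#\gamma^n-\pi^2_\#\gamma^n$ this bounds $\norm{\mu^n-\bar\mu}$, hence $\norm{\mu^n_{\partial\Omega}}$. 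Thus $(\mu^n)$ has uniformly bounded total variation on the compact set $\ovom$, so up to a subsequence $\mu^n\to\mu$ weakly while $\rho^n\rightharpoonup\rho$ in $L^1(\Omega)$ (Dunford-Pettis); testing against $C_c(\Omega)$ and against indicators identifies $\mu_\Omega=\rho\dif x$ and gives $\mu^n_\Omega\to\mu_\Omega$ setwise, and $\mu\in\cone$ by weak closedness of $\cone$. Since $\E$ is convex, hence weakly lower semicontinuous on $L^1(\Omega)$, and $\mu^n_{\partial\Omega}(\Psi)=\mu^n(\Psi)-\mu^n_\Omega(\Psi)\to\mu_{\partial\Omega}(\Psi)$ (using $\Psi\in C_b(\ovom)\cap L^\infty(\Omega)$), we get $\Ent(\mu)\le\liminf_n\Ent(\mu^n)$; moreover $\TrFun(\mu,\bar\mu)\le\liminf_n\TrFun(\mu^n,\bar\mu)$ by \Cref{cor:sciT}. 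Passing to a further subsequence along which the (bounded) quantities $\Ent(\mu^n)$ and $\TrFun^2(\mu^n,\bar\mu)$ converge separately, we obtain $\Ent(\mu)+\TrFun^2(\mu,\bar\mu)/(2\tau)\le m$, so $\mu$ is a minimizer.

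\emph{Step 3 (properties of minimizers).} Let $\mu$ be any minimizer. Because $\Ent$ never takes the value $-\infty$ on $\cone$ while $m<\infty$, necessarily $\TrFun(\mu,\bar\mu)<\infty$ and then $\Ent(\mu)<\infty$; in particular $\mu_\Omega=\rho\dif x$, which is~(1). For~(3), the comparison $\Ent(\mu)+\TrFun^2(\mu,\bar\mu)/(2\tau)\le\Ent(\bar\mu)$ reads $\TrFun^2(\mu,\bar\mu)/(2\tau)\le\E(\bar\rho)-\E(\rho)+\bar\mu_{\partial\Omega}(\Psi)-\mu_{\partial\Omega}(\Psi)$; bounding $\bar\mu_{\partial\Omega}(\Psi)-\mu_{\partial\Omega}(\Psi)$ with \Cref{lemma:boundaryVSinterior} (for $\Phi=\Psi$) and moving the term $\TrFun^2(\mu,\bar\mu)/(4\tau)$ to the left produces exactly~\eqref{eq:existence:inequality}. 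Finally~(2): rewriting~\eqref{eq:existence:inequality} as $\TrFun^2(\mu,\bar\mu)/(4\tau)+\int_\Omega g(\rho)\dif x\le\const_{\tau,\bar\mu}$, where $g$ and the constant involve $V$ only through $\norm{V}_{L^\infty}$, superlinearity of $g$ bounds $\norm{\mu_\Omega}=\int_\Omega\rho\dif x$, hence also $\int_\Omega\rho\log\rho\dif x$ and $\TrFun(\mu,\bar\mu)$, and then $\norm{\mu_{\partial\Omega}}\le\norm{\mu-\bar\mu}+\norm{\bar\mu_{\partial\Omega}}$ is bounded through \Cref{rmk:boundGamma} exactly as in Step~2. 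The single genuine difficulty, around which everything is organized, is the lack of any a~priori lower bound on $\Ent$ on $\cone$, resolved by the interplay between \Cref{lemma:boundaryVSinterior} and the quadratic penalization $\TrFun^2/(2\tau)$.
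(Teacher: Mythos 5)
Your proposal is correct and takes essentially the same route as the paper's proof: the direct method, with \Cref{lemma:boundaryVSinterior} absorbing the unbounded boundary term $\mu_{\partial\Omega}(\Psi)$ into the quadratic penalization, compactness from uniform integrability (superlinearity of $\lambda\mapsto\lambda\log\lambda$) plus the total-variation bound coming from Condition~\ref{(3)} and \Cref{rmk:boundGamma}, lower semicontinuity of $\E$ under weak $L^1$ convergence and of $\TrFun$ via \Cref{cor:sciT}, and finally the comparison with $\bar\mu$ to get~\eqref{eq:existence:inequality} and the bounds for an arbitrary minimizer. The only cosmetic nit is that weak lower semicontinuity of $\E$ on $L^1(\Omega)$ uses convexity \emph{together with} strong lower semicontinuity (Fatou), not convexity alone, exactly as the paper records.
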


The proof of this proposition, partially inspired by \cite[Propositions~4.3~\&~5.9]{Morales18}, is essentially an application of the \emph{direct method in the calculus of variations}, although some care is needed due to the unboundedness of~$\Ent$ from below.

\begin{proof}[Proof of \Cref{prop:existence}]
	Let~$(\mu^n)_{n \in {\N_1}} \subseteq \cone$ be a minimizing sequence for~\eqref{eq:singlestep}. We may assume that
	\begin{equation} \label{eq:optimalitymun} \Ent(\mu^n) + \frac{\TrFun^2(\mu^n,\bar \mu)}{2\tau} \le \Ent(\bar \mu) + \frac{\TrFun^2(\bar \mu,\bar \mu)}{2\tau} + \frac{1}{n} = \Ent(\bar \mu) + \frac{1}{n} < \infty \comma \qquad n \in \N_1 \comma \end{equation}
	where the finiteness of~$\Ent(\bar \mu)$ is consequence of~$\E(\bar \rho) < \infty$. For every~$n$, let~$\rho^n$ be the density of~$\mu^n_\Omega$ and let~$\gamma^n \in \Opt_\TrFun(\mu^n,\bar \mu)$.
	
	\emph{Step 1 (preliminary bounds).} Firstly, we shall do some work towards the proof of~\eqref{eq:existence:inequality} and establish uniform integrability for~$\set{\rho^n}_n$. By~\eqref{eq:optimalitymun} and \Cref{lemma:boundaryVSinterior},
	\begin{multline} \label{eq:preIneq}
		\frac{\TrFun^2(\mu^n,\bar \mu)}{2\tau} \le \Ent(\bar \mu) - \Ent(\mu^n) + \frac{1}{n} = \E(\bar \rho) - \E(\rho^n) + \bar \mu_{\partial \Omega}(\Psi)-\mu^n_{\partial \Omega}(\Psi) +  \frac{1}{n} \\
		\le \E(\bar \rho) - \E(\rho^n) + \mu^n_\Omega(\Psi) - \bar \mu_\Omega(\Psi) + \tau (\Lip \Psi)^2 \bigl( \norm{\mu_\Omega^n} + \norm{\bar \mu_\Omega} \bigr) + \frac{\TrFun^2(\mu^n,\bar\mu)}{4\tau} + \frac{1}{n} \comma
	\end{multline}
	from which,
	\begin{equation} \label{eq:onestep:massbound}
		\int_\Omega \rho^n \log \rho^n \le \int_\Omega \left( \bar \rho \log \bar \rho + (\norm{V}_{L^\infty} + \norm{\Psi}_{L^\infty} + 1 + \tau(\Lip \Psi)^2) (\bar \rho + \rho^n) \right) \dif x + \frac{1}{n} \fstop
	\end{equation}
	Since~$l \mapsto l \log l$ is superlinear, we have uniform integrability of~$\set{\rho^n}_n$. In particular,~$\norm{\mu^n_\Omega}$ is bounded.
	
	Also the total variation~$\norm{\mu^n}$ is bounded. Indeed,
	\begin{equation} \label{eq:tvbound}
		\norm{\mu^n} \le 2\norm{\gamma^n} + \norm{\bar \mu} \le 2\norm{\mu^n_\Omega} + 3 \norm{\bar \mu} \comma
	\end{equation}
	where the first inequality follows from Condition~\ref{(3)} in \Cref{def:TrDis}, and the second one from \Cref{rmk:boundGamma}.%
	
	\emph{Step 2 (existence).} We can extract a (not relabeled) subsequence such that:
	\begin{enumerate} \item $\mu^n_{\partial \Omega} \to_n \eta$ for some~$\eta$ weakly in duality with $C(\partial \Omega)$,
		\item $\rho^n \rightharpoonup_n \rho$ for some~$\rho$ weakly in~$L^1(\Omega)$,
		\item $\mu^n \to_n  \mu \coloneqq \rho \dif x + \eta$ weakly in duality with~$C(\ovom)$, and~$ \mu \in \cone$.
	\end{enumerate}
	Since the functional~$\E$ is sequentially lower semicontinuous w.r.t.~the weak convergence in~$L^1(\Omega)$, and sum of lower semicontinuous functions is lower semicontinuous, \Cref{cor:sciT} yields
	\begin{equation*} \Ent( \mu) + \frac{\TrFun^2( \mu,\bar \mu)}{2\tau}
		\le \liminf_{n \to \infty} \left( \Ent(\mu^n) + \frac{\TrFun^2(\mu^n,\bar\mu)}{2
			\tau} \right) = \inf \left( \Ent(\cdot) + \frac{\TrFun^2(\cdot,\bar\mu)}{2
			\tau} \right) \fstop
	\end{equation*}
	
	\emph{Step 3 (inequalities).} If~$\mu$ is \emph{any} minimizer for~\eqref{eq:singlestep}, the inequality~\eqref{eq:existence:inequality}, and the bounds on~$\norm{\mu}$ and~$\int_\Omega \rho \log \rho \dif x$ directly follow from~\eqref{eq:preIneq},~\eqref{eq:onestep:massbound}, and~\eqref{eq:tvbound} by taking the constant sequence equal to~$\mu$ in place of~$(\mu^n)_n$.
\end{proof}

\subsubsection{Boundary condition}

Pick any minimizer~$\mu$ for~\eqref{eq:singlestep} and denote by~$\rho$ the density of~$\mu_\Omega$. Let~$\gamma \in \Opt_\TrFun(\mu, \bar \mu)$ and let~$S \colon \Omega \to \ovom$ be such that~$\gamma_\Omega^\ovom = (\Id,S)_\# \mu_\Omega$.

\begin{proposition} \label{prop:boundary}
	There exists a $\Leb^d$-negligible set~$N \subseteq \Omega$ such that:
	\begin{enumerate}
		\item For all~$x \in \Omega \setminus N$ and~$y \in \partial \Omega$, the inequalities
		\begin{equation} \label{eq:prop:boundary:00}
			-\frac{\abs{x-y}^2}{2\tau} \le \log \rho(x) - \Psi(y) + V(x) \le \const \frac{\abs{x-y}}{\tau} + \const \tau
		\end{equation}
		hold. The constant~$\const$ can be chosen independent of~$V$.
		\item For all~$x \in \Omega \setminus N$ such that~$S(x) \in \partial \Omega$, we have the identity
		\begin{equation}
			\label{eq:prop:boundary:01}
			\log \rho(x) = \Psi(S(x)) - V(x) - \frac{\abs{x-S(x)}^2}{2\tau} \fstop
		\end{equation}
	\end{enumerate}
\end{proposition}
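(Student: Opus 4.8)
The plan is to run the first‑variation (competitor) method, carefully adapted to the structure of $\Adm_\TrFun$. Fix a minimizer $\mu$ of the functional in~\eqref{eq:singlestep}; by \Cref{prop:existence} its restriction $\mu_\Omega$ has a density $\rho$ with $\int_\Omega\rho\log\rho<\infty$. Fix $\gamma\in\Opt_\TrFun(\mu,\bar\mu)$ and, via \Cref{prop:trPlans}, a Borel map $S\colon\Omega\to\ovom$ with $\gamma_\Omega^\ovom=(\Id,S)_\#\mu_\Omega$. For a small ball $B\Subset\Omega$, a point $y\in\partial\Omega$, an absolutely continuous probability measure $\nu_B$ supported in $B$, and a small $\epsilon>0$, I will perturb $\mu$ to a competitor $\mu^\epsilon\in\cone$ \emph{together with} an admissible plan $\gamma^\epsilon\in\Adm_\TrFun(\mu^\epsilon,\bar\mu)$ obtained by a local surgery on $\gamma$, and feed the chain
\[
	\Ent(\mu)+\frac{\cost(\gamma)}{2\tau}\;\le\;\Ent(\mu^\epsilon)+\frac{\TrFun^2(\mu^\epsilon,\bar\mu)}{2\tau}\;\le\;\Ent(\mu^\epsilon)+\frac{\cost(\gamma^\epsilon)}{2\tau}
\]
(valid by minimality of $\mu$ and optimality of $\gamma$) into the usual machine: cancel the common terms, divide by $\epsilon$, send $\epsilon\downarrow0$ to differentiate $\E$ along the perturbation, and finally shrink $B=B_r(x_0)$ to a suitable Lebesgue/density point $x_0$. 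Only three surgeries are needed, one per inequality.

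\emph{Lower bound in~\eqref{eq:prop:boundary:00}.} Take $\nu_B=\1_B\Leb^d/|B|$ and set $\mu^\epsilon\coloneqq\mu+\epsilon\nu_B-\epsilon\delta_y$, $\gamma^\epsilon\coloneqq\gamma+\epsilon\,\nu_B\otimes\delta_y$; this is $\TrFun$-admissible for \emph{every} $\epsilon>0$, because the new mass is drawn from the reservoir point $y$ and deposited inside $\Omega$, so condition~\ref{(4)} is untouched while~\ref{(1)}--\ref{(3)} are immediate. The chain collapses to $\E(\rho)\le\E(\rho+\epsilon\nu_B)-\epsilon\Psi(y)+\tfrac{\epsilon}{2\tau}\int|x-y|^2\dif\nu_B$; dividing by $\epsilon$ and using that the difference quotient of the convex function $\epsilon\mapsto\E(\rho+\epsilon\nu_B)$ is nondecreasing, in the limit $\epsilon\downarrow0$ one gets $\tfrac1{|B|}\int_B(\log\rho+V)\dif x\ge\Psi(y)-\tfrac1{2\tau|B|}\int_B|x-y|^2\dif x$ for \emph{every} ball $B\Subset\Omega$. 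In particular $\int_B(\log\rho+V)>-\infty$ for every ball, and since $\int_\Omega\rho\log\rho<\infty$ bounds $\log\rho$ from above in $L^1_\mathrm{loc}$, this gives $\log\rho\in L^1_\mathrm{loc}(\Omega)$, hence $\rho>0$ $\Leb^d$-a.e.\ and $\Leb^d$-a.e.\ point is a Lebesgue point of $\log\rho$. Shrinking $B$ to such an $x_0$ turns the inequality into $\log\rho(x_0)+V(x_0)\ge\Psi(y)-|x_0-y|^2/(2\tau)$ for every $y\in\partial\Omega$, i.e.\ the left inequality in~\eqref{eq:prop:boundary:00}.

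\emph{Upper bounds.} Split $\Omega=E\cup E'$ with $E\coloneqq S^{-1}(\Omega)$, $E'\coloneqq S^{-1}(\partial\Omega)$, and take $\nu_B$ to be the normalized restriction of $\mu_\Omega$ to $B\cap E$, resp.\ $B\cap E'$, so that for $\epsilon\le\mu_\Omega(B\cap E)$ (resp.\ $\le\mu_\Omega(B\cap E')$) one has $\epsilon(\Id,S)_\#\nu_B\le(\Id,S)_\#\mu_\Omega=\gamma_\Omega^\ovom\le\gamma$. On $E$, where $S_\#\nu_B$ lives in $\Omega$, \emph{reroute the interior ray through the boundary}: $\mu^\epsilon\coloneqq\mu-\epsilon\nu_B+\epsilon\delta_y$, $\gamma^\epsilon\coloneqq\gamma-\epsilon(\Id,S)_\#\nu_B+\epsilon\,\delta_y\otimes S_\#\nu_B$; this is in $\Adm_\TrFun$ because the $\pi^2$-marginal is unchanged (so~\ref{(2)} survives) and because $S_\#\nu_B$ avoids $\partial\Omega$ (so~\ref{(4)} survives). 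The machine and localization at a generic $x_0\in E$ give $\log\rho(x_0)+V(x_0)\le\Psi(y)+\tfrac1{2\tau}\bigl(|y-S(x_0)|^2-|x_0-S(x_0)|^2\bigr)$, and $|y-S(x_0)|^2-|x_0-S(x_0)|^2\le2\diam(\ovom)\,|x_0-y|$ by the reverse triangle inequality, which is the right inequality in~\eqref{eq:prop:boundary:00} on $E$. On $E'$, where $S_\#\nu_B$ lives on $\partial\Omega$, \emph{delete the ray}: $\mu^\epsilon\coloneqq\mu-\epsilon\nu_B+\epsilon S_\#\nu_B$, $\gamma^\epsilon\coloneqq\gamma-\epsilon(\Id,S)_\#\nu_B$ (admissible, condition~\ref{(4)} holding since the deleted piece sat on $\Omega\times\partial\Omega$); localizing at a generic $x_0\in E'$ yields $\log\rho(x_0)+V(x_0)\le\Psi(S(x_0))-|x_0-S(x_0)|^2/(2\tau)$. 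Dropping the last term is exactly~\eqref{eq:prop:boundary:01}, and keeping it while using $\Psi(S(x_0))-\Psi(y)\le(\Lip\Psi)\bigl(|x_0-y|+|x_0-S(x_0)|\bigr)$ and the elementary bound $(\Lip\Psi)t-t^2/(2\tau)\le(\Lip\Psi)^2\tau/2$ gives $\log\rho(x_0)+V(x_0)-\Psi(y)\le(\Lip\Psi)|x_0-y|+(\Lip\Psi)^2\tau/2\le\const\bigl(|x_0-y|/\tau+\tau\bigr)$ (using $|x_0-y|\le\diam\Omega$), i.e.\ the right inequality in~\eqref{eq:prop:boundary:00} on $E'$. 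All constants that appear depend only on $\diam\Omega$ and $\Lip\Psi$, never on $V$. Finally, $N$ is assembled as the $\mu_\Omega$-null set where $S$ is undefined, together with the complement of the set of points that are density points of $E$ and $E'$ and Lebesgue points of $\rho$, $V$, $\log\rho$, each component of $S$, $|S|^2$ and $\Psi\circ S$ — a $\Leb^d$-null complement; crucially, all pre-limit inequalities hold for \emph{every} $y\in\partial\Omega$, so this one $y$-independent set of good points suffices (Lebesgue points of $S_1,\dots,S_d,|S|^2$ make $\tfrac1{|B|}\int_B|y-S|^2\dif x\to|y-S(x_0)|^2$ for all $y$ at once).

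The main obstacle I anticipate is not any single estimate but two bookkeeping issues. First, verifying that each competitor truly lies in $\Adm_\TrFun$: all four defining conditions must survive the surgery, and condition~\ref{(4)} (no $\partial\Omega$-to-$\partial\Omega$ transport) is precisely what makes the naive ``push the mass at $x$ straight out to $y$'' competitor illegal whenever $S(x)\in\partial\Omega$, which is why the $E/E'$ split and the detour through~\eqref{eq:prop:boundary:01} are forced. Second, the one‑sided differentiation of $\E$: the right derivative of $\epsilon\mapsto\E(\rho+\epsilon\nu_B)$ in the first surgery can equal $-\infty$ on a set where $\rho$ vanishes, so the limit must be taken through monotone difference quotients of the convex functional $\E$ — which is exactly the mechanism that, as a bonus, delivers $\rho>0$ $\Leb^d$-a.e., after which the corresponding derivatives in the two upper‑bound surgeries are finite at every good point.
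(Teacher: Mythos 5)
Your proposal is correct, and for the lower bound in~\eqref{eq:prop:boundary:00}, the upper bound on~$S^{-1}(\Omega)$, and~\eqref{eq:prop:boundary:01} it is essentially the paper's own argument: the same three competitor surgeries on the pair~$(\mu,\gamma)$ (add mass drawn from~$\delta_y$; reroute the ray through~$y$ keeping the second marginal; delete the ray to~$\partial\Omega$), fed into minimality of~$\mu$ and optimality of~$\gamma$, with the same convexity/monotone-convergence handling of the one-sided derivative of~$\E$. The genuine difference is how you get the right-hand inequality of~\eqref{eq:prop:boundary:00} on~$S^{-1}(\partial\Omega)$: the paper first proves \Cref{lemma:minimalityS} (an argmin characterization of~$S$ there, via a further competitor) and deduces the almost-projection bound of \Cref{lemma:almostProj}, then combines it with~\eqref{eq:prop:boundary:01}; you instead retain the quadratic gain~$-\abs{x-S(x)}^2/(2\tau)$ produced by the deletion surgery and absorb the Lipschitz error~$(\Lip\Psi)\abs{x-S(x)}$ by Young's inequality, obtaining the same bound with a comparable $V$-independent constant while bypassing both auxiliary lemmas — a slightly shorter route, though the paper's detour also yields the almost-projection property of~$S$, which has independent interest. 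A second, minor difference is the localization: the paper tests with arbitrary Borel sets~$A$, obtains $y$-dependent null sets~$N_y$, and then uses separability of~$\partial\Omega$ together with continuity in~$y$; you shrink balls at Lebesgue/density points and get a single $y$-independent null set, which also works. The only point to phrase carefully is that in your two upper-bound surgeries the averages are taken against~$\nu_B\propto\mu_\Omega|_{B\cap E}$ rather than normalized Lebesgue measure, so the differentiation step should invoke Besicovitch differentiation of~$\mu_\Omega$ (legitimate, since~$\rho>0$ a.e.\ by your first step and the relevant integrands are locally $\mu_\Omega$-integrable), or one can simply test with arbitrary Borel subsets of~$S^{-1}(\Omega)$ and~$S^{-1}(\partial\Omega)$ as the paper does; this is bookkeeping, not a gap.
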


\begin{remark} \label{rmk:equivmeasur}
	\Cref{prop:boundary} implies in particular that~$\rho \in L^\infty(\Omega)$ and that~$\rho$ is bounded from below by a positive constant (depending on~$\tau$). In particular, the measure~$\mu_\Omega$ is equivalent to the Lebesgue measure on~$\Omega$.
\end{remark}

\begin{remark} \label{rmk:g}
	Define
	\[
	g \coloneqq \sqrt{\rho e^V}-e^{\Psi/2} \comma \quad g^{{(\kappa)}} \coloneqq (g-\kappa)_+-(g+\kappa)_- \comma \qquad \kappa > 0 \fstop
	\]
	It follows from~\eqref{eq:prop:boundary:00} that, when~$\kappa \ge c (e^{c \tau} - 1)$, for a suitable constant~$c$ independent of~$V$ and~$\tau$, the function~$g^{{(\kappa)}}$ is compactly supported in~$\Omega$ (up to changing its value on a Lebesgue-negligible set).
\end{remark}

\begin{remark}
	The term~$\const \tau$ at the right-hand side of~\eqref{eq:prop:boundary:00} can be removed when~$\Psi$ is constant. This fact can be easily checked in the proof of \Cref{prop:boundary} and is consistent with \cite[Proposition~3.7 (27)]{AmbrosioGigliSavare08}. However, the following example proves that, in general, this extra term is necessary, i.e.,~the boundary condition need not be satisfied \emph{exactly} by the map~$t \mapsto \mu_t^\tau$ (even for~$t\ge \tau$).
\end{remark}

\begin{example}
	Let~$\Omega \coloneqq (0,1)$ and~$V \equiv 0$, and choose~$\bar \mu = 0$. Since~$\bar \mu = 0$, we necessarily have~$S(x) \in \partial \Omega = \set{0,1}$ for~$\mu_\Omega$-a.e.~$x$, hence for~$\Leb^1$-a.e.~$x \in \Omega$ by \Cref{rmk:equivmeasur}. Additionally, by \Cref{prop:boundary}, for~$\Leb^1$-a.e.~$x \in S^{-1}(0)$ we have
	\[
	\Psi(1) - \frac{\abs{1-x}^2}{2\tau} \stackrel{\eqref{eq:prop:boundary:00}}{\le} \log \rho(x) \stackrel{\eqref{eq:prop:boundary:01}}{=} \Psi(0) - \frac{\abs{x}^2}{2\tau}
	\]
	and, after rearranging,
	\[
	x \le \frac{1}{2} + \tau\bigl(\Psi(0)-\Psi(1)\bigr) \fstop
	\]
	Therefore, when~$\Psi$ and~$\tau$ are such that~$\tau\bigl(\Psi(0)-\Psi(1)\bigr) < -\frac{1}{2}$, the set~$S^{-1}(0)$ is negligible, i.e.,~$S(x) = 1$ for~$\Leb^1$-a.e.~$x \in \Omega$. Then, \eqref{eq:prop:boundary:01} gives
	\[
	\log \rho(x) = \Psi(1) - \frac{\abs{1-x}^2}{2\tau} \qquad \text{for~$\Leb^1$-a.e.~$x \in \Omega$} \comma
	\]
	and, therefore, the trace of~$\rho$ at~$0$ is~$\exp\left(\Psi(1) - \frac{1}{2\tau}\right) > \exp\bigl(\Psi(0)\bigr)$.
\end{example}

\Cref{prop:boundary} is analogous to~\cite[Proposition~3.7~(27)~\&~(28)]{FigalliGigli10} and~\cite[Proposition~5.2~(5.39)~\&~(5.40)]{Morales18}. Like those, ours is proven by taking suitable variations of the minimizer~$\mu$. %

\begin{proof}[Proof of~\Cref{prop:boundary}]
	We shall prove the inequalities in the statement for~$x$ out of negligible sets~$N_y$ \emph{that depend on~$y$}. This is sufficient because the set~$\partial \Omega$ is separable and all the functions in the statement are continuous in the variable~$y$.
	Fix~$y \in \partial \Omega$.
	
	\emph{Step 1 (first inequality in~\eqref{eq:prop:boundary:00}).} Let~$\epsilon > 0 $, take a Borel set~$A \subseteq \Omega$, and define
	\[ \tilde \mu_1 \coloneqq \mu + \epsilon \Leb^d_{A} - \epsilon \abs{A} \delta_y \in \cone \comma \quad \tilde \gamma_1 \coloneqq \gamma + \epsilon \Leb^d_{A} \otimes \delta_y \in \Adm_\TrFun(\tilde \mu_1, \bar \mu) \fstop \]
	By the minimality property of~$\mu$ and the optimality of~$\gamma$,
	\[ 0 \le \int_{A} \left( \frac{(\rho + \epsilon) \log (\rho + \epsilon) - \rho \log \rho }{\epsilon} + V - 1 - \Psi(y) + \frac{\abs{x-y}^2}{2\tau} \right) \dif x \fstop \]
	Since the function~$l \mapsto l \log l$ is convex, we can use the monotone convergence theorem (``downwards'') to find
	\[ 0 \le \int_{A} \left( \log \rho  + V  - \Psi(y) + \frac{\abs{x-y}^2}{2\tau} \right) \dif x \fstop \]
	By arbitrariness of~$A$, we have the first inequality in~\eqref{eq:prop:boundary:00} for~$x$ out of a~$\Leb^d$-negligible set (possibly dependent on~$y$). In particular,~$\rho > 0$.
	
	\emph{Step 2 (second indequality in~\eqref{eq:prop:boundary:00} on~$S^{-1}(\Omega)$).} Let~$\epsilon \in (0,1)$, take a Borel set~$A \subseteq S^{-1}(\Omega)$, define
	\begin{align*} \tilde \mu_2 &\coloneqq \mu + \epsilon \mu(A) \delta_y - \epsilon \mu_A \in \cone \comma \\ \tilde \gamma_2 &\coloneqq \gamma - \epsilon (\Id,S)_\# \mu_A + \epsilon \delta_y \otimes S_\# \mu_A \in \Adm_{\TrFun}(\tilde \mu_2,\bar \mu) \fstop \end{align*}
	Note that~$A \subseteq S^{-1}(\Omega)$ is needed to ensure that~$(\tilde \gamma_2)_{\partial \Omega}^{\partial \Omega} = 0$. This time, the minimality property gives
	\[ 0 \le \int \left( \frac{(1-\epsilon)\log(1-\epsilon)}{\epsilon} - \log \rho - V + 1 + \Psi(y) + \frac{\langle y-\Id, y+\Id-2S \rangle }{2\tau} \right) \dif \mu_A \fstop \]
	We conclude by arbitrariness of~$A$, after letting~$\epsilon \to 0$, that
	\[ \log \rho(x) + V(x) - \Psi(y) \le \frac{\langle y-x, y+x-2S(x)\rangle }{2\tau} \le \diam(\Omega)\frac{\abs{x-y}}{\tau} \]
	for~$\mu$-a.e.~$x \in S^{-1}(\Omega)$. Since~$\rho > 0$, the same is true~$\Leb^d_{S^{-1}(\Omega)}$-a.e.
	
	\emph{Step 3 (identity~\eqref{eq:prop:boundary:01}).} Let~$\epsilon \in (0,1)$, take a Borel set~$A \subseteq S^{-1}(\partial \Omega)$, define
	\begin{align*} \tilde \mu_3 &\coloneqq \mu + \epsilon S_\# \mu_A  - \epsilon \mu_A \in \cone \comma \\ \tilde \gamma_3 &\coloneqq \gamma - \epsilon (\Id,S)_\# \mu_A \in \Adm_{\TrFun}(\tilde \mu_3,\bar \mu) \fstop \end{align*}
	By the minimality property,
	\[ 0 \le \int \left( \frac{(1-\epsilon)\log(1-\epsilon)}{\epsilon} - \log \rho - V + 1 + \Psi \circ S - \frac{\abs{\Id - S}^2}{2\tau} \right) \dif \mu_A \comma \]
	from which, by arbitrariness of~$\epsilon$ and~$A$, we infer the inequality~$\le$ in~\eqref{eq:prop:boundary:01}~$\Leb^d_{S^{-1}(\partial \Omega)}$-a.e. The inequality~$\ge$ follows from the first inequality in~\eqref{eq:prop:boundary:00}.
	
	\emph{Step 4 (second inequality in~\eqref{eq:prop:boundary:00} on~$S^{-1}(\partial \Omega)$).} We make use of~\eqref{eq:prop:boundary:01}, the Lipschitz continuity of~$\Psi$, the triangle inequality, and the inequality~$2ab-b^2 \le a^2$:
	\begin{align*} \log \rho(x) - \Psi(y) + V(x) &\stackrel{\eqref{eq:prop:boundary:01}}{=} \Psi(S(x))-\Psi(y)-\frac{\abs{x-S(x)}^2}{2\tau} \\
		&\le (\Lip \Psi) \abs{S(x)-y}-\frac{\abs{x-S(x)}^2}{2\tau} \\
		&\le (\Lip \Psi) \abs{x-S(x)}-\frac{\abs{x-S(x)}^2}{2\tau} + (\Lip \Psi) \abs{x-y} \\
		&\le \frac{\tau (\Lip \Psi)^2}{2} + (\Lip \Psi) \abs{x-y} \fstop \end{align*}
	Eventually, we conclude with the estimate
	\[ \abs{x-y} \le \frac{\abs{x-y}}{2\tau} + \frac{\tau\abs{x-y}}{2} \le \frac{\abs{x-y}}{2\tau} + \frac{\tau\diam(\Omega)}{2} \fstop \qedhere \]
\end{proof}

\subsubsection{Sobolev regularity}
\begin{proposition} \label{prop:sobolevReg}
	Let~$\mu$ be a minimizer of~\eqref{eq:singlestep} and denote by~$\rho$ the density of~$\mu_\Omega$.
	\begin{enumerate}
		\item The function~$\rho$ belongs to~$W^{1,(2 \wedge d)}_\mathrm{loc}(\Omega)$, and~$\sqrt{ \rho e^V}$ belongs to~$W^{1,2}(\Omega)$. We have the estimates
		\begin{equation} \label{eq:reg2}
			\norm{\nabla\sqrt{\rho e^V}}_{L^2} \le \const \frac{\TrFun( \mu,\bar \mu)}{ \tau} \comma
		\end{equation}
		and, for every~$q \in [1,\infty)$ such that~$q(d-2) \le d$,
		\begin{equation} \label{eq:reg2bis}
			\norm{\rho}_{L^q} \le \const_{q} \left( e^{\const \tau} +  \norm{\nabla\sqrt{\rho e^V}}_{L^2}^2 + \norm{\rho}_{L^1} \right) \fstop
		\end{equation}
		If~$d=1$, the same is true with~$q = \infty$ too.
		\item For every~$\gamma \in \Opt_\TrFun(\mu,\bar \mu)$, writing~$\gamma_{\Omega}^\ovom = (\Id,S)_\# \mu_\Omega$, we have \begin{equation} \label{eq:onestep:transportMap2}
			\frac{S-\Id}{\tau} \rho = \nabla \rho + \rho \nabla V = e^{-V} \nabla(\rho e^V) \qquad \Leb^d\text{-a.e.~on~} \Omega \fstop
		\end{equation}
	\end{enumerate}
\end{proposition}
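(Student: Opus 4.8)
The plan is to take an arbitrary minimizer $\mu$ of \eqref{eq:singlestep} — one exists by \Cref{prop:existence} — write $\rho$ for the density of $\mu_\Omega$ (which exists by \Cref{prop:existence}), and exploit that, by \Cref{prop:boundary} and the remark following it, $\rho\in L^\infty(\Omega)$ and $\rho$ is bounded below by a strictly positive constant. Since $\TrFun(\mu,\bar\mu)<\infty$ by \Cref{prop:existence}, \Cref{lemma:optTrFun} supplies $\gamma\in\Opt_\TrFun(\mu,\bar\mu)$, and \Cref{prop:trPlans} (together with the observation following it, using that $\mu_\Omega$ is absolutely continuous) yields a Borel map $S\colon\Omega\to\ovom$ with $\gamma_\Omega^\ovom=(\Id,S)_\#\mu_\Omega$. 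The crux is the Euler--Lagrange identity $\frac{S-\Id}{\tau}\rho=\nabla\rho+\rho\nabla V$ in the distributional sense on $\Omega$, which is item~(2); item~(1) is then extracted from it.

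To obtain the Euler--Lagrange identity I would perturb $\mu$ by a test vector field $\boldsymbol w\in C_c^\infty(\Omega;\R^d)$. For $\abs t$ small, $\mu_t\coloneqq(\Id+t\boldsymbol w)_\#\mu$ lies in $\cone$, and since $\boldsymbol w$ vanishes near $\partial\Omega$ we have $(\mu_t)_{\partial\Omega}=\mu_{\partial\Omega}$, so $\Ent(\mu_t)-\Ent(\mu)=\E(\rho_t)-\E(\rho)$, where $\rho_t\dif x=(\Id+t\boldsymbol w)_\#(\rho\dif x)$. A change of variables, the expansion $\log\det(\Id+t\nabla\boldsymbol w)=t\dive\boldsymbol w+O(t^2)$, and the identity $V(x+t\boldsymbol w(x))-V(x)=t\langle\nabla V(x),\boldsymbol w(x)\rangle+o(t)$ — justified by absolute continuity of $V\in W^{1,d+}_\mathrm{loc}(\Omega)$ along lines, integrated against $\boldsymbol w\rho\in L^{p'}$ because $\rho\in L^\infty$ — give $\E(\rho_t)-\E(\rho)=t\int_\Omega(-\dive\boldsymbol w+\langle\nabla V,\boldsymbol w\rangle)\rho\dif x+o(t)$. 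For the transport cost, \Cref{lemma:dirDer} and \eqref{eq:dirDer1}, together with the fact that $\boldsymbol w$ vanishes on $\partial\Omega$ so that only $\gamma_\Omega^\ovom$ contributes to the right-hand side, yield $\TrFun^2(\mu_t,\bar\mu)-\TrFun^2(\mu,\bar\mu)\le-2t\int_\Omega\langle\boldsymbol w,S-\Id\rangle\rho\dif x+o(t)$ as $t\downarrow0$. Inserting these into the minimality inequality $\Ent(\mu_t)+\frac{1}{2\tau}\TrFun^2(\mu_t,\bar\mu)\ge\Ent(\mu)+\frac{1}{2\tau}\TrFun^2(\mu,\bar\mu)$, dividing by $t>0$, and letting $t\downarrow0$ gives $\int_\Omega\rho\dive\boldsymbol w\dif x\ge\int_\Omega\langle\nabla V-\frac{S-\Id}{\tau},\boldsymbol w\rangle\rho\dif x$; applying this to $-\boldsymbol w$ as well turns it into an equality for every $\boldsymbol w$, which is exactly the Euler--Lagrange identity. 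Since its right-hand side $\frac{S-\Id}{\tau}\rho$ lies in $L^\infty(\Omega)$ and $\rho\nabla V\in L^{p}_\mathrm{loc}(\Omega)$ for $p=p(\omega)>d$, we deduce $\nabla\rho\in L^{p}_\mathrm{loc}(\Omega)$, i.e.\ $\rho\in W^{1,d+}_\mathrm{loc}(\Omega)\subseteq W^{1,1}_\mathrm{loc}(\Omega)$; combined with $e^V\in W^{1,d+}_\mathrm{loc}(\Omega)\cap L^\infty(\Omega)$ (chain rule, $V$ bounded) and the Leibniz rule, $\rho e^V\in W^{1,d+}_\mathrm{loc}(\Omega)$ with $\nabla(\rho e^V)=e^V(\nabla\rho+\rho\nabla V)=\frac{S-\Id}{\tau}\rho e^V$, which is \eqref{eq:onestep:transportMap2}.

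For item~(1), set $h\coloneqq\rho e^V$. As $\rho$ and $e^V$ are both bounded below by positive constants, so is $h$, and the chain rule gives $\nabla\sqrt h=\frac{\nabla h}{2\sqrt h}=\frac{\sqrt h\,(S-\Id)}{2\tau}$ a.e.\ on $\Omega$; hence
\[
	\int_\Omega\abs{\nabla\sqrt h}^2\dif x=\frac{1}{4\tau^2}\int_\Omega\abs{S(x)-x}^2e^{V(x)}\rho(x)\dif x\le\frac{e^{\norm{V}_{L^\infty}}}{4\tau^2}\cost(\gamma_\Omega^\ovom)\le\frac{e^{\norm{V}_{L^\infty}}}{4\tau^2}\TrFun^2(\mu,\bar\mu).
\]
Since $\sqrt h\in L^\infty(\Omega)\subseteq L^2(\Omega)$, this proves $\sqrt{\rho e^V}\in W^{1,2}(\Omega)$ and the estimate \eqref{eq:reg2}. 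Finally, put $g\coloneqq\sqrt{\rho e^V}-e^{\Psi/2}\in W^{1,2}(\Omega)$ (as $e^{\Psi/2}$ is Lipschitz) and, with the constant of \Cref{rmk:g}, choose $\kappa\coloneqq c(e^{c\tau}-1)$; then $g^{(\kappa)}$ is compactly supported in $\Omega$ by \Cref{rmk:g}, so $g^{(\kappa)}\in W^{1,2}_0(\Omega)$, with $\abs{\nabla g^{(\kappa)}}\le\abs{\nabla g}$ a.e.\ and $\abs{g-g^{(\kappa)}}\le\kappa$. The Gagliardo--Nirenberg--Sobolev inequality for $W^{1,2}_0$ (into $L^{2d/(d-2)}$ if $d\ge3$, into every $L^r$ with $r<\infty$ if $d=2$, and into $L^\infty$ if $d=1$), followed by Hölder's inequality on the bounded set $\Omega$, then yields for every $q\in[1,\infty)$ with $q(d-2)\le d$ — and also for $q=\infty$ when $d=1$ —
\[
	\norm{\sqrt{\rho e^V}}_{L^{2q}}\le\norm{g^{(\kappa)}}_{L^{2q}}+\kappa\abs{\Omega}^{1/(2q)}+\norm{e^{\Psi/2}}_{L^{2q}}\le\const_q\bigl(\norm{\nabla\sqrt{\rho e^V}}_{L^2}+e^{\const\tau}\bigr);
\]
squaring this and using $\rho=(\rho e^V)e^{-V}\le e^{\norm{V}_{L^\infty}}(\sqrt{\rho e^V})^2$ pointwise gives \eqref{eq:reg2bis}.

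I expect the main obstacle to be making the Euler--Lagrange computation rigorous under the weak hypotheses on $V$: differentiating $t\mapsto\E(\rho_t)$ when $V$ is merely locally $W^{1,d+}$ rather than $C^1$ is the delicate point, and it is precisely why the a priori bounds $\rho\in L^\infty(\Omega)$ and $\rho$ bounded below supplied by \Cref{prop:boundary} are indispensable, and why the perturbations must be supported away from $\partial\Omega$ (so as not to disturb the boundary part of $\Ent$ and so that \Cref{lemma:dirDer} applies cleanly to $\gamma_\Omega^\ovom$). Everything downstream — the bootstrap to $W^{1,d+}_\mathrm{loc}$, the closed formula for $\nabla\sqrt{\rho e^V}$, and the Sobolev-embedding step behind the $L^{2q}$ bounds — is then routine, the only vigilance needed being to keep all constants independent of $V$ except through $\norm{V}_{L^\infty}$, as the statement requires.
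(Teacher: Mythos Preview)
Your argument is correct and in fact takes a genuinely simpler route than the paper's own proof. The paper does \emph{not} work directly with the original~$V$: instead it mollifies~$V$ to smooth~$V_k$ with~$\norm{V_k}_{L^\infty}=\norm{V}_{L^\infty}$, finds minimizers~$\mu^k$ of the approximate functionals~$\Ent_k(\cdot)+\TrFun^2(\cdot,\bar\mu)/(2\tau)$, performs the Euler--Lagrange computation for~$\mu^k$ (trivial since~$V_k$ is Lipschitz), and then devotes considerable effort (Steps~2--4 and~6) to extracting a weakly convergent subsequence in~$W^{1,2}$, upgrading to strong~$L^2$-convergence via a localized Rellich--Kondrachov argument, proving that the limit~$\mu$ is itself a minimizer, and passing the Sobolev bound and the transport-map identity to the limit. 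This is why the paper only claims the properties for \emph{some} minimizer---namely the one constructed as a limit of the~$\mu^k$.

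Your approach, by contrast, exploits that \Cref{prop:boundary} already gives~$\rho\in L^\infty$ and bounded below for \emph{every} minimizer, which is exactly what is needed to differentiate~$t\mapsto\int V\circ R_t\cdot\rho$ when~$V$ is merely~$W^{1,d+}_\mathrm{loc}$ (your justification via absolute continuity on lines is correct; the paper itself carries out essentially the same computation later, in the proof of \Cref{lemma:uniqS}, via Friedrichs approximation). This lets you run the Euler--Lagrange computation directly on an arbitrary minimizer and thereby establish the full statement for every one of them---a strictly stronger conclusion. The price you pay is a logical dependence on \Cref{prop:boundary} already at the Sobolev step, whereas the paper's Step~1 (the core~$W^{1,2}$ bound) needs nothing about~$\rho^k$ beyond~$L^1$; but since both proofs invoke \Cref{rmk:g} (hence \Cref{prop:boundary}) for the higher-integrability estimate~\eqref{eq:reg2bis} anyway, this is no real loss. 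Your final Sobolev-embedding paragraph is also a bit cleaner than the paper's Step~5 (you implicitly use the Poincar\'e inequality in~$W^{1,2}_0$, which removes the need for the~$\norm{\rho}_{L^1}$ term; the bound~\eqref{eq:reg2bis} with that extra term follows a fortiori).
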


The core idea to prove \Cref{prop:sobolevReg} is to compute the first variation of the functional~\eqref{eq:singlestep} at a minimizer and exploit \Cref{lemma:dirDer}, like in \cite[Proposition~3.6]{FigalliGigli10}. However, the proof is complicated by the weak assumptions on~$V$ and the lack of regularity of the boundary~$\partial \Omega$. To manage~$V$, we rely on an approximation argument (in the next lemma). The issue with~$\partial \Omega$ is that the the Sobolev embedding theorem is not available for functions in~$W^{1,2}(\Omega)$. Nonetheless, we can still apply it to functions in~$W^{1,2}_0(\Omega)$. To do this, we leverage the approximate boundary conditions of \Cref{prop:boundary}.

\begin{lemma} \label{lem:sobReg}
	Let~$\mu$ be a minimizer of~\eqref{eq:singlestep} and denote by~$\rho$ the density of~$\mu_\Omega$. Let~$\boldsymbol w \colon \Omega \to \R^d$ be a~$C^\infty$-regular vector field with compact support. For~$\epsilon > 0$ sufficiently small, define~$\mu^\epsilon \coloneqq (\Id + \epsilon \boldsymbol w)_\# \mu$. Then
	\begin{equation}
		\lim_{\epsilon \to 0^+} \frac{\Ent(\mu) - \Ent(\mu^\epsilon)}{\epsilon} = \int_\Omega \bigl(\dive \boldsymbol w - \langle \nabla V, \boldsymbol w \rangle \bigr) \rho \dif x \fstop
	\end{equation}
\end{lemma}

\begin{proof}
	Let~$R_\epsilon (x) \coloneqq x + \epsilon \boldsymbol{w}(x)$. Fix~$\epsilon$ sufficiently small and an open set~$\omega \Subset \Omega$ so that~$R_{s\epsilon}$ is a diffeomorphism from~$\omega$ to itself and equals the identity on~$\Omega \setminus \omega$ for every~$s \in (0,1)$, and~$\inf_{s \in (0,1), x \in \Omega} \abs{\det \nabla R_{s\epsilon}(x)} > 0$. It can be easily checked that the density~$\rho^{\epsilon}$ of~$\mu^{\epsilon}_\Omega$ satisfies
	\[ \rho^{\epsilon} \circ R_\epsilon = \frac{\rho}{\det \nabla R_\epsilon} \quad \Leb^d\text{-a.e. on } \Omega \semicolon \]
	therefore,
	\begin{equation} \label{eq:ratioH}
		\begin{aligned}
			\frac{\Ent(\mu) - \Ent(\mu^\epsilon)}{\epsilon} &= \int_\Omega \frac{\log \rho - \log(\rho^{\epsilon} \circ R_\epsilon) + V - V \circ R_\epsilon }{\epsilon} \dif \mu_\Omega \\
			&= \int_\Omega \frac{\log \det \nabla R_\epsilon}{\epsilon} \dif \mu_\Omega + \int_{\Omega} \frac{V-V \circ R_\epsilon}{\epsilon} \dif \mu_\Omega \fstop
		\end{aligned}
	\end{equation}
	By the dominated convergence theorem,
	\begin{equation*}
		\lim_{\epsilon \to 0^+} \int_\Omega \frac{\log \det \nabla R_\epsilon}{\epsilon} \dif \mu_\Omega = \int_\Omega (\dive \boldsymbol w) \rho \dif x \fstop
	\end{equation*}
	To deal with the last term in~\eqref{eq:ratioH}, we choose an open set~$\tilde \omega$ such that~$\omega \Subset \tilde \omega \Subset \Omega$. By \Cref{sec:functions}, we have~$V \in W^{1,p}(\tilde \omega)$ for some~$p > d$ and, by Friedrichs’ theorem~\cite[Theorem~9.2]{Brezis11},
	the function~$V|_\omega$ is the limit in~$W^{1,p}(\omega)$ and a.e.~of (the restriction to~$\omega$ of) a sequence of equibounded functions~$(V_k)_{k \in \N_0} \subseteq C_c^\infty(\R^d)$. For every~$k$, we have
	\begin{align*}
		\int \frac{V-V \circ R_\epsilon}{\epsilon} \dif \mu_\Omega &= \int_\omega \frac{V-V_k}{\epsilon} \rho \dif x + \int_\omega \frac{V_k \circ R_\epsilon -V \circ R_\epsilon}{\epsilon} \rho \dif x - \int_\omega \langle \nabla V_k, \boldsymbol w \rangle \rho \dif x \\
		&\quad- \int_0^1 \int_\omega \langle(\nabla V_k) \circ R_{s\epsilon} - \nabla V_k, \boldsymbol w \rangle \rho \dif x \dif s \fstop %
	\end{align*}
	With a change of variables, we rewrite the last integral as
	\[
	\int_0^1 \int_\omega \langle(\nabla V_k) \circ R_{s\epsilon} - \nabla V_k, \boldsymbol w \rangle \rho \dif x \dif s
	=
	\int_\omega \Bigl\langle \nabla V_k,  \int_0^1 \frac{(\boldsymbol w \rho) \circ R_{s\epsilon}^{-1}}{\det \nabla R_{s\epsilon} \circ R_{s\epsilon}^{-1}} \dif s - \boldsymbol w \rho \Bigr\rangle \dif x \fstop
	\]
	Recall that~$\rho \in L^\infty(\Omega)$ by \Cref{rmk:equivmeasur}. Passing to the limit in~$k$, we find that
	\begin{equation*}
		\int \frac{V-V \circ R_\epsilon}{\epsilon} \dif \mu_\Omega + \int_\Omega \langle \nabla V, \boldsymbol{w} \rangle \rho \dif x = \int_\omega \Bigl\langle \nabla V,  \int_0^1 \frac{(\boldsymbol w \rho) \circ R_{s\epsilon}^{-1}}{\det \nabla R_{s\epsilon} \circ R_{s\epsilon}^{-1}} \dif s - \boldsymbol w \rho \Bigr\rangle \dif x \fstop
	\end{equation*}
	It only remains to prove that the right-hand side in the latter is negligible as~$\epsilon \to 0$. Let~$(\rho_l)_{l \in \N_0}$ be a sequence of continuous and equibounded functions that converge to~$\rho$ almost everywhere (hence in~$L^{p'}$). Using the triangle inequality and Minkowski's integral inequality, for~$l \in  \N_0$, we write
	\begin{multline*}
		\norm{\int_0^1 \frac{(\boldsymbol w \rho) \circ R_{s\epsilon}^{-1}}{\det \nabla R_{s\epsilon} \circ R_{s\epsilon}^{-1}} \dif s - \boldsymbol w \rho}_{L^{p'}} \\ \le
		\int_0^1 \norm{ \frac{(\boldsymbol w \rho - \boldsymbol w \rho_l) \circ R_{s\epsilon}^{-1}}{\det \nabla R_{s\epsilon} \circ R_{s\epsilon}^{-1}} }_{L^{p'}} \dif s + \norm{\boldsymbol w \rho_l - \boldsymbol w \rho }_{L^{p'}} 
		\\ + \int_0^1	\norm{ \frac{(\boldsymbol w \rho_l) \circ R_{s\epsilon}^{-1}}{\det \nabla R_{s\epsilon} \circ R_{s\epsilon}^{-1}}  - \boldsymbol w \rho_l}_{L^{p'}} \dif s \fstop
	\end{multline*}
	A change of variables yields
	\[
	\norm{ \frac{(\boldsymbol w \rho - \boldsymbol w \rho_l) \circ R_{s\epsilon}^{-1}}{\det \nabla R_{s\epsilon} \circ R_{s\epsilon}^{-1}} }_{L^{p'}} = \norm{\frac{\boldsymbol w \rho - \boldsymbol w\rho_l}{\abs{\det \nabla R_{s\epsilon}}^{1/p}}}_{L^{p'}} \fstop
	\]
	Hence, when we let~$\epsilon \to 0$, using that~$\rho_l$ is continuous, we find
	\[
	\limsup_{\epsilon \to 0} \norm{\int_0^1 \frac{(\boldsymbol w \rho) \circ R_{s\epsilon}^{-1}}{\det \nabla R_{s\epsilon} \circ R_{s\epsilon}^{-1}} \dif s - \boldsymbol w \rho}_{L^{p'}} \le 2\norm{\boldsymbol w \rho - \boldsymbol w\rho_l}_{L^{p'}} \comma
	\]
	and we conclude by arbitrariness of~$l$.
\end{proof}

\begin{proof}[Proof of \Cref{prop:sobolevReg}]
	\emph{Step 1 (inequality~\eqref{eq:reg2}). } Let~$\boldsymbol w \colon \Omega \to \R^d$ be a~$C^\infty$-regular vector field with compact support. For~$\epsilon > 0$ sufficiently small, define~$\mu^\epsilon \coloneqq (\Id + \epsilon \boldsymbol w)_\# \mu \in \cone$. Since~$\mu$ is optimal for~\eqref{eq:singlestep},
	\[
	\frac{\Ent(\mu) - \Ent(\mu^\epsilon)}{\epsilon} \le \frac{\TrFun^2(\mu^{\epsilon},\bar \mu) - \TrFun^2(\mu,\bar \mu)}{2\epsilon \tau} \fstop
	\]
	We can pass to the limit~$\epsilon \to 0$ using \Cref{lemma:dirDer} and \Cref{lem:sobReg} to find that
	\begin{equation} \label{eq:preTransportMapOld}
		\int_\Omega \bigl(\dive \boldsymbol{w} - \langle \nabla V, \boldsymbol{w}\rangle \bigr) \rho \dif x  \le -\frac{1}{\tau} \int \langle \boldsymbol{w}(x), y-x \rangle \dif  \gamma(x,y)
		\le \norm{\boldsymbol w}_{L^2 (\rho)} \frac{\TrFun(\mu, \bar \mu)}{\tau} \comma
	\end{equation}
	for any~$\gamma \in \Opt_{\TrFun}(\mu,\bar \mu)$. By the Riesz representation theorem, this means that there exists a vector field~$\boldsymbol u \in L^2(\rho;\R^d)$ such that
	\begin{equation} \label{eq:boundu}
		\norm{\boldsymbol u}_{L^2(\rho)} \le \frac{\TrFun(\mu, \bar \mu)}{\tau} \comma
	\end{equation}
	and
	\[
	\int_\Omega \bigl(\dive \boldsymbol{w} - \langle \nabla V, \boldsymbol{w}\rangle \bigr) \rho \dif x = \int_\Omega \langle \boldsymbol u, \boldsymbol{w} \rangle \rho \dif x \comma
	\]
	for all smooth and compactly supported vector fields~$\boldsymbol w$.
	In other words,~$-\rho (\boldsymbol u + \nabla V)$ is the distributional gradient of~$\rho$. Since~$\rho \in L^\infty(\Omega)$ (see \Cref{rmk:equivmeasur}) and~$V \in W^{1,d+}_\mathrm{loc}(\Omega)$, we now know that~$\rho \in W^{1,(2 \wedge d)}_\mathrm{loc}(\Omega)$. Hence, for every smooth~$\boldsymbol w$ that is compactly supported,
	\begin{align*}
		\int_{\Omega} \sqrt{\rho e^{V}} \dive \boldsymbol w &\dif x = \lim_{\epsilon \downarrow 0} \int_{\Omega} \sqrt{\rho e^{V} + \epsilon} \dive \boldsymbol w \dif x = \lim_{\epsilon \downarrow 0} \int_{\Omega} \frac{\rho e^{V}}{2\sqrt{\rho e^{V} + \epsilon}} \langle \boldsymbol u, \boldsymbol w \rangle \dif x \\
		&\le \frac{\norm{\boldsymbol u}_{L^2(\rho)}}2 \liminf_{\epsilon \downarrow 0} \sqrt{\int_\Omega \frac{\rho e^{2V} \abs{\boldsymbol w}^2}{\rho e^{V} + \epsilon} \dif x } = \frac{ \norm{\boldsymbol u}_{L^2(\rho)} \norm{\boldsymbol w}_{L^2(e^{V})} }{2} \comma
	\end{align*}
	where, for the second equality, we used a standard property of the composition of Sobolev functions (cf.~\cite[Proposition~9.5]{Brezis11}) and, in the last one, the monotone convergence theorem. It follows that that~$\sqrt{\rho e^{V}} \in W^{1,2}(\Omega)$ with
	\begin{equation} \label{eq:sobBoundOld}
		\int_{\Omega} \abs{\nabla \sqrt{\rho e^{V}}}^2 e^{-V} \dif x \le \left( \frac{\norm{\boldsymbol{u}}_{L^2(\rho)}}{2} \right)^2 \stackrel{\eqref{eq:boundu}}{\le} \frac{\TrFun^2(\mu, \bar \mu)}{4\tau^2} \comma
	\end{equation}
	which, since~$V$ is bounded, yields~\eqref{eq:reg2}.
	
	\emph{Step 2 (inequality~\eqref{eq:reg2bis}).} Pick~$q$ as in the statement, i.e.,~$1 \le q < \infty$~with~$q(d-2) \le d$ or, if~$d=1$,~$q \in [1,\infty]$. Inequality~\eqref{eq:reg2bis} would follow from the Sobolev embedding theorem~\cite[Corollary~9.14]{Brezis11} if~$\partial \Omega$ were regular enough. Nonetheless, by~\cite[Remark~20, Chapter~9]{Brezis11}, even with no regularity on~$\partial \Omega$, we still have that the inclusion~$W^{1,2}_0(\Omega) \hookrightarrow L^q(\Omega)$ is continuous. Consider the functions~$g$ and~$g^{(\kappa)}$ of \Cref{rmk:g} and fix~$\kappa = c(e^{c\tau}-1)$ for a suitable constant~$c$ independent of~$\tau$ (and~$q$), so that~$g^{(\kappa)}$ is compactly supported, hence in~$W^{1,2}_0(\Omega)$. From the Sobolev embedding theorem we obtain~$\norm{ g^{(\kappa)}}_{L^{2q}} \le \const_q \norm{ g^{(\kappa)}}_{W^{1,2}}$ and, therefore,
	\begin{align*}
		\norm{\sqrt{ \rho e^{V}}}_{L^{2q}} &\le \const_q + \norm{ g}_{L^{2q}} \le \const_q (1+\kappa) + \norm{ g^{(\kappa)}}_{L^{2q}} \le \const_{q} \left( 1+\kappa+ \norm{ g^{(\kappa)}}_{W^{1,2}} \right) \\
		&\le \const_{q} \left( 1+\kappa+ \norm{ g}_{W^{1,2}} \right) \le \const_{q} \left( 1+\kappa+ \norm{\sqrt{ \rho e^{V}}}_{W^{1,2}} \right) \\
		&\le \const_{q} \left( 1+\kappa + \norm{\nabla\sqrt{\rho e^V}}_{L^2} + \sqrt{\norm{ \rho}_{L^1}} \right) \comma
	\end{align*}
	which can be easily transformed into~\eqref{eq:reg2bis}.

	\emph{Step 3 (identity~\eqref{eq:onestep:transportMap2}).} Let~$\gamma \in \Opt_{\TrFun}(\mu,\bar \mu)$ and let~$S$ be such that~$\gamma_\Omega^\ovom = (\Id,S)_\# \mu_\Omega$. From~\eqref{eq:preTransportMapOld} we infer that
	\begin{multline*} \label{eq:pretransportmapnew}
		-2\int_\Omega  \sqrt{\rho e^{-V}} \left \langle \nabla \sqrt{\rho e^{V}}, \boldsymbol w \right\rangle \dif x  \le -\frac{1}{\tau} \int \langle \boldsymbol{w}(x), y-x \rangle \dif  \gamma(x,y) \\ = -\frac{1}{\tau} \int \langle \boldsymbol{w}, S-\Id \rangle \rho \dif x  \fstop
	\end{multline*}
	By arbitrariness of~$\boldsymbol w$,~\eqref{eq:onestep:transportMap2} follows.
\end{proof}

\subsubsection{Uniqueness}
Let us assume that~$\mu$ and~$\mu'$ are two minimizers for~\eqref{eq:singlestep}
such that their restrictions to~$\Omega$ are absolutely continuous; let~$\rho$ and~$\rho'$ be their respective densities. Let~$\gamma \in \Opt_\TrFun(\mu,\bar \mu)$ and~$\gamma' \in \Opt_\TrFun(\mu', \bar \mu)$. By \Cref{prop:trPlans}, we can write
\begin{align*}
	&\gamma_\Omega^\ovom = (\Id,S)_\# \mu_\Omega \comma \quad( \gamma')_\Omega^\ovom = (\Id,S')_\# \mu_\Omega \comma \\ &\gamma_\ovom^\Omega = (T,\Id)_\# \bar \mu_\Omega \comma \quad (\gamma')_\ovom^\Omega = (T',\Id)_\# \bar \mu_\Omega \comma
\end{align*}
for some appropriate Borel maps. %

\begin{proposition} \label{prop:uniq}
	The two measures~$\mu$ and~$\mu'$ are equal.
\end{proposition}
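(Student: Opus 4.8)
The plan is a midpoint comparison argument that first forces the interior densities to coincide, after which the rigidity of optimal transport maps (via Brenier's theorem) pins down the boundary parts through the admissibility constraints \ref{(1)}--\ref{(4)}. Concretely, I would set $\mu_{1/2}:=\tfrac12(\mu+\mu')\in\cone$, pick $\gamma\in\Opt_\TrFun(\mu,\bar\mu)$ and $\gamma'\in\Opt_\TrFun(\mu',\bar\mu)$ (which exist by \Cref{prop:existence} and \Cref{lemma:optTrFun}), and check directly from \Cref{def:TrDis} and \Cref{def:TrFun} that $\tfrac12(\gamma+\gamma')\in\Adm_\TrFun(\mu_{1/2},\bar\mu)$. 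Since $\cost$ is linear in the plan, this gives $\TrFun^2(\mu_{1/2},\bar\mu)\le\tfrac12\bigl(\TrFun^2(\mu,\bar\mu)+\TrFun^2(\mu',\bar\mu)\bigr)$; on the other hand $\E$ is convex and $\mu\mapsto\mu_{\partial\Omega}(\Psi)$ is linear, so $\Ent(\mu_{1/2})\le\tfrac12\bigl(\Ent(\mu)+\Ent(\mu')\bigr)$. Adding, $\mu_{1/2}$ attains the (finite) minimum of \eqref{eq:singlestep}, so both inequalities are equalities. Equality in the entropy estimate, $\E\bigl(\tfrac{\rho+\rho'}2\bigr)=\tfrac12(\E(\rho)+\E(\rho'))$, forces $\rho=\rho'$ $\Leb^d$-a.e.\ by strict convexity of $s\mapsto s\log s$, while equality in the cost estimate gives $\tfrac12(\gamma+\gamma')\in\Opt_\TrFun(\mu_{1/2},\bar\mu)$. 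Thus $\mu_\Omega=\mu'_\Omega$, and it remains only to prove $\mu_{\partial\Omega}=\mu'_{\partial\Omega}$.

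Next I would exploit \Cref{prop:trPlans}. Applied to the optimal plan $\tfrac12(\gamma+\gamma')$ with $A\times B=\Omega\times\ovom$ and with $A\times B=\ovom\times\Omega$ (both disjoint from $\partial\Omega\times\partial\Omega$), it yields that
\[
\tfrac12\bigl(\gamma_\Omega^\ovom+(\gamma')_\Omega^\ovom\bigr)=\tfrac12\bigl((\Id,S)_\#\mu_\Omega+(\Id,S')_\#\mu_\Omega\bigr)
\quad\text{and}\quad
\tfrac12\bigl(\gamma_\ovom^\Omega+(\gamma')_\ovom^\Omega\bigr)=\tfrac12\bigl((T,\Id)_\#\bar\mu_\Omega+(T',\Id)_\#\bar\mu_\Omega\bigr)
\]
are optimal for the classical $2$-Wasserstein distance between their marginals (here I use $\mu_\Omega=\mu'_\Omega$). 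Since the first plan has absolutely continuous first marginal $\mu_\Omega$ and the second has absolutely continuous second marginal $\bar\mu_\Omega$, Brenier's theorem tells us the optimal plan is unique and induced by a map; disintegrating in the absolutely continuous variable, the conditional, equal to $\tfrac12(\delta_{S(x)}+\delta_{S'(x)})$ respectively $\tfrac12(\delta_{T(y)}+\delta_{T'(y)})$, must therefore be a Dirac mass. Hence $S=S'$ $\mu_\Omega$-a.e.\ and $T=T'$ $\bar\mu_\Omega$-a.e.

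Finally I would read the boundary parts off the constraints: restricting the identity $\mu=\bar\mu+\pi^1_\#\gamma-\pi^2_\#\gamma$ (condition \ref{(3)}) to $\partial\Omega$, and using $\gamma_{\partial\Omega}^{\partial\Omega}=0$ together with $\gamma_\ovom^\Omega=(T,\Id)_\#\bar\mu_\Omega$ and $\gamma_\Omega^\ovom=(\Id,S)_\#\mu_\Omega$, one gets
\[
\mu_{\partial\Omega}=\bar\mu_{\partial\Omega}+T_\#\bigl((\bar\mu_\Omega)|_{T^{-1}(\partial\Omega)}\bigr)-S_\#\bigl((\mu_\Omega)|_{S^{-1}(\partial\Omega)}\bigr),
\]
and the analogous identity holds for $\mu'$ with $T',S'$ in place of $T,S$ and with $\mu'_\Omega=\mu_\Omega$. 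Since $S=S'$ $\mu_\Omega$-a.e.\ and $T=T'$ $\bar\mu_\Omega$-a.e., the two right-hand sides agree, so $\mu_{\partial\Omega}=\mu'_{\partial\Omega}$, and therefore $\mu=\mu'$. I expect the interior step to be routine convexity; the genuine obstacle is the boundary, since $\Ent$ is affine along perturbations of $\mu_{\partial\Omega}$ and strict convexity is useless there, so one must instead transfer the uniqueness of the Brenier maps to the boundary masses through the bookkeeping constraints \ref{(1)}--\ref{(4)}. (Note that this route does not seem to need the extra standing hypothesis that $\mu$ enjoys the Sobolev regularity of \Cref{prop:sobolevReg}.)
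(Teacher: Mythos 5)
Your argument is correct, and it takes a genuinely different route from the paper at the decisive point. The midpoint step (strict convexity of $s\mapsto s\log s$ applied to $\mu_{1/2}=\tfrac12(\mu+\mu')$ with the averaged plan $\tfrac12(\gamma+\gamma')$) is exactly the paper's Step~1 for $\rho=\rho'$; but from there the paper identifies $S$ with $S'$ through the Euler--Lagrange identity \eqref{eq:onestep:transportMap2} (\Cref{lemma:uniqS}), which is precisely why it carries the standing hypothesis that one minimizer has the regularity of \Cref{prop:sobolevReg}, and it identifies $T$ with $T'$ only on $T^{-1}(\partial\Omega)\cap (T')^{-1}(\partial\Omega)$ via \Cref{lemma:argmin} and Cox's almost-sure-uniqueness lemma (\Cref{lemma:uniqT}), using Condition~\ref{(2)} to see that the two preimages are $\bar\mu$-essentially equal, before concluding $\gamma=\gamma'$ and invoking Condition~\ref{(3)}. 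You instead push the equality case further: since all inequalities collapse, $\cost\bigl(\tfrac12(\gamma+\gamma')\bigr)=\TrFun^2(\mu_{1/2},\bar\mu)$, so the averaged plan is itself $\TrFun$-optimal (Condition~\ref{(4)} clearly passes to the average), and then \Cref{prop:trPlans} plus the uniqueness part of Brenier's theorem (one marginal absolutely continuous) forces the disintegrations $\tfrac12(\delta_{S(x)}+\delta_{S'(x)})$ and $\tfrac12(\delta_{T(y)}+\delta_{T'(y)})$ to be Dirac, i.e.\ $S=S'$ $\mu_\Omega$-a.e.\ and $T=T'$ $\bar\mu_\Omega$-a.e.; the boundary identity obtained by restricting Condition~\ref{(3)} to $\partial\Omega$ (using $\gamma_{\partial\Omega}^{\partial\Omega}=0$) then closes the argument, exactly as in the paper's Step~3 but without first proving $\gamma=\gamma'$ in full. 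This buys you two things: no need for the Sobolev-regularity standing hypothesis or for the external Cox lemma, and the stronger conclusion $T=T'$ a.e.\ rather than only on the boundary preimages. What the paper's route buys in exchange is the explicit variational characterization of $S$ and $T$ on the boundary (the argmin property), which mirrors the analysis used elsewhere (e.g.\ \Cref{prop:boundary}) and does not rely on uniqueness theorems for $W_2$-optimal plans, only on existence of the Brenier maps; your proof, by contrast, is shorter and stays entirely within the convexity/optimal-transport rigidity toolkit already set up in \Cref{prop:existence} and \Cref{prop:trPlans}.
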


Note that uniqueness is not immediate, given that the functional~$\Ent$ is not strictly convex. This setting is different from that of~\cite{Morales18} and~\cite{FigalliGigli10}: therein, measures are defined \emph{only on~$\Omega$}. Instead, we claim here that the measure~$\mu$, on the \emph{whole}~$\ovom$, is uniquely determined.

The proof of \Cref{prop:uniq} is preceded by three lemmas: the first one concerns the identification of~$S$ and~$S'$; the second one, similar to~\cite[Proposition~A.3~(A.5)]{Morales18}, shows that~$T|_{T^{-1}(\partial \Omega)}$ and~$T'|_{(T')^{-1}(\partial \Omega)}$ enjoy one same property, inferred from the minimality of~$\mu$ and~$\mu'$; the third one ensures that this property identifies uniquely~$T$ (i.e.,~$T=T'$) on~$T^{-1}(\partial \Omega) \cap (T')^{-1}(\partial \Omega)$.

\begin{lemma} \label{lemma:uniqS}
	If~$\mu_\Omega = \mu_\Omega'$, then~$S(x) = S'(x)$ for~$\Leb^d_\Omega$-a.e~$x$.
\end{lemma}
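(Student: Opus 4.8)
The plan is to run a midpoint argument tailored to the hypothesis $\mu_\Omega = \mu'_\Omega$. Write $\rho$ for this common density. First I would introduce the midpoint measure $\mu'' \coloneqq \tfrac12(\mu+\mu')$, which again lies in $\cone$ (indeed $(\mu'')_\Omega = \mu_\Omega \ge 0$ and $\mu''(\ovom)=0$), together with the plan $\gamma'' \coloneqq \tfrac12(\gamma+\gamma')$, and check that $\gamma'' \in \Adm_\TrFun(\mu'',\bar \mu)$: Conditions \ref{(1)}--\ref{(4)} in \Cref{def:TrDis} and \Cref{def:TrFun} are all preserved under convex combinations of plans, the only point worth noting being that $\bigl(\pi^1_\#\gamma''\bigr)_\Omega = \tfrac12(\mu_\Omega+\mu'_\Omega) = \mu_\Omega = (\mu'')_\Omega$, where $\mu_\Omega = \mu'_\Omega$ is used.

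The key observation is that, since $\mu_\Omega = \mu'_\Omega$, the restriction $(\mu'')_\Omega$ still has density $\rho$, so the entropy is \emph{exactly additive}: $\Ent(\mu'') = \E(\rho) + \tfrac12\mu_{\partial\Omega}(\Psi) + \tfrac12\mu'_{\partial\Omega}(\Psi) = \tfrac12\Ent(\mu) + \tfrac12\Ent(\mu')$; note that no convexity of $\E$ is needed, only linearity, precisely because $\rho' = \rho$. On the other hand $\cost$ is linear in the plan and $\gamma,\gamma'$ are $\TrFun$-optimal, so $\cost(\gamma'') = \tfrac12\TrFun^2(\mu,\bar\mu) + \tfrac12\TrFun^2(\mu',\bar\mu)$. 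Since $\mu$ and $\mu'$ are both minimizers of \eqref{eq:singlestep}, they attain the same minimal value $m$, and summing the two previous identities yields $\Ent(\mu'') + \cost(\gamma'')/(2\tau) = m$. Because $\TrFun^2(\mu'',\bar\mu) \le \cost(\gamma'')$ while $\Ent(\mu'') + \TrFun^2(\mu'',\bar\mu)/(2\tau) \ge m$ by minimality, I conclude that $\mu''$ is itself a minimizer of \eqref{eq:singlestep} and that $\gamma'' \in \Opt_\TrFun(\mu'',\bar\mu)$.

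Finally I would apply \Cref{prop:trPlans} with $A = \Omega$ and $B = \ovom$ (so that $(A\times B)\cap(\partial\Omega\times\partial\Omega) = \emptyset$): since $(\mu'')_\Omega$ is absolutely continuous, the plan $(\gamma'')_\Omega^\ovom$ is induced by a Borel map, say $(\gamma'')_\Omega^\ovom = (\Id,T'')_\#\mu_\Omega$. On the other hand, by construction $(\gamma'')_\Omega^\ovom = \tfrac12(\Id,S)_\#\mu_\Omega + \tfrac12(\Id,S')_\#\mu_\Omega$. Disintegrating both representations with respect to the first coordinate and using uniqueness of the disintegration, one obtains $\tfrac12\delta_{S(x)} + \tfrac12\delta_{S'(x)} = \delta_{T''(x)}$ for $\mu_\Omega$-a.e.\ $x$, and a convex combination of two Dirac masses is a Dirac mass only if the two points coincide, so $S(x) = S'(x) = T''(x)$ for $\mu_\Omega$-a.e.\ $x$, which is the assertion. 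I do not expect a genuine obstacle here; the single thing that must be exploited is $\rho = \rho'$, which upgrades the convex inequality $\Ent(\mu'') \le \tfrac12\Ent(\mu)+\tfrac12\Ent(\mu')$ to an equality and thereby makes the midpoint a minimizer despite the lack of strict convexity of $\Ent$. (In particular, this argument uses neither the extra regularity of \Cref{prop:sobolevReg} nor the transport identity \eqref{eq:onestep:transportMap2}.)
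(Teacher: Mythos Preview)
Your argument is correct and takes a genuinely different route from the paper's.

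The paper's proof exploits the standing assumption that $\mu$ enjoys the regularity of \Cref{prop:sobolevReg} and that $S$ satisfies the Euler--Lagrange identity~\eqref{eq:onestep:transportMap2}; it then re-runs the first-variation argument of \Cref{prop:sobolevReg} (Step~1) for~$\mu'$, using the minimality of~$\mu'$ together with an approximation of~$V$ to show that~$S'$ satisfies the \emph{same} pointwise identity $\frac{S'-\Id}{\tau}\rho = \nabla\rho + \rho\nabla V$, whence $S=S'$ $\mu_\Omega$-a.e. Your midpoint argument bypasses all of this: the hypothesis $\rho=\rho'$ turns the entropy inequality for $\mu'' = \tfrac12(\mu+\mu')$ into an equality, forcing $\gamma''=\tfrac12(\gamma+\gamma')$ to be $\TrFun$-optimal, and then \Cref{prop:trPlans} plus Brenier's theorem (the plan $(\gamma'')_\Omega^{\ovom}$ is concentrated on a graph because its first marginal is absolutely continuous) immediately give $S=S'$. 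The student's approach is more elementary, self-contained, and---as you note---does not rely on the Sobolev regularity of~$\rho$, the transport identity~\eqref{eq:onestep:transportMap2}, or any differentiability of~$V$; it would go through for any minimizers $\mu,\mu'$ with absolutely continuous restriction to~$\Omega$. What the paper's approach buys, by contrast, is that the identity~\eqref{eq:onestep:transportMap2} for both $S$ and $S'$ is independently useful elsewhere (and is the natural ``first-order condition'' characterizing minimizers), so establishing it for $S'$ is informative in its own right.
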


\begin{proof}
	This statement immediately follows from~\eqref{eq:onestep:transportMap2} in \Cref{prop:sobolevReg}.
\end{proof}

\begin{lemma} \label{lemma:argmin}
	For~$\bar \mu$-a.e.~point~$x \in \Omega$ such that~$T(x) \in \partial \Omega$, we have
	\begin{equation}
		T(x) \in \argmin_{y \in \partial \Omega} \left( \Psi(y) + \frac{\abs{x-y}^{2}}{2\tau} \right) \fstop
	\end{equation}
	An analogous statement holds for~$T'$.
\end{lemma}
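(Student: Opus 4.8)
The plan is to run the same first-variation argument used for \Cref{lemma:minimalityS}, but now on the map $T$ coming from the \emph{second} marginal of $\gamma$ rather than on $S$; the sign in front of $\Psi$ will be reversed because the transported mass is now taken \emph{from} the boundary instead of being deposited \emph{onto} it.

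First I would fix, by the measurable selection theorem \cite[Theorem~18.19]{AliprantisBorder06} — applicable since $\partial\Omega$ is compact and $(x,y)\mapsto\Psi(y)+\abs{x-y}^2/(2\tau)$ is continuous, so that the minimum over $y\in\partial\Omega$ is attained for every $x$ — a Borel map $R\colon\Omega\to\partial\Omega$ with $R(x)\in\argmin_{y\in\partial\Omega}\bigl(\Psi(y)+\abs{x-y}^2/(2\tau)\bigr)$ for all $x$. Then, for an arbitrary Borel set $A\subseteq T^{-1}(\partial\Omega)\subseteq\Omega$, I would reroute inside $\gamma$ the mass reaching $A$ from the boundary point $T(x)$ so that it comes instead from $R(x)$, i.e.\ consider
\[
	\tilde\gamma \coloneqq \gamma - (T,\Id)_\#\bar\mu_A + (R,\Id)_\#\bar\mu_A \comma \qquad
	\tilde\mu \coloneqq \mu - T_\#\bar\mu_A + R_\#\bar\mu_A \fstop
\]
Since $(T,\Id)_\#\bar\mu_A \le \gamma_\ovom^\Omega \le \gamma$, the measure $\tilde\gamma$ is nonnegative; since $T(x),R(x)\in\partial\Omega$ while $A\subseteq\Omega$, both correction terms are concentrated on $\partial\Omega\times\Omega$, so that $\tilde\mu_\Omega=\mu_\Omega\ge 0$ and $\tilde\mu(\ovom)=\mu(\ovom)=0$, whence $\tilde\mu\in\cone$, and moreover $(\tilde\gamma)_{\partial\Omega}^{\partial\Omega}=\gamma_{\partial\Omega}^{\partial\Omega}=0$; checking Conditions~\ref{(1)}--\ref{(3)} of \Cref{def:TrDis} then gives $\tilde\gamma\in\Adm_\TrFun(\tilde\mu,\bar\mu)$.

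Next I would invoke the minimality of $\mu$ for \eqref{eq:singlestep} together with the optimality of $\gamma$: from $\cost(\gamma)=\TrFun^2(\mu,\bar\mu)$ and $\TrFun^2(\tilde\mu,\bar\mu)\le\cost(\tilde\gamma)$ one gets $\Ent(\mu)+\tfrac{1}{2\tau}\cost(\gamma)\le\Ent(\tilde\mu)+\tfrac{1}{2\tau}\cost(\tilde\gamma)$. Because $\tilde\mu_\Omega=\mu_\Omega$, the $\E$-part of $\Ent$ cancels, and after rearranging the inequality reduces to
\[
	\int_A\Bigl(\Psi(T(x))+\frac{\abs{x-T(x)}^2}{2\tau}\Bigr)\dif\bar\mu(x)
	\le
	\int_A\Bigl(\Psi(R(x))+\frac{\abs{x-R(x)}^2}{2\tau}\Bigr)\dif\bar\mu(x)
	=\int_A\min_{y\in\partial\Omega}\Bigl(\Psi(y)+\frac{\abs{x-y}^2}{2\tau}\Bigr)\dif\bar\mu(x)\fstop
\]
Since the integrand on the left dominates the one on the right pointwise and $A\subseteq T^{-1}(\partial\Omega)$ is arbitrary, equality must hold $\bar\mu$-a.e.\ on $T^{-1}(\partial\Omega)$, which is exactly the assertion for $T$. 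The statement for $T'$ follows verbatim, replacing $\mu,\gamma,S$ by $\mu',\gamma'\in\Opt_\TrFun(\mu',\bar\mu),S'$; note that only the minimality of $\mu'$ is used, not its Sobolev regularity. I do not expect a genuine obstacle here: this is a routine computation, and the only point requiring care is the bookkeeping that $\tilde\gamma$ stays $\TrFun$-admissible — in particular that Condition~\ref{(4)} of \Cref{def:TrFun} is preserved, which is automatic because keeping $A\subseteq\Omega$ prevents the rerouted mass from ever sitting on $\partial\Omega\times\partial\Omega$.
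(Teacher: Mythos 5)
Your proposal is correct and is essentially the paper's argument: the paper proves \Cref{lemma:argmin} precisely as "an easy adaptation" of \Cref{lemma:minimalityS}, i.e.\ the same rerouting of the boundary-to-interior mass $(T,\Id)_\#\bar\mu_A$ onto a measurable selection $R$ of the argmin, using minimality of $\mu$ and optimality of $\gamma$, with the sign of $\Psi$ flipped exactly as you note. Your bookkeeping (nonnegativity of $\tilde\gamma$, $\tilde\mu\in\cone$, preservation of Condition~\ref{(4)}, cancellation of the $\E$-part since $\tilde\mu_\Omega=\mu_\Omega$) fills in the details the paper leaves implicit.
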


\begin{proof}
	Set
	\begin{equation} \label{eq:deff} f(x,y) \coloneqq \Psi(y) + \frac{\abs{x-y}^2}{2\tau} \comma \qquad x \in \Omega \comma y \in \partial \Omega \fstop \end{equation}
	By~\cite[Theorem 18.19]{AliprantisBorder06} there exists a Borel function~$R \colon \Omega \to \partial \Omega$ such that
	\[ R(x) \in \argmin_{y \in \partial \Omega} f(x,y) \]
	for all~$x \in \Omega$. Let~$A \subseteq T^{-1}(\partial \Omega)$ be a Borel set and consider the measure
	\[ \tilde \mu \coloneqq \mu - T_\# \bar \mu_A + R_\# \bar \mu_A \comma  \]
	which lies in~$\cone$. Additionally define
	\[ \tilde \gamma \coloneqq \gamma - (T,\Id)_\#  \bar \mu_A + (R,\Id)_\#  \bar \mu_A \]
	and notice that~$\tilde \gamma \in \Adm_{\TrFun}(\tilde \mu, \bar \mu)$. By the minimality property of~$\mu$ and the optimality of~$\gamma$, we must have
	\[ \Ent(\mu) + \frac{1}{2\tau} \cost(\gamma) \le \Ent(\tilde \mu) + \frac{1}{2\tau} \cost({\tilde \gamma}) \comma \]
	which, after rearranging the terms, gives
	\begin{equation*} \int f(x,T(x)) \dif  \bar \mu_A(x) \le \int f(x,R(x)) \dif  \bar \mu_A(x) = \int \min_{y \in \partial \Omega} f(x,y) \dif \bar \mu_A(x) \fstop \end{equation*}
	We conclude the proof by arbitrariness of~$A$.
\end{proof}

\begin{lemma} \label{lemma:uniqT}
	For~$\bar \mu$-a.e.~point~$x \in \Omega$ such that~$T(x) \in \partial \Omega$ and~$T'(x) \in \partial \Omega$, we have
	\[ T(x) = T'(x) \fstop \]
\end{lemma}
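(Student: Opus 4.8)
The plan is to reduce the statement to the assertion that the set of minimizers appearing in \Cref{lemma:argmin} is, for $\Leb^d$-almost every $x$, a singleton. Write
\[
	M_\tau(x) \coloneqq \argmin_{y \in \partial \Omega}\left( \Psi(y) + \frac{\abs{x-y}^2}{2\tau} \right) \fstop
\]
Since $\bar\mu_\Omega$ is absolutely continuous with density $\bar\rho$, a $\Leb^d$-negligible set is $\bar\mu$-negligible; hence, once we know that $M_\tau(x)$ is a singleton for $\Leb^d$-a.e.~$x \in \Omega$, we may combine this with \Cref{lemma:argmin} applied to $T$ and (separately) to $T'$ to conclude that $T(x) = T'(x)$ for $\bar\mu$-a.e.~$x$ such that $T(x) \in \partial\Omega$ and $T'(x) \in \partial\Omega$.

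The core of the argument is the classical semiconcavity of the inf-convolution. Expanding the square,
\[
	\min_{y \in \partial\Omega}\left( \Psi(y) + \frac{\abs{x-y}^2}{2\tau} \right) = \frac{\abs{x}^2}{2\tau} - \frac{1}{\tau}\,\phi(x), \qquad \phi(x) \coloneqq \max_{y \in \partial\Omega}\left( \langle x, y\rangle - \tau \Psi(y) - \frac{\abs{y}^2}{2} \right) \fstop
\]
The maximum is attained and finite because $\partial\Omega$ is compact (a closed subset of the compact set $\ovom$) and $\Psi$ is continuous. As a supremum of affine functions of $x$, the map $\phi$ is convex and locally Lipschitz on $\R^d$, so by Rademacher's theorem it is differentiable at $\Leb^d$-a.e.~$x$. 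Moreover, $y \in \partial\Omega$ belongs to $M_\tau(x)$ if and only if $y$ attains the maximum defining $\phi(x)$; in that case $\phi(z) \ge \langle z, y\rangle - \tau\Psi(y) - \abs{y}^2/2 = \phi(x) + \langle z - x, y\rangle$ for every $z \in \R^d$, i.e.~$y$ is a subgradient of $\phi$ at $x$. At any point $x$ where $\phi$ is differentiable, the subdifferential is the single point $\{\nabla\phi(x)\}$, which forces $M_\tau(x) = \{\nabla\phi(x)\}$.

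Putting the pieces together: for $\Leb^d$-a.e.~$x \in \Omega$ the set $M_\tau(x)$ has exactly one element, hence the same holds for $\bar\mu$-a.e.~$x$ by absolute continuity of $\bar\mu_\Omega$. By \Cref{lemma:argmin}, $T(x) \in M_\tau(x)$ for $\bar\mu$-a.e.~$x$ with $T(x) \in \partial\Omega$, and likewise $T'(x) \in M_\tau(x)$ for $\bar\mu$-a.e.~$x$ with $T'(x) \in \partial\Omega$; intersecting these full-measure sets gives the claim. I do not anticipate a genuine obstacle here: the only point requiring mild care is the inclusion $M_\tau(x) \subseteq \partial\phi(x)$ together with the elementary fact that a convex function differentiable at a point has a one-element subdifferential there, while the reduction itself is just bookkeeping with the absolute continuity of $\bar\mu_\Omega$.
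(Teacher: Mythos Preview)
Your proof is correct and in fact more direct than the paper's. The paper outsources the uniqueness-of-minimizer step to \cite[Lemma~1]{Cox20}, verifying its (somewhat idiosyncratic) list of hypotheses (Absolute Continuity, Continuous Differentiability, Generic, Manifold) for the functional $Q(t,z) = \Psi(t) + \abs{z-t}^2/(2\tau)$. You instead give the standard convex-analytic argument: rewrite the minimum as $\abs{x}^2/(2\tau) - \phi(x)/\tau$ with $\phi$ a pointwise supremum of affine functions, observe that every minimizer lies in the subdifferential $\partial\phi(x)$, and invoke Rademacher to conclude that $\partial\phi(x)$ is a singleton for $\Leb^d$-a.e.~$x$. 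This is self-contained, avoids the regularity discussion of $\partial\Omega$ the paper has to address when checking Cox's ``Manifold'' hypothesis, and makes transparent why no boundary regularity is needed. Both routes then finish identically, combining with \Cref{lemma:argmin} and the absolute continuity of $\bar\mu_\Omega$.
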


\begin{proof}
	We can resort to~\cite[Lemma 1]{Cox20} by~G.~Cox. Adopting the notation of this lemma, we set
	\[
	Q(t,z) \coloneqq \Psi(t) + \frac{\abs{z-t}^2}{2\tau} \comma \quad P \coloneqq c \, \bar \mu|_{T^{-1}(\partial \Omega) \cap (T')^{-1}(\partial \Omega)} \comma
	\]
	for some constant~$c$ that makes~$P$ a probability distribution. Four assumptions are made therein and need to be checked:
	\begin{itemize}
		\item Absolute Continuity: It follows from ~$\E(\bar \mu) < \infty$ that~$\bar \mu_\Omega$ is absolutely continuous. Hence, so is the probability~$P$.
		\item Continuous Differentiability: Conditions~(a) and~(b) are easy to check. Condition~(c) is vacuously true by setting~$A(t) \coloneqq \emptyset$ for every~$t$.
		\item Generic: Condition~(d) is true and easy to check.
		\item Manifold: This condition is not true if~$\partial \Omega$ does not enjoy any kind of regularity. However, one can check that that~$\partial \Omega$ does not need to be a union of manifolds if the condition Generic holds with~$A(t) \coloneqq \emptyset$ for every~$t$. The other topological properties, namely second-countability and Hausdorff, are trivially true, since~$\partial \Omega \subseteq \R^d$. \qedhere
	\end{itemize}
\end{proof}

\begin{proof}[Proof of \Cref{prop:uniq}]
	\emph{Step 1 (uniqueness of~$\rho$ and~$S$).} The identity~$\rho = \rho'$ follows from the strict convexity of the function~$l \mapsto l \log l$. To see why, notice that~$\frac{\gamma+\gamma'}{2} \in \Adm_\TrFun(\frac{\mu+\mu'}{2},\bar \mu)$; therefore, by minimality,
	\[ \frac{\Ent(\mu) + \frac{1}{2\tau}\cost(\gamma) + \Ent(\mu') + \frac{1}{2\tau}\cost(\gamma')}{2} \le \Ent\left( \frac{\mu + \mu'}{2} \right) + \frac{1}{2\tau} \cost\left(\frac{\gamma + \gamma'}{2} \right) \fstop \]
	Most of the terms simplify by linearity. What remains is
	\[ \int_\Omega \frac{\rho \log \rho + \rho' \log \rho'}{2} \dif x \le \int_\Omega \left( \frac{\rho + \rho'}{2} \right) \log \left( \frac{\rho + \rho'}{2} \right) \dif x \comma \]
	which implies~$\rho(x) = \rho'(x)$ for~$\Leb^d$-a.e.~$x \in \Omega$. The identity~$S=S'$ out of a~$\Leb^d_\Omega$-negligible set follows from \Cref{lemma:uniqS}.
	
	\emph{Step 2 (uniqueness of~$\gamma_{\partial \Omega}^\Omega$).} We can write
	\[ \gamma = \gamma_\Omega^\ovom + \gamma_{\partial \Omega}^\Omega \quad \text{and} \quad \gamma'=(\gamma')_\Omega^\ovom + (\gamma')_{\partial \Omega}^\Omega \fstop \]
	Because of the uniqueness of~$\mu_\Omega$ and~$S$, we have the equality~$\gamma_\Omega^\ovom = (\gamma')_\Omega^\ovom$. If we combine this fact with Condition~\ref{(2)} in~\Cref{def:TrDis}, we find
	\begin{align*}
		0 &= \left(\pi^2_\# (\gamma- \gamma')\right)_\Omega = \pi^2_\# \left(\gamma_{\partial \Omega}^\Omega- (\gamma')_{\partial \Omega}^\Omega\right) \\
		&= \pi^2_\# \left( (T,\Id)_\# \bar \mu_{T^{-1}(\partial \Omega)} - (T',\Id)_\# \bar\mu_{(T')^{-1}(\partial\Omega)} \right) = \bar \mu_{T^{-1}(\partial \Omega)} - \bar \mu_{(T')^{-1}(\partial \Omega)} \fstop \end{align*}
	This proves that~$T^{-1}(\partial \Omega)$ and~$(T')^{-1}(\partial \Omega)$ are~$\bar \mu$-essentially equal. Together with \Cref{lemma:uniqT}, this gives
	\[ \gamma_{\partial \Omega}^\Omega = (T,\Id)_\# \bar \mu_{T^{-1}(\partial \Omega)} = (T',\Id)_\#\bar \mu_{(T')^{-1}(\partial \Omega)} = (\gamma')_{\partial \Omega}^\Omega \fstop \]
	
	\emph{Step 3 (conclusion).} We have determined that~$\gamma = \gamma'$. Condition~\ref{(3)} in \Cref{def:TrFun} gives
	\[ \mu = \pi^1_\# \gamma - \pi^2_\# \gamma + \bar \mu = \pi^1_\# \gamma' - \pi^2_\# \gamma' + \bar \mu = \mu' \comma \]
	which is what we wanted to prove.
\end{proof}

\subsubsection{Contractivity}

In this section, we establish time monotonicity for some truncated and weighted~$L^q$ norm ($q \ge 1$) of the densities~$\rho_t^\tau$.

Here, too, only one step of the scheme is involved. %
We let~$\mu$ be the unique minimimum point of~\eqref{eq:singlestep} and~$\rho$ be the density of its restriction to~$\Omega$.

\begin{proposition} \label{prop:Lq}
	Let~$q \ge 1$. For every~$\vartheta \ge \vartheta_0 \coloneqq \max_{\partial \Omega} e^\Psi$, the following inequality holds (possibly, with one or both sides being infinite):
	\begin{equation} \label{eq:prop:Lq:0}
		\int_\Omega \max\set{\rho, \vartheta e^{-V}}^q e^{(q-1)V} \dif x \le \int_\Omega \max\set{\bar \rho, \vartheta e^{-V}}^q e^{(q-1)V} \dif x \fstop
	\end{equation}
\end{proposition}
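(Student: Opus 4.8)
\emph{Reduction.} If the right-hand side of~\eqref{eq:prop:Lq:0} is $+\infty$ there is nothing to prove, so assume it is finite. Put $u\coloneqq\rho e^{V}$, $\bar u\coloneqq\bar\rho\,e^{V}$, and let $m$ be the finite measure $e^{-V}\Leb^{d}$ on $\Omega$. Since $\max\{\rho,\vartheta e^{-V}\}^{q}e^{(q-1)V}=\max\{u,\vartheta\}^{q}e^{-V}$, the inequality~\eqref{eq:prop:Lq:0} is equivalent to
\begin{equation*}
\int_{\Omega}\beta(u)\dif m\le\int_{\Omega}\beta(\bar u)\dif m\comma\qquad\text{where }\ \beta(s)\coloneqq\max\{s,\vartheta\}^{q}-\vartheta^{q}\fstop
\end{equation*}
The function $\beta$ is convex, nondecreasing, and vanishes on $(-\infty,\vartheta]$; moreover $g(s)\coloneqq qs^{q-1}\1_{\{s>\vartheta\}}$ lies in the subdifferential of $\beta$ at every $s\ge0$, is nondecreasing, and $g(u)=0$ off $A\coloneqq\{u>\vartheta\}$. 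By convexity $\beta(\bar u)\ge\beta(u)+g(u)(\bar u-u)$ pointwise, and all quantities below are finite because $\rho$ (hence $u$ and $g(u)$) is bounded — by the remark after~\Cref{prop:boundary} — and $\mu_{\Omega},\bar\mu_{\Omega}$ have finite mass. Hence it suffices to prove $\int_{\Omega}g(u)(\bar u-u)\dif m\ge0$, that is, recalling $(\bar u-u)e^{-V}=\bar\rho-\rho$,
\begin{equation*}
\int_{\Omega}g(u)\dif\bar\mu_{\Omega}\ \ge\ \int_{\Omega}g(u)\dif\mu_{\Omega}\fstop
\end{equation*}

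\emph{Reduction to a monotonicity of the transport map.} Fix $\gamma\in\Opt_{\TrFun}(\mu,\bar\mu)$ and a Borel map $S\colon\Omega\to\ovom$ with $\gamma_{\Omega}^{\ovom}=(\Id,S)_{\#}\mu_{\Omega}$ for which~\eqref{eq:onestep:transportMap2} holds — such a choice exists by~\Cref{prop:sobolevReg}, as $\mu$ is \emph{the} minimizer by~\Cref{prop:uniq}. Set $E\coloneqq\{x\in\Omega:S(x)\in\Omega\}$, so that $\gamma_{\Omega}^{\Omega}=(\Id,S)_{\#}(\mu_{\Omega}|_{E})$. By condition~\ref{(2)} of~\Cref{def:TrDis},
\begin{equation*}
\bar\mu_{\Omega}=(\pi^{2}_{\#}\gamma)_{\Omega}=\pi^{2}_{\#}\gamma_{\Omega}^{\Omega}+\pi^{2}_{\#}\gamma_{\partial\Omega}^{\Omega}\ \ge\ \pi^{2}_{\#}\gamma_{\Omega}^{\Omega}=S_{\#}(\mu_{\Omega}|_{E})\fstop
\end{equation*}
Also, \eqref{eq:prop:boundary:01} gives $u\le e^{\Psi(S(\cdot))}\le\max_{\partial\Omega}e^{\Psi}=\vartheta_{0}\le\vartheta$ on $\{S\in\partial\Omega\}$, so $A\subseteq E$ up to a $\Leb^{d}$-null set and therefore $\int_{\Omega}g(u)\dif\mu_{\Omega}=\int_{E}g(u)\dif\mu_{\Omega}$. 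Using $g\ge0$, we conclude
\begin{equation*}
\int_{\Omega}g(u)\dif\bar\mu_{\Omega}-\int_{\Omega}g(u)\dif\mu_{\Omega}\ \ge\ \int_{E}\bigl(g(u(S(x)))-g(u(x))\bigr)\dif\mu_{\Omega}(x)\fstop
\end{equation*}
Since $g$ is nondecreasing, it remains to show $u\circ S\ge u$ holds $\mu_{\Omega}$-a.e.\ on $E$.

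\emph{The monotonicity $u\circ S\ge u$.} This is the heart of the proof, and it is where~\eqref{eq:onestep:transportMap2} is used. As $\mu_{\Omega}$ is absolutely continuous, \Cref{prop:trPlans} and Brenier's theorem~\cite{Brenier87} provide a convex $\varphi$ with $S=\nabla\varphi$ $\mu_{\Omega}$-a.e. By~\eqref{eq:onestep:transportMap2} and the bound $0<\const_{\tau}\le\rho\le\const_{\tau}$, one has $S-\Id=\tau\,e^{-V}\nabla(\rho e^{V})/\rho=\tau\nabla\log u$ $\Leb^{d}$-a.e.\ on $\Omega$; hence $\nabla\bigl(\varphi-\tfrac12\abs{\cdot}^{2}-\tau\log u\bigr)=0$ a.e.\ on $\Omega$, so $\varphi$ equals $\tfrac12\abs{\cdot}^{2}+\tau\log u$ up to a locally constant function (a genuine constant if $\Omega$ is connected; otherwise argue component by component, since the problem decouples over connected components). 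Writing the subgradient inequality $\varphi(z)\ge\varphi(x)+\langle\nabla\varphi(x),z-x\rangle$ at an $x\in E$ where $\nabla\varphi(x)$ exists, with $z=S(x)=\nabla\varphi(x)\in\Omega$, and substituting the expression for $\varphi$, we obtain
\begin{equation*}
\tfrac12\abs{S(x)}^{2}+\tau\log u(S(x))\ \ge\ \tfrac12\abs{x}^{2}+\tau\log u(x)+\bigl\langle S(x),\,S(x)-x\bigr\rangle\comma
\end{equation*}
which rearranges to $\log u(S(x))\ge\log u(x)+\tfrac1{2\tau}\abs{S(x)-x}^{2}\ge\log u(x)$. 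This closes the chain of inequalities and proves~\eqref{eq:prop:Lq:0}.

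\emph{Where the difficulty lies.} The first and second paragraphs are bookkeeping with the marginal conditions of~\Cref{def:TrDis} and the convexity of $\beta$; the crux is the step $u\circ S\ge u$, which rests on identifying the Brenier potential of $S$ with $\tfrac12\abs{\cdot}^{2}+\tau\log u$ through the Euler--Lagrange identity~\eqref{eq:onestep:transportMap2}, and for which the a priori regularity of $\rho$ (bounded and bounded below, from~\Cref{prop:boundary}) is essential so that $\log u$ is a well-defined Sobolev function. It is also worth noting that~\Cref{prop:boundary} is needed a second time, via~\eqref{eq:prop:boundary:01}, to guarantee $A\subseteq E$, i.e.\ that the set $\{u>\vartheta\}$ where $g(u)\neq0$ is not transported onto $\partial\Omega$ by $S$.
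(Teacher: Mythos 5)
Your proposal is correct in its overall architecture but follows a genuinely different route from the paper's. The paper first proves the case $q=1$ directly, using \Cref{lemma:Lq} (obtained from a first-variation argument: move an $\epsilon$-fraction of $\mu_A$ onto $S_\#\mu_A$ and use convexity of $\lambda\mapsto\lambda\log\lambda$), together with the marginal inequality $\pi^2_\#\gamma_\Omega^\Omega\le\bar\mu_\Omega$ and~\eqref{eq:prop:boundary:01}; the case $q>1$ is then deduced by multiplying the $q=1$ inequality by $\tilde\vartheta^{q-2}$ and integrating in $\tilde\vartheta$ (a layer-cake argument). You instead treat all $q\ge 1$ at once via the convexity of $\beta(s)=\max\{s,\vartheta\}^q-\vartheta^q$ and a subgradient inequality, reducing everything to $u\circ S\ge u$ on $S^{-1}(\Omega)$, which you derive from Brenier's theorem combined with the Euler--Lagrange identity~\eqref{eq:onestep:transportMap2}; your computation in fact recovers exactly the inequality of \Cref{lemma:Lq}, including the term $\abs{S(x)-x}^2/(2\tau)$. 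The bookkeeping with the marginal conditions and with~\eqref{eq:prop:boundary:01} matches the paper's. Your route buys a cleaner handling of general $q$, at the price of invoking \Cref{prop:sobolevReg}/Brenier where the paper's \Cref{lemma:Lq} needs only a direct perturbation of the minimizer; note also that evaluating the a.e.-defined $\log u$ at $S(x)$ is legitimate only because $S_\#(\mu_\Omega|_E)=\pi^2_\#\gamma_\Omega^\Omega\le\bar\mu_\Omega\ll\Leb^d$, a point you use only implicitly.

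The one step that does not stand as written is the parenthetical treatment of disconnected $\Omega$: from $\nabla\bigl(\varphi-\tfrac12\abs{\cdot}^2-\tau\log u\bigr)=0$ you only obtain a constant on each connected component, and if $x$ and $S(x)$ lie in different components the two constants do not cancel in the subgradient inequality, so the conclusion $u(S(x))\ge u(x)$ is lost. The assertion that ``the problem decouples over connected components'' is not an argument: admissible plans may perfectly well couple different components, and nothing in \Cref{def:TrDis} or \Cref{def:TrFun} forbids it. What rescues your proof is that \emph{optimal} plans never charge $\Omega_i\times\Omega_j$ for $i\neq j$: if $x\in\Omega_i$ and $y\in\Omega_j$, the segment $[x,y]$ meets $\partial\Omega$ at some $z=x+t(y-x)$ with $t\in(0,1)$, and $\abs{x-z}^2+\abs{z-y}^2=\bigl(t^2+(1-t)^2\bigr)\abs{x-y}^2<\abs{x-y}^2$; rerouting the cross-component mass through such (measurably selected) points yields a plan that still satisfies the admissibility conditions of \Cref{def:TrDis} and \Cref{def:TrFun} and has strictly smaller cost, whence $\gamma(\Omega_i\times\Omega_j)=0$ and $S(x)$ stays in the component of $x$ for $\mu_\Omega$-a.e.\ $x\in E$. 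Either add this short argument, or bypass the issue by proving $u\circ S\ge u$ as in the paper's \Cref{lemma:Lq}, whose perturbation argument is purely local and insensitive to connectedness.
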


\begin{remark}
	For a solution to the Fokker--Planck equation~\eqref{eq:fpStrong}, a monotonicity property like~\eqref{eq:prop:Lq:0} is expected. Indeed, \emph{formally}:
	\begin{multline*}
		\frac{\dif}{\dif t} \int_\Omega \max\set{\rho_t, \vartheta e^{-V}}^q e^{(q-1)V} \dif x = q \int_{\set{\rho_t > \vartheta e^{-V}}} (\rho_te^V)^{q-1} \dive(\nabla \rho_t + \rho_t \nabla V)  \dif x \\
		\quad = q \int_{\partial\set{\rho_t > \vartheta e^{-V} }} (\rho_t e^V)^{q-1} e^{-V} \langle \nabla (\rho_t e^V) , \boldsymbol n \rangle \dif \mathscr{H}^{d-1} \\
		\underbrace{-q(q-1) \int_{\set{\rho_t > \vartheta e^{-V}}} (\rho_t e^V)^{q-2} e^V \abs{\nabla \rho_t + \rho_t \nabla V}^2 \dif x}_{\le 0} \fstop
	\end{multline*}
	If~$\vartheta \gneq \vartheta_0$, the boundary condition forces the set $\partial\set{\rho_t > \vartheta e^{-V} } \cap \partial \Omega$ to be negligible. Moreover, on~$\partial\set{\rho_t > \vartheta e^{-V} } \cap \Omega$, the scalar product~$\langle \nabla (\rho_t e^V) , \boldsymbol n \rangle$ is nonpositive. The case~$\vartheta = \vartheta_0$ can be deduced by approximation.
\end{remark}

\begin{remark}[Mass bound] \label{rmk:massbound}
	Note that \Cref{prop:Lq} implies that the mass of~$(\mu_t^\tau)_\Omega$ is bounded by a constant~$\const$ indepentent of~$t$ and~$\tau$. Indeed,
	\begin{align*} \int_{\Omega} \rho_t^\tau \dif x &\le \int_\Omega \max\set{\rho_t^\tau, \vartheta_0 e^{-V}} \dif x \le \cdots \le \int_\Omega \max\set{\rho_0, \vartheta_0 e^{-V}} \dif x \\
		&\le \int_\Omega \rho_0 \dif x + \vartheta_0 \int_\Omega e^{-V} \dif x \fstop \end{align*}
\end{remark}

The proof of the first Step in \Cref{prop:Lq}, i.e.,~the case~$q=1$, and of the preliminary lemma \Cref{lemma:Lq} follow the lines of \cite[Proposition~3.7~(24)]{FigalliGigli10} and \cite[Proposition~5.3]{Morales18}. In all these proofs, the key is to leverage the optimality of~$\mu$ by constructing small variations. In the proof of Step~2, i.e., the case~$q > 1$, instead, our idea is to take the inequality for~$q=1$, multiply it by a suitable power of~$\vartheta$, and integrate it w.r.t.~the variable~$\vartheta$ itself. This is the reason why, while \Cref{prop:Lq} will later be used only with~$\vartheta = \vartheta_0$---or in the form of \Cref{rmk:massbound}---it is convenient to have it stated and proven (at least for~$q=1$) for a continuum of values of~$\vartheta$.

\begin{lemma} \label{lemma:Lq}
	For~$\mu$-a.e.~$x \in \Omega$ such that~$S(x) \in \Omega$, we have
	\begin{equation} \label{eq:lemma:preL1}
		\log \rho(x) + V(x) \le \log \rho(S(x)) + V(S(x)) - \frac{\abs{x-S(x)}^2}{2\tau} \fstop
	\end{equation}
\end{lemma}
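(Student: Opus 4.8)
The strategy is the one used for the analogous statements \cite[Proposition~3.7]{FigalliGigli10} and \cite[Proposition~5.3]{Morales18}: perturb the minimizer $\mu$ by moving a small amount of mass along the optimal plan $\gamma$ (from a point $x$ to its image $S(x)\in\Omega$) and exploit minimality. Concretely, it suffices to prove that for \emph{every} Borel set $A\subseteq\set{x\in\Omega : S(x)\in\Omega}$ one has
\[
	\int_A \rho(x)\left( \log\rho(x)+V(x)-\log\rho(S(x))-V(S(x))+\frac{\abs{x-S(x)}^2}{2\tau}\right)\dif x \le 0 \comma
\]
because the integrand is an integrable function on $\Omega$ — indeed $\rho\in L^\infty(\Omega)$ and $\rho$ is bounded below by a positive constant by the remark after \Cref{prop:boundary}, so $\log\rho$ is bounded, $V$ is bounded, and $\abs{x-S(x)}\le\diam(\Omega)$ — and hence the inequality for all $A$ forces the bracket to be $\le 0$ for $\Leb^d$-a.e.\ (equivalently $\mu$-a.e., since $\mu_\Omega\sim\Leb^d_\Omega$) $x$ with $S(x)\in\Omega$.

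So fix such an $A$ and, for $\epsilon\in(0,1]$, set
\[
	\tilde\mu \coloneqq \mu - \epsilon\mu_A + \epsilon S_\#\mu_A \comma \qquad \tilde\gamma \coloneqq \gamma - \epsilon(\Id,S)_\#\mu_A + \epsilon(S,S)_\#\mu_A \fstop
\]
Since $A\subseteq S^{-1}(\Omega)$ and $\gamma_\Omega^\ovom=(\Id,S)_\#\mu_\Omega$, we have $\gamma|_{A\times\ovom}=(\Id,S)_\#\mu_A$, so $\tilde\gamma$ is a nonnegative measure, concentrated on $\Omega\times\Omega$ away from the diagonal-free part, with $(\tilde\gamma)_{\partial\Omega}^{\partial\Omega}=0$; a direct check of the marginals (using that $(S,S)_\#\mu_A$ has equal marginals and that $S(A)\subseteq\Omega$) gives $\tilde\gamma\in\Adm_\TrFun(\tilde\mu,\bar\mu)$ and $\tilde\mu\in\cone$. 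The key technical point — and the place where the specific admissibility conditions enter — is that $\tilde\mu$ has \emph{finite} entropy: Condition~\ref{(2)} in \Cref{def:TrDis} gives $\pi^2_\#\gamma_\Omega^\Omega\le\bigl(\pi^2_\#\gamma\bigr)_\Omega=\bar\mu_\Omega$, so $S_\#\mu_A=\pi^2_\#\gamma_A^\Omega\le\bar\mu_\Omega$ is absolutely continuous with some density $s\le\bar\rho$; hence $\tilde\mu_\Omega=\tilde\rho\,\Leb^d$ with $\tilde\rho=(1-\epsilon)\rho+\epsilon s\mathbf{1}_{S^{-1}(\cdot)}$-type expression bounded above by $\rho+\bar\rho$, so $\E(\tilde\rho)<\infty$ because $\lambda\mapsto\lambda\log\lambda$ is integrable against $\rho+\bar\rho$, and the boundary part of $\Ent$ is unchanged.

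Now minimality of $\mu$ together with $\TrFun^2(\mu,\bar\mu)=\cost(\gamma)$ and $\TrFun^2(\tilde\mu,\bar\mu)\le\cost(\tilde\gamma)$ yields $\E(\rho)-\E(\tilde\rho)\le\tfrac1{2\tau}\bigl(\cost(\tilde\gamma)-\cost(\gamma)\bigr)$, and since $(S,S)_\#\mu_A$ has zero cost, $\cost(\tilde\gamma)-\cost(\gamma)=-\epsilon\int_A\abs{x-S(x)}^2\rho(x)\dif x$. Dividing by $\epsilon$ and letting $\epsilon\downarrow0$: using convexity of $\lambda\mapsto\lambda\log\lambda$ (which makes the relevant difference quotients monotone and provides the integrable domination $\lambda\mapsto\lambda\log\lambda$ evaluated at $(1-\epsilon)\rho+\epsilon s\le(1-\epsilon)\psi(\rho)+\epsilon\psi(s)$ with $\psi(t)=t\log t$), together with $-(1-\epsilon)\log(1-\epsilon)/\epsilon\uparrow1$, one gets by monotone convergence
\[
	\int_A\bigl(\rho\log\rho+V\rho\bigr)\dif x - \int_{S(A)} s(y)\bigl(\log\rho(y)+V(y)\bigr)\dif y \le -\frac1{2\tau}\int_A\abs{x-S(x)}^2\rho(x)\dif x \comma
\]
and rewriting $\int_{S(A)}s(y)\bigl(\log\rho(y)+V(y)\bigr)\dif y=\int\bigl(\log\rho+V\bigr)\dif(S_\#\mu_A)=\int_A\bigl(\log\rho(S(x))+V(S(x))\bigr)\rho(x)\dif x$ (the push-forward formula; $V\in C(\Omega)$ makes $V\circ S$ Borel, and $\rho(S(x))$ is a.e.\ positive and finite) gives exactly the displayed inequality above. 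I expect the main obstacle to be exactly this finiteness/limit step — recognizing via Condition~\ref{(2)} that $S_\#\mu_A$ is absolutely continuous so that the competitor is admissible \emph{and} has finite entropy, then organizing the $\epsilon\to0^+$ passage with convexity so that the two entropy contributions converge monotonically; checking $\tilde\gamma\in\Adm_\TrFun(\tilde\mu,\bar\mu)$ and computing the cost difference are routine.
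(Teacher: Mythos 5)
Your proposal is correct and follows essentially the same route as the paper's proof: the same competitor $\tilde \mu = \mu + \epsilon S_\# \mu_A - \epsilon \mu_A$ with the plan $\tilde \gamma = \gamma - \epsilon(\Id,S)_\# \mu_A + \epsilon(S,S)_\# \mu_A$, the same key observation that $S_\# \mu_A \le \bar\mu_\Omega$ (so its density is dominated by $\bar\rho$, justifying the monotone-convergence passage as $\epsilon \downarrow 0$ via convexity of $\lambda \mapsto \lambda\log\lambda$), and the same conclusion by arbitrariness of $A \subseteq S^{-1}(\Omega)$. No gaps.
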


\begin{proof}
	Let~$\epsilon \in (0,1)$ and let~$A \subseteq S^{-1}(\Omega)$ be a Borel set. We define
	\begin{align*} \tilde \mu &\coloneqq \mu + \epsilon S_\# \mu_A  - \epsilon \mu_A \in \cone \comma \\ \tilde \gamma &\coloneqq \gamma - \epsilon (\Id,S)_\# \mu_A + \epsilon(S,S)_\# \mu_A \in \Adm_{\TrFun}(\tilde \mu,\bar \mu) \fstop \end{align*}
	Let~$\hat \rho$ be the density of~$S_\# \mu_A$ and note that~$\hat \rho \le \bar \rho$.
	By the minimality of~$\mu$, we have
	\begin{multline*}
		0 \le \underbrace{\int_{\Omega} \frac{\bigl(\rho + \epsilon(\hat \rho - \1_A \rho)\bigr) \log\bigl(\rho + \epsilon(\hat \rho - \1_A \rho)\bigr) - \rho \log \rho }{\epsilon} \dif x}_{\coloneqq I_1} \\
		+ \int \left(V \circ S -V - \frac{\abs{\Id - S}^2}{2\tau} \right) \dif\mu_A \fstop
	\end{multline*}
	We use the convexity of~$l \mapsto l \log l$ to write
	\begin{align*} I_1 &\le \int_\Omega (\hat\rho - \1_A\rho) \left(1+\log\bigl(\rho + \epsilon(\hat \rho - \1_A \rho)\bigr)\right) \dif x \\
		&= \int_\Omega (\hat\rho - \1_A\rho) \log\bigl(\rho + \epsilon(\hat \rho - \1_A \rho)\bigr) \dif x \\
		&= \int_\Omega \hat \rho \log\bigl(\rho + \epsilon(\hat \rho - \1_A \rho) \bigr) \dif x - \int_A \rho \log\bigl( (1-\epsilon)\rho + \epsilon \hat\rho \bigr) \dif x \\
		&\le \int_\Omega \hat \rho \log(\rho + \epsilon\hat \rho \bigr) \dif x - \int_A \rho \bigl( \log \rho + \log (1-\epsilon) \bigr) \dif x \fstop \end{align*}
	On the first integral on the last line, we use the monotone convergence theorem (``downwards''): its hypotheses are satisfied because~$\hat \rho \le \bar \rho$.
	By passing to the limit~$\epsilon \to 0$, we obtain
	\begin{multline*} 0 \le \int_{\Omega} \hat \rho \log \rho \dif x + \int \left(-\log \rho + V \circ S -V - \frac{\abs{\Id-S}^2}{2\tau} \right) \dif\mu_A \\
		= \int \left(\log \rho \circ S -\log \rho + V\circ S-V - \frac{\abs{\Id-S}^2}{2\tau} \right) \dif\mu_A \comma \end{multline*}
	and we conclude by arbitrariness of~$A$.
\end{proof}

\begin{proof}[Proof of \Cref{prop:Lq}]
	\emph{Step 1 ($q = 1$).} Consider the case~$q=1$. Let
	\begin{equation} \label{eq:lemma:L1:1} A \coloneqq \set{x \in \Omega \, \colon \, \rho e^V > \vartheta} \fstop \end{equation}
	Thanks to~\eqref{eq:prop:boundary:01}, we know that~$A \cap S^{-1}(\partial \Omega)$ is~$\Leb^d$-negligible. Therefore, we can extract a~$\Leb^d_A$-full-measure Borel subset~$\tilde A$ of~$A \cap S^{-1}(\Omega)$ where~\eqref{eq:lemma:preL1} holds (recall that~$\Leb^d_\Omega \ll \mu_\Omega$). It is easy to check that~$S(\tilde A) \subseteq A$. Therefore, we have
	\begin{multline} \label{eq:lemma:L1:2}
		\int_A \max\set{\rho, \vartheta e^{-V}} \dif x \stackrel{\eqref{eq:lemma:L1:1}}{=} \int_A \rho \dif x = \int_{\tilde A} \rho \dif x \le \int_{S^{-1}(A)} \rho \dif x = S_\# \mu_\Omega(A) \\
		= \pi^2_\# \gamma_\Omega^\ovom (A) \stackrel{(A \subseteq \Omega)}{=} \pi^2_\# \gamma_\Omega^\Omega (A) \le \pi^2_\# \gamma_\ovom^\Omega (A) = \bar \mu_\Omega(A) \le \int_A \max\set{\bar \rho, \vartheta e^{-V}} \dif x \fstop
	\end{multline}
	On the other hand,
	\begin{equation} \label{eq:lemma:L1:3} \int_{\Omega \setminus A} \max\set{\rho, \vartheta e^{-V}} \dif x \stackrel{\eqref{eq:lemma:L1:1}}{=} \int_{\Omega \setminus A} \vartheta e^{-V} \dif x \le \int_{\Omega \setminus A} \max\set{\bar \rho, \vartheta e^{-V}} \dif x \comma \end{equation}
	and we conclude by taking the sum of~\eqref{eq:lemma:L1:2} and~\eqref{eq:lemma:L1:3}.
	
	\emph{Step 2 ($q > 1$)} Assume now that~$q > 1$. Define
	\[ f \coloneqq \max\set{\rho, \vartheta e^{-V}} \comma \quad g  \coloneqq \max\set{\bar \rho, \vartheta e^{-V}} \fstop \]
	Note that the case~$q=1$ implies
	\begin{equation} \label{eq:prop:Lq:1} \int_\Omega \max\set{f, \tilde\vartheta e^{-V}} \dif x \le \int_\Omega \max\set{g, \tilde \vartheta e^{-V}} \dif x \end{equation}
	\emph{for every~$\tilde \vartheta > 0$}. After multiplying~\eqref{eq:prop:Lq:1} by~${\tilde \vartheta}^{q-2}$, integrating w.r.t.~$\tilde \vartheta$ from~$0$ to some~$\Theta > 0$, and changing the order of integration with Tonelli's theorem, we find
	\begin{multline*} \int_\Omega \left( \int_0^{\min\set{fe^V,\Theta}} {\tilde \vartheta}^{q-2} \dif {\tilde \vartheta} \right) f \dif x + \int_\Omega \left( \int_{\min\set{fe^V,\Theta}}^{\Theta} {\tilde \vartheta}^{q-1} \dif {\tilde \vartheta} \right) e^{-V} \dif x \\
		\le \int_\Omega \left( \int_0^{\min\set{ge^V,\Theta}} {\tilde \vartheta}^{q-2} \dif {\tilde \vartheta} \right) g \dif x + \int_\Omega \left( \int_{\min\set{ge^V,\Theta}}^{\Theta} {\tilde \vartheta}^{q-1} \dif {\tilde \vartheta} \right) e^{-V} \dif x \comma \end{multline*}
	whence
	\begin{multline*} \frac{1}{q-1} \int_\Omega \min\set{f e^V, \Theta}^{q-1} f \dif x - \frac{1}{q} \int_\Omega \min\set{f e^V, \Theta}^{q} e^{-V} \dif x \\
		\le \frac{1}{q-1} \int_\Omega \min\set{g e^V, \Theta}^{q-1} g \dif x - \frac{1}{q} \int_\Omega \min\set{g e^V, \Theta}^{q} e^{-V} \dif x \fstop \end{multline*}
	It follows that
	\begin{multline*} \left(\frac{1}{q-1} - \frac{1}{q} \right) \int_{\Omega} \min\set{fe^V, \Theta}^q e^{-V} \dif x  + \frac{1}{q} \int_\Omega \min\set{g e^V, \Theta}^{q} e^{-V} \dif x \\ \le  \frac{1}{q-1} \int_\Omega \min\set{g e^V, \Theta}^{q-1} g \dif x \fstop \end{multline*}
	We now let~$\Theta \to \infty$ and deduce from the monotone convergence theorem that
	\[  \left(\frac{1}{q-1} - \frac{1}{q} \right) \int_\Omega f^q e^{(q-1)V} \dif x + \frac{1}{q} \int_\Omega g^q e^{{(q-1)}V} \dif x \le \frac{1}{q-1} \int_\Omega g^q e^{(q-1)V} \dif x \fstop \]
	Eventually, we can rearrange, and, noted that~$\left(\frac{1}{q-1} - \frac{1}{q} \right) > 0$, simplify to finally obtain~\eqref{eq:prop:Lq:0}.
\end{proof}

\subsection{Convergence w.r.t~$Wb_2$}
In this section, we prove convergence w.r.t.~$Wb_2$ of the measures built with the scheme~\eqref{eq:jkoTrFun0}. The argument is standard. In fact, we shall give a short proof that relies on the `refined version of Ascoli-Arzel\`a theorem'~\cite[Proposition~3.3.1]{AmbrosioGigliSavare08}.

\begin{proposition} \label{prop:convergence}
	As~$\tau \to 0$, up to subsequences, the maps~$\bigl(t\mapsto (\mu_t^\tau)_\Omega\bigr)_\tau$ converge pointwise w.r.t.~$Wb_2$ to a curve~$t \mapsto \rho_t \dif x$ of absolutely continuous measures, continuous w.r.t.~$Wb_2$.
\end{proposition}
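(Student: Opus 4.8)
The plan is to apply the refined Arzel\`a--Ascoli theorem \cite[Proposition~3.3.1]{AmbrosioGigliSavare08} to the curves $t \mapsto (\mu^\tau_t)_\Omega$ in the metric space $(\mathcal M_2(\Omega), Wb_2)$. Two ingredients are needed: a uniform (in $\tau$) equicontinuity-type estimate obtained from a discrete energy-dissipation inequality, and a compactness statement ensuring that, for each fixed $t$, the family $\bigl\{(\mu^\tau_t)_\Omega\bigr\}_\tau$ is relatively compact in $(\mathcal M_2(\Omega), Wb_2)$.

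First I would record the one-step estimate. From the minimality in \eqref{eq:jkoTrFun0} and \Cref{prop:existence}, for each $n$ we have
\begin{equation*}
\frac{\TrFun^2(\mu^\tau_{(n+1)\tau}, \mu^\tau_{n\tau})}{2\tau} \le \Ent(\mu^\tau_{n\tau}) - \Ent(\mu^\tau_{(n+1)\tau}) \fstop
\end{equation*}
Since $\Ent(\mu^\tau_{n\tau})$ may not be monotone a priori, I would instead invoke \Cref{prop:existence}\,(3), i.e.\ inequality~\eqref{eq:existence:inequality}, together with the mass bound in \Cref{rmk:massbound}: summing~\eqref{eq:existence:inequality} over $n$ telescopes the $\E(\rho)$-terms, and the boundary terms $\mu_\Omega(\Psi) - \bar\mu_\Omega(\Psi)$ telescope as well, leaving a bound of the form
\begin{equation*}
\sum_{n=0}^{N-1} \frac{\TrFun^2(\mu^\tau_{(n+1)\tau}, \mu^\tau_{n\tau})}{\tau} \le \const\bigl(1 + N\tau\bigr)
\end{equation*}
on every finite time horizon $N\tau \le \bar t$; here I use $\E \ge 0$ on $L^1_+(\Omega)$ (each integrand is bounded below after absorbing $V \in L^\infty$) to control the terminal term, and the mass bound to handle the $\const\tau(\norm{\mu_\Omega}+\norm{\bar\mu_\Omega})$ remainders. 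Via the inequality $Wb_2 \le \TrFun$ from~\eqref{eq:distIneq} and the discrete Cauchy--Schwarz / De Giorgi interpolation argument (exactly as in the classical JKO convergence proof, cf.\ \cite[Section~2]{Santambrogio17} or \cite[Theorem~2.3.1]{AmbrosioGigliSavare08}), this yields
\begin{equation*}
Wb_2\bigl((\mu^\tau_s)_\Omega, (\mu^\tau_t)_\Omega\bigr) \le \const_{\bar t}\,\sqrt{|t-s| + \tau} \comma \qquad 0 \le s \le t \le \bar t \comma
\end{equation*}
which is the required uniform modulus of continuity.

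For the pointwise compactness, I would fix $t$ and note that all $(\mu^\tau_t)_\Omega$ have uniformly bounded mass (\Cref{rmk:massbound}) and are supported in the bounded set $\Omega$; hence they are tight and, up to a subsequence, converge weakly in duality with $C_b(\Omega)$. By the closedness of $\cone$ and by \cite[Proposition~2.7]{FigalliGigli10} — convergence in duality with $C_b(\Omega)$ implies $Wb_2$-convergence — any weak limit is also a $Wb_2$-limit, so the family is relatively compact in $(\mathcal M_2(\Omega), Wb_2)$ for each $t$. A diagonal extraction over a countable dense set of times, combined with the equicontinuity estimate above, then gives \cite[Proposition~3.3.1]{AmbrosioGigliSavare08} a (not relabeled) subsequence $\tau \downarrow 0$ and a limit curve $t \mapsto \sigma_t \in \mathcal M_2(\Omega)$ with $(\mu^\tau_t)_\Omega \to \sigma_t$ w.r.t.\ $Wb_2$ for every $t$, and $t \mapsto \sigma_t$ is $Wb_2$-continuous (indeed $1/2$-H\"older on compact time intervals, by passing to the limit in the modulus of continuity).

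Finally, I must upgrade $\sigma_t$ to an absolutely continuous measure $\rho_t\,dx$. This is where the superlinear part of the entropy enters: by \Cref{prop:Lq} with $q=2$ (or any $q>1$), the densities $\rho^\tau_t = $ density of $(\mu^\tau_t)_\Omega$ satisfy $\int_\Omega \max\{\rho^\tau_t, \vartheta_0 e^{-V}\}^q e^{(q-1)V}\,dx \le \int_\Omega \max\{\rho_0, \vartheta_0 e^{-V}\}^q e^{(q-1)V}\,dx$, a bound uniform in $t$ and $\tau$; this gives equi-integrability of $\{\rho^\tau_t\}_\tau$, so the $Wb_2$-limit $\sigma_t$ is absolutely continuous with density $\rho_t \in L^q(\Omega)$, and $\rho^\tau_t \rightharpoonup \rho_t$ weakly in $L^1$ (using that $Wb_2$-convergence implies convergence in duality with $C_c(\Omega)$). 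I expect the main obstacle to be the bookkeeping in the telescoping argument for the dissipation bound — specifically, making sure the boundary contributions $\int \Psi\,d\mu_{\partial\Omega}$ and the $\const\tau\norm{\mu_\Omega}$ remainders are genuinely controlled uniformly, for which \Cref{rmk:massbound} (mass bound on the restrictions) and \Cref{lemma:boundaryVSinterior} are the crucial inputs — rather than the Arzel\`a--Ascoli machinery itself, which is routine.
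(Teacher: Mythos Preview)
Your overall architecture---refined Arzel\`a--Ascoli via \cite[Proposition~3.3.1]{AmbrosioGigliSavare08}, with the dissipation sum coming from telescoping \eqref{eq:existence:inequality}---matches the paper exactly, and your equicontinuity estimate is essentially \Cref{lemma:boundSumT}. However, there are two genuine gaps in the compactness/absolute-continuity part.

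First, bounded mass on a bounded domain does \emph{not} give tightness on the open set~$\Omega$, nor weak convergence in duality with $C_b(\Omega)$: mass can accumulate at~$\partial\Omega$ (think of $\delta_{1/n}$ on $(0,1)$, which fails to converge against $\sin(1/x)\in C_b(0,1)$). So the step ``tight $\Rightarrow$ converge weakly in duality with $C_b(\Omega)$'' is false as written, and \cite[Proposition~2.7]{FigalliGigli10} does not apply. Second, your appeal to \Cref{prop:Lq} with $q=2$ to obtain equi-integrability does not work under the stated hypotheses: the right-hand side $\int_\Omega \max\{\rho_0,\vartheta_0 e^{-V}\}^2 e^{V}\,dx$ may well be infinite, since only $\int\rho_0\log\rho_0<\infty$ is assumed, not $\rho_0\in L^2$.

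Both issues are repaired by the same observation, which the paper makes and which you already have in hand: the telescoping of \eqref{eq:existence:inequality} yields not only the bound on $\sum\TrFun^2/\tau$ but simultaneously a bound $\int_\Omega \rho^\tau_t\log\rho^\tau_t\,dx\le \const(1+t)$ (rearrange so that $-\E(\rho^\tau_t)$ stays on the left). This entropy bound gives equi-integrability directly (de la Vall\'ee--Poussin), so Dunford--Pettis furnishes weak-$L^1$ compactness of $\{\rho^\tau_t\}_\tau$; weak $L^1$ convergence implies convergence in duality with $C_b(\Omega)$ (since $C_b\subset L^\infty$), and now \cite[Proposition~2.7]{FigalliGigli10} applies. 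Absolute continuity of the limit is then automatic. This is precisely the set $K_t$ the paper uses. In short: replace your mass-bound tightness argument and the $L^q$-contractivity argument by the entropy bound you already derived.
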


Once again, we first need a lemma.

\begin{lemma} \label{lemma:boundSumT}
	Let~$t \ge 0$ and~$ \tau >  0$. Then
	\begin{equation} \label{eq:lemmaBoundSumT}
		\tau \int_\Omega \rho_t^\tau \log \rho_t^\tau \dif x + \sum_{i=0}^{\lfloor t/\tau \rfloor -1 } \TrFun^2\bigl(\mu_{i\tau}^\tau,\mu_{(i+1)\tau}^\tau\bigr) \le \const \, \tau (1+t+\tau) \fstop
	\end{equation}
	As a consequence,
	\begin{equation} \label{eq:lemmaBoundSumT2}
		Wb_2\bigl((\mu_s^\tau)_\Omega,(\mu_t^\tau)_\Omega\bigr) \le \TrDis \bigl(\mu_s^\tau,\mu_t^\tau\bigr) \le \const \sqrt{(t-s+\tau)(1+t+\tau)} \comma \qquad s \in [0,t] \fstop
	\end{equation}
\end{lemma}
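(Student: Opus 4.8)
The plan is to run the standard energy-dissipation argument for the minimizing-movement scheme. For the first inequality \eqref{eq:lemmaBoundSumT}, I would use that $\mu^\tau_{(i+1)\tau}$ is a minimizer in \eqref{eq:jkoTrFun0}, so testing the functional at the competitor $\mu^\tau_{i\tau}$ gives, for each $i$,
\[
	\Ent\bigl(\mu^\tau_{(i+1)\tau}\bigr) + \frac{\TrFun^2\bigl(\mu^\tau_{i\tau},\mu^\tau_{(i+1)\tau}\bigr)}{2\tau} \le \Ent\bigl(\mu^\tau_{i\tau}\bigr).
\]
Summing over $i = 0,\dots, \lfloor t/\tau\rfloor - 1$ telescopes the entropy terms, yielding
\[
	\sum_{i=0}^{\lfloor t/\tau\rfloor-1} \TrFun^2\bigl(\mu^\tau_{i\tau},\mu^\tau_{(i+1)\tau}\bigr) \le 2\tau\bigl(\Ent(\mu_0) - \Ent(\mu^\tau_{t})\bigr).
\]
Now $\Ent(\mu_0) = \E(\rho_0) + (\mu_0)_{\partial\Omega}(\Psi)$ is a finite constant by the standing hypothesis $\int_\Omega \rho_0\log\rho_0\,\dif x<\infty$, so the real work is the \emph{lower} bound on $-\Ent(\mu^\tau_t)$, i.e. an upper bound on $-\int_\Omega\bigl(\rho^\tau_t\log\rho^\tau_t+(V-1)\rho^\tau_t+1\bigr)\dif x - (\mu^\tau_t)_{\partial\Omega}(\Psi)$. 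I would control this exactly as in the proof of \Cref{prop:existence}: \Cref{rmk:massbound} gives $\norm{(\mu^\tau_t)_\Omega}\le\const$ uniformly in $t,\tau$, and \Cref{lemma:boundaryVSinterior} (applied with $\Phi=\Psi$ and the time step $\tau$) bounds $(\mu^\tau_t)_{\partial\Omega}(\Psi)$ in terms of $(\mu^\tau_t)_\Omega$, $(\mu^\tau_{t-\tau})_\Omega$, and $\TrFun^2(\mu^\tau_{t-\tau},\mu^\tau_t)/(4\tau)$; the superlinear part $\rho\log\rho$ then absorbs the resulting linear-in-$\rho$ error, leaving $-\Ent(\mu^\tau_t)\le\const(1+\tau)$ up to a term $\tfrac14\TrFun^2(\mu^\tau_{t-\tau},\mu^\tau_t)$ that can be reabsorbed into the left-hand sum (it appears with a smaller coefficient than the $\tfrac12$ coming from the telescoping). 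Keeping careful track, the bound $2\tau\bigl(\Ent(\mu_0)-\Ent(\mu^\tau_t)\bigr)\le\const\,\tau(1+t+\tau)$ follows, and the extra $\tau\int_\Omega\rho^\tau_t\log\rho^\tau_t\dif x$ on the left of \eqref{eq:lemmaBoundSumT} is handled in the same stroke, since that integral is exactly (minus) the superlinear part of $\Ent(\mu^\tau_t)$ and is therefore bounded below.

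For the second inequality \eqref{eq:lemmaBoundSumT2}, I would first note the left inequality is just \eqref{eq:distIneq}. For the middle one, write $s = j\tau + r$-type bookkeeping; more simply, since $\TrDis\le\TrFun$ and $\TrDis$ satisfies the triangle inequality (shown earlier), for any $0\le s\le t$ one has (using that $\mu^\tau$ is constant on dyadic-$\tau$ intervals, so only indices $i$ with $i\tau\in[s,t]$ contribute, of which there are at most $\lceil(t-s)/\tau\rceil + 1 \le (t-s+\tau)/\tau + 1 \le 2(t-s+\tau)/\tau$ when, say, $\tau\le t-s$, and the edge cases are trivial)
\[
	\TrDis(\mu^\tau_s,\mu^\tau_t) \le \sum_{i} \TrFun\bigl(\mu^\tau_{i\tau},\mu^\tau_{(i+1)\tau}\bigr) \le \sqrt{N}\,\sqrt{\sum_i \TrFun^2\bigl(\mu^\tau_{i\tau},\mu^\tau_{(i+1)\tau}\bigr)},
\]
by Cauchy--Schwarz, where $N$ is the number of steps between $s$ and $t$. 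Plugging $N\le\const(t-s+\tau)/\tau$ and the sum bound from \eqref{eq:lemmaBoundSumT} (which gives $\sum_i\TrFun^2\le\const\,\tau(1+t+\tau)$) produces $\TrDis(\mu^\tau_s,\mu^\tau_t)\le\const\sqrt{(t-s+\tau)(1+t+\tau)}$, as claimed. The degenerate cases $s=t$ or $t-s<\tau$ (so $s,t$ lie in the same interval, making the left side $0$) are immediate.

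The main obstacle is the first part: obtaining the \emph{uniform-in-$(t,\tau)$} lower bound on $\Ent(\mu^\tau_t)$ despite $\Ent$ being unbounded below on $\cone$. This is precisely where one must combine the mass bound of \Cref{rmk:massbound} with \Cref{lemma:boundaryVSinterior} and be careful that the $\TrFun^2/(4\tau)$ error term is reabsorbed rather than creating a circular estimate — the coefficient bookkeeping ($\tfrac12$ from telescoping versus $\tfrac14$ from the lemma) is what makes this work. Everything else (the telescoping, Cauchy--Schwarz, the triangle inequality for $\TrDis$) is routine.
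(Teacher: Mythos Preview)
Your plan for the second inequality \eqref{eq:lemmaBoundSumT2} is correct and matches the paper's proof exactly (triangle inequality for $\TrDis$, then $\TrDis\le\TrFun$, then Cauchy--Schwarz over at most $(t-s+\tau)/\tau$ steps).

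For the first inequality there is a genuine gap in your control of the boundary term. A \emph{single} application of \Cref{lemma:boundaryVSinterior} between $\mu^\tau_{t-\tau}$ and $\mu^\tau_t$ does not bound $(\mu^\tau_t)_{\partial\Omega}(\Psi)$; it only bounds the \emph{increment} $(\mu^\tau_t)_{\partial\Omega}(\Psi)-(\mu^\tau_{t-\tau})_{\partial\Omega}(\Psi)$, and $(\mu^\tau_{t-\tau})_{\partial\Omega}(\Psi)$ is just as uncontrolled as the term you started with. Consequently your claimed bound $-\Ent(\mu^\tau_t)\le\const(1+\tau)$, with a single absorbable error $\tfrac14\TrFun^2(\mu^\tau_{t-\tau},\mu^\tau_t)$, is not what you get.

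The fix is to apply \Cref{lemma:boundaryVSinterior} at \emph{every} step $i=0,\dots,\lfloor t/\tau\rfloor-1$ and telescope. The error term is then the full sum $\tfrac1{4\tau}\sum_i\TrFun^2$ (not a single term), and your $1/2$-versus-$1/4$ coefficient observation is exactly what makes the reabsorption go through; in addition the $\const_\Psi\tau$ terms accumulate $\lfloor t/\tau\rfloor$ times, producing the $\const\,t$ that appears in the final bound. Equivalently---and this is what the paper actually does---sum the one-step inequality \eqref{eq:existence:inequality} from \Cref{prop:existence} over $i$. That inequality already packages the minimality condition together with one application of \Cref{lemma:boundaryVSinterior}, so after summing, only the \emph{interior} terms $(\mu^\tau_{\cdot})_\Omega(\Psi)$ survive and telescope; these are bounded directly by \Cref{rmk:massbound}, and no reabsorption is needed at all.
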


\begin{proof}
	We use~\eqref{eq:existence:inequality} to write
	\begin{equation*} 
		\sum_{i=0}^{\lfloor t/\tau \rfloor -1 } \frac{\TrFun^2\bigl(\mu_{i\tau}^\tau,\mu_{(i+1)\tau}^\tau\bigr)}{4\tau} \le \E(\rho_0) - \E(\rho^\tau_t) + (\mu^\tau_t)_\Omega(\Psi) - (\mu_0)_\Omega(\Psi) + \const \tau \sum_{i=0}^{\lfloor t/\tau \rfloor} \norm{(\mu^\tau_{i\tau})_\Omega} \comma
	\end{equation*}
	and conclude~\eqref{eq:lemmaBoundSumT} by using \Cref{rmk:massbound}.
	
	The first inequality in~\eqref{eq:lemmaBoundSumT2} follows from~\eqref{eq:distIneq}. As for the second one, since~$\TrDis$ is a pseudometric, and by the Cauchy--Schwarz inequality and~\eqref{eq:distIneq}, we have the chain of inequalities
	\begin{align*}
		\TrDis(\mu_s^\tau,\mu_t^\tau) &\le \sum_{i=\lfloor s/\tau \rfloor}^{\lfloor t/\tau \rfloor -1 } \TrDis(\mu_{i\tau}^\tau,\mu_{(i+1)\tau}^\tau) \le \sum_{i=\lfloor s/\tau \rfloor}^{\lfloor t/\tau \rfloor -1 } \TrFun\bigl(\mu_{i\tau}^\tau,\mu_{(i+1)\tau}^\tau\bigr) \\
		&\le \sqrt{\frac{t-s+\tau}{\tau}} \sqrt{\sum_{i=\lfloor s/\tau \rfloor}^{\lfloor t/\tau \rfloor -1 } \TrFun^2\bigl(\mu_{i\tau}^\tau,\mu_{(i+1)\tau}^\tau\bigr)} \fstop
	\end{align*}
	We combine the latter with~\eqref{eq:lemmaBoundSumT} to infer~\eqref{eq:lemmaBoundSumT2}.
\end{proof}

\begin{proof}[Proof of \Cref{prop:convergence}]
	Fix~$t > 0$. We know from \Cref{lemma:boundSumT} that, for every~$s \in [0,t]$ and~$\tau \in (0,1)$, we have
	\[
	(\mu_s^\tau)_\Omega \in K_t \coloneqq \set{ \rho \dif x \, : \, \int_\Omega \rho \log \rho \dif x \le c \, (2+t)} \comma
	\]
	where~$c$ is the constant in \eqref{eq:lemmaBoundSumT}. We claim that~$K_t$ is \emph{compact} in~$(\mathcal M_2(\Omega), Wb_2)$. By identifying an absolutely continuous measure with its density,~$K_t$ can be seen as a subset of~$L^1(\Omega)$. This set is closed and convex, as well as weakly sequentially compact by the Dunford--Pettis theorem. From~\cite[Proposition 2.7]{FigalliGigli10} we know that weak convergence in~$L^1(\Omega)$ implies convergence w.r.t.~$Wb_2$; hence the claim is true.
	
	Furthermore, for every~$r,s \in [0,t]$, we have
	\[
	\limsup_{\tau \to 0} Wb_2\bigl((\mu_r^\tau)_\Omega,(\mu_s^\tau)_\Omega\bigr) \stackrel{\eqref{eq:lemmaBoundSumT2}}{\le} \const \sqrt{\abs{s-r}(1+t)} \fstop
	\]
	
	All the hypotheses of~\cite[Proposition~3.3.1]{AmbrosioGigliSavare08} are satisfied; thus, we conclude the existence of a subsequence of~$\bigl(s \mapsto (\mu^\tau_s)_\Omega\bigr)_\tau$ that converges, pointwise in~$[0,t]$ w.r.t.~$Wb_2$, to a continuous curve of measures. Each limit measure lies in~$K_t$; hence it is absolutely continuous. With a diagonal argument, we find a single subsequence that converges pointwise on the whole half-line~$[0,\infty)$.
\end{proof}

\subsection{Solution to the Fokker--Planck equation with Dirichlet boundary conditions} 
We are now going to conclude the proof of \Cref{Theorem_1.1} by showing that the limit curve is, in fact, a solution to the linear Fokker--Planck equation with the desired boundary conditions.

\begin{proposition} \label{prop:FP}
	If the sequence~$\bigl(t \mapsto (\mu_t^\tau)_\Omega\bigr)_\tau$ converges, pointwise w.r.t.~$Wb_2$ as~$\tau \to 0$, to~$t \mapsto \rho_t  \dif x$, then~$\rho^\tau \to_\tau \rho$ also in~$L^1_\mathrm{loc} \bigl( (0,\infty) ; L^q(\Omega) \bigr)$ for every~$q \in [1,\frac{d}{d-1})$. The curve~$t \mapsto \rho_t \dif x$ solves the linear Fokker--Planck equation in the sense of \Cref{sec:defFokker}, and the map~$t \mapsto \left(\sqrt{\rho_t e^V} - e^{\Psi/2}\right)$ belongs to~$L^2_\mathrm{loc}\bigl([0,\infty ); W^{1,2}_0(\Omega)\bigr)$.
\end{proposition}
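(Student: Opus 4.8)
The plan is to prove the three assertions — strong $L^1_{\mathrm{loc}}$–$L^q$ convergence, the weak Fokker–Planck equation, and the boundary regularity — in that order, since the first feeds into the other two. Throughout, recall that by \Cref{prop:uniq} each iterate $\mu^\tau_{n\tau}$ is \emph{the} unique minimizer of the corresponding one–step problem, hence enjoys all the properties of \Cref{prop:sobolevReg}; in particular \eqref{eq:reg2}, \eqref{eq:reg2bis}, \eqref{eq:nablarho} and \eqref{eq:onestep:transportMap2} apply to it.

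\textbf{Uniform estimates.} First I would collect, for every $T>0$, the bounds uniform in $\tau\in(0,1)$. From \Cref{lemma:boundSumT} one gets $\int_\Omega\rho^\tau_t\log\rho^\tau_t\dif x\le\const(1+T)$ and $\sum_i\TrFun^2(\mu^\tau_{i\tau},\mu^\tau_{(i+1)\tau})\le\const\tau(1+T)$ on $[0,T]$; combined with \eqref{eq:reg2} this gives $\int_0^T\|\nabla\sqrt{\rho^\tau_s e^V}\|_{L^2(\Omega)}^2\dif s\le\const(1+T)$. Since $\|\rho^\tau_s\|_{L^1}\le\const$ by \Cref{rmk:massbound}, also $\sqrt{\rho^\tau e^V}$ is bounded in $L^\infty\bigl((0,T);L^2(\Omega)\bigr)$, and \eqref{eq:reg2bis} yields $\int_0^T\|\rho^\tau_s\|_{L^{q_0}(\Omega)}\dif s\le\const_{q_0}(1+T)$ whenever $q_0(d-2)\le d$. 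Using the truncations $g^{\tau,(\kappa_\tau)}\in W^{1,2}_0(\Omega)$ of \Cref{rmk:g} (with $\kappa_\tau=c(e^{c\tau}-1)\to0$), the Sobolev inequality on $\R^d$, and interpolation between the $L^\infty_tL^2_x$ and $L^2_tL^{2^*}_x$ bounds, I upgrade this to: $\rho^\tau$ is bounded in $L^{r_0}\bigl((0,T);L^{q_0}(\Omega)\bigr)$ for some $r_0>1$ and some $q_0\in(q,\tfrac{d}{d-1})$. Finally, from \eqref{eq:nablarho} and $V\in W^{1,d+}_{\mathrm{loc}}(\Omega)$, Hölder's inequality gives $\int_0^T\|\nabla\rho^\tau_s\|_{L^1(\omega)}\dif s\le\const_\omega(1+T)$ for every $\omega\Subset\Omega$, i.e.\ $\rho^\tau$ is bounded in $L^1\bigl((0,T);W^{1,1}(\omega)\bigr)$.

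\textbf{Time equicontinuity and compactness.} Fix $\varphi\in C^2_c(\Omega)$. For one step, writing $\gamma\in\Opt_\TrFun(\mu^\tau_{(n+1)\tau},\mu^\tau_{n\tau})$ with $\gamma_\Omega^\ovom=(\Id,S)_\#(\mu^\tau_{(n+1)\tau})_\Omega$ and using \eqref{eq:onestep:transportMap2}, a second–order Taylor expansion of $\varphi$ (extended by $0$ to $\R^d$) and integration by parts give
\[
\Bigl| \int\varphi\dif(\mu^\tau_{(n+1)\tau})_\Omega - \int\varphi\dif(\mu^\tau_{n\tau})_\Omega - \tau\int_\Omega\bigl(\Delta\varphi-\langle\nabla\varphi,\nabla V\rangle\bigr)\rho^\tau_{(n+1)\tau}\dif x \Bigr| \le \tfrac12\|D^2\varphi\|_\infty\,\TrFun^2(\mu^\tau_{n\tau},\mu^\tau_{(n+1)\tau}).
\]
Summing over the steps between two times and applying Fubini in the time variable (with the bounds above) yields $\int_0^{T-h}\|\rho^\tau_{t+h}-\rho^\tau_t\|_{(C^2_c(\omega))^*}\dif t\le\const_{\omega,T}(h+\tau)$ for every $\omega\Subset\Omega$. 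Combined with the $L^1\bigl((0,T);W^{1,1}(\omega)\bigr)$–bound and the compact embedding $W^{1,1}(\omega)\hookrightarrow\hookrightarrow L^1(\omega)\hookrightarrow (C^2_c(\omega))^*$, an Aubin–Lions–Simon type compactness theorem gives, up to a subsequence, $\rho^\tau\to\tilde\rho$ strongly in $L^1\bigl((0,T);L^1(\omega)\bigr)$; testing against products $\psi(t)\varphi(x)$ and using the pointwise $Wb_2$–convergence (hence convergence in $C_c(\Omega)$–duality) of \Cref{prop:convergence} identifies $\tilde\rho=\rho$. Interpolating the $L^1_tL^1_x$–convergence with the uniform $L^{r_0}_tL^{q_0}_x$–bound ($r_0>1$, $q_0>q$), and using $\int_0^T\|\rho^\tau_s\|_{L^q(\Omega\setminus\omega)}\dif s\le|\Omega\setminus\omega|^{1/q-1/q_0}\const(1+T)$ to control the layer near $\partial\Omega$, I obtain $\rho^\tau\to\rho$ in $L^1\bigl((0,T);L^q(\Omega)\bigr)$ for every $T$ and every $q\in[1,\tfrac{d}{d-1})$, i.e.\ in $L^1_{\mathrm{loc}}\bigl((0,\infty);L^q(\Omega)\bigr)$.

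\textbf{The equation and the boundary condition.} Telescoping the one–step identity over $n\in\{\lfloor s/\tau\rfloor,\dots,\lfloor t/\tau\rfloor-1\}$: the error terms sum to at most $\tfrac12\|D^2\varphi\|_\infty\sum_n\TrFun^2\le\const\tau(1+t)\to0$; the left–hand side converges to $\int\varphi\,\rho_t\dif x-\int\varphi\,\rho_s\dif x$ by \Cref{prop:convergence}; and the Riemann sum converges to $\int_s^t\int_\Omega(\Delta\varphi-\langle\nabla\varphi,\nabla V\rangle)\rho_r\dif x\dif r$ because $\Delta\varphi\in C_c(\Omega)$, $\langle\nabla\varphi,\nabla V\rangle\in L^p(\operatorname{supp}\varphi)$ with $p>d$, and $\rho^\tau\to\rho$ in $L^1_{\mathrm{loc}}\bigl((0,\infty);L^{p'}(\Omega)\bigr)$ with $p'<\tfrac{d}{d-1}$. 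This is exactly \eqref{eq:fp}; continuity in $C_c(\Omega)$–duality and the local integrability requirements of \Cref{sec:defFokker} follow from \Cref{prop:convergence} and $\int_\omega|\nabla V|\rho_r\dif x\le\|\nabla V\|_{L^p(\omega)}\|\rho_r\|_{L^{p'}}$. For the last claim, the truncations $g^{\tau,(\kappa_\tau)}$ lie in $W^{1,2}_0(\Omega)$ and are bounded in $L^2_{\mathrm{loc}}\bigl([0,\infty);W^{1,2}(\Omega)\bigr)$; since $|g^{\tau,(\kappa_\tau)}-(\sqrt{\rho^\tau e^V}-e^{\Psi/2})|\le\kappa_\tau\to0$ uniformly and $\rho^\tau\to\rho$ a.e.\ (along the subsequence), these functions converge in $\mathcal D'\bigl((0,\infty)\times\Omega\bigr)$ to $g:=\sqrt{\rho e^V}-e^{\Psi/2}$, so $g$ is their weak $L^2_{\mathrm{loc}}\bigl([0,\infty);W^{1,2}(\Omega)\bigr)$–limit; as $W^{1,2}_0(\Omega)$ is weakly closed in $W^{1,2}(\Omega)$, it follows that $g\in L^2_{\mathrm{loc}}\bigl([0,\infty);W^{1,2}_0(\Omega)\bigr)$.

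\textbf{Main obstacle.} The crux is the strong space–time compactness of $(\rho^\tau)$: the dissipation grows only linearly in time, so the natural control of $\|\rho^\tau\|_{L^q}$ is merely $L^1$ in $t$, and $\partial\Omega$ carries no regularity, so neither Rellich–Kondrachov nor standard parabolic compactness is available on $\Omega$. Both obstructions are handled by working with the $W^{1,2}_0$–truncations $g^{\tau,(\kappa_\tau)}$ — whose truncation height $\kappa_\tau\to0$ — which restore the Sobolev/Rellich machinery on all of $\Omega$ via extension to $\R^d$, and by trading a little spatial integrability for $L^{r_0}$–in–time control ($r_0>1$) through interpolation with the uniform $L^\infty_tL^1_x$ mass bound; this is precisely what lets Aubin–Lions–Simon plus a Vitali argument deliver the stated $L^q$–in–space convergence up to $q<\tfrac{d}{d-1}$.
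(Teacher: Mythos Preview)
Your approach is correct and structurally close to the paper's, but with two tactical differences. First, for the $L^q$ control on $\rho^\tau$ you interpolate between $L^\infty_tL^2_x$ and $L^2_tL^{2^*}_x$ for $\sqrt{\rho^\tau e^V}$ to get an $L^{r_0}_tL^{q_0}_x$ bound with $r_0>1$; the paper instead exploits the discrete contractivity of truncated weighted $L^q$-norms (\Cref{prop:Lq}) to obtain a \emph{pointwise-in-time} bound $\|\rho^\tau_t\|_{L^q}\le\const_q(1+t)/(t-\tau)$ for all $q$ with $q(d-2)\le d$ (\Cref{lemma:lebesgueBound}). Your route is more self-contained; the paper's yields a sharper bound and simplifies the subsequent compactness step. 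Second, you invoke Aubin--Lions--Simon, while the paper uses the Dreher--J\"ungel discrete variant, purpose-built for piecewise-constant-in-time sequences.

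Two points need tightening. Your one-step estimate omits a second error: $\int(\varphi\circ S)\rho^\tau_{(n+1)\tau}\dif x\neq\int\varphi\,\rho^\tau_{n\tau}\dif x$, because mass can enter $\operatorname{supp}\varphi$ from $\partial\Omega$ through $\gamma_{\partial\Omega}^\Omega$. This extra term is $\le\|\varphi\|_\infty\,\|\gamma_{\partial\Omega}^{\operatorname{supp}\varphi}\|\le\const_\omega\|\varphi\|_\infty\TrFun^2$ since $\operatorname{dist}(\operatorname{supp}\varphi,\partial\Omega)>0$ (this is the paper's $I_2^i$ in \Cref{lemma:FP}); the fix is cosmetic but the right-hand constant must depend on $\omega$. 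More substantively, your time-shift bound $\const_{\omega,T}(h+\tau)$ does not verify Simon's equicontinuity uniformly in $\tau$, since piecewise-constant functions jump; you must either cite a discrete Aubin--Lions lemma (as the paper does) or pass through a linear-in-time interpolant. Finally, since the Sobolev and Lebesgue bounds only hold from $t\ge\tau$ and $\rho_0$ is merely in $L\log L$, the estimates and the compactness should be carried out on $(\epsilon,T)$ rather than $(0,T)$, which is all the $L^1_{\mathrm{loc}}\bigl((0,\infty);L^q\bigr)$ conclusion requires.
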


Like in the proofs of \cite[Theorem~3.5]{FigalliGigli10} and \cite[Theorem~4.1]{Morales18}, the key to  \Cref{prop:FP} is to first determine (see \Cref{lemma:FP}) that the measures constructed with~\eqref{eq:jkoTrFun0} already solve approximately the Fokker--Planck equation. In order to prove that the limit curve has the desired properties and that convergence holds in~$L^1_\mathrm{loc}\bigl( (0,\infty); L^q(\Omega) \bigr)$ (\Cref{lemma:improvedconv}), two further preliminary lemmas turn out to be particularly useful. Both provide quantitative bounds at the discrete level: one (\Cref{lemma:sobolev}) for~$\sqrt{\rho^\tau e^V}$ in~$L^2_\mathrm{loc}\bigl( (0,\infty); W^{1,2}(\Omega) \bigr)$; the other (\Cref{lemma:lebesgueBound}) for~$\rho^\tau$ in~$L^\infty_\mathrm{loc}\bigl((0,\infty) ; L^q(\Omega) \bigr)$, for suitable values of~$q$. In turn, these bounds are deduced from \Cref{prop:sobolevReg} and \Cref{prop:Lq}.

\begin{lemma}[Sobolev bound] \label{lemma:sobolev}
	If $\tau \le t$, then,
	\begin{equation} \int_\tau^t \norm{ \sqrt{\rho^\tau_r e^V}}_{W^{1,2}}^2 \dif r \le \const (1+t) \fstop \end{equation}
\end{lemma}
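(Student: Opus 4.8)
The plan is to discretize the time integral, bound each frozen term $\|\sqrt{\rho^\tau_{n\tau}e^V}\|_{W^{1,2}}^2$ separately in its $L^2$-part and its gradient part, and then sum. Since $t\mapsto\mu^\tau_t$ is constant on each interval $[n\tau,(n+1)\tau)$, every $r\in[\tau,t]$ lies in some interval $[n\tau,(n+1)\tau)$ with $1\le n\le\lfloor t/\tau\rfloor$, and the measure of $[\tau,t]\cap[n\tau,(n+1)\tau)$ is at most $\tau$; hence
\[
\int_\tau^t \norm{\sqrt{\rho^\tau_r e^V}}_{W^{1,2}}^2 \dif r \le \tau \sum_{n=1}^{\lfloor t/\tau\rfloor} \norm{\sqrt{\rho^\tau_{n\tau} e^V}}_{W^{1,2}}^2 = \tau \sum_{n=1}^{\lfloor t/\tau\rfloor}\Bigl( \norm{\sqrt{\rho^\tau_{n\tau} e^V}}_{L^2}^2 + \norm{\nabla\sqrt{\rho^\tau_{n\tau} e^V}}_{L^2}^2 \Bigr).
\]

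For the $L^2$-part, I would use $\norm{\sqrt{\rho^\tau_{n\tau}e^V}}_{L^2}^2 = \int_\Omega\rho^\tau_{n\tau}e^V\dif x \le e^{\norm{V}_{L^\infty}}\norm{(\mu^\tau_{n\tau})_\Omega}$, which by the uniform mass bound of \Cref{rmk:massbound} is $\le\const$. Summing gives $\tau\sum_{n=1}^{\lfloor t/\tau\rfloor}\norm{\sqrt{\rho^\tau_{n\tau}e^V}}_{L^2}^2 \le \const\,\tau\lfloor t/\tau\rfloor \le \const\, t$. For the gradient part, recall that by \Cref{prop:uniq} the measure $\mu^\tau_{n\tau}$ is the \emph{unique} minimizer of the one-step functional with $\bar\mu=\mu^\tau_{(n-1)\tau}$ (whose interior entropy $\E(\rho^\tau_{(n-1)\tau})$ is finite, including $n=1$ since $\int_\Omega\rho_0\log\rho_0<\infty$ and $V\in L^\infty$), so it is in particular the minimizer produced by \Cref{prop:sobolevReg} and satisfies the estimate~\eqref{eq:reg2}, i.e.\
\[
\norm{\nabla\sqrt{\rho^\tau_{n\tau} e^V}}_{L^2}^2 \le \const\,\frac{\TrFun^2(\mu^\tau_{n\tau},\mu^\tau_{(n-1)\tau})}{\tau^2}.
\]
Therefore, using the symmetry of $\TrFun$ and reindexing $i=n-1$,
\[
\tau \sum_{n=1}^{\lfloor t/\tau\rfloor} \norm{\nabla\sqrt{\rho^\tau_{n\tau} e^V}}_{L^2}^2 \le \frac{\const}{\tau}\sum_{n=1}^{\lfloor t/\tau\rfloor}\TrFun^2(\mu^\tau_{n\tau},\mu^\tau_{(n-1)\tau}) = \frac{\const}{\tau}\sum_{i=0}^{\lfloor t/\tau\rfloor-1}\TrFun^2(\mu^\tau_{i\tau},\mu^\tau_{(i+1)\tau}),
\]
and I would close the argument by invoking \Cref{lemma:boundSumT}.

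The one point requiring a little care is that the left-hand side of~\eqref{eq:lemmaBoundSumT} also contains $\tau\int_\Omega\rho^\tau_t\log\rho^\tau_t\dif x$, which is a priori negative; but the elementary pointwise inequality $s\log s\ge-1/e$ gives $\int_\Omega\rho^\tau_t\log\rho^\tau_t\dif x\ge-|\Omega|/e\ge-\const$, so~\eqref{eq:lemmaBoundSumT} yields $\sum_{i=0}^{\lfloor t/\tau\rfloor-1}\TrFun^2(\mu^\tau_{i\tau},\mu^\tau_{(i+1)\tau})\le\const\,\tau(1+t+\tau)$. Combining the two parts,
\[
\int_\tau^t \norm{\sqrt{\rho^\tau_r e^V}}_{W^{1,2}}^2 \dif r \le \const\, t + \const(1+t+\tau) \le \const(1+t),
\]
where the last inequality uses the hypothesis $\tau\le t$. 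I do not expect a genuine obstacle here: the lemma is pure bookkeeping built on \eqref{eq:reg2}, \Cref{rmk:massbound}, and \Cref{lemma:boundSumT}, and the only subtlety is the sign of the entropy term just discussed and keeping track of the off-by-one in the index reshuffling.
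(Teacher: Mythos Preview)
Your proposal is correct and follows essentially the same approach as the paper: invoke~\eqref{eq:reg2} for the gradient part, sum, and close with \Cref{lemma:boundSumT}. You are simply more explicit than the paper about the $L^2$-part (handled via \Cref{rmk:massbound} and $V\in L^\infty$), about why \eqref{eq:reg2} applies to \emph{the} minimizer (via \Cref{prop:uniq}), and about the sign of the entropy term in~\eqref{eq:lemmaBoundSumT}; the paper folds all of this into the phrase ``can be easily reduced to the desired inequality.''
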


\begin{proof}
	Let~$r \ge \tau$. By~\eqref{eq:reg2}, we have 
	\[ \norm{\nabla \sqrt{\rho^\tau_r e^V}}_{L^2}^2 \le \const \frac{\TrFun^2\left(\mu_{\lfloor  r/\tau \rfloor \tau}^\tau , \mu_{\lfloor r/\tau \rfloor \tau - \tau}^\tau \right)}{\tau^2} \fstop \]
	Thus,
	\[ \int_\tau^t \norm{\nabla \sqrt{\rho^\tau_r e^V}}_{L^2}^2 \dif r
	\le \const \sum_{i=0}^{\lfloor t/\tau \rfloor -1} \frac{\TrFun^2\left(\mu^\tau_{(i+1)\tau}, \mu^\tau_{i\tau}\right)}{\tau} \comma \]
	which, using \Cref{lemma:boundSumT}, can be easily reduced to the desired inequality.
\end{proof}

\begin{lemma}[Lebesgue bound] \label{lemma:lebesgueBound}
	Let~$q \in [1, \infty)$ be such that~$q(d-2) \le d$. If~$\tau < t$, then
	\begin{equation} \label{eq:lebesgueBound}
		\norm{\rho^\tau_t}_{L^q} \le \const_{q} e^{\const \tau} \frac{1+t}{t-\tau} \fstop
	\end{equation}
\end{lemma}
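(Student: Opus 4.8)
The plan is to amortise, over the interval $(\tau,t)$, the a priori bound on the Sobolev energy of the discrete iterates, so as to find \emph{one} step at which \eqref{eq:reg2bis} gives a good $L^q$ estimate, and then to propagate that estimate forward to time $t$ using the monotonicity of \Cref{prop:Lq}. Write $n\coloneqq\lfloor t/\tau\rfloor\ge 1$, so that $\rho^\tau_t=\rho^\tau_{n\tau}$. By uniqueness (\Cref{prop:uniq}), for every $1\le m\le n$ the measure $\mu^\tau_{m\tau}$ is \emph{the} minimiser of the $m$-th step and, in particular, coincides with the one constructed in \Cref{prop:sobolevReg}, hence enjoys the regularity and satisfies the estimates stated there.

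First I would pick the good step. By \Cref{lemma:sobolev}, $\int_\tau^t\norm{\nabla\sqrt{\rho^\tau_r e^V}}_{L^2}^2\dif r\le\int_\tau^t\norm{\sqrt{\rho^\tau_r e^V}}_{W^{1,2}}^2\dif r\le\const(1+t)$. Since $r\mapsto\rho^\tau_r$ is constant on each interval $[m\tau,(m+1)\tau)$, the integral on the left is a sum of the numbers $\norm{\nabla\sqrt{\rho^\tau_{m\tau}e^V}}_{L^2}^2$, $1\le m\le n$, with nonnegative weights summing to $t-\tau$; hence there is $m\in\{1,\dots,n\}$ with
\[
\norm{\nabla\sqrt{\rho^\tau_{m\tau}e^V}}_{L^2}^2\le\frac{1}{t-\tau}\int_\tau^t\norm{\nabla\sqrt{\rho^\tau_r e^V}}_{L^2}^2\dif r\le\const\,\frac{1+t}{t-\tau}\fstop
\]
Applying \eqref{eq:reg2bis} at this step (this is where the hypothesis $q(d-2)\le d$ enters) together with the mass bound $\norm{\rho^\tau_{m\tau}}_{L^1}\le\const$ of \Cref{rmk:massbound}, I obtain
\[
\norm{\rho^\tau_{m\tau}}_{L^q}\le\const_q\Bigl(e^{\const\tau}+\norm{\nabla\sqrt{\rho^\tau_{m\tau}e^V}}_{L^2}^2+\norm{\rho^\tau_{m\tau}}_{L^1}\Bigr)\le\const_q\Bigl(e^{\const\tau}+\frac{1+t}{t-\tau}\Bigr)\le\const_q\,e^{\const\tau}\,\frac{1+t}{t-\tau}\comma
\]
where in the last step I used $1\le\frac{1+t}{t-\tau}$.

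Next I would propagate this to time $t$. Since $m\le n$, iterating \Cref{prop:Lq} with $\vartheta=\vartheta_0$ from the $m$-th step to the $n$-th gives
\[
\int_\Omega\max\set{\rho^\tau_{n\tau},\vartheta_0 e^{-V}}^q e^{(q-1)V}\dif x\le\int_\Omega\max\set{\rho^\tau_{m\tau},\vartheta_0 e^{-V}}^q e^{(q-1)V}\dif x\fstop
\]
Because $V\in L^\infty(\Omega)$, $\vartheta_0<\infty$ and $\abs\Omega<\infty$, the integrand is pointwise comparable to $\rho^q$, namely $\const^{-1}\rho^q\le\max\set{\rho,\vartheta_0 e^{-V}}^q e^{(q-1)V}\le\const(\rho^q+1)$ with a constant depending only on $q$ (and on $\norm V_{L^\infty}$, $\vartheta_0$, $\abs\Omega$). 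Hence $\norm{\rho^\tau_t}_{L^q}^q=\norm{\rho^\tau_{n\tau}}_{L^q}^q\le\const_q\bigl(\norm{\rho^\tau_{m\tau}}_{L^q}^q+1\bigr)$; taking $q$-th roots and inserting the bound on $\norm{\rho^\tau_{m\tau}}_{L^q}$ from the previous paragraph (and using $1\le\frac{1+t}{t-\tau}$ again) yields $\norm{\rho^\tau_t}_{L^q}\le\const_q\,e^{\const\tau}\,\frac{1+t}{t-\tau}$, which is the claim.

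The only delicate point — rather than a genuine obstacle — is the amortisation step: the factor $(t-\tau)^{-1}$ in the conclusion appears precisely because the total Sobolev energy $\const(1+t)$ is distributed over the $\approx t/\tau$ steps contained in $(\tau,t)$, so at least one of them carries energy $\lesssim\frac{1+t}{t-\tau}$. The remaining care is in the bookkeeping of \Cref{prop:Lq}, i.e.\ passing between $\norm{\cdot}_{L^q}^q$ and the truncated weighted functional $\int_\Omega\max\set{\cdot,\vartheta_0 e^{-V}}^q e^{(q-1)V}\dif x$, which is harmless since $V$ is bounded and $\Omega$ has finite measure.
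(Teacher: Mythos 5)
Your proof is correct and follows essentially the same route as the paper: combine the contractivity of \Cref{prop:Lq} (to transfer the estimate from an earlier step to time $t$), the bound~\eqref{eq:reg2bis} together with \Cref{rmk:massbound}, and the time-integrated Sobolev bound of \Cref{lemma:sobolev}. The only difference is cosmetic — the paper integrates the resulting inequality in $r$ over $(\tau,t)$, whereas you select one good step by a pigeonhole/averaging argument — and your bookkeeping (uniqueness to invoke \Cref{prop:sobolevReg}, comparability of the truncated weighted functional with $\norm{\cdot}_{L^q}^q$) is sound.
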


\begin{proof} 
	For every~$r \in [0,t]$, \Cref{prop:Lq} gives
	\begin{align*}
		\norm{\rho^\tau_t}_{L^q} &\le \const_{q} \left(\int_{\Omega} \max\set{\rho_t^\tau e^V,\vartheta_0}^{q} e^{-V} \dif x\right)^{1/q} \\
		&\le \const_{q} \left( \int_{\Omega} \max\set{\rho_{r}^\tau e^V,\vartheta_0}^q e^{-V} \dif x \right)^{1/q} 
		\le \const_{q}\left(1+\norm{\rho_r^\tau}_{L^q}\right) \comma
	\end{align*}
	and if, additionally,~$r \ge \tau$, then~\eqref{eq:reg2bis} yields
	\[
	\norm{\rho^\tau_t}_{L^q} \le \const_{q} \left(e^{\const \tau} + \norm{\nabla \sqrt{\rho^\tau_r e^V}}_{L^2}^2 + \norm{\rho_r^\tau}_{L^1} \right) \fstop
	\]
	After integrating w.r.t.~$r$ from~$\tau$ to~$t$, \Cref{lemma:sobolev} and \Cref{rmk:massbound} imply~\eqref{eq:lebesgueBound}.
\end{proof}

\begin{lemma}[Approximate Fokker--Planck] \label{lemma:FP}
	Let~$\omega \Subset \Omega$ be open, let~$\varphi \in C^2_0(\omega)$, and let~$s,t$ be such that~$0 \le s \le t$. Then,~$\rho^\tau,\rho^\tau \nabla V \in L^1_\mathrm{loc}\bigl( ( \tau,\infty ); L^1(\omega) \bigr)$, and
	\begin{multline} \label{eq:lemma:FP:00}
		\abs{\int_\Omega (\rho^\tau_t - \rho^\tau_s) \varphi \dif x - \int_{\lfloor \frac{s}{\tau} \rfloor \tau+\tau}^{\lfloor \frac{t}{\tau} \rfloor\tau + \tau} \int_\Omega (\Delta \varphi - \langle \nabla \varphi, \nabla V \rangle) \rho_r^\tau \dif x \dif r } \\ \le \const_{\omega} \, \tau(1+t+\tau) \norm{\varphi}_{C^2_0(\omega)}\fstop 
	\end{multline}
	Moreover, for~$\epsilon > 0$, the inequality
	\begin{equation}\label{eq:lemma:FP:01}
		\norm{\rho^\tau_t - \rho^\tau_s}_{(C_0^2(\omega))^*} \le \const_{\omega, \epsilon} (t-s+\tau)
	\end{equation}
	holds whenever~$0<2\tau \le \epsilon \le s \le t \le 1/\epsilon$.
\end{lemma}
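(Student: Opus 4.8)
The plan is to follow the classical route of \cite[Theorem~3.5]{FigalliGigli10} and \cite[Theorem~4.1]{Morales18}: first prove a one-step version of \eqref{eq:lemma:FP:00}, then sum it along the scheme, while carefully controlling the mass exchanged with~$\partial \Omega$ and exploiting only the available (local Sobolev) regularity of~$V$. Fix $\omega \Subset \Omega$ and set $\delta \coloneqq \operatorname{dist}(\overline \omega, \partial \Omega) > 0$. For a generic step, write $\mu \coloneqq \mu^\tau_{(n+1)\tau}$, $\bar \mu \coloneqq \mu^\tau_{n\tau}$, let $\rho, \bar \rho$ be the densities of $\mu_\Omega, \bar \mu_\Omega$, and pick $\gamma \in \Opt_\TrFun(\mu,\bar \mu)$ as provided by \Cref{prop:sobolevReg}, so that $\gamma_\Omega^\ovom = (\Id, S)_\# \mu_\Omega$ and, by \eqref{eq:onestep:transportMap2}, $(S-\Id)\rho = \tau(\nabla \rho + \rho \nabla V)$ holds $\Leb^d$-a.e.~on $\Omega$. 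For $\varphi \in C^2_0(\omega)$ (compactly supported in~$\omega$), since $\varphi$ vanishes on $\partial \Omega$, Conditions~\ref{(1)} and~\ref{(2)} in \Cref{def:TrDis} give
\[
	\int_\Omega (\rho - \bar \rho)\varphi \dif x = \int \bigl(\varphi(x) - \varphi(y)\bigr) \dif \gamma(x,y) \fstop
\]

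Next I would split $\gamma = \gamma_\Omega^\ovom + \gamma_{\partial \Omega}^\Omega$ (Condition~\ref{(4)} rules out a $\gamma_{\partial \Omega}^{\partial \Omega}$ part) and estimate the two contributions. On $\gamma_{\partial \Omega}^\Omega$ one has $\varphi(x) = 0$, so this piece equals $-\int_\Omega \varphi \dif (\pi^2_\# \gamma_{\partial \Omega}^\Omega)$; every point of $\operatorname{supp}\varphi \subseteq \overline \omega$ is at distance at least $\delta$ from $\partial \Omega$, hence $\pi^2_\# \gamma_{\partial \Omega}^\Omega(\overline \omega) \le \delta^{-2}\cost(\gamma) = \delta^{-2}\TrFun^2(\mu,\bar \mu)$ and this contribution is bounded by $\const_\omega \norm{\varphi}_{C^0}\TrFun^2(\mu,\bar \mu)$. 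On $\gamma_\Omega^\ovom = (\Id,S)_\# \mu_\Omega$, extending $\varphi$ by zero to $\R^d$ and Taylor-expanding to second order, $\varphi(x) - \varphi(S(x)) = -\langle \nabla \varphi(x), S(x) - x \rangle - R(x)$ with $\abs{R(x)} \le \tfrac{1}{2}\norm{D^2\varphi}_\infty \abs{S(x)-x}^2$, so the remainder contributes at most $\tfrac12 \norm{D^2\varphi}_\infty \cost(\gamma)$ after integrating against~$\rho$, while the linear part, by the transport identity and an integration by parts — legitimate because $\rho \in W^{1,1}_\mathrm{loc}(\Omega)$ (\Cref{prop:sobolevReg}) and $\nabla \varphi \in C^1_c(\omega)$, so no boundary term appears — equals $\tau \int_\Omega (\Delta \varphi - \langle \nabla \varphi, \nabla V \rangle)\rho \dif x$. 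Collecting terms yields the one-step estimate
\[
	\int_\Omega (\rho - \bar \rho)\varphi \dif x = \tau \int_\Omega (\Delta \varphi - \langle \nabla \varphi, \nabla V \rangle)\rho \dif x + E_n, \qquad \abs{E_n} \le \const_\omega \norm{\varphi}_{C^2_0(\omega)}\TrFun^2(\mu,\bar \mu) \fstop
\]

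I would then sum this over $n = \lfloor s/\tau \rfloor, \dots, \lfloor t/\tau \rfloor - 1$. Since $t \mapsto \mu^\tau_t$ is constant on each $[n\tau,(n+1)\tau)$, the left-hand sides telescope to $\int_\Omega (\rho^\tau_t - \rho^\tau_s)\varphi \dif x$, and $\tau \int_\Omega (\Delta \varphi - \langle \nabla \varphi, \nabla V \rangle)\rho^\tau_{(n+1)\tau}\dif x = \int_{n\tau+\tau}^{(n+1)\tau+\tau}\int_\Omega (\Delta \varphi - \langle \nabla \varphi, \nabla V \rangle)\rho^\tau_r \dif x \dif r$, so the main terms reassemble into the integral $\int_{\lfloor s/\tau \rfloor \tau + \tau}^{\lfloor t/\tau \rfloor \tau + \tau}\int_\Omega (\Delta \varphi - \langle \nabla \varphi, \nabla V \rangle)\rho^\tau_r \dif x \dif r$ appearing in \eqref{eq:lemma:FP:00}, and the accumulated error $\bigl|\sum_n E_n\bigr| \le \const_\omega \norm{\varphi}_{C^2_0(\omega)}\sum_n \TrFun^2(\mu^\tau_{n\tau},\mu^\tau_{(n+1)\tau})$ is at most $\const_\omega \tau(1+t+\tau)\norm{\varphi}_{C^2_0(\omega)}$ by \eqref{eq:lemmaBoundSumT} in \Cref{lemma:boundSumT}; this proves \eqref{eq:lemma:FP:00}. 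The integrability claim is then immediate: for $r > \tau$ the function $\rho^\tau_r$ is the density of one of the iterates, hence lies in $L^1(\omega)$, indeed in $L^{p'}(\omega)$ for the exponent $p = p(\omega) > d$ of \Cref{sec:functions} (by \Cref{prop:sobolevReg}), so $\rho^\tau_r \nabla V \in L^1(\omega)$ by H\"older's inequality, and $r \mapsto \rho^\tau_r$ takes finitely many values on each bounded time interval.

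Finally, for \eqref{eq:lemma:FP:01} I would bound the two terms on the right of \eqref{eq:lemma:FP:00} under the hypothesis $0 < 2\tau \le \epsilon \le s \le t \le 1/\epsilon$. With $a \coloneqq \lfloor s/\tau \rfloor \tau + \tau$ and $b \coloneqq \lfloor t/\tau \rfloor \tau + \tau$ one has $b - a \le t - s + \tau$ and, crucially, $\epsilon \le s \le a \le b \le 2/\epsilon$; hence on $[a,b]$ the mass bound \Cref{rmk:massbound} gives $\norm{\rho^\tau_r}_{L^1} \le \const$, and \Cref{lemma:lebesgueBound} with $q = p'$ (which satisfies $q(d-2) \le d$) gives $\norm{\rho^\tau_r}_{L^{p'}} \le \const_{p'} e^{\const \tau}\frac{1+r}{r-\tau} \le \const_{\omega,\epsilon}$, using $r - \tau \ge \epsilon/2$ and $r \le 2/\epsilon$. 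Thus the integral term in \eqref{eq:lemma:FP:00} is at most $(b-a)\const_{\omega,\epsilon}\norm{\varphi}_{C^2_0(\omega)} \le (t-s+\tau)\const_{\omega,\epsilon}\norm{\varphi}_{C^2_0(\omega)}$, while the error term is $\le \const_\omega \tau(1+t+\tau) \le \const_{\omega,\epsilon}(t-s+\tau)$, so $\norm{\rho^\tau_t - \rho^\tau_s}_{(C^2_0(\omega))^*} \le \const_{\omega,\epsilon}(t-s+\tau)$. I expect the only genuinely delicate point of the whole argument to be the treatment of the boundary-exchange term: it relies on $\varphi$ being compactly supported \emph{inside} $\Omega$ — not merely vanishing on $\partial \Omega$ — so that transport to or from $\partial \Omega$ that can interact with $\varphi$ costs at least $\delta^2$ and hence has $\TrFun^2$-controlled mass; the remaining subtlety, purely bookkeeping, is keeping the constants' dependence on $\epsilon$ under control, which is exactly why the assumption $2\tau \le \epsilon \le s$ (forcing $a \ge \epsilon$, away from $\tau$) is imposed.
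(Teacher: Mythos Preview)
Your proof is correct and follows essentially the same route as the paper's: the same one-step split into a Taylor-remainder term on $\gamma_\Omega^{\ovom}$ (handled via the transport identity \eqref{eq:onestep:transportMap2} and an integration by parts) and a boundary-exchange term on $\gamma_{\partial\Omega}^{\Omega}$ (controlled by $\delta^{-2}\cost(\gamma)$), then summing and invoking \Cref{lemma:boundSumT}, with \eqref{eq:lemma:FP:01} deduced from \Cref{lemma:lebesgueBound} and the mass bound. The only differences are organizational---the paper establishes the integrability first and writes the split as $I_1^i + I_2^i$ directly rather than starting from $\int(\varphi(x)-\varphi(y))\dif\gamma$---but the mathematical content is the same.
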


\begin{remark}
	In~\eqref{eq:lemma:FP:01}, we identify~$\rho^\tau_t - \rho^\tau_s$ with the continuous linear functional
	\[
	C_0^2(\omega) \ni \varphi \longrightarrow \int_\omega (\rho^\tau_t - \rho^\tau_s) \varphi \dif x \fstop
	\]
\end{remark}

\begin{proof}[Proof of \Cref{lemma:FP}] \emph{Step 1 (integrability).} From \Cref{rmk:massbound}, it follows trivially that~$\rho^\tau \in L^1_\mathrm{loc}\bigl([0,\infty); L^1(\Omega)\bigr)$. We shall prove that the function~$\rho^\tau \nabla V $ belongs to $ L^1_\mathrm{loc}\bigl( ( \tau,\infty ); L^1(\omega) \bigr)$ for every~$\omega \Subset \Omega$ open. Fix~$ a,b >0$ with~$\tau < a \le b$. Let~$p$ be as in \Cref{sec:functions}. Its conjugate exponent~$p'$ satisfies~$p' \in [1,\infty)$ and~$p'(d-2) \le d$.
	By H\"older's inequality and~\Cref{lemma:lebesgueBound}, we have
	\begin{equation}  \label{eq:lemma:FP:3}
		\begin{aligned}
			\int_a^b \norm{\rho_r^\tau \nabla V}_{L^1} \dif r &\le \norm{\nabla V}_{L^p(\omega)} \int_a^b \norm{\rho_r^\tau}_{L^{p'}} \dif r \stackrel{\eqref{eq:lebesgueBound}}{\le} \const_{p} \norm{\nabla V}_{L^p(\omega)} e^{\const \tau} \int_a^b \frac{1+r}{r-\tau}  \dif r \\
			&\le \const_{p} \norm{\nabla V}_{L^p(\omega)} e^{\const \tau} \frac{1+b}{a-\tau} (b-a) \le \const_{\omega} e^{\const \tau} \frac{1+b}{a-\tau} (b-a) \fstop
		\end{aligned}
	\end{equation}
	The last passage is due to the fact that both~$p$ and~$\norm{\nabla V}_{L^p(\omega)}$ can be seen as functions of~$V$ and~$\omega$.
	
	\emph{Step 2 (inequality~\eqref{eq:lemma:FP:00}).} 
	Let~$i \in \N_0$, and choose~$\gamma^i \in  \Opt_\TrFun\bigl(\mu^\tau_{(i+1)\tau},\mu^\tau_{i\tau}\bigr)$ and~$S_i\colon \Omega \to \ovom$ as in~\eqref{eq:onestep:transportMap2}. By the triangle inequality and the fact that~$\rho_r^\tau = \rho^\tau_{(i+1)\tau}$ when~$r \in \bigl[(i+1)\tau,(i+2)\tau \bigr)$, we have
	\begin{multline*}
		\abs{\int_\Omega (\rho^\tau_{(i+1)\tau} - \rho^\tau_{i\tau}) \varphi \dif x - \int_{(i+1)\tau}^{(i+2)\tau} \int_\Omega (\Delta \varphi - \langle \nabla \varphi, \nabla V \rangle) \rho_r^\tau \dif x \dif r } \\ 
		\le \underbrace{\abs{\int_\Omega \left(\varphi - \varphi \circ S_i-\tau \Delta \varphi + \tau\langle \nabla \varphi, \nabla V \rangle \right) \rho^\tau_{(i+1)\tau} \dif x}}_{\eqqcolon I_1^i} \\
		+ \underbrace{\abs{\int_\Omega \bigl( (\varphi \circ S_i) \rho_{(i+1)\tau}^\tau - \varphi \rho^\tau_{i\tau}\bigr) \dif x}}_{\eqqcolon I_2^i} \fstop
	\end{multline*}
	Using~$\eqref{eq:onestep:transportMap2}$, we rewrite~$I_1^i$ as
	\[ I_1^i = \abs{\int_\Omega \left( \varphi - \varphi \circ S_i + \langle \nabla \varphi, S_i - \Id \rangle \right) \rho_{(i+1)\tau}^\tau \dif x } \comma \]
	and then, by means of Taylor's theorem with remainder in Lagrange form, we establish the upper bound
	\[ I_1^i \le \const \norm{\varphi}_{C^2_0(\omega)} \int_\Omega \abs{S_i-\Id}^2 \, \rho_{(i+1)\tau}^\tau \dif x \le \const \norm{\varphi}_{C^2_0(\omega)} \TrFun^2\left(\mu^\tau_{(i+1)\tau}, \mu^\tau_{i\tau}\right) \fstop \]
	By Condition~\ref{(2)} in \Cref{def:TrDis} and the fact that~$\varphi$ is supported in the closure of~$\omega$, we have
	\begin{align*} I_2^i &= \abs{\int_\ovom \varphi(y) \dif \pi^2_\# (\gamma_\Omega^\ovom - \gamma_\ovom^\Omega) } = \abs{\int_\ovom \varphi(y) \dif \pi^2_\# (\gamma_\Omega^\omega - \gamma_\ovom^\omega) } \le \norm{\varphi}_{L^\infty(\omega)} \norm{\gamma_{\partial \Omega}^\omega} \\
		&\le \const_\omega \norm{\varphi}_{L^\infty(\omega)} \int_{\partial \Omega \times \omega} \abs{x-y}^2 \dif \gamma(x,y) \le \const_\omega \norm{\varphi}_{L^\infty(\omega)} \TrFun^2\left(\mu^\tau_{(i+1)\tau}, \mu^\tau_{i\tau}\right) \comma \end{align*}
	where~$\const_\omega$ actually only depends on the (strictly positive) distance of~$\omega$ from~$\partial \Omega$. Taking the sum over~$i$, we obtain
	\begin{multline*} \abs{\int_\Omega (\rho^\tau_t - \rho^\tau_s) \varphi \dif x - \int_{\lfloor \frac{s}{\tau} \rfloor \tau+\tau}^{\lfloor \frac{t}{\tau} \rfloor \tau+\tau} \int_\Omega \rho_r^\tau(\Delta \varphi - \langle \nabla \varphi, \nabla V \rangle) \dif x \dif r } \le \sum_{i = \lfloor s/\tau \rfloor}^{\lfloor t/\tau \rfloor -1} (I_1^i + I_2^i) \\
		\le \const_\omega \norm{\varphi}_{C^2_0(\omega)} \sum_{i = 0}^{\lfloor t/\tau \rfloor -1} \TrFun^2\left(\mu^\tau_{(i+1)\tau}, \mu^\tau_{i\tau}\right) \fstop \end{multline*}
	At this point,~\eqref{eq:lemma:FP:00} follows from the last estimate and \Cref{lemma:boundSumT}.
	
	\emph{Step 3 (inequality~\eqref{eq:lemma:FP:01}).} Assume that~$2\tau \le \epsilon \le s \le t \le 1/\epsilon$. From~\eqref{eq:lemma:FP:00}, we obtain
	\[ \abs{\int_\Omega (\rho_t^\tau - \rho_s^\tau) \varphi \dif x} \le \const_{\omega, \epsilon} \, \tau \norm{\varphi}_{C^2_0(\omega)} 
	+ \underbrace{\int_{\lfloor \frac{s}{\tau} \rfloor \tau+\tau}^{\lfloor \frac{t}{\tau} \rfloor \tau+\tau} \norm{\rho_r^\tau (\Delta \varphi - \langle \nabla \varphi, \nabla V \rangle)}_{L^1} \dif r}_{\eqqcolon I_3} \fstop \]
	Taking into account \Cref{rmk:massbound} and the estimate~\eqref{eq:lemma:FP:3} of Step 1,
	\begin{align*}
		I_3 &\le \norm{\varphi}_{C^2_0(\omega)} \int_{\lfloor \frac{s}{\tau} \rfloor \tau+\tau}^{\lfloor \frac{t}{\tau} \rfloor \tau+\tau} \left(\norm{\rho_r^\tau}_{L^1}+\norm{\rho_r^\tau \nabla V}_{L^1} \right) \dif r \\
		&\le \const_{\omega} e^{\const \tau} \norm{\varphi}_{C^2_0(\omega)} (t-s+\tau)\left(1 +  \frac{1+t+\tau}{\lfloor s / \tau \rfloor \tau } \right) \\
		&\le \const_{\omega,\epsilon} \norm{\varphi}_{C^2_0(\omega)} (t-s+\tau) \fstop
	\end{align*}
	The inequality~\eqref{eq:lemma:FP:01} easily follows.
\end{proof}

\begin{lemma}[Improved convergence] \label{lemma:improvedconv}
	Assume that the sequence~$\bigl(t \mapsto (\mu_t^\tau)_\Omega\bigr)_\tau$ converges pointwise w.r.t.~$Wb_2$ as~$\tau \to 0$ to a limit~$t \mapsto \rho_t \dif x$. Then, for every~$q \in [1,\frac{d}{d-1})$, the sequence~$(\rho^\tau)_\tau$ converges to~$\rho$ in~$L^1_\mathrm{loc}\bigl((0,\infty); L^q(\Omega) \bigr)$.
\end{lemma}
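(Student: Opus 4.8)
The plan is to run an Aubin--Lions--Simon compactness argument for the discrete densities $\rho^\tau$, localised away from $\partial\Omega$, and then to upgrade the resulting local $L^1$-convergence to $L^q$ by interpolation against the higher integrability of \Cref{lemma:lebesgueBound}. It suffices to prove, for arbitrary $0<\epsilon<1$ and $T=1/\epsilon$, that $\int_\epsilon^T\norm{\rho_t^\tau-\rho_t}_{L^q(\Omega)}\dif t\to 0$ as $\tau\to 0$. Fix an exponent $q_0$ with $q<q_0$ and $q_0(d-2)\le d$ (possible since $q<\tfrac{d}{d-1}$; e.g.~$q_0=\tfrac{d}{d-2}$ for $d\ge 3$). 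By \cite[Proposition~2.7]{FigalliGigli10}, the $Wb_2$-convergence $(\mu_t^\tau)_\Omega\to\rho_t\dif x$ implies convergence in duality with $C_c(\Omega)$; together with the bound $\norm{\rho_t^\tau}_{L^{q_0}(\Omega)}\le\const_\epsilon$ on $[\epsilon,T]$ for $2\tau\le\epsilon$ (\Cref{lemma:lebesgueBound}), this gives $\rho_t^\tau\rightharpoonup\rho_t$ weakly in $L^{q_0}(\Omega)$ for each such $t$, and hence also $\norm{\rho_t}_{L^{q_0}(\Omega)}\le\const_\epsilon$ there.

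Fix a ball $\omega\Subset\Omega$. Two uniform (in $2\tau\le\epsilon$) bounds power the compactness. \emph{In space:} the identity \eqref{eq:nablarho} gives $\rho^\tau$ bounded in $L^1\bigl((\epsilon,T);W^{1,1}(\omega)\bigr)$; indeed the term $e^{-V}\sqrt{\rho^\tau e^V}\,\nabla\sqrt{\rho^\tau e^V}$ is estimated by Cauchy--Schwarz in $t$ using the mass bound (\Cref{rmk:massbound}, which bounds $\norm{\sqrt{\rho^\tau e^V}}_{L^2(\Omega)}$) and \Cref{lemma:sobolev} (which bounds $\norm{\nabla\sqrt{\rho^\tau e^V}}_{L^2(\Omega)}$ in $L^2_t$), while $\rho^\tau\nabla V$ is estimated by \eqref{eq:lemma:FP:3}. \emph{In time:} \eqref{eq:lemma:FP:01} gives, for consecutive grid points, $\norm{\rho^\tau_{(k+1)\tau}-\rho^\tau_{k\tau}}_{(C_0^2(\omega))^*}\le 2\const_{\omega,\epsilon}\,\tau$, so the discrete variation of $k\mapsto\rho^\tau_{k\tau}$ on $[\epsilon,T]$, measured in $(C_0^2(\omega))^*$, is bounded by $\const_{\omega,\epsilon}$ uniformly in $\tau$. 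Along the chain $W^{1,1}(\omega)\hookrightarrow\hookrightarrow L^1(\omega)\hookrightarrow(C_0^2(\omega))^*$ (Rellich--Kondrachov on the ball, and $f\mapsto(\varphi\mapsto\int_\omega f\varphi\dif x)$) these are exactly the hypotheses of the Aubin--Lions--Simon compactness lemma in the form adapted to piecewise-constant time-interpolants (the additive $\tau$ in the time estimate being absorbed by the uniform discrete-variation bound). Hence $(\rho^\tau)_\tau$ is relatively compact in $L^1\bigl((\epsilon,T);L^1(\omega)\bigr)$, and since any subsequential limit agrees with $\rho$ (already a limit in duality with $C_c(\Omega)$, hence in the sense of distributions on $(\epsilon,T)\times\omega$), the whole family converges: $\rho^\tau\to\rho$ in $L^1\bigl((\epsilon,T);L^1(\omega)\bigr)$ for every ball $\omega\Subset\Omega$.

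To pass from balls to $\Omega$ and from $L^1$ to $L^q$: given $\delta>0$, choose a finite union of balls $\omega\Subset\Omega$ with $\abs{\Omega\setminus\omega}$ small enough that, by H\"older's inequality and the $L^{q_0}$-bounds above, $\int_\epsilon^T\norm{\rho_t^\tau-\rho_t}_{L^q(\Omega\setminus\omega)}\dif t\le\const_\epsilon(1+T)(T-\epsilon)\,\abs{\Omega\setminus\omega}^{1/q-1/q_0}<\delta/2$, uniformly in $\tau$. On $\omega$, the interpolation $\norm{h}_{L^q(\omega)}\le\norm{h}_{L^1(\omega)}^{\theta}\norm{h}_{L^{q_0}(\omega)}^{1-\theta}$ (with $\tfrac1q=\theta+\tfrac{1-\theta}{q_0}$) followed by H\"older in $t$ yields $\int_\epsilon^T\norm{\rho_t^\tau-\rho_t}_{L^q(\omega)}\dif t\le\const_\epsilon\bigl(\int_\epsilon^T\norm{\rho_t^\tau-\rho_t}_{L^1(\omega)}\dif t\bigr)^{\theta}$, which is $<\delta/2$ for $\tau$ small by the previous step (summed over the finitely many balls). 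Therefore $\int_\epsilon^T\norm{\rho_t^\tau-\rho_t}_{L^q(\Omega)}\dif t\to 0$, and since $\epsilon\in(0,1)$ is arbitrary this is precisely convergence in $L^1_\mathrm{loc}\bigl((0,\infty);L^q(\Omega)\bigr)$.

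The main obstacle is the compactness in the second step: the scheme delivers time-regularity only in the weak norm $(C_0^2(\omega))^*$ and only up to an additive $\const\tau$ error, while $\partial\Omega$ has no regularity, so neither a global Sobolev embedding nor a genuine bound on $\partial_t\rho^\tau$ is available. This forces the compactness to be run locally in $\Omega$---whence the need for the higher integrability \eqref{eq:reg2bis} of \Cref{lemma:lebesgueBound} to control the boundary layer $\Omega\setminus\omega$---and in the time-discretised form of the Aubin--Lions lemma rather than its classical statement.
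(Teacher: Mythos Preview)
Your proof is correct and follows essentially the same approach as the paper: a localised Aubin--Lions argument on balls $\omega\Subset\Omega$ with the triple $W^{1,1}(\omega)\hookrightarrow\hookrightarrow L^q(\omega)\hookrightarrow(C_0^2(\omega))^*$ (the paper cites the Dreher--J\"ungel version explicitly), followed by control of the boundary layer $\Omega\setminus\omega$ via H\"older and the higher integrability of \Cref{lemma:lebesgueBound}. The only cosmetic difference is that the paper takes $B=L^q(\omega)$ directly in the compactness step, whereas you take $B=L^1(\omega)$ and then interpolate against $L^{q_0}$; both routes work.
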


\begin{proof}
	\emph{Step 1.} Fix~$\epsilon \in (0,1)$ and an open set~$\omega \Subset \Omega$ with $C^1$-regular boundary. As a first step, we shall prove strong convergence of~$(\rho^\tau)_\tau$ in~$L^1\bigl( \epsilon,\epsilon^{-1}; L^{q}(\omega) \bigr)$. The idea is to use a variant of the Aubin--Lions lemma by M.~Dreher and A.~J\"ungel~\cite{DreherJungel12}.
	Consider the Banach spaces
	\[ X \coloneqq W^{1,1}(\omega) \comma \quad B \coloneqq L^{q}(\omega) \comma \quad Y \coloneqq \bigl( C^2_0(\omega) \bigr)^* \comma \] 
	and note that the embeddings~$X \hookrightarrow B$ and~$B \hookrightarrow Y$ are respectively compact (by the Rellich--Kondrachov theorem~\cite[Theorem 9.16]{Brezis11}) and continuous. Inequality~\eqref{eq:lemma:FP:01} in \Cref{lemma:FP} provides one of the two bounds needed to apply \cite[Theorem~1]{DreherJungel12}. The other one, namely
	\[
	\limsup_{\tau \to 0} \norm{\rho^\tau}_{L^1\bigl((\epsilon, \epsilon^{-1}) ; W^{1,1}(\omega) \bigr)} < \infty \comma
	\]
	can be derived from our previous lemmas. Indeed, \Cref{rmk:massbound} provides the bound on the~$L^1\bigl(\epsilon, \epsilon^{-1};L^1(\omega))$ norm, and we have
	\begin{align*}
		\norm{\nabla \rho^\tau_t}_{L^1(\omega)} &\le \const \norm{\sqrt{\rho^\tau_t} \, \nabla\sqrt{ \rho^\tau_t e^V}}_{L^1(\omega)} + \norm{\rho^\tau_t \nabla V}_{L^1(\omega)} \\
		&\le \const \sqrt{\norm{\rho^\tau_t}_{L^1}} \norm{\nabla \sqrt{\rho_t^\tau e^V}}_{L^2} + \norm{\rho_t^\tau}_{L^{p'}(\omega)} \norm{\nabla V}_{L^p(\omega)} \comma
	\end{align*}
	where~$p = p(\omega)$ is given by \Cref{sec:functions}. When~$\tau \le \epsilon$, \Cref{rmk:massbound} and \Cref{lemma:sobolev} yield
	\begin{align*}
		\int_{\epsilon}^{\frac{1}{\epsilon}} \sqrt{\norm{\rho^\tau_t}_{L^1}} \norm{\nabla \sqrt{\rho_t^\tau e^V}}_{L^2} \dif t \le \sqrt{\int_{\epsilon}^{\frac{1}{\epsilon}}  \norm{\rho^\tau_t}_{L^1} \dif t} \sqrt{\int_{\epsilon}^{\frac{1}{\epsilon}}  \norm{\nabla \sqrt{\rho_t^\tau e^V}}_{L^2}^2 \dif t} \le \const_{\epsilon} \fstop
	\end{align*}
	Moreover, since~$p' \in [1,\infty)$ and~$p'(d-2) \le d$, we can apply \Cref{lemma:lebesgueBound} to bound~$\norm{\rho^\tau_t}_{L^{p'}(\omega)}$. To be precise, there is still a small obstruction to applying Dreher and J\"ungel's theorem: it requires~$\rho^\tau$ to be constant on equally sized subintervals of the time domain, i.e.,~$ (\epsilon, \epsilon^{-1})$; instead, here,~$\tau$ and $ (\epsilon^{-1} - \epsilon)$ may even be incommensurable. Nonetheless, it is not difficult to check that the proof in~\cite{DreherJungel12} can be adapted.\footnote{The adaptation is the following. In place of~\cite[Inequality~(7)]{DreherJungel12}, we write, in our notation:\[
		\sum_{i \, : \, \epsilon < i\tau < \epsilon^{-1}} \norm{\rho^\tau_{i\tau} - \rho^\tau_{(i-1)\tau}}_{Y}	\stackrel{\eqref{eq:lemma:FP:01}}{\le} \const_{\omega,\epsilon} \tau \left( \lceil 1/(\epsilon\tau) - 1 \rceil - \lfloor \epsilon/\tau \rfloor  \right) \le \const_{\omega,\epsilon} (\epsilon^{-1}-\epsilon+\tau) \fstop
		\]} In the end, we obtain the convergence of~$\bigl(\rho^\tau \bigr)_\tau$,~\emph{along a subsequence~$(\tau_k)_{k \in \N_0}$}, to some function~$f \colon (\epsilon,\epsilon^{-1}) \times \omega \to \R_+$ in~$L^1\bigl( \epsilon,\epsilon^{-1}; L^{q}(\omega) \bigr)$. Up to extracting a further subsequence, we can also require that convergence holds in~$L^{q}(\omega)$ for~$\Leb^1_{ (\epsilon,\epsilon^{-1}) }$-a.e.~$t$. For any such~$t$, and for any~$\varphi \in C_c(\omega)$, we thus have
	\[
	\int_\omega \varphi f_t \dif x = \lim_{k \to \infty} \int_\omega \varphi \rho^{\tau_k}_t \dif x = \int_\omega \varphi \rho_t \dif x  \comma
	\]
	where the last identity follows from the convergence w.r.t.~$Wb_2$ and~\cite[Proposition 2.7]{FigalliGigli10}. Therefore,~$f_t(x) = \rho_t(x)$ for~$\Leb^{d+1}_{ (\epsilon,\epsilon^{-1})\times \omega}$-a.e.~$(t,x)$, and, \emph{a posteriori}, there was no need to extract subsequences.
	
	\emph{Step 2.} Secondly, we prove that, for every~$\epsilon \in (0,1)$, the sequence~$(\rho^\tau)_\tau$ is Cauchy in the complete space~$L^1 \bigl( \epsilon,\epsilon^{-1} ; L^{q}(\Omega) \bigr)$. Pick an open subset~$\omega \Subset \Omega$ and cover it with a \emph{finite} number of open balls~$\set{A_i}_i$, all compactly contained in~$\Omega$. Additionally choose~$\beta \in (q,\infty)$ with~$\beta(d-2)\le d$. We have
	\[
	\norm{\cdot}_{L^1\bigl(\epsilon, \epsilon^{-1} ; L^{q}(\Omega) \bigr)} \le \sum_i \norm{\cdot}_{L^1\bigl(\epsilon, \epsilon^{-1} ; L^{q}(A_i) \bigr)} + \norm{\cdot}_{L^1\bigl(\epsilon, \epsilon^{-1} ; L^{q}(\Omega \setminus \omega) \bigr)} \comma
	\]
	and, by~H\"older's inequality,
	\[
	\norm{\cdot}_{L^1\bigl(\epsilon, \epsilon^{-1} ; L^{q}(\Omega \setminus \omega) \bigr)} \le \abs{\Omega \setminus \omega}^{\frac{1}{q}-\frac{1}{\beta}} \norm{\cdot}_{L^1\bigl(\epsilon, \epsilon^{-1} ; L^{\beta}(\Omega) \bigr)} \fstop
	\]
	Hence, by Step~1,
	\begin{equation*}
		\limsup_{\tau_1,\tau_2 \to 0} \norm{\rho^{\tau_1} - \rho^{\tau_2}}_{L^1\bigl(\epsilon, \epsilon^{-1} ; L^{q}(\Omega) \bigr)} 
		\le 2\abs{\Omega \setminus \omega}^{\frac{1}{q}-\frac{1}{\beta}} \limsup_{\tau \to 0} \norm{\rho^\tau}_{L^1\bigl(\epsilon, \epsilon^{-1} ; L^{\beta}(\Omega) \bigr)} \fstop
	\end{equation*}	
	Recall \Cref{lemma:lebesgueBound}: we have
	\[
	\limsup_{\tau \to 0} \norm{\rho^\tau}_{L^1\bigl(\epsilon, \epsilon^{-1} ; L^{\beta}(\Omega) \bigr)} \le \const_{\beta} \int_\epsilon^{\epsilon^{-1}} \left(1+\frac{1}{t} \right) \dif t  \le \const_{\beta, \epsilon} \fstop
	\]
	We conclude, by arbitrariness of~$\omega$, the desired Cauchy property.
	
	By Step~1, the limit of~$(\rho^\tau)_\tau$ in~$L^1\bigl(\epsilon, \epsilon^{-1}; L^{q}(\Omega) \bigr)$ must coincide~$\Leb^{d+1}_{(\epsilon,\epsilon^{-1}) \times  \omega}$-a.e. with~$\rho$ for every~$ \omega \Subset \Omega$ open; hence, this limit is precisely~$\rho$ on~$\Omega$. 
\end{proof}

\begin{proof}[Proof of \Cref{prop:FP}]
	Convergence in~$L^1_\mathrm{loc} \bigl( (0,\infty); L^q(\Omega))$ was proven in the previous lemma. Thus, we shall only prove the properties of the limit curve.
	
	\emph{Step 1 (continuity).} Continuity in duality with~$C_c(\Omega)$ follows from \Cref{prop:convergence} and~\cite[Proposition 2.7]{FigalliGigli10}.
	
	\emph{Step 2 (identity~\eqref{eq:fp} for~$s > 0$).} Let~$0 < s \le t$ and let~$\varphi \in C_c^2(\Omega)$. Thanks to the convergences
	\[ \rho_s^\tau \dif x \stackrel{Wb_2}{\to_\tau} \rho_s \dif x \quad \text{and} \quad \rho_t^\tau \dif x \stackrel{Wb_2}{\to_\tau} \rho_t \dif x \comma \]
	we have (see~\cite[Proposition 2.7]{FigalliGigli10})
	\[ \int_\Omega (\rho_t^\tau-\rho_s^\tau) \varphi \dif x \to_\tau \int_\Omega (\rho_t-\rho_s) \varphi \dif x \fstop \]
	Moreover, since every~$p$ as in \Cref{sec:functions} has a conjugate exponent~$p'$ that satisfies~$p' (d-1) < d$, \Cref{lemma:improvedconv} yields
	\[ \int_{\lfloor \frac{s}{\tau} \rfloor \tau+\tau}^{\lfloor \frac{t}{\tau} \rfloor\tau + \tau} \int_\Omega \rho_r^\tau(\Delta \varphi - \langle \nabla \varphi, \nabla V \rangle) \dif x \dif r \to_\tau \int_{s}^{t} \int_\Omega \rho_r (\Delta \varphi - \langle \nabla \varphi, \nabla V \rangle) \dif x \dif r \fstop \]
	Thus,~\eqref{eq:fp} is true by \Cref{lemma:FP}.
	
	\emph{Step 3 (Sobolev regularity and boundary condition).} In analogy with \Cref{rmk:g}, we define
	\[
	g_r^\tau \coloneqq \sqrt{\rho_r^\tau e^V}-e^{\Psi/2} \comma \quad g_r^{\tau,{(\kappa)}} \coloneqq (g_r^\tau-\kappa)_+-(g_r^\tau+\kappa)_- \comma \qquad \tau,\kappa > 0 \comma r \ge 0 \comma
	\]
	and
	\[
	g_r \coloneqq \sqrt{\rho_r e^V}-e^{\Psi/2} \comma \quad g_r^{{(\kappa)}} \coloneqq (g_r-\kappa)_+-(g_r+\kappa)_- \comma \qquad \kappa > 0 \comma r \ge 0 \fstop
	\]
	Recall that, if~$\kappa \ge c (e^{c \tau}-1)$ for an appropriate constant~$c$, and if~$r \ge \tau$, then the function~$g_{r}^{\tau,(\kappa)}$ is compactly supported in~$\Omega$. Let us fix one such~$\kappa$ and~$0 < s < t$. \Cref{lemma:sobolev} implies that the  sequence~$\bigl(g^{\tau,(\kappa)}\bigr)_{\tau}$ is eventually norm-bounded in the space~$L^2\bigl( s,t ; W^{1,2}_0(\Omega) \bigr)$. As a consequence, it admits a subsequence~$\bigl(g^{\tau_k,(\kappa)}\bigr)_{k}$ (possibly dependent on~$s,t,\kappa$) that converges weakly in~$L^2\bigl(s,t ; W^{1,2}_0(\Omega) \bigr)$. Using \Cref{lemma:improvedconv} and Mazur's lemma \cite[Corollary~3.8~\&~Exercise~3.4(.1)]{Brezis11}, one can easily show that this limit indeed coincides with~$g^{(\kappa)}$.
	
	By means of the weak semicontinuity of the~norm, the definition of~$g^{\tau,(\kappa)}$, and \Cref{lemma:sobolev}, we find
	\begin{equation*} \int_s^t \norm{g^{(\kappa)}_r}_{W^{1,2}}^2 \dif r \le \liminf_{k \to \infty} \int_s^t \norm{g^{{\tau_k},(\kappa)}_r}_{W^{1,2}}^2 \dif r \le \liminf_{k \to \infty} \int_s^t \norm{g^{\tau_k}_r}_{W^{1,2}}^2 \dif r \le \const(1+t) \comma \end{equation*}
	and, by arbitrariness of~$s$,
	\[
	\int_0^t \norm{g^{(\kappa)}_r}_{W^{1,2}}^2 \dif r \le \const(1+t)
	\]
	for every~$\kappa,t > 0$. We can thus extract a subsequence~$\bigl( g^{(\kappa_l)} \bigr)_l$ (possibly dependent on~$t$) that converges~weakly in~$L^2\bigl(0,t;W^{1,2}_0(\Omega) \bigr)$. As before, one can check that this limit is~$g$; hence~$g \in L^2\bigl(0,t;W^{1,2}_0(\Omega) \bigr)$ with
	\begin{equation}
		\int_0^t \norm{g_r}_{W^{1,2}}^2 \dif r \le \const(1+t)
	\end{equation}
	
	\emph{Step 4 (integrability, and~\eqref{eq:fp} for~$s=0$).} Fix an open set~$\omega \Subset \Omega$. Let~$p = p(\omega) > d$ be as in \Cref{sec:functions} and let~$p'$ be its conjugate exponent. Since~$g \in L^2_\mathrm{loc}\bigl([0,\infty) ; W^{1,2}_0(\Omega) \bigr)$, the Sobolev embedding theorem implies $g \in L^2_\mathrm{loc}\bigl( [0,\infty); L^{2p'}(\Omega) \bigr)$. Given that~$V \in L^\infty(\Omega)$, we obtain~$\rho \in L^1_\mathrm{loc}\bigl([0,\infty) ; L^{p'}(\Omega)\bigr)$. In particular,~$t \mapsto \int_\omega \rho_t \dif x$ and~$t \mapsto \int_{\omega} \abs{\nabla V} \rho_t \dif x$ are both locally integrable on~$[0,\infty)$. Given~$\varphi \in C^2_c(\omega)$, the identity~\eqref{eq:fp} for~$s=0$ thus follows from the one with~$s > 0$ by taking the limit~$s \downarrow 0$: on the one side,
	\[
	\lim_{s \downarrow 0} \int_{\Omega} \rho_s \varphi \dif x = \int_{\Omega} \rho_0 \varphi \dif x
	\]
	by continuity in duality with~$C_c(\Omega)$; on the other,
	\[
	\lim_{s \downarrow 0} \int_{s}^{t} \int_\Omega \rho_r (\Delta \varphi - \langle \nabla \varphi, \nabla V \rangle) \dif x \dif r = \int_{0}^{t} \int_\Omega \rho_r (\Delta \varphi - \langle \nabla \varphi, \nabla V \rangle) \dif x \dif r
	\]
	by the dominated convergence theorem.
\end{proof}

\section{Slope formula in dimension~$d=1$} \label{sec:slope}

In this section, we only work in dimension~$d=1$ and we take~$\Omega = ( 0,1 )$. %
Recall (\Cref{prop:distance}) that, in this setting,~$\TrDis$ is a metric on~$\cone$. Our purpose is to find an explicit formula for the descending slope~$\slope{\Ent}{\TrDis}$ and to derive \Cref{thm:main30} as a corollary. Specifically, the main result of this section is the following.

\begin{proposition} \label{prop:slope}
	Assume that~$V \in W^{1,2}(\Omega)$. Take~$\mu \in \cone$ such that~$\Ent(\mu) < \infty$ and let~$\rho$ be the density of~$\mu_\Omega$. Then,
	\begin{equation} \label{eq:inequalityFisher}
		\slope{\Ent}{\TrDis}^2 (\mu) = \begin{cases}
			\displaystyle 4 \int_\Omega \left( \partial_x \sqrt{\rho e^V} \right)^2 e^{-V} \dif x &\text{if } \sqrt{\rho e^V}-e^{\Psi/2} \in W^{1,2}_0(\Omega) \comma \\
			\infty &\text{otherwise.} 
		\end{cases}
	\end{equation}
\end{proposition}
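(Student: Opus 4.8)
\emph{Reparametrisation.} Throughout, write $u \coloneqq \sqrt{\rho e^V}$, so that $\rho = u^2 e^{-V}$; since $V \in L^\infty(\Omega)$, one has $u \in L^2(\Omega)$ if and only if $\rho \in L^1(\Omega)$. A short computation shows that, for $\mu \in \cone$ with $\mu_\Omega = \rho \dif x$,
\[
	\Ent(\mu) = \int_\Omega \bigl( 2 u^2 e^{-V} \log u - u^2 e^{-V} + 1 \bigr) \dif x + \mu_{\partial \Omega}(\Psi) \fstop
\]
The point is that the nonlinear part of $\Ent$ is the functional $u \mapsto \int_\Omega (2 u^2 \log u - u^2) e^{-V} \dif x$, in which $V$ enters \emph{only} through the weight $e^{-V}$ and is never differentiated; this keeps first variations tractable under the sole hypothesis $V \in W^{1,2}(\Omega)$. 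The plan is to establish the two inequalities in~\eqref{eq:inequalityFisher} separately.

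\emph{Step 1 (lower bound, and the value~$\infty$).} Fix $\boldsymbol w \in C^\infty_c(\Omega)$ and set $\mu_t \coloneqq (\Id + t \boldsymbol w)_\# (\rho \dif x) + \mu_{\partial \Omega} \in \cone$ for $t > 0$ small. The plan $(\Id, \Id + t \boldsymbol w)_\# (\rho \dif x)$ is $\TrDis$-admissible between $\mu$ and $\mu_t$, whence $\TrDis(\mu, \mu_t) \le t \, \norm{\boldsymbol w}_{L^2(\rho \dif x)}$. For the entropy, I would compute $\tfrac{\dif}{\dif t} \Ent(\mu_t) \big|_{t = 0^+}$: the push-forward change-of-variables formula turns the $\lambda \log \lambda$ contribution into $-\int_\Omega \rho \log(1 + t\,\partial_x \boldsymbol w) \dif x$, which is smooth in $t$, while the term $\int_\Omega (V-1) \circ (\Id + t \boldsymbol w)\,\rho \dif x$ is differentiated at $t=0$ by dominated convergence, using the Hardy--Littlewood maximal bound for $V' \in L^2$ (here one uses $\rho \in L^2$, which holds in the relevant regime). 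The outcome is $\tfrac{\dif}{\dif t} \Ent(\mu_t) \big|_{t = 0^+} = 2 \int_\Omega e^{-V} u \, \partial_x u \, \boldsymbol w \dif x = 2 \int_\Omega (\partial_x \log u)\, \rho \boldsymbol w \dif x$. Combining the two estimates gives
\[
	\slope{\Ent}{\TrDis}(\mu) \ \ge\ \sup_{\boldsymbol w \in C^\infty_c(\Omega)} \frac{ - 2 \int_\Omega e^{-V} u \, \partial_x u \, \boldsymbol w \dif x }{ \norm{\boldsymbol w}_{L^2(\rho \dif x)} } \fstop
\]
When $u - e^{\Psi/2} \in W^{1,2}_0(\Omega)$ (hence $u \in W^{1,2}(\Omega) \subseteq L^\infty$ and $\rho \in L^\infty$), approximating $-2\,\partial_x u / u$ in $L^2(\rho \dif x)$ by elements of $C^\infty_c(\Omega)$ makes the right-hand side tend to $\bigl( 4 \int_\Omega (\partial_x u)^2 e^{-V} \dif x \bigr)^{1/2}$, giving ``$\ge$'' in~\eqref{eq:inequalityFisher}. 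When instead $\partial_x u \notin L^2$, or $u$ does not attain the boundary values $e^{\Psi/2}$, the same supremum is $+\infty$ --- in the first case by testing against strongly oscillating $\boldsymbol w$, in the second by testing against perturbations supported in a vanishing boundary layer; this is exactly the point at which a finite slope forces the boundary condition.

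\emph{Step 2 (upper bound).} The reverse inequality $\slope{\Ent}{\TrDis}^2(\mu) \le 4 \int_\Omega (\partial_x u)^2 e^{-V} \dif x$ when $u - e^{\Psi/2} \in W^{1,2}_0(\Omega)$ is the crux of the statement, and the place where the lack of geodesic $\lambda$-convexity of $\Ent$ is felt. I would argue directly on the definition of the slope. Let $\nu \in \cone$ with $\TrDis(\mu, \nu)$ small, and pick $\gamma \in \Opt_\TrDis(\mu, \nu)$ for which the diagonal of $\partial \Omega \times \partial \Omega$ is $\gamma$-negligible (it exists by \Cref{lemma:optTrDis}, and its total mass is then controlled through \Cref{rmk:boundGamma2}). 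Since $\mu_\Omega$ is absolutely continuous, \Cref{prop:trPlans} and Brenier's theorem yield $\gamma_\Omega^\ovom = (\Id, S)_\# (\rho \dif x)$ with $S$ monotone (we are in dimension one) and $\int_\Omega \abs{S - \Id}^2 \rho \dif x \le \cost(\gamma) = \TrDis^2(\mu, \nu)$. Writing $\nu_\Omega = \sigma \dif x$, one then bounds $\Ent(\mu) - \Ent(\nu)$ from above by combining: the convexity inequality $\sigma \log \sigma \ge \rho \log \rho + (1 + \log \rho)(\sigma - \rho)$ for the interior densities (together with the a priori sign information coming from the lower semicontinuity of $\Ent$, \Cref{prop:lscIbis}, to handle the region $\{\rho = 0\}$); a second-order Taylor expansion of $\abs{x - y}^2$ along the transport; and --- decisively --- the boundary condition $u|_{\partial \Omega} = e^{\Psi/2}$, which renders the contribution of the mass exchanged between $\Omega$ and $\partial \Omega$ a higher-order error (without the boundary condition this contribution would be unbounded, in accordance with the value $+\infty$). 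This gives
\[
	\Ent(\mu) - \Ent(\nu) \le \Bigl( 4 \int_\Omega (\partial_x u)^2 e^{-V} \dif x \Bigr)^{1/2} \TrDis(\mu, \nu) + o\bigl( \TrDis(\mu, \nu) \bigr) \qquad (\nu \to \mu) \comma
\]
hence $\slope{\Ent}{\TrDis}(\mu) \le \bigl( 4 \int_\Omega (\partial_x u)^2 e^{-V} \dif x \bigr)^{1/2}$. Equivalently, one may verify that the right-hand side is a weak upper gradient for $\Ent$: along a $\TrDis$-absolutely continuous curve $t \mapsto \mu_t$ with interior velocity $v_t$ and boundary flux $j_t$, a chain rule gives $\tfrac{\dif}{\dif t} \Ent(\mu_t) = 2 \int_\Omega e^{-V} u_t \, \partial_x u_t \, v_t \dif x + 2 \sum_{p \in \partial \Omega} \log\bigl( u_t(p)\, e^{-\Psi(p)/2} \bigr) j_t(p)$, the first term being $\ge - \bigl( 4 \int_\Omega (\partial_x u_t)^2 e^{-V} \dif x \bigr)^{1/2} \abs{\dot \mu_t}$ by Cauchy--Schwarz and a Benamou--Brenier bound, while the boundary sum vanishes along curves staying in the set where the slope is finite, by Step~1.

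\emph{Step 3 (assembly, and \Cref{thm:main30}).} If $u - e^{\Psi/2} \in W^{1,2}_0(\Omega)$, Steps~1 and~2 give equality in~\eqref{eq:inequalityFisher} with a finite value; otherwise Step~1 gives $\slope{\Ent}{\TrDis}(\mu) = \infty$. Specialising to $\Psi \equiv 0$, in which case $\Ent(\mu) = \E(\mu_\Omega)$ and, by \Cref{lemma:proj} together with $\TrDis \ge Wb_2$, the $\TrDis$-slope of $\Ent$ coincides with the $Wb_2$-slope of $\E$, one obtains \Cref{thm:main30}. I expect Step~2 to be the main obstacle: making the boundary-flux estimate rigorous under the very weak regularity assumed on $\partial \Omega$, $V$, and $\rho$, handling the possible singularity of $\log \rho$, and showing that every error term is genuinely $o(\TrDis(\mu, \nu))$, uniformly as $\nu \to \mu$.
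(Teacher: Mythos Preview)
Your Step~1 for the finite case is essentially the paper's Step~1. But the infinite case has a genuine gap: when $u \in W^{1,2}(\Omega)$ with the \emph{wrong} trace, your supremum over $\boldsymbol w \in C^\infty_c(\Omega)$ is still \emph{finite}, equal to $2\bigl(\int_\Omega (\partial_x u)^2 e^{-V}\bigr)^{1/2}$. Variations of the form $(\Id+t\boldsymbol w)_\#\mu$ leave $\mu_{\partial\Omega}$ fixed and cannot see the boundary. The paper detects the trace mismatch by constructing variations that \emph{exchange mass with}~$\partial\Omega$: if $\rho(0)>e^{\Psi(0)-V(0)}$, it takes $\mu^\epsilon \coloneqq \mu - \epsilon\mu_{(0,\epsilon^2)} + (\epsilon\!\int_0^{\epsilon^2}\!\rho)\,\delta_0$, for which $\Ent(\mu)-\Ent(\mu^\epsilon)\sim c\,\epsilon^3$ with $c>0$ while $\TrDis(\mu,\mu^\epsilon)\lesssim\epsilon^{7/2}$; the ratio blows up. Your phrase ``perturbations supported in a vanishing boundary layer'' does not capture this, and the ``strongly oscillating $\boldsymbol w$'' suggestion is irrelevant here.

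Step~2 is where the real work lies, and your sketch does not do it. The convexity inequality $\sigma\log\sigma\ge\rho\log\rho+(1+\log\rho)(\sigma-\rho)$ is unusable where $\rho=0$ (and $\log\rho$ is not integrable against $\sigma-\rho$ in general); the phrase ``second-order Taylor expansion of $\abs{x-y}^2$'' does not correspond to any step in the argument; and the alternative ``weak upper gradient / chain rule'' route presupposes a Benamou--Brenier description of $\TrDis$ and a chain rule along curves, neither of which you have. The paper's proof is substantially more hands-on: it uses the monotonicity of the 1D optimal map $T_n$ and the Jacobian equation to rewrite $\Ent(\mu)-\Ent(\mu^n)$ exactly, then splits it into a main term $L_1^n=-\int_{a_n}^{b_n}(T_n-\Id)(\partial_x\rho+\rho\,\partial_xV)\dif x$ and three boundary-layer terms $L_2^n,L_3^n,L_4^n$. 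Showing the latter are $o\bigl(\TrDis(\mu,\mu^n)\bigr)$ uses: (i) a preliminary reduction replacing $\mu^n$ by a competitor with $\rho^n|_{S_n^{-1}(\partial\Omega)}\le\Lambda$ (this is what makes the $\gamma$-mass from $\partial\Omega$ into $\Omega$ controllable); (ii) the strict positivity $\rho\ge\bar\lambda>0$ near $\partial\Omega$ forced by the boundary condition, so that $\log\rho$ is itself $W^{1,2}$ there; (iii) \Cref{lemma:compareaverage}, a Lebesgue-differentiation-type lemma via the Hardy--Littlewood maximal function, to handle the term $\int(V-V\circ T_n)\rho$; and (iv) Hardy's inequality on the shrinking intervals $(b_n,1)$ and $(0,a_n)$. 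None of these ingredients is visible in your outline, and each is needed to make the error terms genuinely~$o(\TrDis)$.
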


\begin{remark}
	In the current setting, i.e.,~$\Omega = ( 0,1 )$ and~$V \in W^{1,2}(\Omega)$, the function~$V$ is H\"older continuous; thus it extends to the boundary~$\partial \Omega = \set{0,1}$. When~$\sqrt{\rho e^V} \in W^{1,2}(\Omega)$, the function~$\rho$ belongs to~$W^{1,2}(\Omega)$, is continuous, and extends to the boundary as well.
\end{remark}

\begin{remark} \label{rkm:slopeLSC}
	The functional
	\begin{equation} \label{eq:slopeLSC}
		W^{1,2}(\Omega) \ni f \longmapsto \begin{cases}
			\displaystyle 4 \int_\Omega \left( \partial_x f \right)^2 e^{-V} \dif x &\text{if } f-e^{\Psi/2} \in W^{1,2}_0(\Omega) \comma \\
			\infty &\text{if } f-e^{\Psi/2} \in W^{1,2}(\Omega) \setminus W^{1,2}_0(\Omega) \fstop
		\end{cases}
	\end{equation}
	is particularly well-behaved: it is convex, strongly continuous, weakly lower semicontinuous, and has weakly compact sublevels. As a consequence,~$\slope{\Ent}{\TrDis}$ turns out to be lower semicontinuous w.r.t.~$\TrDis$. Indeed, assume that~$\mu^n \stackrel{\TrDis}{\to} \mu$ and $\sup_n \slope{\Ent}{\TrDis}(\mu^n) < \infty$. Let~$\rho^n$ be the density of~$\mu^n_\Omega$. Then the functions~$f_n \coloneqq \sqrt{\rho^n e^V}$ converge, up to subsequences, weakly in~$W^{1,2}(\Omega)$ and---by the Rellich--Kondrachov theorem~\cite[Theorem~8.8]{Brezis11}---strongly in~$C(\overline \Omega)$ to a function~$f$ such that~$f-e^{\Psi/2} \in W^{1,2}_0(\Omega)$ and
	\[
	4 \int_\Omega (\partial_x f)^2 e^{-V} \dif x \le \liminf_{n \to \infty} \slope{\Ent}{\TrDis}^2(\mu^n) \fstop
	\]
	Additionally,~$\rho^n = f_n^2 e^{-V} \to f^2 e^{-V}$ in~$C(\overline \Omega)$, hence~$\mu_\Omega = f^2 e^{-V} \dif x$ (we use~\eqref{eq:distIneq} and~\cite[Proposition~2.7]{FigalliGigli10}).
\end{remark}

While~\eqref{eq:inequalityFisher} reminds the classical slope of the relative entropy (i.e.,~the relative Fisher information), the crucial difference is in the role of the boundary condition: if~$\rho$ does not satisfy the correct one, the slope is infinite.

We are going to prove the two opposite inequalities in~\eqref{eq:inequalityFisher} separately. Proving~$\ge$ is easier: for the case where~$\sqrt{\rho e^V} - e^{\Psi/2} \in W^{1,2}_0$, it amounts to taking small variations of~$\mu$ in an arbitrary direction; for the other case, it suffices to find appropriate sequences that make the difference quotient diverge. To handle the opposite inequality, we have to bound~$\bigl( \Ent(\mu) - \Ent(\tilde \mu) \bigr)_+$ from above for every sufficiently close measure~$\tilde \mu \in \cone$. Classical proofs (e.g.,~\cite[Theorem 15.25]{AmbrosioBrueSemola21} or~\cite[Theorem 10.4.6]{AmbrosioGigliSavare08}) take advantage of geodesic convexity of the functional, which we do not to have; see \Cref{sec:geod}. One of the perks of geodesic convexity is that it automatically ensures lower semicontinuity of the descending slope, which in turn allows to assume stronger regularity on~$\mu$ and then argue by approximation. To overcome this problem, we combine different ideas on different parts of~$\mu$ and~$\tilde \mu$. Away from the boundary~$\partial \Omega = \set{0,1}$, the transport plans move absolutely continuous measures to absolutely continuous measures. The Jacobian equation (change of variables formula) relates the two densities and makes the computations rather easy. Estimating the contribution of the parts of~$\mu,\tilde \mu$ closest to the boundary is more technical: we need to exploit the boundary condition and the Sobolev regularity of the functions~$\rho$,~$\log \rho$, and~$V$. Note, indeed, that since the boundary condition is positive, also~$\log \rho$ has a square-integrable derivative in a neighborhood of~$\partial \Omega$.

To be in dimension~$d=1$ is necessary for~$\TrDis$ to be a distance, but is also extremely convenient because optimal transport maps are monotone and~$W^{1,2}$-regular functions are H\"older continuous. For these reasons, it seems difficult (but maybe still possible) to adapt our proof of \Cref{prop:slope} for an analogue of \Cref{thm:main30} in higher dimension.

We first prove a variant of the Lebesgue differentiation theorem that is needed for the subsequent proof of \Cref{prop:slope}. We prove \Cref{thm:main30} at the end of the section.

\begin{lemma} \label{lemma:compareaverage}
	Let~$(\gamma^n)_{n \in \N_0}$ be a sequence of nonnegative Borel measures on~$\Omega \times \ovom$ such that~$ \lim_{n \to \infty} \cost(\gamma^n) = 0 $.
	Additionally assume that~$\pi^1_\#  \gamma^n$ is absolutely continuous for every~$n \in  \N_0$, with a density that is uniformly bounded in~$L^\infty(\Omega)$. Then, for every~$f \in L^2(\Omega)$,
	\begin{equation} \lim_{n \to \infty} \int \left( \fint_x^y \bigr( f(z) - f(x) \bigr) \dif z \right)^2 \dif \,  \gamma^n (x,y) = 0 \fstop \end{equation}
\end{lemma}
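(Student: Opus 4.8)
The plan is to reduce the statement to the classical Lebesgue differentiation theorem by a density argument, using the $L^\infty$-bound on the densities of $\pi^1_\# \gamma^n$ to control error terms uniformly. First I would fix $\varepsilon > 0$ and choose $g \in C(\ovom)$ with $\norm{f - g}_{L^2(\Omega)} < \varepsilon$. Writing $\Phi_h(x,y) \coloneqq \fint_x^y \bigl(h(z) - h(x)\bigr)\dif z$ for a generic $h$, one has the pointwise bound
\[
	\abs{\Phi_f(x,y)} \le \abs{\Phi_g(x,y)} + \abs{\Phi_{f-g}(x,y)} \le \abs{\Phi_g(x,y)} + \fint_x^y \abs{(f-g)(z)}\dif z + \abs{(f-g)(x)} \fstop
\]
Since $\gamma^n$ is supported on $\Omega\times\ovom$ and $\cost(\gamma^n)\to 0$, the first term is easy: $g$ is uniformly continuous, so $\abs{\Phi_g(x,y)} \le \omega_g(\abs{x-y})$ where $\omega_g$ is the modulus of continuity of $g$, and because the total mass $\norm{\gamma^n}$ stays bounded (the $\pi^1$-marginals have bounded $L^\infty$ densities on the fixed bounded set $\Omega$) while $\abs{x-y}\to 0$ in $\gamma^n$-measure, $\int \omega_g(\abs{x-y})^2 \dif\gamma^n \to 0$.

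The key step is to estimate the two error terms built from $f - g$. For the term $\abs{(f-g)(x)}$ I use directly that $\pi^1_\# \gamma^n = \rho^n \dif x$ with $\norm{\rho^n}_{L^\infty} \le M$ uniformly, so
\[
	\int \abs{(f-g)(x)}^2 \dif \gamma^n(x,y) = \int_\Omega \abs{(f-g)(x)}^2 \rho^n(x)\dif x \le M \norm{f-g}_{L^2(\Omega)}^2 < M\varepsilon^2 \fstop
\]
For the averaging term $\fint_x^y \abs{(f-g)(z)}\dif z$, the point is that in dimension one the interval of integration $[x,y]$ (or $[y,x]$) is short and one can dominate it, on the support of $\gamma^n$, by a Hardy–Littlewood-type maximal estimate: $\bigl(\fint_x^y \abs{(f-g)}\,\dif z\bigr)^2 \le \abs{x-y}^{-1}\int_{I(x,y)} \abs{f-g}^2 \dif z$ with $I(x,y) = [\min(x,y),\max(x,y)]$. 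One must integrate this against $\gamma^n$; here I would disintegrate $\gamma^n$ with respect to its second marginal, or alternatively use the $L^\infty$ bound on $\rho^n$ together with the fact that for each fixed $y$ the map $x\mapsto \abs{x-y}^{-1}\int_{I(x,y)}\abs{f-g}^2$ is (up to a constant) controlled by the maximal function of $\abs{f-g}^2$. The cleanest route is probably: for $x$ in the support of $\pi^1_\#\gamma^n$, split according to whether $\abs{x-y} \ge \delta$ or $< \delta$. On $\{\abs{x-y}\ge\delta\}$ the mass is $o_n(1)$ by Chebyshev (from $\cost(\gamma^n)\to 0$) and the integrand is bounded by $\delta^{-1}\norm{f-g}_{L^2}^2$; on $\{\abs{x-y} < \delta\}$ one uses that $z\in I(x,y)$ implies $z$ lies within $\delta$ of $x$, and then the $L^\infty$-bound on $\rho^n$ plus Fubini gives $\int_{\{\abs{x-y}<\delta\}} \bigl(\fint_x^y\abs{f-g}\bigr)^2\dif\gamma^n \le C M \norm{f-g}_{L^2}^2 = CM\varepsilon^2$, uniformly in $n$ and $\delta$.

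Combining, $\limsup_{n\to\infty}\int \Phi_f(x,y)^2 \dif\gamma^n(x,y) \le C' M \varepsilon^2$ for a universal constant $C'$, and letting $\varepsilon\downarrow 0$ finishes the proof. The main obstacle I anticipate is making the maximal-function/averaging estimate for the $f-g$ term genuinely uniform in $n$ — i.e.\ controlling $\int \bigl(\fint_x^y\abs{f-g}\dif z\bigr)^2\dif\gamma^n$ by $\norm{f-g}_{L^2}^2$ times a constant independent of $n$ — which is precisely where the hypothesis that the first marginals have uniformly bounded $L^\infty$ densities is essential; without it one only controls the averages starting at $x$, not their interaction with the (unconstrained) endpoint $y$.
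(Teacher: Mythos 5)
Your overall architecture (approximate $f$ by a nice $g$, kill the $g$-part using $\cost(\gamma^n)\to 0$ and the bounded mass, and control the two $f-g$ error terms through the uniform $L^\infty$ bound on the first marginals) is the same as the paper's, and your treatment of the term $\abs{(f-g)(x)}$ and of the $g$-part (modulus of continuity instead of the paper's Lipschitz $g$ plus $\cost(\gamma^n)$) is fine. The gap is exactly at the step you flag as the main obstacle: the uniform bound $\int \bigl(\fint_x^y \abs{f-g}\,\dif z\bigr)^2 \dif\gamma^n \le C M \norm{f-g}_{L^2}^2$. Neither of your proposed routes closes it. If you first apply Cauchy--Schwarz, $\bigl(\fint_x^y\abs{f-g}\bigr)^2 \le \abs{x-y}^{-1}\int_{I(x,y)}\abs{f-g}^2\dif z$, and then Fubini, you are left with the weight $W_n(z) = \int \1_{\{z\in I(x,y)\}}\abs{x-y}^{-1}\dif\gamma^n(x,y)$, and the $L^\infty$ bound on $\pi^1_\#\gamma^n$ does \emph{not} make $W_n$ uniformly bounded: the plan $\gamma^n = (\Id, z_0)_\# \bigl(M\1_{(z_0,z_0+\delta)}\Leb^1\bigr)$ (all mass sent to a single boundary point) has first-marginal density $\le M$ and cost $M\delta^3/3$, yet $W_n(z)\sim M\log\bigl(\delta/(z-z_0)\bigr)$, so for a general $L^2$ function $f-g$ the post-Fubini integral can blow up (indeed be infinite, e.g.\ for $\abs{f-g}^2(z)\sim \abs{z-z_0}^{-1}\log^{-2}\abs{z-z_0}^{-1}$), even though the original quantity is fine; Cauchy--Schwarz at this point is genuinely lossy, and the $\delta$-splitting does not help since the singularity lives at small $\abs{x-y}$. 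Your alternative, controlling $\abs{x-y}^{-1}\int_{I(x,y)}\abs{f-g}^2$ by the maximal function of $\abs{f-g}^2$, also fails, because $\abs{f-g}^2$ is only in $L^1$ and the Hardy--Littlewood maximal operator is not bounded on $L^1$.

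The correct (and simple) repair is the one the paper uses: bound the average \emph{before} squaring, $\fint_x^y\abs{f-g}\,\dif z \le 2\,(f-g)^*(x)$ where $(f-g)^*$ is the maximal function of $f-g$ (extended by zero outside $\Omega$), so that
\begin{equation*}
	\int \left(\fint_x^y \abs{f-g}\,\dif z\right)^2 \dif\gamma^n \le 4\int_\Omega \bigl((f-g)^*\bigr)^2 \rho^n \dif x \le 4 \sup_n\norm{\rho^n}_{L^\infty}\,\norm{(f-g)^*}_{L^2(\R)}^2 \le \const\, \sup_n\norm{\rho^n}_{L^\infty}\,\norm{f-g}_{L^2}^2
\end{equation*}
by the strong $(2,2)$ maximal inequality, which is uniform in $n$ with no case distinction on $\abs{x-y}$. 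With this substitution your argument becomes essentially the paper's proof.
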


\begin{proof}
	Denote by~$\rho^n$ the density of~$\pi^1_\#  \gamma^n$. Let~$g \colon \Omega \to \R$ be Lipschitz continuous. For every~$n \in \N_0$, we have
	\begin{align*}
		I_n &\coloneqq \int  \left( \fint_x^y \bigr( f(z) - f(x) \bigr) \dif z \right)^2  \dif   \gamma^n  \\ &\le 3\int \left( \fint_x^y (f-g) \dif z \right)^2  \dif   \gamma^n
		+ 3 \int  \left( \fint_x^y g \dif z - g(x)  \right)^2 \dif   \gamma^n  \\
		&\quad + 3\int_{\Omega} ( g-f)^2 \rho^n \dif x \fstop
	\end{align*}
	Consider the Hardy--Littlewood maximal function of (the extension to~$\R$ of) $f-g$, that is,
	\[ (f-g)^*(x) \coloneqq \sup_{r > 0} \frac{1}{2r} \int_{\max\set{x-r,0}}^{\min\set{x+r,1}} \abs{f-g} \dif z \comma \qquad x \in \R \fstop \]
	By the (strong) Hardy--Littlewood maximal inequality,
	\begin{multline*} \int \left( \fint_x^y (f-g) \dif z \right)^2 \dif \,  \gamma^n \le 4 \int \bigl( (f-g)^*(x) \bigr)^2 \dif \,  \gamma^n = 4 \int_\Omega \bigl( (f-g)^* \bigr)^2 \rho^n \dif x \\
		\le 4 \sup_n \norm{\rho^n}_{L^\infty} \norm{(f-g)^*}_{L^2(\R)}^2 \le \const \sup_n \norm{\rho^n}_{L^\infty} \norm{f-g}_{L^2}^2 \fstop \end{multline*}
	The Lipschitz-continuity of~$g$ gives
	\begin{equation*} \int \left( \fint_x^y g \dif z - g(x)  \right)^2 \dif \,  \gamma^n \le (\Lip g)^2 \int (x-y)^2 \dif \,  \gamma^n \le (\Lip g)^2 \cost( \gamma^n) \comma \end{equation*}
	and, moreover, we have
	\[ \int_\Omega (g-f)^2 \rho^n \dif x \le \norm{\rho^n}_{L^\infty} \norm{f-g}_{L^2}^2 \fstop \]
	In conclusion,
	\[ I_n \le \const \sup_n \norm{\rho^n}_{L^\infty} \norm{f-g}^2_{L^2} + 3(\Lip g)^2 \cost( \gamma^n) \fstop \]
	After passing to the limit superior in~$n$, we conclude by arbitrariness of~$g$.
\end{proof}

\begin{proof}[Proof of \Cref{prop:slope}]
	We omit the subscript~$_\TrDis$ in~$\slope{\Ent}{\TrDis}$ throughout the proof.
	
	\emph{Step 1 (inequality~$\ge$, finite case).}  Assume  that~$\sqrt{\rho e^V} - e^{\Psi/2} \in W^{1,2}_0$; hence, in particular,~$\rho \in L^\infty(\Omega)$. Let~$w \colon \Omega \to \R$  be~$C^\infty$-regular with compact support (and not identically equal to~$0$), and, for~$\epsilon > 0$, define~$R_\epsilon(x) \coloneqq x + \epsilon w(x)$. Set~$\mu^\epsilon \coloneqq (R_\epsilon)_\# \mu$ and~$\gamma^\epsilon \coloneqq (\Id,R_\epsilon)_\# \mu$. When~$\epsilon$ is sufficiently small,~$\mu^\epsilon \in \cone$ and~$\gamma^\epsilon \in \Adm_{\TrDis}(\mu,\mu^\epsilon)$. Therefore, arguing as in the proof of \Cref{lem:sobReg},
	\[
	\lim_{\epsilon \to 0^+} \frac{\Ent(\mu) - \Ent(\mu^\epsilon)}{\epsilon} = \int_\Omega (\partial_x w-w \, \partial_x V )\rho \dif x \fstop
	\] Thus,
	\begin{align*} \label{eq:prop:slope:1}
		\begin{split} \int_\Omega (\partial_x w-w \, \partial_x V )\rho \dif x  \le \slopesmall{\Ent}{} (\mu) \liminf_{\epsilon \downarrow 0} \frac{\sqrt{\cost(\gamma^\epsilon)}}{\epsilon} \le \slopesmall{\Ent}{}(\mu) \norm{w}_{L^2(\rho)} \comma
		\end{split}
	\end{align*}
	and we conclude that
	\[
	\int_\Omega \abs{\partial_x \sqrt{\rho e^V}}^2 e^{-V} \dif x \le \frac{1}{4} \slopesmall{\Ent}{}^2  (\mu) \fstop
	\]
	
	\emph{Step 2 (inequality~$\ge$, infinite case).} The case~$\sqrt{\rho e^V}\not\in W^{1,2}(\Omega)$ is trivial. Thus, let us assume now that~$\sqrt{\rho e^V}\in W^{1,2}(\Omega)$ with $\Tr \rho \neq \Tr e^{\Psi - V}$. Without loss of generality, we may consider the case where~$\rho(0) \neq e^{\Psi(0)-V(0)}$. If~$\rho(0) > e^{\Psi(0)-V(0)}$, for~$\epsilon > 0$ define 
	\begin{align*} \mu^\epsilon &\coloneqq \mu - \epsilon\mu_{ ( 0,\epsilon^2 )} + \left( \epsilon \int_0^{\epsilon^2} \rho \dif x \right) \delta_0 \in \cone \comma \\ \gamma^\epsilon &\coloneqq \epsilon \mu_{( 0,\epsilon^2 )} \otimes \delta_0 + (\Id,\Id)_\# (\mu_\Omega - \epsilon \mu_{(0,\epsilon^2)}) \in \Adm_{\TrDis}(\mu,\mu^\epsilon) \fstop \end{align*}
	Since all the functions involved are continuous up to the boundary, we get
	\begin{align*} \Ent(\mu) - \Ent(\mu^\epsilon) &= \int_0^{\epsilon^2} \left(\rho \log \rho - (1-\epsilon)\rho \log \bigl((1-\epsilon)\rho \bigr) + \epsilon \bigl(V-1-\Psi(0)\bigr)\rho \right) \dif x \\
		&\sim_{\epsilon \downarrow 0} \epsilon^3 \bigl(\log \rho(0) + V(0) - \Psi(0)\bigr) \rho(0) \fstop \end{align*}
	On the other hand,
	\[ \TrDis(\mu,\mu^\epsilon) \le  \sqrt{\cost(\gamma^\epsilon)} =  \sqrt{\epsilon \int_0^{\epsilon^2} x^2 \rho \dif x} \le \sqrt{\epsilon^{5} \int_0^{\epsilon^2} \rho \dif x} \sim_{\epsilon \downarrow 0} \epsilon^\frac{7}{2} \sqrt{\rho(0)} \comma \]
	from which we find
	\begin{align*} \slopesmall{\Ent}{}(\mu) &\ge \limsup_{\epsilon \downarrow 0} \frac{\Ent(\mu) - \Ent(\mu^\epsilon)}{\TrDis(\mu,\mu^\epsilon)} \\
		&\ge \underbrace{\sqrt{\rho(0)} \bigl(\log \rho(0)+V(0)-\Psi(0)\bigr)}_{>0} \limsup_{\epsilon \downarrow 0} \epsilon^{-\frac{1}{2}} = \infty \fstop \end{align*}
	
	If, instead,~$\rho(0) < e^{\Psi(0) - V(0)}$, we consider, for~$\epsilon > 0$,
	\begin{equation*} \mu^\epsilon \coloneqq \mu + \epsilon \Leb^1_{( 0,\epsilon^2 )} - \epsilon^3 \delta_0 \in \cone \comma \quad
		\gamma^\epsilon \coloneqq \epsilon \delta_0 \otimes \Leb^1_{( 0,\epsilon^2 )}+(\Id,\Id)_\# \mu_\Omega \in \Adm_{\TrDis}(\mu,\mu^\epsilon) \fstop \end{equation*}
	and conclude with similar computations as before.
	
	\emph{Step 3 (preliminaries for~$\le$).} We suppose again that~$\sqrt{\rho e^V}-e^{\Psi/2} \in W^{1,2}_0(\Omega)$. In particular, there exist~$\bar \lambda, \bar \epsilon > 0$ such that
	\[
	\rho|_{[0,\bar \epsilon] \cup [1-\bar \epsilon,1]} > \bar \lambda \fstop
	\]
	Let us take a sequence~$(\mu^n)_{n \in \N_0}$ that converges to~$\mu$ w.r.t.~$\TrDis$, with~$\Ent(\mu^n) < \Ent(\mu)$ for every~$n$. We aim to prove that
	\[
	\limsup_{n \to \infty} \frac{\Ent(\mu) - \Ent(\mu^n)}{\TrDis(\mu,\mu^n)} \le 2\sqrt{\int_\Omega \left(\partial_x \sqrt{\rho e^V} \right)^2 e^{-V} \dif x} \fstop
	\]
	
	For every~$n \in \N_0$, we write:
	\begin{itemize}
		\item $\rho^n$ for the density of~$\mu^n_\Omega$;
		\item $\gamma^n$ for some (arbitrarily chosen) $\TrDis$-optimal transport plan between~$\mu$ and~$\mu^n$ such that the diagonal~$\Delta$ of~$\partial \Omega \times \partial \Omega$ (i.e.,~the set with the two points~$(0,0)$ and~$(1,1)$) is~$\gamma^n$-negligible;
		\item $T_n,S_n$ for maps such that~$(\gamma^n)_\Omega^\ovom = (\Id,T_n)_\# \mu_\Omega$ and~$(\gamma^n)_\ovom^\Omega = (S_n,\Id)_\# \mu^n_\Omega$. We can and will assume that these two maps are nondecreasing, hence~$\Leb^1_\Omega$-a.e.~differentiable;
		\item $a_n,b_n \in \ovom = [0,1]$ for the infimum and supremum of the set~$T_n^{-1}(\Omega)$, respectively. Note that, since~$T_n$ is monotone,~$T_n^{-1}(\Omega)$ is an interval. Conventionally, we set~$a_n=1$ and~$b_n = 0$ if~$T_n^{-1}(\Omega) = \emptyset$.
	\end{itemize}
	Observe that, since~$(0,a_n)\subseteq T_n^{-1}(\set{0,1})$, we have
	\[
	\TrDis^2(\mu,\mu^n) \ge \int_0^{a_n} \min\set{x,1-x}^2 \rho \dif x \ge \bar \lambda \int_0^{\min\set{a_n,\bar \epsilon}} x^2 \dif x = \frac{\bar \lambda}{3} \min\set{a_n,\bar \epsilon}^3 \fstop
	\]
	In particular,
	\begin{equation} \label{eq:limsupanbn}
		\limsup_{n \to \infty} \frac{a_n^3}{\TrDis^2(\mu,\mu^n)} < \infty \text{ and, similarly,~}\limsup_{n \to \infty} \frac{(1-b_n)^3}{\TrDis^2(\mu,\mu^n)} < \infty \semicolon
	\end{equation}
	thus, up to taking subsequences, we may and will assume that~$a_n < \bar \epsilon < 1-\bar \epsilon < b_n$ for every~$n$. In particular, $(\gamma^n)_\Omega^\Omega \neq 0$ and~$\Leb^1_{(0,a_n)\cup(b_n,1)} \ll \mu_{(0,a_n)\cup(b_n,1)}$.  
	Furthermore, since~$\gamma^n$ is $W_2$-optimal between its marginals (cf. \Cref{prop:trPlans}), it is concentrated on a monotone set~$\Gamma_n$. This implies that~$\gamma(0,1)$ and~$\gamma(1,0)$ equal~$0$ as soon as~$\gamma_\Omega^{\Omega} \neq 0$. Combining this observation with the fact that~$\Delta$ is~$\gamma$-negligible, we infer that~$\gamma_{\partial \Omega}^{\partial \Omega} = 0$. By the same argument,~$T|_{(b_n,1)} \equiv 1$ and~$T|_{(0,a_n)} \equiv 0$.
	
	Another assumption that we can and will make is
	\begin{equation} \label{eq:prop:slope:1bis} \rho^n|_{S_n^{-1}(\partial \Omega)} \le \Lambda \coloneqq \left(\sup_{\partial \Omega} e^\Psi\right) \cdot \left( \sup_\Omega e^{-V} \right) \fstop \end{equation}
	Indeed, if this is not the case, we can consider the new measures
	\begin{align*} \tilde \gamma^n &\coloneqq \gamma^n - (S_n,\Id)_\# \left( \rho^n|_{S_n^{-1}(\partial \Omega)} - \Lambda \right)_+ \Leb^1_\Omega \comma \\
		\tilde \mu^n &\coloneqq \mu - \pi^1_\# (\tilde \gamma^n) + \pi^2_\# (\tilde \gamma^n) \in \cone \comma \end{align*}
	and notice that~$\tilde \gamma^n \in \Adm_{\TrDis}(\mu, \tilde \mu^n)$. We have
	\begin{multline*} \Ent(\tilde \mu^n) - \Ent(\mu^n) = \int_{S_n^{-1}(\partial \Omega) \cap \set{\rho^n > \Lambda} } \Lambda(\log \Lambda + V - 1 - \Psi \circ S_n) \dif x \\- \int_{S_n^{-1}(\partial \Omega) \cap \set{\rho^n > \Lambda} } \rho^n(\log \rho^n + V - 1 - \Psi \circ S_n) \dif x \comma \end{multline*}
	and, because of the definition of~$\Lambda$, we obtain~$\Ent(\tilde \mu^n) \le \Ent(\mu^n)$. At the same time,~$ \TrDis(\mu, \tilde \mu^n) \le \TrDis(\mu, \mu^n) $
	because~$\tilde \gamma^n \le \gamma^n$. This concludes the proof of the claim that we can assume~\eqref{eq:prop:slope:1bis}.
	
	\emph{Step 4 (inequality~$\le$).} By \Cref{prop:trPlans},~$(\gamma^n)_\Omega^\Omega$ is a $W_2$-optimal transport plan between its marginals~$\rho \Leb^1_{T_n^{-1}(\Omega)}$ and~$\rho^n \Leb^1_{S_n^{-1}(\Omega)}$, and it is induced by the map~$T_n$. Hence, by~\cite[Theorem~7.3]{AmbrosioBrueSemola21}, the Jacobian equation
	\begin{equation} \label{eq:prop:slope:2}  \left(\rho^n|_{S_n^{-1}(\Omega)} \circ T_n \right) \cdot \partial_x T_n = \rho \end{equation}
	holds~$\rho \Leb^1_{T_n^{-1}(\Omega)}$-a.e.
	Consequently, we have the chain of identities
	\begin{align} \label{eq:prop:slope:4}
		\begin{split}
			\int_{S_n^{-1}(\Omega)}  (\log \rho^n + V  -1 ) \rho^n \dif x &= \int (\log \rho^n + V -1) \dif \pi^2_\# (\gamma^n)^{\Omega}_\Omega  \\
			&= \int_{T_n^{-1}(\Omega)} \bigl((\log \rho^n + V -1 ) \circ T_n \bigr) \rho \dif x \\
			& \stackrel{\eqref{eq:prop:slope:2}}{=} \int_{T_n^{-1}(\Omega)} \left( \log \rho - \log (\partial_x T_n) + V \circ T_n - 1\right) \rho \dif x \fstop
		\end{split}
	\end{align}
	Thus, we can decompose the difference~$\Ent(\mu) - \Ent(\mu^n)$ as
	\begin{align}  \label{eq:prop:slope:5}
		\begin{split}
			\Ent(\mu) - \Ent(\mu^n) %
			&\stackrel{\eqref{eq:prop:slope:4}}{=} \int_{T_n^{-1}(\Omega)} \bigl(\log (\partial_x T_n) + V - V \circ T_n\bigr) \rho \dif x + (\mu-\mu^n)_{\partial \Omega}(\Psi) \\
			&\quad+ \int_{T_n^{-1}(\partial \Omega)} (\log \rho + V - 1) \rho \dif x - \int_{S_n^{-1}(\partial \Omega)} (\log \rho^n + V - 1) \rho^n \dif x \fstop
		\end{split}
	\end{align}
	Let us focus on the integral on~$T_n^{-1}(\Omega)$. By making the estimate~$\log (\partial_x T_n) \le \partial_x T_n-1$ and using the properties of the Riemann--Stieltjes integral, we obtain
	\begin{align} \label{eq:prop:slope:6} \begin{split}
			&\int_{T_n^{-1}(\Omega)} \log (\partial_x T_n) \rho \dif x \le \int_{T_n^{-1}(\Omega)}  (\partial_x T_n -1) \rho \dif x = \int_{a_n}^{b_n} (\partial_x T_n) \rho \dif x - \int_{a_n}^{b_n} \rho \dif x \\
			&\quad \le \lim_{\epsilon \downarrow 0} \int_{a_n+\epsilon}^{b_n-\epsilon} \rho \dif T_n - b_n\rho(b_n) + a_n\rho(a_n) + \int_{a_n}^{b_n} x \partial_x \rho \dif x
			\\
			&\quad = (T(b_n^-)-b_n)\rho(b_n) - (T(a_n^+)-a_n) \rho(a_n)  - \int_{a_n}^{b_n} (T_n-\Id) \partial_x \rho \dif x \comma
		\end{split}
	\end{align}
	where we employ the notation~$T(a_n^+) \coloneqq \lim_{\epsilon \downarrow 0} T(a_n+\epsilon)$, and similarly with~$T(b_n^-)$.
	
	Let~$f \coloneqq \partial_x V$. By the fundamental theorem of calculus,
	\[
	\int_{T_n^{-1}(\Omega)} (V-V\circ T_n)\rho \dif x = \int_{a_n}^{b_n} \left( \int_{T_n(x)}^x f(z) \dif z \right) \rho \dif x \fstop
	\]
	By adding and subtracting~$f(x)$, we get
	\begin{equation} \label{eq:prop:slope:7}
		\begin{aligned}
			&\int_{T_n^{-1}(\Omega)} (V-V\circ T_n)\rho \dif x \\
			&\quad = \int_{a_n}^{b_n} f(x) \left( \int_{T_n(x)}^x  \dif z \right) \rho \dif x +  \int_{a_n}^{b_n} \left( \int_{T_n(x)}^x \bigl( f(z)-f(x)\bigr) \dif z \right) \rho \dif x \\
			&\quad= -\int_{a_n}^{b_n} (T_n-\Id) \rho \, f \dif x + \int_{a_n}^{b_n} \left(\int_{T_n(x)}^x \bigl(f(z)-f(x)\bigr) \dif z\right) \rho \dif x \fstop
	\end{aligned} \end{equation}
	At this point, we observe that, by H\"older's inequality and \Cref{lemma:compareaverage} (applied to the restriction~$(\gamma^n)_\Omega^\Omega$), the last double integral is negligible, i.e., it is of the order $o_n\bigl(\TrDis(\mu,\mu^n)\bigr)$.
	
	To handle the rest of~\eqref{eq:prop:slope:5}, we exploit the convexity of~$l \mapsto l \log l$ and write
	\begin{equation} \label{eq:prop:slope:8} %
		-\int_{S_n^{-1}(\partial \Omega)} (\log \rho^n + V - 1) \rho^n \dif x \le -\int_{S_n^{-1}(\partial \Omega)} (\log \rho + V) \rho^n \dif x + \int_{S_n^{-1}(\partial \Omega) \cap \set{\rho^n > 0} } \! \rho \dif x \fstop %
	\end{equation}
	By Condition~\ref{(3)} in \Cref{def:TrDis} and the boundary condition of~$\rho$,
	\begin{equation} \label{eq:prop:slope:9}
		(\mu-\mu^n)_{\partial \Omega}(\Psi) = \int (\log \rho + V) \dif \, \left( \pi^1_\# (\gamma^n)_{\partial \Omega}^\ovom - \pi^2_\# (\gamma^n)_\ovom^{\partial \Omega} \right) \fstop
	\end{equation}
	
	In summary, recalling that~$(\gamma^n)_{\partial \Omega}^{\partial \Omega} = 0$, from~\eqref{eq:prop:slope:5},~\eqref{eq:prop:slope:6},~\eqref{eq:prop:slope:7},~\eqref{eq:prop:slope:8}, and~\eqref{eq:prop:slope:9} follows the inequality
	\begin{align} \label{eq:prop:slope:10}
		\begin{split}
			\Ent(\mu) - \Ent(\mu^n) &\le o_{n} \left(\TrDis(\mu,\mu^n) \right) \underbrace{-\int_{a_n}^{b_n} (T_n-\Id) (\partial_x \rho+\rho\partial_x V) \dif x}_{\eqqcolon L^n_1} \\
			&\quad \underbrace{+ \int (\log \rho + V) \dif \left( \pi^1_\# \bigl( \gamma^n - (\gamma^n)_{\Omega}^\Omega\bigr) - \pi^2_\# \bigl( \gamma^n - (\gamma^n)_{\Omega}^\Omega \bigr) \right)}_{\eqqcolon L^n_2} \\
			&\quad \underbrace{+ \bigl(T(b_n^-)-b_n\bigr)\rho(b_n)+\int_{S_n^{-1}(1)\cap \set{\rho^n > 0}} \rho \dif x - \int_{T_n^{-1}(1)} \rho \dif x}_{\eqqcolon L^n_3} \\
			&\quad \underbrace{-  \bigl(T(a_n^+)-a_n\bigr)\rho(a_n) + \int_{S_n^{-1}(0)\cap \set{\rho^n > 0}} \rho \dif x - \int_{T_n^{-1}(0)} \rho \dif x}_{\eqqcolon L^n_4} \fstop
		\end{split}
	\end{align}
	
	We claim that the last three lines in~\eqref{eq:prop:slope:10}, i.e.,~$L^n_2$,~$L^n_3$ and~$L^n_4$, are bounded from above by negligible quantities, of the order~$o_n\left(\TrDis(\mu,\mu^n)\right)$. Let us start with~$L_3^n$. Since every left-neighborhood of~$1$ is \emph{not}~$\mu_\Omega$-negligible,
	\[
	\sup \set{x \in \Omega \, \colon \, (x,T_n(x)) \in \Gamma_n} =1 \comma 
	\]
	which, together with the monotonicity of~$\Gamma_n$, implies
	\begin{equation}
		\label{eq:essinf}
		T_n(1^-) \le \mu^n_\Omega\essinf S^{-1}(1) \fstop
	\end{equation}
	We now distinguish two cases: either~$b_n < 1$ or~$b_n = 1$. If~$b_n < 1$, given that~$T_n|_{(b_n,1)} \equiv 1$, the set~$S^{-1}(1)$ is~$\mu^n_\Omega$-negligible by~\eqref{eq:essinf}. Thus
	\begin{align*} L^n_3 &\le \int_{b_n}^1 \bigl( \rho(b_n)-\rho(x) \bigr) \dif x = -\int_{b_n}^1 \left(\int_{b_n}^x \partial_x \rho \dif z\right) \dif x \\
		&\le \sqrt{\int_{b_n}^1 \abs{x-b_n}^2 \dif x } \sqrt{ \int_{b_n}^1 \left( \fint_{b_n}^x \partial_x \rho \dif z \right)^2 \dif x } \\
		&\stackrel{\eqref{eq:limsupanbn}}{=} O_n\bigl(\TrDis(\mu,\mu^n)\bigr) \sqrt{ \int_{b_n}^1 \left( \fint_{b_n}^x \partial_x \rho \dif z \right)^2 \dif x} \fstop
	\end{align*}
	Knowing that~$\rho \in W^{1,2}(\Omega)$ and that~$b_n \to_n 1$, it can be easily proven with Hardy's inequality that the last square root tends to~$0$ as~$n \to \infty$.
	
	Assume now that~$b_n = 1$. This time, Inequality~\eqref{eq:essinf} yields
	\[ L^n_3 \le (T_n(1^-)-1)\rho(1) + \int_{T_n(1^-)}^1 \rho \dif x = \int_{T_n(1^-)}^1 \bigl( \rho(x)-\rho(1) \bigr) \dif x \fstop \]
	We conclude as in the case~$b_n < 1$, because the computations that led to~\eqref{eq:limsupanbn} can be easily adapted to show that~$(1-T_n(1^-))^3 = O_n\bigl( \TrDis^2(\mu,\mu^n)\bigr)$. Indeed, the monotonicity of~$T_n$ gives
	\[
	\TrDis^2(\mu,\mu^n) \ge \int_{T_n(1^-)}^1 \bigl(x-T_n(x)\bigr)^2 \rho(x) \dif x \ge \bar \lambda \int_{\max\set{1-\bar \epsilon,T_n(1^-)}}^1 \bigl(x-T_n(1^-)\bigr)^2 \dif x \fstop
	\]
	The proof for~$L^n_4$ is similar to that for~$L^n_3$.
	
	Let us now deal with the term~$L^n_2$:
	\[
	L^n_2 = \int \bigl(\log \rho(x)+V(x)-\log \rho(y)-V(y)\bigr) \dif \, \bigl((\gamma^n)_\Omega^{\partial \Omega}+(\gamma^n)_{\partial \Omega}^\Omega \bigr) \fstop \]
	Define the square-integrable function
	\[ g \coloneqq \begin{cases} 
		\frac{\partial_x \rho}{\rho}+\partial_x V &\text{on } (0,\bar \epsilon) \cup (1-\bar \epsilon,1) \comma \\
		0 &\text{otherwise.}
	\end{cases} \]
	Since~$\gamma_\Omega^{\set{1}}$ is concentrated on~$(b_n,1) \times \set{1}$, and~$\gamma_{\set{1}}^\Omega$ is concentraded on~$\set{1} \times (T_n(1^-),1)$, as soon as~$n$ is large enough for~$b_n$ and~$T_n(1^-)$ to be greater than~$1-\bar \epsilon$, we have the equality
	\begin{equation*}
		\bigl(\log \rho(x)+V(x)-\log \rho(y)-V(y)\bigr) = \int_y^x g \dif z \quad \text{for }\bigl((\gamma^n)_\Omega^{\set{1}} + (\gamma^n)^\Omega_{\set{1}}\bigr)\text{-a.e.~}(x,y) \fstop
	\end{equation*} Moreover,
	\[
	\int \left( \int_y^x g \dif z \right) \dif \,  (\gamma^n)_\Omega^{\set{1}} \le \TrDis(\mu,\mu^n) \sqrt{ \int_{b_n}^1 \left( \fint_x^1 g \dif z \right)^2 \underbrace{\rho}_{\le \norm{\rho}_{L^\infty}} \dif x } \comma
	\]
	and
	\[
	\int \left( \int_y^x g \dif z \right) \dif \,  (\gamma^n)^\Omega_{\set{1}} \le \TrDis(\mu,\mu^n) \sqrt{ \int^1_{T_n(1^-)} \left( \fint_x^1 g \dif z \right)^2 \underbrace{\rho^n|_{S_n^{-1}(1)}}_{\le \Lambda} \dif x } \fstop
	\]
	In both cases, since~$b_n$ and~$T_n(1^-)$ tend to~$1$ as~$n \to \infty$, and~$g \in L^2(\Omega)$, the square roots are infinitesimal. The same argument can be easily applied at~$0$ (i.e.~for the integrals w.r.t.~$(\gamma^n)_\Omega^{\set{0}}$ and~$(\gamma^n)^\Omega_{\set{0}}$), and this brings us to the conclusion that~$L_2^n$ is negligible.
	
	In the end,~\eqref{eq:prop:slope:10} reduces to
	\begin{align*}
		\Ent(\mu) - \Ent(\mu^n) &\le -\int_{a_n}^{b_n} (T_n-\Id)(\partial_x \rho+\rho \, \partial_x V) \dif x + o_{n}\left( \TrDis(\mu,\mu^n) \right) \\
		&\le \TrDis(\mu,\mu^n)\sqrt{\int_\Omega \left(\frac{\partial_x \rho}{\sqrt{\rho}} + \sqrt{\rho} \, \partial_x V \right)^2 \dif x + o_{n}(1) } \comma
	\end{align*}
	which is precisely the statement that we wanted to prove.
\end{proof}

\begin{corollary}[\Cref{thm:main30}] \label{cor:main3}
	Assume that~$V \in W^{1,2}(\Omega)$. Let~$ \mu \in \mathcal M_2(\Omega)$. Then,
	\begin{equation} \label{eq:main31}
		\slope{\hat \E}{Wb_2}^2 ( \mu)
		=
		\begin{cases}
			\displaystyle 4 \int_0^1 \left( \partial_x \sqrt{\rho e^V} \right)^2 e^{-V} \dif x &\text{if }   \mu = \rho \dif x \\ &\text{ and } \sqrt{\rho e^V}-1 \in W^{1,2}_0(\Omega) \comma \\
			\infty &\text{otherwise,}
		\end{cases}
	\end{equation}
	where~$\hat \E$ is defined as
	\begin{equation}
		\mathcal M_2(\Omega) \ni \mu \stackrel{\hat \E}{\longmapsto} \begin{cases}
			\E(\rho) &\text{if }  \mu = \rho \dif x \comma \\
			\infty &\text{otherwise.}
		\end{cases}
	\end{equation}
	Additionally,~$\slope{\hat \E}{Wb_2}$ is lower semicontinuous w.r.t.~$Wb_2$.
\end{corollary}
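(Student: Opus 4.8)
The plan is to reduce the statement to \Cref{prop:slope} by relating the descending slope of $\hat\E$ on $\mathcal M_2(\Omega)$ to the slope of $\Ent$ on $(\cone,\TrDis)$ through the lift/projection correspondence $\mu\leftrightarrow\tilde\mu$ (here $\tilde\mu\in\cone$ and $\tilde\mu_\Omega=\mu$; such a lift exists, e.g.\ $\tilde\mu=\rho\dif x-\norm{\rho}_{L^1}\delta_0$). First I would observe that, specialising the definitions to $\Psi\equiv0$, one has $\Ent(\tilde\mu)=\E(\mu_\Omega)=\hat\E(\mu)$ for every $\mu\in\mathcal M_2(\Omega)$ and every lift $\tilde\mu$ (both sides being $\infty$ when $\mu$ is not absolutely continuous), and that the value of $\slope{\Ent}{\TrDis}$ supplied by \Cref{prop:slope} depends on $\tilde\mu$ only through $\mu_\Omega$. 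Hence \Cref{prop:slope} with $\Psi\equiv0$ (so $e^{\Psi/2}=1$) already yields
\begin{equation*}
	\slope{\Ent}{\TrDis}^2(\tilde\mu)=
	\begin{cases}
		\displaystyle 4\int_0^1\bigl(\partial_x\sqrt{\rho e^V}\bigr)^2 e^{-V}\dif x & \text{if }\sqrt{\rho e^V}-1\in W^{1,2}_0(\Omega)\comma\\
		\infty & \text{otherwise,}
	\end{cases}
\end{equation*}
with $\rho$ the density of $\mu_\Omega$, which is precisely the right-hand side of~\eqref{eq:main31}. It therefore suffices to prove that $\slope{\hat\E}{Wb_2}(\mu)=\slope{\Ent}{\TrDis}(\tilde\mu)$ for one (equivalently, any) lift $\tilde\mu$ of $\mu$. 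The case $\hat\E(\mu)=\infty$ is immediate: both slopes are $\infty$ by convention, and so is the right-hand side of~\eqref{eq:main31}, since $\sqrt{\rho e^V}\in W^{1,2}(\Omega)$ would force $\rho\in L^\infty(\Omega)$, hence $\hat\E(\mu)<\infty$; so I may assume $\hat\E(\mu)<\infty$.

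For the inequality ``$\le$'' I would fix a lift $\tilde\mu$ and, given $\nu\in\mathcal M_2(\Omega)$ with $\nu\neq\mu$, take a $Wb_2$-optimal plan $\gamma$ between $\mu$ and $\nu$ and run the construction in the proof of \Cref{lemma:proj}: it produces $\tilde\nu\in\cone$ with $\tilde\nu_\Omega=\nu$ and $\tilde\gamma\in\Adm_\TrFun(\tilde\mu,\tilde\nu)$ with $\cost(\tilde\gamma)\le\cost(\gamma)$, so that $\TrDis(\tilde\mu,\tilde\nu)\le\TrFun(\tilde\mu,\tilde\nu)\le\sqrt{\cost(\tilde\gamma)}\le\sqrt{\cost(\gamma)}=Wb_2(\mu,\nu)$, the first inequality being~\eqref{eq:distIneq}. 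Since $\hat\E(\mu)-\hat\E(\nu)=\Ent(\tilde\mu)-\Ent(\tilde\nu)$ and $\TrDis(\tilde\mu,\tilde\nu)>0$ whenever this difference is positive (because $\nu\neq\mu$ forces $\tilde\nu\neq\tilde\mu$, and $\TrDis$ is a genuine metric by \Cref{prop:distance}), the $Wb_2$-difference quotient of $\hat\E$ at $\mu$ is bounded above by the $\TrDis$-difference quotient of $\Ent$ at $\tilde\mu$; as $\nu\to\mu$ in $Wb_2$ one has $\TrDis(\tilde\mu,\tilde\nu)\to0$, so passing to $\limsup$ gives $\slope{\hat\E}{Wb_2}(\mu)\le\slope{\Ent}{\TrDis}(\tilde\mu)$. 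For the reverse inequality I would use the (simpler) projection: given $\tilde\nu\in\cone$ with $\Ent(\tilde\mu)>\Ent(\tilde\nu)$, set $\nu\coloneqq\tilde\nu_\Omega$; then $\nu\neq\mu$ (otherwise $\Ent(\tilde\nu)=\E(\nu)=\E(\rho)=\Ent(\tilde\mu)$), $\hat\E(\nu)=\Ent(\tilde\nu)$, and $Wb_2(\mu,\nu)=Wb_2(\tilde\mu_\Omega,\tilde\nu_\Omega)\le\TrDis(\tilde\mu,\tilde\nu)$ by~\eqref{eq:distIneq}, so the $\TrDis$-quotient of $\Ent$ is dominated by the $Wb_2$-quotient of $\hat\E$. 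Choosing a sequence $\tilde\nu^n\to\tilde\mu$ in $\TrDis$ realising $\slope{\Ent}{\TrDis}(\tilde\mu)$ (the case of vanishing slope being trivial), so that eventually the numerators are positive, and passing to the limit yields $\slope{\Ent}{\TrDis}(\tilde\mu)\le\slope{\hat\E}{Wb_2}(\mu)$. Combining the two directions closes the argument.

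I do not expect a real obstacle: the substantive input — the upper bound on the slope in the absence of geodesic convexity — is already contained in \Cref{prop:slope}, and what remains is the lift/projection bookkeeping together with the degenerate cases ($\hat\E(\mu)=\infty$, $\nu=\mu$, lifts that differ only on $\partial\Omega$). If one wanted a proof of ``$\ge$'' that avoids $(\cone,\TrDis)$ altogether, one could instead re-run the lower-bound computations of \Cref{prop:slope} (Steps~1--2) directly in $(\mathcal M_2(\Omega),Wb_2)$: the perturbations $(\Id+\epsilon w)_\#\mu$ stay in $\mathcal M_2(\Omega)$ and are joined to $\mu$ by $Wb_2$-admissible plans, giving the Fisher-type integral as a lower bound when the boundary condition holds, while the boundary-layer competitors of \Cref{prop:slope}, with the Dirac masses on $\partial\Omega$ simply deleted so that the competitor remains supported in $\Omega$, still make the difference quotient diverge when $\rho$ violates the boundary condition.
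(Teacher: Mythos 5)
Your proposal is correct and follows essentially the same route as the paper: specialise \Cref{prop:slope} to $\Psi\equiv 0$ and transfer the slope through the lift/projection correspondence, using \Cref{lemma:proj} together with $Wb_2\le\TrDis$ from~\eqref{eq:distIneq}. The only (immaterial) difference is that you compare difference quotients pointwise via the explicit construction inside the proof of \Cref{lemma:proj}, obtaining $\TrDis(\tilde\mu,\tilde\nu)\le Wb_2(\mu,\nu)$ for a specific lift, whereas the paper invokes the statement of \Cref{lemma:proj} to pick lifts whose $\TrDis$/$Wb_2$ ratio tends to~$1$.
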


\begin{proof}
	We may assume that~$ \mu = \rho \dif x$ for some~$\rho \in L^1_{+}(\Omega)$, and that~$\E(\rho) < \infty$. In particular,~$ \mu$ is finite and we can fix some~$\tilde \mu \in \cone$ such that~$\tilde \mu_\Omega = \mu$
	
	\emph{Step 1 (inequality~$\le$).} Let~$( \mu^n)_{n \in \N_0} \subseteq \mathcal M_2(\Omega)$ be such that~$Wb_2( \mu^n, \mu) \to_n 0$ (and~$\mu^n \neq \mu$). We want to prove that the limit superior
	\[ \limsup_{n \to \infty} \frac{\bigl(\hat \E ( \mu) - \hat \E ( \mu^n) \bigr)_+}{Wb_2( \mu,  \mu^n)}
	\]
	is bounded from above by the right-hand side of~\eqref{eq:main31}. To this aim, we may assume that the limit superior is actually a limit and that~$\hat \E( \mu^n) \le \hat \E( \mu) = \E(\rho)$ for every~$n \in \N_0$. In particular, each measure~$ \mu^n$ is finite and has a density~$\rho^n$. By \Cref{lemma:proj}, for every~$n \in \N_0$,
	\[
	\inf_{\tilde \nu \in \cone} \set{ \TrDis(\tilde \mu, \tilde \nu) \, : \, \tilde \nu_\Omega = \mu^n} = Wb_2( \mu,  \mu^n) \comma
	\]
	which ensures the existence of~$\tilde \mu^n \in \cone$ such that~$\tilde \mu^n_\Omega = \mu^n$ and
	\begin{equation} \label{eq:main32}
		\lim_{n \to \infty} \frac{\TrDis(\tilde \mu, \tilde \mu^n)}{Wb_2( \mu, \mu^n)} = 1 \comma \text{ as well as, consequently, } \lim_{n \to \infty} \TrDis(\tilde \mu,\tilde \mu^n) = 0 \fstop
	\end{equation}
	By~\eqref{eq:main32} and \Cref{prop:slope} (with~$\Psi \equiv 0$), we conclude that
	\[
	\lim_{n \to \infty} \frac{\bigl(\hat \E ( \mu) - \hat \E ( \mu^n) \bigr)_+}{Wb_2( \mu,  \mu^n)} \le \limsup_{n \to \infty} \frac{\bigl(\E (\rho) - \E ( \rho^n) \bigr)_+}{\TrDis(\tilde \mu, \tilde \mu^n)} \le \text{RHS of~\eqref{eq:main31}.}
	\]
	
	\emph{Step 2 (inequality~$\ge$).} By \Cref{prop:slope} (with~$\Psi \equiv 0$), we know that there exists a sequence~$(\tilde \mu^n)_{n \in \N_0} \subseteq \cone$ such that~$ \TrDis(\tilde \mu^n,\tilde \mu) \to_n 0$ (with~$\tilde \mu^n \neq \tilde \mu$) and
	\[
	\lim_{n \to \infty} \frac{\bigl(\hat \E( \mu) - \hat \E(\tilde \mu^n_\Omega)\bigr)_+}{\TrDis(\tilde \mu, \tilde \mu^n)} = \text{RHS of~\eqref{eq:main31}.}
	\]
	If this number is~$0$, then there is nothing to prove. Otherwise, we may assume that~$\mu \neq \tilde \mu_\Omega^n$ for every~$n$, and we conclude by using~\eqref{eq:distIneq}.
	
	\emph{Step 3 (semicontinuity).} The lower semicontinuity is proven as in \Cref{rkm:slopeLSC}: if~$\mu^n \stackrel{Wb_2}{\to} \mu$ and~$\sup_n \slope{\hat \E}{Wb_2}(\mu^n) < \infty$, then, up to subsequences,~$\left(\sqrt{\rho^n e^V} \right)_{n}$ converges weakly in~$W^{1,2}(\Omega)$ and (strongly) in~$C(\overline \Omega)$, the limit is~$\sqrt{\rho e^V}$ by~\cite[Proposition~2.7]{FigalliGigli10}, and~$\sqrt{\rho e^V}-1 \in W^{1,2}_0(\Omega)$. We conclude by the weak semicontinuity of the functional in~\eqref{eq:slopeLSC}.
\end{proof}

\section[Proof of Theorem 1.2]{Proof of~\Cref{Theorem_1.5}} \label{sec:main20}

As in \Cref{sec:slope}, throughout this section we restrict to the case where~$\Omega = ( 0,1 ) \subseteq \R^1$. Fix~$\mu_0 \in \cone$ such that its restriction to~$( 0,1 )$ is absolutely continuous with density equal to~$\rho_0$. Recall the scheme~\eqref{eq:jkoTrDis0}: for every~$\tau > 0$ and~$n \in \N_0$, we iteratively choose
\begin{equation} \label{eq:jko2} \mu^\tau_{(n+1)\tau} \in \argmin_{\mu \in \cone} \, \left(\Ent(\mu) + \frac{\TrDis^2(\mu,\mu_{n\tau})}{2\tau} \right) \fstop \end{equation}
These sequences of measures are extended to maps~$t \mapsto \mu^\tau_t$, constant on the intervals~$\bigl[n\tau,(n+1)\tau \bigr)$ for every~$n \in \N_0$.

The purpose of this section is to prove \Cref{Theorem_1.5}. Observe the following fact: Statement~\ref{st:main203} follows directly from Statements~\ref{st:main201}-\ref{st:main202}. Indeed, given the sequence of maps~$( t \mapsto \mu_t^\tau )_\tau$ that converges to~$t \mapsto \mu_t$ pointwise w.r.t.~$\TrDis$, we infer from~\eqref{eq:distIneq} that~$\bigl(t \mapsto (\mu_t^\tau)_\Omega \bigr)_\tau$ converges to~$t \mapsto (\mu_t)_\Omega$ pointwise w.r.t.~$Wb_2$. Since the approximating maps are precisely the same as those built with~\eqref{eq:jkoTrFun0}, we can apply~\Cref{prop:FP} to conclude Statement~\ref{st:main203}. The proof of \Cref{thm:main30} is thus split into only three parts. 

\subsection{Equivalence of the schemes}

Let us fix a measure~$\bar \mu \in \cone$ such that its restriction to~$\Omega = ( 0,1 )$ is absolutely continuous. Denote by~$\bar \rho$ the density of this restriction and assume that~$\E(\bar \rho) < \infty$.

\begin{proposition} \label{prop:equivalenceOnestep}
	If~$2\tau\abs{\Psi(1)-\Psi(0)} < 1$, then~$\mu \in \cone$ is a minimizer of
	\begin{equation} \label{eq:prop:onestepTrDis:00} \Ent(\cdot) + \frac{\TrDis^2(\cdot, \bar \mu)}{2\tau} \colon \cone \to \R \cup \set{\infty} \end{equation}
	if and only if it is a minimizer of
	\begin{equation} \label{eq:prop:onestepTrDis:01} \Ent(\cdot) + \frac{\TrFun^2(\cdot, \bar \mu)}{2\tau} \colon \cone \to \R \cup \set{\infty} \fstop \end{equation}
	In particular, there exists one single such~$\mu$; see \Cref{prop:existence} and \Cref{prop:uniq}.
\end{proposition}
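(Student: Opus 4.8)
The plan is the following. Write $m_\TrDis$ and $m_\TrFun$ for the infima over $\cone$ of the functionals in~\eqref{eq:prop:onestepTrDis:00} and~\eqref{eq:prop:onestepTrDis:01}, respectively. Since $\Adm_\TrFun(\mu,\bar\mu)\subseteq\Adm_\TrDis(\mu,\bar\mu)$, we have $\TrDis(\mu,\bar\mu)\le\TrFun(\mu,\bar\mu)$ for every $\mu\in\cone$, hence $m_\TrDis\le m_\TrFun$. The whole statement thus reduces to two facts: (A) $m_\TrDis=m_\TrFun$; and (B) every $\TrDis$-minimizer $\mu$ admits an optimal $\TrDis$-plan with no boundary-to-boundary component, so that this plan is also $\TrFun$-admissible and $\TrFun(\mu,\bar\mu)=\TrDis(\mu,\bar\mu)$.

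The engine of the argument is a rerouting construction. Recall that here $\partial\Omega=\{0,1\}$, so $\partial\Omega\times\partial\Omega=\{(0,0),(1,1),(0,1),(1,0)\}$, the two off-diagonal points have cost $\abs{0-1}^2=1$, and any $\TrDis$-admissible plan can be modified — without changing its cost or its admissibility — so that it gives no mass to the diagonal $\{(0,0),(1,1)\}$. Given $\mu\in\cone$ and $\gamma\in\Adm_\TrDis(\mu,\bar\mu)$ with $\gamma\bigl(\{(0,0),(1,1)\}\bigr)=0$, write $\gamma_{\partial\Omega}^{\partial\Omega}=a\delta_{(0,1)}+b\delta_{(1,0)}$ with $a,b\ge0$ and set
\begin{equation*}
\gamma'\coloneqq\gamma-\gamma_{\partial\Omega}^{\partial\Omega}\comma\qquad
\mu'\coloneqq\mu-\pi^1_\#\gamma_{\partial\Omega}^{\partial\Omega}+\pi^2_\#\gamma_{\partial\Omega}^{\partial\Omega}=\mu+(b-a)(\delta_0-\delta_1)\fstop
\end{equation*}
A direct check against the conditions in \Cref{def:TrDis} and \Cref{def:TrFun} shows that $\mu'\in\cone$, that $\gamma'\in\Adm_\TrFun(\mu',\bar\mu)$, and that $\cost(\gamma')=\cost(\gamma)-(a+b)$. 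Since $\mu'_\Omega=\mu_\Omega$, only the boundary part of the entropy changes, giving $\Ent(\mu')=\Ent(\mu)+a\bigl(\Psi(1)-\Psi(0)\bigr)+b\bigl(\Psi(0)-\Psi(1)\bigr)$. Therefore
\begin{equation*}
\Ent(\mu')+\frac{\cost(\gamma')}{2\tau}-\Ent(\mu)-\frac{\cost(\gamma)}{2\tau}
=a\Bigl(\Psi(1)-\Psi(0)-\tfrac1{2\tau}\Bigr)+b\Bigl(\Psi(0)-\Psi(1)-\tfrac1{2\tau}\Bigr)\fstop
\end{equation*}
Under the hypothesis $2\tau\abs{\Psi(1)-\Psi(0)}<1$ both parentheses are strictly negative, so the right-hand side is $\le0$, and it is $<0$ exactly when $\gamma_{\partial\Omega}^{\partial\Omega}\neq0$.

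With this in hand, (A) follows: for an arbitrary $\mu\in\cone$ with $\TrDis^2(\mu,\bar\mu)<\infty$ (otherwise there is nothing to prove), pick a near-optimal diagonal-free $\gamma$; rerouting produces $\mu'$ with $m_\TrFun\le\Ent(\mu')+\cost(\gamma')/(2\tau)\le\Ent(\mu)+\cost(\gamma)/(2\tau)$, and letting first $\gamma$ and then $\mu$ vary yields $m_\TrFun\le m_\TrDis$, hence $m_\TrDis=m_\TrFun\eqqcolon m$. For the equivalence: if $\mu$ minimizes~\eqref{eq:prop:onestepTrDis:01}, then $m\le\Ent(\mu)+\TrDis^2(\mu,\bar\mu)/(2\tau)\le\Ent(\mu)+\TrFun^2(\mu,\bar\mu)/(2\tau)=m$, so $\mu$ minimizes~\eqref{eq:prop:onestepTrDis:00}. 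Conversely, let $\mu$ minimize~\eqref{eq:prop:onestepTrDis:00} and take $\gamma\in\Opt_\TrDis(\mu,\bar\mu)$ with $\gamma\bigl(\{(0,0),(1,1)\}\bigr)=0$ (the set $\Opt_\TrDis(\mu,\bar\mu)$ is nonempty by \Cref{lemma:optTrDis}, and one may subtract the diagonal part). If $\gamma_{\partial\Omega}^{\partial\Omega}\neq0$, rerouting gives $\mu'$ with $\Ent(\mu')+\cost(\gamma')/(2\tau)<\Ent(\mu)+\cost(\gamma)/(2\tau)=m$, contradicting $m_\TrFun=m$; hence $\gamma_{\partial\Omega}^{\partial\Omega}=0$, so $\gamma\in\Adm_\TrFun(\mu,\bar\mu)$, which forces $\TrFun(\mu,\bar\mu)=\TrDis(\mu,\bar\mu)$ and thus $\Ent(\mu)+\TrFun^2(\mu,\bar\mu)/(2\tau)=m$, i.e.\ $\mu$ minimizes~\eqref{eq:prop:onestepTrDis:01}. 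The final assertion (existence and uniqueness of such $\mu$) is then immediate from \Cref{prop:existence} and \Cref{prop:uniq} applied to~\eqref{eq:prop:onestepTrDis:01}.

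The only genuinely delicate point is the rerouting identity together with the strict sign of the two parentheses — this is where the quantitative threshold $2\tau\abs{\Psi(1)-\Psi(0)}<1$ enters, and it is the one place where $d=1$ (so that $\partial\Omega$ consists of the two distance-$1$ points $0$ and $1$) is used in an essential way. Everything else is bookkeeping with the defining conditions of $\Adm_\TrDis$ and $\Adm_\TrFun$; the cases where $\Ent(\mu)$ or $\TrDis^2(\mu,\bar\mu)$ is infinite are trivial and can be disposed of in a line.
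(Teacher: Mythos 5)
Your proposal is correct and follows essentially the same route as the paper: the rerouting construction $\mu'=\mu-\pi^1_\#\gamma_{\partial\Omega}^{\partial\Omega}+\pi^2_\#\gamma_{\partial\Omega}^{\partial\Omega}$, $\gamma'=\gamma-\gamma_{\partial\Omega}^{\partial\Omega}$ is exactly the paper's $\tilde\mu,\tilde\gamma$, and the sign computation with the threshold $2\tau\abs{\Psi(1)-\Psi(0)}<1$ is the same key inequality. Phrasing the conclusion via equality of the two infima plus a strict-decrease contradiction, rather than the paper's chain of inequalities turned into equalities, is only a cosmetic difference.
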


\begin{proof}
	Let~$\gfun$ be the function in~\eqref{eq:prop:onestepTrDis:00} and~$\hfun$ be that in~\eqref{eq:prop:onestepTrDis:01}. Recall that~$\TrDis \le \TrFun$, which implies that~$\gfun \le \hfun$. Let~$\mu \in \cone$, let~$\gamma \in \Opt_\TrDis(\mu,\bar \mu)$ be such that the diagonal~$\Delta$ of~$\partial \Omega \times \partial \Omega$ is~$\gamma$-negligible, and define
	\[ \tilde \mu \coloneqq \mu - \pi^1_\# \gamma_{\partial \Omega}^{\partial \Omega} + \pi^2_\# \gamma_{\partial \Omega}^{\partial \Omega} \in \cone \comma \quad \tilde \gamma \coloneqq \gamma - \gamma_{\partial \Omega}^{\partial \Omega} \in \Adm_{\TrFun}(\tilde \mu,\bar \mu) \fstop \]
	We have
	\begin{multline} \label{eq:prop:onestepTrDis:1} \hfun(\tilde \mu) \le \Ent(\tilde \mu) + \frac{\cost{(\tilde \gamma)}}{2\tau} = \gfun(\mu) + \bigl( \pi^2_\# \gamma_{\partial \Omega}^{\partial \Omega} - \pi^1_\# \gamma_{\partial \Omega}^{\partial \Omega} \bigr)(\Psi) - \frac{\cost(\gamma_{\partial \Omega}^{\partial \Omega})}{2\tau} \\
		= \gfun(\mu) + \bigl(\Psi(1)-\Psi(0)\bigr)\bigl( \gamma(0,1) - \gamma(1,0) \bigr) - \frac{\gamma(0,1) + \gamma(1,0)}{2\tau} \le \gfun(\mu) \comma \end{multline}
	where, in the last inequality, we used the assumption on~$\tau$.
	
	\emph{Step 1.} It follows from~\eqref{eq:prop:onestepTrDis:1} that~$\inf \hfun \le \gfun \le \hfun$. This is enough to conclude that every minimizer of~$\hfun$ is a minimizer of~$\gfun$ too.
	
	\emph{Step 2.}  Assume now that~$\mu$ is a minimizer of~$\gfun$. Again by~\eqref{eq:prop:onestepTrDis:1},
	\[ \gfun(\mu) \le \gfun(\tilde \mu) \le \hfun(\tilde \mu) \le \gfun(\mu) \fstop \]
	Therefore, it must be true that~$\gfun(\mu) = \hfun(\tilde \mu)$ and that all inequalities in~\eqref{eq:prop:onestepTrDis:1} are equalities. This can only happen if~$\gamma_{(\partial \Omega \times \partial \Omega) \setminus \Delta} = \gamma_{\partial \Omega}^{\partial \Omega}$ has zero mass, which implies~$\mu = \tilde \mu$. It is now easy to conclude from~$ \gfun \le \hfun$ and~$\gfun(\mu)=\hfun(\mu)$ that~$\mu$ is a minimizer of~$\hfun$.
\end{proof}

\subsection{Convergence}

\begin{proposition} \label{prop:convergence1d}
	As~$\tau \to 0$, up to subsequences, the maps~$(t\mapsto \mu_t^\tau)_\tau$ converge pointwise w.r.t.~$\TrDis$ to a curve~$t \mapsto \mu_t$, continuous w.r.t~$\TrDis$. The restrictions~$(\mu_t)_\Omega$ are absolutely continuous.
\end{proposition}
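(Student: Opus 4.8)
The plan is to reduce everything to the scheme~\eqref{eq:jkoTrFun0} and then mimic the proof of~\Cref{prop:convergence}, replacing~$Wb_2$ by~$\TrDis$; the one genuinely new point is that we must also keep the boundary masses under control, so as to have a $\TrDis$-compact set in which all the discrete curves live. Since we take~$\tau \to 0$ up to subsequences, we may assume~$2\tau\abs{\Psi(1)-\Psi(0)} < 1$, so that by~\Cref{prop:equivalenceOnestep} the maps~$(t \mapsto \mu_t^\tau)$ produced by~\eqref{eq:jko2} coincide with those produced by~\eqref{eq:jkoTrFun0}, and all the estimates of~\Cref{sec:main10} apply. In particular,~\Cref{lemma:boundSumT} gives, for every~$T > 0$, every~$s \in [0,t]$ with~$t \le T$, and every~$\tau \in (0,1)$,
\[
	\int_\Omega \rho_t^\tau \log \rho_t^\tau \dif x \le \const_T \comma \qquad \TrDis(\mu_s^\tau, \mu_t^\tau) \le \const \sqrt{(t-s+\tau)(1+t+\tau)} \le \const_T \sqrt{t-s+\tau} \comma
\]
while~\Cref{rmk:massbound} gives~$\norm{(\mu_t^\tau)_\Omega} \le \const$ uniformly in~$t$ and~$\tau$.

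Next I would bound the boundary mass of~$\mu_t^\tau$. Fix the affine function~$\Phi_1 \colon \ovom = [0,1] \to \R$ with~$\Phi_1(0) = 0$ and~$\Phi_1(1) = 1$, and set~$\Phi_0 \coloneqq 1 - \Phi_1$. Since~$\TrDis(\mu_t^\tau, \mu_0) = \TrDis(\mu_t^\tau, \mu_0^\tau) \le \sum_{i < \lfloor t/\tau \rfloor} \TrDis(\mu_{i\tau}^\tau, \mu_{(i+1)\tau}^\tau) \le \const_T$ for~$t \le T$ and~$\tau < 1$, \Cref{lemma:boundaryVSinteriorTrDis} yields~$\abs{\mu_t^\tau(\Phi_j) - \mu_0(\Phi_j)} \le \const_T$ for~$j \in \set{0,1}$. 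Because~$\mu_t^\tau(\set{1}) = \mu_t^\tau(\Phi_1) - \int_\Omega \Phi_1 \rho_t^\tau \dif x$ and the integral is at most~$\norm{(\mu_t^\tau)_\Omega} \le \const$, this gives~$\abs{\mu_t^\tau(\set{1})} \le \const_T$, and symmetrically~$\abs{\mu_t^\tau(\set{0})} \le \const_T$; hence~$\norm{(\mu_t^\tau)_{\partial \Omega}} \le \const_T$ uniformly for~$t \le T$ and~$\tau < 1$.

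Now define~$K_T \coloneqq \bigl\{ \mu \in \cone \,:\, \mu_\Omega = \rho\dif x \text{ with } \int_\Omega \rho\log\rho\dif x \le \const_T,\ \norm{\mu_\Omega}+\norm{\mu_{\partial\Omega}} \le \const_T \bigr\}$, with the constants chosen so that~$\mu_t^\tau \in K_T$ for all~$t \le T$, $\tau < 1$. I claim~$K_T$ is compact in~$(\cone, \TrDis)$. Take a sequence in~$K_T$: its interior densities are uniformly integrable, hence, by the Dunford--Pettis theorem, admit a subsequence converging weakly in~$L^1(\Omega)$ to some~$\rho \ge 0$; the boundary parts, being measures of bounded variation supported on the two-point set~$\partial\Omega = \set{0,1}$, subconverge to some~$\eta$; thus the measures converge weakly on~$\ovom$ to~$\mu \coloneqq \rho\dif x + \eta$, which lies in~$\cone$ (weakly closed) and in~$K_T$ by weak lower semicontinuity of~$\int\rho\log\rho$ and of the total variation, together with~$\int_\Omega\rho^n \to_n \int_\Omega\rho$. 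By~\Cref{lemma:convergCrit}, this weak convergence upgrades to convergence w.r.t.~$\TrDis$, so~$K_T$ is sequentially, hence (being metric) compact.

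Finally, I would invoke the refined Ascoli--Arzel\`a theorem~\cite[Proposition~3.3.1]{AmbrosioGigliSavare08}: the curves~$(s \mapsto \mu_s^\tau)_\tau$ on~$[0,T]$ take values in the compact set~$K_T$ and satisfy~$\limsup_{\tau \to 0} \TrDis(\mu_s^\tau, \mu_t^\tau) \le \const_T \sqrt{\abs{t-s}}$, a modulus of continuity that vanishes on the diagonal; this produces a subsequence converging pointwise on~$[0,T]$ w.r.t.~$\TrDis$ to a~$\TrDis$-continuous curve~$t \mapsto \mu_t$. A diagonal argument over~$T \in \N$ gives a single subsequence converging pointwise on~$[0,\infty)$, and each limit measure~$\mu_t$ lies in~$K_T$ for~$T > t$, so~$(\mu_t)_\Omega$ is absolutely continuous. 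The main obstacle is the boundary-mass bound of the second paragraph — turning the interior mass bound of~\Cref{rmk:massbound} into a bound on~$\norm{(\mu_t^\tau)_{\partial\Omega}}$ via~\Cref{lemma:boundaryVSinteriorTrDis} — together with the verification that the resulting set~$K_T$ is genuinely~$\TrDis$-compact (not merely weakly compact), which is exactly where~\Cref{lemma:convergCrit} enters.
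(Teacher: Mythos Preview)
Your proposal is correct and follows essentially the same approach as the paper: bound the total variation (in the paper this is stated separately as \Cref{lemma:TVbound}), show the resulting set is~$\TrDis$-compact via Dunford--Pettis plus \Cref{lemma:convergCrit}, and conclude with the refined Ascoli--Arzel\`a theorem and a diagonal argument. The only cosmetic difference is how the boundary-mass bound is obtained: the paper sums the step-by-step estimate from \Cref{lemma:boundaryVSinterior} (the~$\TrFun$ version) over~$i$, whereas you apply \Cref{lemma:boundaryVSinteriorTrDis} directly between~$\mu_t^\tau$ and~$\mu_0$; both are valid and yield the same conclusion.
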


\begin{lemma} \label{lemma:TVbound}
	For every~$t \ge 0$ and~$\tau > 0$ such that~$2\tau\abs{\Psi(1)-\Psi(0)} < 1$, we have the upper bound
	\begin{equation}
		\norm{\mu_t^\tau} \le \const(1+t+\tau) \fstop
	\end{equation}
\end{lemma}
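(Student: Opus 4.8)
The plan is to mimic the mass/entropy estimate already available for the $\TrFun$-scheme. Since $2\tau|\Psi(1)-\Psi(0)|<1$ by hypothesis, \Cref{prop:equivalenceOnestep} tells us that the iterates $\mu_t^\tau$ produced by~\eqref{eq:jkoTrDis0} coincide with those produced by~\eqref{eq:jkoTrFun0}. Therefore it suffices to bound $\norm{\mu_t^\tau}$ for the latter, and here all the machinery of \Cref{sec:main10} is available. First I would invoke \Cref{rmk:massbound}: the restriction $(\mu_t^\tau)_\Omega$ has mass bounded by a constant $\const$ independent of $t$ and $\tau$. So the only thing left to control is the boundary mass $\norm{(\mu_t^\tau)_{\partial\Omega}}$, i.e.~$|(\mu_t^\tau)(\{0\})| + |(\mu_t^\tau)(\{1\})|$ (recall $\partial\Omega=\{0,1\}$).

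The key step is to track how much boundary mass can accumulate per time step. Pick $\gamma^i\in\Opt_\TrFun(\mu_{(i+1)\tau}^\tau,\mu_{i\tau}^\tau)$. By Condition~\ref{(3)} in \Cref{def:TrDis} together with \Cref{rmk:boundGamma}, the mass of $\gamma^i$ is bounded by $\norm{(\mu_{(i+1)\tau}^\tau)_\Omega}+\norm{(\mu_{i\tau}^\tau)_\Omega}\le 2\const$, and the change in the measure, $\mu_{(i+1)\tau}^\tau-\mu_{i\tau}^\tau=\pi^1_\#\gamma^i-\pi^2_\#\gamma^i$, is a difference of two measures each of mass $\le 2\const$. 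Hence
\begin{equation*}
	\norm{\mu_{(i+1)\tau}^\tau-\mu_{i\tau}^\tau}\le 2\norm{\gamma^i}\le 4\const\comma
\end{equation*}
which is far too lossy if summed naively over $\lfloor t/\tau\rfloor$ steps. The refinement is to use the quantitative estimate from \Cref{lemma:boundaryVSinterior} (with $\Phi$ the two Lipschitz coordinate functions $x\mapsto \1_{\{x=1\}}$ replaced by a genuine Lipschitz function separating $0$ and $1$, e.g.~$\Phi(x)=x$): applied to consecutive iterates it gives, for every auxiliary parameter $\sigma>0$,
\begin{equation*}
	\abs{\mu_{(i+1)\tau}^\tau(\Phi)-\mu_{i\tau}^\tau(\Phi)}\le \const_\Phi\,\sigma\bigl(\norm{(\mu_{(i+1)\tau}^\tau)_\Omega}+\norm{(\mu_{i\tau}^\tau)_\Omega}\bigr)+\frac{\TrFun^2(\mu_{(i+1)\tau}^\tau,\mu_{i\tau}^\tau)}{4\sigma}\fstop
\end{equation*}
Choosing $\sigma=\tau$, summing over $i=0,\dots,\lfloor t/\tau\rfloor-1$, and invoking \Cref{lemma:boundSumT} (which bounds $\sum_i\TrFun^2(\mu_{i\tau}^\tau,\mu_{(i+1)\tau}^\tau)\le\const\,\tau(1+t+\tau)$) together with the uniform mass bound, we obtain
\begin{equation*}
	\abs{\mu_t^\tau(\Phi)-\mu_0(\Phi)}\le \const\,\tau\sum_{i=0}^{\lfloor t/\tau\rfloor}\bigl(\norm{(\mu_{i\tau}^\tau)_\Omega}+\norm{\mu_0}\bigr)+\const(1+t+\tau)\le\const(1+t+\tau)\fstop
\end{equation*}
Since $\mu_t^\tau(\ovom)=0$ for all $t$, and $\Phi(x)=x$ separates the two boundary points, the two linear functionals $\mu\mapsto\mu(\Phi)$ and $\mu\mapsto\mu(\ovom)$ together determine $(\mu_{\partial\Omega}(\{0\}),\mu_{\partial\Omega}(\{1\}))$ from $\mu_\Omega$; concretely, $\mu_t^\tau(\{1\})=\mu_t^\tau(\Phi)-\int_\Omega x\,\rho_t^\tau\dif x$ and $\mu_t^\tau(\{0\})=-\mu_t^\tau(\{1\})-\norm{(\mu_t^\tau)_\Omega\text{'s signed mass}}$, so both are bounded by $\const(1+t+\tau)$. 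Adding the interior contribution from \Cref{rmk:massbound} yields $\norm{\mu_t^\tau}\le\const(1+t+\tau)$, as claimed.

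The main obstacle I anticipate is purely bookkeeping: making sure the constant in \Cref{lemma:boundaryVSinterior} and in \Cref{lemma:boundSumT} genuinely do not depend on $\tau$ (they do not, since $V\in L^\infty$ and $\Psi$ is globally Lipschitz), and handling the telescoping carefully so that the $\const\,\tau\sum_i(\cdots)$ term, which has $\lfloor t/\tau\rfloor+1$ summands each $O(1)$, collapses to $\const(1+t)$ rather than blowing up. There is also the minor point that \Cref{prop:equivalenceOnestep} must be applied at every step $i$, which is legitimate because the hypothesis $2\tau|\Psi(1)-\Psi(0)|<1$ is independent of the step, and because each $\mu_{i\tau}^\tau$ satisfies the standing regularity assumption $\E(\bar\rho)<\infty$ needed to invoke it (this follows inductively from \Cref{prop:existence}). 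Everything else is a routine application of results already established.
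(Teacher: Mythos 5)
Your proposal is correct and follows essentially the same route as the paper's proof: interior mass via \Cref{rmk:massbound}, then \Cref{lemma:boundaryVSinterior} applied to consecutive iterates with the time step as the auxiliary parameter, summed using \Cref{lemma:boundSumT}, with \Cref{prop:equivalenceOnestep} justifying the use of the $\TrFun$-scheme estimates. The only cosmetic difference is the choice of test function: the paper takes $\Phi(x)=1-x$ (and suitable variants) to isolate each boundary point directly, while you take $\Phi(x)=x$ and recover $\mu_t^\tau(\{0\})$, $\mu_t^\tau(\{1\})$ from the zero-total-mass constraint, which is equivalent.
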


\begin{proof}
	Let~$t \ge 0$ be fixed. We already know from \Cref{rmk:massbound} that~$\norm{(\mu_t^\tau)_\Omega} \le \const$. By applying \Cref{lemma:boundaryVSinterior} with~$\Phi(x) \coloneqq 1-x$, we find
	\[ \mu^\tau_{(i+1)\tau}(0)-\mu^\tau_{i\tau}(0) \le \int (1-x) \dif \, \bigl(\mu^\tau_{i\tau}-\mu^\tau_{(i+1)\tau}\bigr)_\Omega + \const \, \tau + \frac{\TrFun^2\bigl(\mu^\tau_{(i+1)\tau},\mu^\tau_{i\tau}\bigr)}{4\tau} \comma \]
	for every~$i \in \N_0$. By summing over~$i \in \set{0,1,\dotsc,\lfloor t/\tau \rfloor -1}$ and using \Cref{lemma:boundSumT}, 
	\begin{align*} \mu^\tau_t(0) - \mu_0(0) &\le \int (1-x) \dif \, (\mu_0-\mu^\tau_{t})_\Omega + \const (1+t+\tau) \le \const (1+t+\tau) \fstop \end{align*}
	Thus, the sequence~$\bigl(\mu_t^\tau(0)\bigr)_\tau$ is bounded from above as~$\tau \to 0$. By suitably choosing~$\Phi$, we can find a similar bound from below and bounds for~$\mu_t^\tau(1)$.
\end{proof}

\begin{proof}[Proof of \Cref{prop:convergence1d}]
	We can assume that~$\tau < 1$ and that~$2\tau \abs{\Psi(1)-\Psi(0)} < 1$. The proof goes as in \Cref{prop:convergence}: for a fixed~$t \ge 0$, we need to prove that
	\begin{equation} \label{eq:almostHold}
		\limsup_{\tau \to 0} \TrDis(\mu_s^\tau,\mu_t^\tau) \le \const \sqrt{\abs{r-s}(1+t)} \comma \qquad r,s \in [0,t] \comma
	\end{equation}
	and that
	\[
	\tilde K_t \coloneqq \set{\mu \in \cone \, : \, \norm{\mu} \le c_1 (2+t) \comma \text{ and } \mu_\Omega = \rho \dif x \text{ with } \int_\Omega \rho \log \rho \dif x \le c_2 (2+t)}
	\]
	is compact in~$(\cone,\TrDis)$, where the constants~$c_1$ and~$c_2$ are given by \Cref{lemma:TVbound} and \Cref{lemma:boundSumT}, respectively.
	
	The inequality~\eqref{eq:almostHold} follows from~\eqref{eq:lemmaBoundSumT2}.
	If~$( \mu^n)_{n \in \N_0}$ is a sequence in~$\tilde K_t$, thanks to the bound on the total mass, we can extract a (not relabeled) subsequence that converges weakly to some~$ \mu \in \cone$. Let~$ \rho^n$ be the density of~$\mu_\Omega^n$ for every~$n \in \N_0$. We exploit the bound on the integral~$\int_\Omega  \rho^n \log  \rho^n$ to extract a further subsequence such that~$( \rho^n)_{n \in \N_0}$ converges weakly in~$L^1(\Omega)$ to some~$ \rho$. We have~$ \mu_\Omega =  \rho \dif x$, as well as~$\norm{ \mu} \le c_1(2+t)$ and~$\int_\Omega  \rho \log  \rho \dif x \le c_2 (2+t)$; hence~$ \mu \in \tilde K_t$. The convergence~$ \mu^n \to_n  \mu$ holds also w.r.t.~$\TrDis$ thanks to~\Cref{lemma:convergCrit}.
\end{proof}

\subsection{Curve of maximal slope}
\begin{proposition} \label{prop:edi}
	Assume that~$V \in W^{1,2}(\Omega)$. If the sequence~$(t \mapsto \mu^\tau_t)_\tau$ converges pointwise w.r.t.~$\TrDis$ to a curve~$t \mapsto \mu_t$, then the latter is a curve of maximal slope for the functional~$\Ent$ in the metric space~$(\cone, \TrDis)$.
\end{proposition}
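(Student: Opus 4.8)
The plan is to follow the standard energy-dissipation strategy from the theory of minimizing movements, adapted to the present setting where $\TrDis$ is a genuine metric on $\cone$ (\Cref{prop:distance}), where $\Ent$ is lower semicontinuous with respect to $\TrDis$ (\Cref{prop:lscIbis}), and where we have the explicit slope formula from \Cref{prop:slope}. First I would record the discrete energy-dissipation estimate: from the one-step inequality \eqref{eq:existence:inequality} (equivalently \eqref{eq:prop:Lq:0}-type bounds together with \Cref{prop:equivalenceOnestep}, which lets us pass freely between $\TrFun$ and $\TrDis$ in the scheme), summing over steps gives, for $0 \le s \le t$,
\[
\Ent(\mu^\tau_{t}) + \frac{1}{2}\sum_{i} \frac{\TrDis^2(\mu^\tau_{i\tau},\mu^\tau_{(i+1)\tau})}{\tau} + \frac{1}{2}\sum_i \tau \left( \frac{\TrDis(\mu^\tau_{i\tau},\mu^\tau_{(i+1)\tau})}{\tau} \right)^2 \le \Ent(\mu^\tau_s) + (\text{error } O(\tau)),
\]
where the error comes from the $\const\tau(\norm{\mu_\Omega}+\norm{\bar\mu_\Omega})$ term in \eqref{eq:existence:inequality}, controlled uniformly in time via \Cref{rmk:massbound}. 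Introducing the piecewise-affine interpolant (the ``De Giorgi variational interpolant'' or simply the affine interpolant) one rewrites the first sum as $\int_s^t |\dot{\mu}^\tau_r|^2\,\mathrm dr$ for that interpolant, and the third sum, using that the scheme's one-step displacement is an \emph{upper} bound for the slope at the new point up to the variational interpolant correction, as (a lower-semicontinuous substitute for) $\int_s^t \slope{\Ent}{\TrDis}^2(\mu^\tau_r)\,\mathrm dr$.

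Next I would pass to the limit $\tau \to 0$. The term $\Ent(\mu^\tau_t)$ is handled by lower semicontinuity (\Cref{prop:lscIbis} together with \Cref{lemma:convergCrit}, which gives that weak convergence implies $\TrDis$-convergence in dimension one), so $\liminf_\tau \Ent(\mu^\tau_t) \ge \Ent(\mu_t)$; for $\Ent(\mu^\tau_s)$ one needs an upper bound, which follows because $\Ent(\mu^\tau_s)$ is nonincreasing in the discrete time and $\Ent(\mu^\tau_0) = \Ent(\mu_0)$ is fixed (so in fact $\Ent(\mu^\tau_t) \le \Ent(\mu_0)$ for all $\tau$, $t$, giving $t \mapsto \Ent(\mu_t)$ bounded above and, after extracting the monotone envelope, a well-defined nonincreasing $\phi$). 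For the metric-derivative term: the discrete interpolants have uniformly bounded length on bounded time intervals (from the summed estimate), so by the lower-semicontinuity of the one-dimensional action functional (lower semicontinuity of $\int |\dot x|^2$ under pointwise convergence of curves, as in \cite[Theorem 1.1.2 and Proposition 3.3.1]{AmbrosioGigliSavare08}) we get $\int_s^t |\dot\mu_r|^2\,\mathrm dr \le \liminf_\tau \int_s^t |\dot\mu^\tau_r|^2\,\mathrm dr$; in particular $t \mapsto \mu_t$ is locally absolutely continuous. For the slope term, I would invoke the key input that $\slope{\Ent}{\TrDis}$ is lower semicontinuous along the scheme: by \Cref{prop:slope} the slope squared equals (four times) the Dirichlet-type energy of $\sqrt{\rho e^V}$ with the boundary constraint $\sqrt{\rho e^V} - e^{\Psi/2} \in W^{1,2}_0(\Omega)$, and \Cref{rkm:slopeLSC} states precisely that this functional is convex and weakly lower semicontinuous on $W^{1,2}(\Omega)$, with weakly compact sublevels. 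Combining with \Cref{lemma:improvedconv} (or rather its one-dimensional strengthening, statement \ref{st:main203} of \Cref{thm:main20}), which gives $\rho^\tau \to \rho$ in $L^1_{\mathrm{loc}}((0,\infty);L^q)$ for all $q < \infty$, and with the uniform $L^2_{\mathrm{loc}}(W^{1,2})$-bound on $\sqrt{\rho^\tau e^V}$ from \Cref{lemma:sobolev}, one upgrades this to weak convergence of $\sqrt{\rho^\tau_r e^V}$ in $W^{1,2}(\Omega)$ for a.e.\ $r$, whence $\int_s^t \slope{\Ent}{\TrDis}^2(\mu_r)\,\mathrm dr \le \liminf_\tau \int_s^t \slope{\Ent}{\TrDis}^2(\mu^\tau_r)\,\mathrm dr$ by Fatou.

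Putting these three limits together yields the integrated energy-dissipation inequality
\[
\Ent(\mu_t) + \frac{1}{2}\int_s^t |\dot\mu_r|^2\,\mathrm dr + \frac{1}{2}\int_s^t \slope{\Ent}{\TrDis}^2(\mu_r)\,\mathrm dr \le \Ent(\mu_s),
\]
valid for $0 \le s \le t$; differentiating in $t$ (using that $t \mapsto \Ent(\mu_t)$ is, after choosing the monotone representative $\phi$, differentiable a.e.) gives exactly \eqref{eq:ediGeneral}, i.e.\ $\mu$ is a curve of maximal slope. \textbf{The main obstacle} I expect is the lower semicontinuity of the slope along the scheme: unlike in the classical $\lambda$-convex setting this does not come for free, and it is precisely here that the explicit formula of \Cref{prop:slope} and the good functional-analytic properties recorded in \Cref{rkm:slopeLSC} are indispensable—one must be careful that the boundary constraint $\sqrt{\rho e^V}-e^{\Psi/2}\in W^{1,2}_0$ is preserved in the limit, which again uses that the cutoff functions $g^{\tau,(\kappa)}$ lie in $W^{1,2}_0$ with uniform bounds (as in Step 3 of the proof of \Cref{prop:FP}) and that weak $W^{1,2}_0$-limits stay in $W^{1,2}_0$. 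A secondary technical point is the correct identification of the De Giorgi variational interpolant so that the discrete slope sum genuinely dominates $\int \slope{\Ent}{\TrDis}^2$ in the limit rather than something smaller; here I would follow \cite[Chapter 3]{AmbrosioGigliSavare08} verbatim, the only adaptation being the $O(\tau)$ error term which is harmless.
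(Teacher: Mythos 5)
Your overall strategy is the one the paper follows: a discrete energy--dissipation estimate via De Giorgi's variational interpolation, weak $L^2$ compactness of the discrete metric speeds to handle $\int|\dot\mu_r|^2$, and lower semicontinuity of the slope obtained not from convexity but from the explicit formula of \Cref{prop:slope} together with \Cref{rkm:slopeLSC} (plus the identification of the limit of the interpolants through \eqref{eq:lemma:greenbook:1} and the $Wb_2$-convergence). So the architecture is sound and matches the paper.

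There is, however, a genuine gap in your treatment of the left endpoint $\Ent(\mu^\tau_s)$. You claim that the needed upper bound ``follows because $\Ent(\mu^\tau_s)$ is nonincreasing in discrete time and $\Ent(\mu^\tau_0)=\Ent(\mu_0)$,'' but this only yields $\limsup_\tau \Ent(\mu^\tau_s)\le \Ent(\mu_0)$, i.e.\ the integrated inequality starting from $s=0$. That single inequality does \emph{not} imply the curve-of-maximal-slope property: \Cref{def:maxSlope} requires a nonincreasing $\phi$ equal a.e.\ to $t\mapsto\Ent(\mu_t)$ satisfying \eqref{eq:ediGeneral} pointwise a.e., and for this one needs the energy--dissipation inequality on subintervals $[s,t]$ for a.e.\ $s$, hence $\limsup_\tau \Ent(\mu^\tau_s)\le\Ent(\mu_s)$ for a.e.\ $s$. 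This is not a consequence of lower semicontinuity (which goes the wrong way) nor of monotonicity of the discrete energies; your ``monotone envelope'' of a function merely bounded above by $\Ent(\mu_0)$ need not coincide a.e.\ with $t\mapsto\Ent(\mu_t)$, nor does it inherit the differential inequality. The paper closes exactly this hole with \Cref{lemma:convH}: along a subsequence, $\Ent(\mu^{\tau_k}_t)\to\Ent(\mu_t)$ for a.e.\ $t$, proved by combining the strong $L^q$-convergence of the densities (\Cref{lemma:improvedconv}) with the control of the boundary part of the entropy via \Cref{lemma:boundaryVSinteriorTrDis} and the mass bound of \Cref{rmk:massbound} --- note that the boundary term $\mu_{\partial\Omega}(\Psi)$ is not continuous under $\TrDis$-convergence for free, so this step genuinely requires an argument. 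Only with this a.e.\ convergence of the entropies can one write the limit inequality from a.e.\ $s$, define $\phi(t)=\sup\{\Ent(\mu_r): r\ge t,\ r\notin E\}$, and recover \eqref{eq:ediGeneral} by Lebesgue differentiation. Incorporating such a lemma (or an equivalent device) is necessary to complete your proof.
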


To prove this proposition, we employ the classical~\cite[Theorem~2.3.1]{AmbrosioGigliSavare08}, but we also crucially need the results of~\Cref{sec:slope}. In particular, we rely on the explicit formula for the slope of~\Cref{prop:slope} and on the consequent semicontinuity observed in~\Cref{rkm:slopeLSC}.

\begin{proof}
	Consider the subspace~$\widetilde \cone \coloneqq \set{\mu \in \cone \, : \, \Ent(\mu) \le \Ent(\mu_0)}$. Note that, since~$\Ent$ is~$\TrDis$-lower semicontinuous (\Cref{prop:lscIbis}),~$t \mapsto \mu_t$ entirely lies in~$\widetilde{\cone}$. Moreover,~$\slope{\Ent}{\TrDis}$ coincides with~$\slope{(\Ent|_{\widetilde{\cone}})}{\TrDis}$ on~$\widetilde{\cone}$. Therefore, it suffices to prove that~$t \mapsto \mu_t$ is a curve of maximal slope in~$\widetilde{\cone}$.
	
	We invoke \cite[Theorem~2.3.1]{AmbrosioGigliSavare08}. Let us check the assumptions. Firstly, the space~$(\widetilde \cone, \TrDis)$ is complete by \Cref{lemma:completeSublev}. %
	Secondly,~\cite[(2.3.2)]{AmbrosioGigliSavare08} is satisfied because the slope~$\slope{\Ent}{\TrDis}$ is~$\TrDis$-lower semicontinuous; see \Cref{rkm:slopeLSC} and \cite[Remark~2.3.2]{AmbrosioGigliSavare08}. Thirdly, \cite[Assumptions 2.1a,b]{AmbrosioGigliSavare08} follow from \Cref{prop:lscIbis} and \Cref{prop:equivalenceOnestep}. Finally, to prove \cite[(2.3.3)]{AmbrosioGigliSavare08}, let us pick a sequence~$(\mu^n)_{n \in \N_0} \subseteq \widetilde{\cone}$ that converges to some~$\mu$ w.r.t.~$\TrDis$ and such that~$\sup_n \slope{\Ent}{\TrDis}(\mu^n) < \infty$. We will show that~$\Ent(\mu^n) \to \Ent(\mu)$. Note that it is enough to prove this convergence \emph{up to subsequences}. Let~$\rho^n,\rho$ be the densities of~$\mu^n_\Omega,\mu_\Omega$, respectively.
	Since~$\sup_n \slope{\Ent}{\TrDis}(\mu^n) < \infty$, up to subsequences, the functions~$\left(\sqrt{\rho^n e^V} \right)_n$ converge in~$C(\overline \Omega)$ to~$\sqrt{\rho e^V}$. Since~$V$ is bounded, we also have the convergence~$\rho^n \to \rho$ in~$C(\overline \Omega)$. We write
	\begin{align*}
		\abs{\Ent(\mu^n) - \Ent(\mu)} &= \abs{\E(\mu^n) - \E(\mu) + (\mu^n-\mu)_{\partial \Omega}(\Psi)} \\
		&\le \abs{\E(\mu^n) - \E(\mu) - (\mu^n-\mu)_{\Omega}(\Psi)} + \abs{\mu^n(\Psi) - \mu(\Psi)}
	\end{align*}
	Thanks to the uniform convergence~$\rho^n \to \rho$, we have
	\[
	\abs{\E(\mu^n) - \E(\mu) - (\mu_n-\mu)_{\Omega}(\Psi)} \to 0 \fstop
	\] Additionally, by \Cref{lemma:boundaryVSinteriorTrDis},
	\[
	\abs{\mu^{n}(\Psi) - \mu(\Psi)} \le  \const \TrDis(\mu^{n}, \mu) \sqrt{ \norm{\mu^{n}_\Omega} + \norm{\mu_\Omega} + \TrDis^2(\mu^{n}, \mu) } \comma
	\]
	from which we conclude, because~$\sup_{n} \norm{\mu^{n}_\Omega} \le \sup_n \norm{\rho^n}_{L^\infty} < \infty$.
\end{proof}

\begin{remark}
	To be precise, \cite[Theorem~2.3.1]{AmbrosioGigliSavare08} applies to the limit of the maps~$t \mapsto \tilde \mu_t^\tau \coloneqq \mu_{\lceil t/\tau \rceil \tau}$
	(as opposed to~$\mu_t^\tau = \mu_{\lfloor t/\tau \rfloor \tau}$). It can be easily checked that the distance~$\TrDis(\mu_t^\tau,\tilde \mu_t^\tau)$ converges to~$0$ locally uniformly in time; see~\eqref{eq:lemmaBoundSumT2}.
\end{remark}

\appendix

\section{Additional properties of~$\TrDis$} \label{sec:appendix}

\subsection{$\TrDis$ is not a distance when~$d \ge 2$}

We are going to prove that, when~$d \ge 2$, the property
\[
\TrDis(\mu,\nu) = 0 \quad \Longrightarrow \quad \mu = \nu
\]
in general breaks down. In fact, when applying~$\TrDis$ to two measures~$\mu,\nu \in \cone$ the information about~$\mu_{\partial \Omega}$ and~$\nu_{\partial \Omega}$ is completely lost, as soon as~$\partial \Omega$ is connected and ``not too irregular''. A similar result is~\cite[Theorem 2.2]{Mainini12} by~E.~Mainini.

\begin{proposition} \label{prop:informloss}
	If~$\alpha \colon [0,1] \to \partial \Omega$ is~$\left(\frac{1}{2}+\epsilon\right)$-H\"older continuous for some~$\epsilon > 0$, then
	\begin{equation} \label{eq:informloss0}
		\TrDis\bigl(\delta_{\alpha(0)}-\delta_{\alpha(1)},0\bigr) = 0 \fstop
	\end{equation}
	Consequently: Assume that~$\partial \Omega$ is~$C^{0,\frac{1}{2}+}$-path-connected, meaning that for every pair of points~$x,y \in \partial \Omega$ there exist~$\epsilon > 0$ and a $\left(\frac{1}{2}+\epsilon\right)$-H\"older curve~$\alpha \colon [0,1] \to \partial \Omega$ with~$\alpha(0) = x$ and~$\alpha(1) = y$; then, 	for every~$\mu,\nu \in \cone$, we have
	\begin{equation} \label{eq:informloss} \TrDis(\mu,\nu) = Wb_2(\mu_\Omega,\nu_\Omega) \fstop \end{equation}
\end{proposition}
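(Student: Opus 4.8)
The plan is to establish \eqref{eq:informloss0} first and then bootstrap it to \eqref{eq:informloss}. For \eqref{eq:informloss0} I would fix $N\in\N_1$, put $t_j:=j/N$ for $j=0,\dots,N$, and consider the finite nonnegative measure
\[
	\gamma_N:=\sum_{j=0}^{N-1}\delta_{(\alpha(t_j),\alpha(t_{j+1}))}
\]
on $\partial\Omega\times\partial\Omega\subseteq\ovom\times\ovom$. Its marginals $\pi^1_\#\gamma_N=\sum_{j=0}^{N-1}\delta_{\alpha(t_j)}$ and $\pi^2_\#\gamma_N=\sum_{j=1}^{N}\delta_{\alpha(t_j)}$ are carried by $\partial\Omega$, so their restrictions to $\Omega$ vanish, and, since the two sums telescope, $\pi^1_\#\gamma_N-\pi^2_\#\gamma_N=\delta_{\alpha(0)}-\delta_{\alpha(1)}$; hence $\gamma_N\in\Adm_{\TrDis}(\delta_{\alpha(0)}-\delta_{\alpha(1)},0)$. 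Denoting by $C$ the $\left(\frac{1}{2}+\epsilon\right)$-H\"older seminorm of $\alpha$, one has $\cost(\gamma_N)=\sum_{j=0}^{N-1}\abs{\alpha(t_j)-\alpha(t_{j+1})}^2\le C^2\sum_{j=0}^{N-1}\abs{t_j-t_{j+1}}^{1+2\epsilon}=C^2N^{-2\epsilon}$, which tends to $0$ as $N\to\infty$; this yields \eqref{eq:informloss0}.

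For \eqref{eq:informloss}, the inequality $Wb_2(\mu_\Omega,\nu_\Omega)\le\TrDis(\mu,\nu)$ is already contained in \eqref{eq:distIneq}, so only the reverse bound is needed. The crucial intermediate step I would isolate is the following: for every finite signed Borel measure $\eta$ on $\partial\Omega$ with $\eta(\partial\Omega)=0$ and every $\delta>0$ there exists a finite nonnegative Borel measure $\gamma''$ on $\partial\Omega\times\partial\Omega$ with $\pi^1_\#\gamma''-\pi^2_\#\gamma''=\eta$ and $\cost(\gamma'')<\delta$. Granting this, I would take an arbitrary $\gamma\in\Adm_{Wb_2}(\mu_\Omega,\nu_\Omega)$; after replacing $\gamma$ by $\gamma-\gamma_{\partial\Omega}^{\partial\Omega}$ (still $Wb_2$-admissible, with no larger cost) I may assume $\gamma$ is finite, of mass at most $\norm{\mu_\Omega}+\norm{\nu_\Omega}$. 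Then $\eta:=(\mu-\nu)-(\pi^1_\#\gamma-\pi^2_\#\gamma)$ is a finite signed measure carried by $\partial\Omega$ (both differences restrict to $\mu_\Omega-\nu_\Omega$ on $\Omega$) and of zero total mass (as $\mu,\nu\in\cone$ and the two marginals of $\gamma$ have equal mass), so the intermediate step produces $\gamma''$ with $\cost(\gamma'')<\delta$. The measure $\gamma+\gamma''$ then lies in $\Adm_{\TrDis}(\mu,\nu)$: the restrictions of its marginals to $\Omega$ are unchanged and equal $\mu_\Omega$ and $\nu_\Omega$, while \ref{(3)} holds by the definition of $\eta$. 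Since $\cost$ is linear in the plan, $\TrDis^2(\mu,\nu)\le\cost(\gamma)+\delta$; taking the infimum over $\gamma$ and letting $\delta\downarrow0$ gives $\TrDis(\mu,\nu)\le Wb_2(\mu_\Omega,\nu_\Omega)$.

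It remains to explain how I would prove the intermediate step, which is the main obstacle: one must collapse a continuum of boundary mass into finitely many point-to-point transports, and a direct route via a measurable selection of H\"older curves is awkward because path-connectedness gives no uniform control on their seminorms. I would instead pass through atomic approximations. Writing $\eta=\eta_+-\eta_-$ with common total mass $m$ (the case $m=0$ being trivial), I would use compactness of $\partial\Omega$ to choose finitely supported measures $\eta_\pm^k$ on $\partial\Omega$ of mass $m$ with $W_2(\eta_\pm^k,\eta_\pm)\to0$, and build $\gamma''$ as a sum of three pieces carried by $\partial\Omega\times\partial\Omega$: an optimal plan from $\eta_+$ to $\eta_+^k$, an optimal plan from $\eta_-^k$ to $\eta_-$ (both of vanishing cost as $k\to\infty$), and a ``routing'' plan $\gamma_3^k$ with $\pi^1_\#\gamma_3^k-\pi^2_\#\gamma_3^k=\eta_+^k-\eta_-^k$ and $\cost(\gamma_3^k)<1/k$. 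The last piece is obtained by writing some coupling of $\eta_+^k$ and $\eta_-^k$ as a finite atomic sum $\sum_pc_p\delta_{(u_p,v_p)}$, joining each pair $(u_p,v_p)$ by a H\"older curve of exponent $>\frac{1}{2}$ (using that $\partial\Omega$ is $C^{0,\frac{1}{2}+}$-path-connected), and applying \eqref{eq:informloss0} to each pair to get $\gamma^{(p)}$ with $\sum_pc_p\cost(\gamma^{(p)})<1/k$, setting $\gamma_3^k:=\sum_pc_p\gamma^{(p)}$. Summing the three pieces gives $\pi^1_\#\gamma''-\pi^2_\#\gamma''=(\eta_+-\eta_+^k)+(\eta_-^k-\eta_-)+(\eta_+^k-\eta_-^k)=\eta$ and, by linearity of $\cost$, a total cost that is $<\delta$ once $k$ is large, which is exactly what is claimed.
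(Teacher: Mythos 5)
Your proposal is correct and follows essentially the same route as the paper: the telescoping atomic plan along the H\"older curve for~\eqref{eq:informloss0}, then the key intermediate fact that any balanced signed measure~$\eta$ on~$\partial \Omega$ is the marginal difference of a plan on~$\partial\Omega \times \partial\Omega$ of arbitrarily small cost, and finally the addition of such a cheap boundary plan to a (near-)optimal~$Wb_2$ plan after discarding its~$\partial\Omega\times\partial\Omega$ part. Your atomic~$W_2$-approximation of~$\eta_\pm$ plus pairwise routing is just a repackaging of the paper's~$\epsilon$-net projection construction, and working with arbitrary admissible plans instead of a~$Wb_2$-optimal one is an inessential variation.
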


\begin{proof}
	\emph{Step 1.} Let~$\alpha \colon [0,1] \to \partial \Omega$ be~$\left(\frac{1}{2}+\epsilon\right)$-H\"older continuous for some~$\epsilon > 0$. For~$n \in \N_1$, consider the points
	\[ x_i \coloneqq \alpha(i/n), \qquad i \in \set{ 0,1,\dotsc,n} \comma \]
	and the measure
	\[ \gamma^n \coloneqq \sum_{i=0}^{n-1} \delta_{(x_{i},x_{i+1})} \fstop \]
	It is easy to check that~$\gamma^n \in \Adm_{\TrDis}\bigl(\delta_{\alpha(0)} - \delta_{\alpha(1)},0\bigr)$; moreover,
	\begin{equation*}  \cost(\gamma^n) =  \sum_{i=0}^{n-1} \abs{x_{i}-x_{i+1}}^2 \le \const_\alpha \sum_{i=0}^{n-1} n^{-1-2\epsilon} = \const_\alpha n^{-2\epsilon} \comma
	\end{equation*}
	where the inequality follows from the H\"older continuity of~$\alpha$. We conclude~\eqref{eq:informloss0} by letting~$n \to \infty$.
	
	\emph{Step 2.} Assume now that~$\partial \Omega$ is~$C^{0,\frac{1}{2}+}$-path-connected. Fix a finite signed Borel measure~$\eta$ on~$\partial \Omega$ with~$\eta(\partial \Omega) = 0$, that is,~$\norm{\eta_+} = \norm{\eta_-} \eqqcolon \lambda$. We shall prove that~$\TrDis(\eta,0) = 0$.
	Fix~$\epsilon_1,\epsilon_2 > 0$ and let~$X = \set{x_1,x_2,\dotsc,x_N} \subseteq \partial \Omega$ be a~$\epsilon_1$-covering for~$\partial \Omega$, meaning that there exists a function~$P \colon \partial \Omega \to X$ such that~$\abs{x-P(x)} \le \epsilon_1$ for every~$x \in \partial \Omega$. We pick one such~$P$ that is also Borel measurable (we can by~\cite[Theorem~18.19]{AliprantisBorder06}). From the previous Step, for every~$i,j \in \set{1,2,\dotsc,N}$, we get~$\gamma_{i,j}$ (nonnegative and concentrated on~$\partial \Omega \times \partial \Omega$) such that
	\[ \pi^1_\# \gamma_{i,j} - \pi^2_\# \gamma_{i,j} = \delta_{x_i} - \delta_{x_j} \quad \text{and} \quad \cost(\gamma_{i,j}) \le \epsilon_2 \fstop \]
	We define
	\[ \gamma \coloneqq (\Id,P)_\# \eta_+ + (P,\Id)_\# \eta_- + \frac{1}{\lambda}\sum_{i,j = 1}^N \eta_+\bigl(P^{-1}(x_i)\bigr) \eta_-\bigl(P^{-1}(x_j)\bigr) \gamma_{i,j} \fstop \]
	The~$\TrDis$-admissibility of~$\gamma$, i.e.,~$\gamma \in \Adm_{\TrDis}(\eta,0)$, is straightforward. Furthermore,
	\begin{align*}
		\cost(\gamma) &= \int \abs{\Id-P}^2 \dif \, (\eta_+ + \eta_-)
		+ \frac{1}{\lambda} \sum_{i,j = 1}^N \eta_+(P^{-1}(x_i)) \eta_-(P^{-1}(x_j)) \cost(\gamma_{i,j}) \\
		&\le 2\lambda\epsilon_1^2 + \lambda \epsilon_2 \comma
	\end{align*}
	which brings us to the conclusion that~$\TrDis(\eta,0) = 0$ by arbitrariness of~$\epsilon_1,\epsilon_2$.
	
	\emph{Step 3.} Let us assume again that~$\partial \Omega$ is~$C^{0,\frac{1}{2}+}$-path-connected, and fix~$\mu,\nu \in \cone$ and~$\epsilon_3 > 0$. Let~$\gamma$ be a $Wb_2$-optimal transport plan between~$\mu_\Omega$ and~$\nu_\Omega$, and set~$\tilde \mu \coloneqq \pi^1_\# \gamma + (\nu - \pi^2_\# \gamma)_{\partial \Omega}$. It is easy to check that~$\tilde \mu \in \cone$ and that~$\mu_\Omega = \tilde \mu_\Omega$. Therefore, the previous Step is applicable to~$\eta \coloneqq \mu_{\partial \Omega} - \tilde \mu_{\partial \Omega}$, and produces~$\gamma_\eta$ on~$\partial \Omega \times \partial \Omega$ such that
	\[ \pi^1_\# \gamma_\eta - \pi^2_\# \gamma_\eta = \eta \quad \text{and} \quad \cost(\gamma_\eta) \le \epsilon_3 \fstop \]
	The measure~$\gamma' \coloneqq \gamma + \gamma_\eta$ is $\TrDis$-admissible between~$\mu$ and~$\nu$. Therefore,
	\begin{equation*}
		\TrDis^2(\mu,\nu) \le \cost(\gamma') \le \cost(\gamma) + \epsilon_3 = Wb_2^2(\mu_\Omega, \nu_\Omega)+\epsilon_3 \comma
	\end{equation*}
	which yields one of the two inequalities in~\eqref{eq:informloss} by arbitrariness of~$\epsilon_3$. The other inequality is~\eqref{eq:distIneq}.
\end{proof}

\subsection{(Lack of) completeness}
We prove here two claims from \Cref{sec:distance}: in the setting where~$\Omega$ is a finite union of intervals, the metric space~$(\cone,\TrDis)$ is \emph{not} complete, but the sublevels of~$\Ent$ are.

\begin{proposition} \label{prop:notComplete}
	Assume that~$d=1$ and that $\Omega$ is a finite union of intervals. Then the metric space~$(\cone,\TrDis)$ is not complete.
\end{proposition}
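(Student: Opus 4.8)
The plan is to produce a sequence in $\cone$ that is Cauchy for $\TrDis$ but has no limit. We may assume $\Omega = (0,1)$; the general case is identical, running the construction inside one connected component $(a,b)$ of $\Omega$ and letting the spikes accumulate at the endpoint $a$, which lies in $\partial\Omega$. First I would set
\[
\mu^n \coloneqq \sum_{k=1}^{n}\bigl(\delta_{2^{-k}} - \delta_0\bigr)\comma \qquad n \in \N_1\fstop
\]
Each $\mu^n$ lies in $\cone$: its restriction to $\Omega$ is $\sum_{k=1}^n\delta_{2^{-k}} \ge 0$ and $\mu^n(\ovom) = n - n = 0$. Note moreover that $\mu^n_\Omega$ is not absolutely continuous, so $\Ent(\mu^n) = \infty$ for every $n$; this is consistent with (and complementary to) the completeness of the sublevels of $\Ent$ stated in \Cref{lemma:completeSublev}.

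Next I would check the Cauchy property. For $n > m$ one has $\mu^n - \mu^m = \sum_{k=m+1}^n(\delta_{2^{-k}}-\delta_0)$, and for any $1$-Lipschitz $f$ with $f(0) = 0$ we get $(\mu^n-\mu^m)(f) = \sum_{k=m+1}^n f(2^{-k})$ with $\abs{f(2^{-k})} \le 2^{-k}$; since also $(\mu^n-\mu^m)(\ovom) = 0$, this gives $\norm{\mu^n-\mu^m}_{\KR} \le \sum_{k=m+1}^\infty 2^{-k} \le 2^{-m}$. By \Cref{lemma:TrDisKR},
\[
\TrDis^2(\mu^n,\mu^m) \le \diam(\Omega)\,\norm{\mu^n-\mu^m}_{\KR} \le \diam(\Omega)\,2^{-m}\xrightarrow[m\to\infty]{}0\comma
\]
so $(\mu^n)_n$ is $\TrDis$-Cauchy. (Alternatively, the plan $\sum_{k=1}^m\delta_{(2^{-k},2^{-k})} + \sum_{k=m+1}^n\delta_{(2^{-k},0)}$ lies in $\Adm_{\TrDis}(\mu^n,\mu^m)$ and has cost $\sum_{k=m+1}^n 4^{-k}$.)

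Then I would argue that $(\mu^n)_n$ does not converge in $(\cone,\TrDis)$. Suppose $\mu^n \to \mu$ for some $\mu \in \cone$. By the inequality $Wb_2(\mu^n_\Omega,\mu_\Omega) \le \TrDis(\mu^n,\mu)$ from \eqref{eq:distIneq}, the restrictions $\mu^n_\Omega = \sum_{k=1}^n\delta_{2^{-k}}$ converge to $\mu_\Omega$ for $Wb_2$, hence in duality with $C_c(\Omega)$ (recalled at the end of \Cref{sec:figalli-gigli}). Therefore, for every $\varphi \in C_c(\Omega)$,
\[
\int_\Omega \varphi\,\dif\mu_\Omega = \lim_{n\to\infty}\sum_{k=1}^n\varphi(2^{-k}) = \sum_{k=1}^\infty\varphi(2^{-k})\comma
\]
the last series being a finite sum because $\varphi$ vanishes in a neighbourhood of $0$. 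Choosing, for each $j$, a function $\varphi_j \in C_c(\Omega)$ with $0 \le \varphi_j \le 1$ and $\varphi_j(2^{-k}) = 1$ for $k = 1,\dots,j$, we get $\norm{\mu_\Omega} \ge \int_\Omega\varphi_j\,\dif\mu_\Omega \ge j$, which is absurd since $\mu$ is a finite measure. Hence $(\mu^n)_n$ has no limit and $(\cone,\TrDis)$ is not complete, proving \Cref{prop:notComplete}.

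All computations here are elementary; the only conceptual point — the ``main obstacle'', such as it is — is that $\TrDis$, unlike the total variation or $\norm{\cdot}_{\KR}$, does not detect the mass leaking to $\partial\Omega$: here $\norm{\mu^n} = 2n \to \infty$ while $(\mu^n)_n$ stays $\TrDis$-Cauchy. Consequently one cannot look for the failure of convergence ``at the boundary'' and must instead locate the putative limit through its interior restriction via $Wb_2$, where it is forced to carry infinite mass.
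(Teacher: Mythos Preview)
Your proof is correct and takes a genuinely different route from the paper's. The paper builds its Cauchy sequence from the density $\tfrac{1}{x}\Leb^1_{(2^{-n},1)}$ (compensated by a Dirac at $0$), proves the Cauchy property via an explicit transport plan, and shows non-convergence by working directly with optimal plans $\tilde\gamma^n \in \Opt_{\TrDis}(\mu^n,\mu)$: it bounds $\tilde\gamma^n([\epsilon,1-\epsilon]\times\partial\Omega)$ by $\epsilon^{-2}\TrDis^2(\mu^n,\mu)$ and deduces $\norm{\mu_\Omega} \ge \int_\epsilon^{1-\epsilon}\tfrac{1}{x}\dif x$. You instead use a sum of Diracs, invoke \Cref{lemma:TrDisKR} for the Cauchy property, and for non-convergence pass through the comparison \eqref{eq:distIneq} with $Wb_2$ together with the $C_c$-duality recalled in \Cref{sec:figalli-gigli}. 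Your argument is more modular --- it reuses lemmas already proved --- and the Dirac construction is lighter to write down; the paper's argument is more self-contained, not relying on the Figalli--Gigli convergence criterion. Both reach the same contradiction (the putative limit would carry infinite interior mass), and both sequences satisfy $\Ent(\mu^n)\to\infty$, consistent with \Cref{lemma:completeSublev}.
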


\begin{proof}
	Without loss of generality, we may assume that~$(0,1)$ is a connected component of~$\Omega$,~i.e.,~$(0,1) \subseteq \Omega$ and~$\set{0,1} \subseteq \partial \Omega$.
	
	Consider the sequence
	\[
	\mu^n \coloneqq \frac{1}{x} \Leb^1_{(2^{-n},1)} - \delta_0 \int_{2^{-n}}^1 \frac{1}{x} \dif x \in \cone \comma \qquad n \in \N_1 \fstop
	\]
	For every~$n$, there exists the admissible transport plan
	\[
	\gamma^n \coloneqq \delta_0 \otimes \left( \frac{1}{x} \Leb^1_{(2^{-n-1},2^{-n})} \right) + (\Id, \Id)_\# \left(\frac{1}{x} \Leb^1_{(2^{-n},1)} \right) \in \Adm_\TrDis(\mu^n,\mu^{n+1}) \comma
	\]
	which yields
	\[
	\sum_{n=1}^\infty \TrDis(\mu^n,\mu^{n+1}) \le \sum_{n=1}^\infty \sqrt{\int_{2^{-n-1}}^{2^{-n}} \frac{x^2}{x} \dif x} = \sum_{n=1}^\infty \sqrt{\frac{3}{8}} \, 2^{-n} = \sqrt{\frac{3}{8}} \semicolon
	\]
	hence~$(\mu^n)_n$ is Cauchy.
	
	Assume now that~$\mu^n \stackrel{\TrDis}{\to}_n \mu$ for some~$\mu \in \cone$ and, for every~$n \in \N_1$, fix~$\tilde \gamma^n \in \Opt_\TrDis(\mu^n,\mu)$. Also fix~$\epsilon>0$. We have
	\[
	\TrDis^2(\mu^n,\mu) = \int \abs{x-y}^2 \dif \tilde \gamma^n(x,y) \ge \epsilon^2 \tilde \gamma^n\bigl([\epsilon,1-\epsilon] \times \partial \Omega \bigr) \comma
	\]
	and, using the conditions in \Cref{def:TrDis},
	\begin{align*}
		\norm{\mu_\Omega} &\ge \tilde \gamma^n\bigl([\epsilon,1-\epsilon] \times \Omega \bigr) = \mu^n\bigl([\epsilon,1-\epsilon]\bigr)-\tilde \gamma^n\bigl([\epsilon,1-\epsilon] \times \partial \Omega \bigr) \\
		&\ge \mu^n\bigl([\epsilon,1-\epsilon] \bigr) - \frac{\TrDis^2(\mu^n,\mu)}{\epsilon^2} \fstop
	\end{align*}
	Passing to the limit~$n \to \infty$, we find
	\[ \norm{\mu_\Omega} \ge \int_\epsilon^{1-\epsilon} \frac{1}{x} \dif x \]
	from which, by arbitrariness of~$\epsilon$, it follows that the total mass of~$\mu_\Omega$ is infinite, contradicting the finiteness required in \Cref{def:TrDis}.
\end{proof}

\begin{proposition} \label{lemma:completeSublev}
	Assume that~$d=1$ and that $\Omega$ is a finite union of intervals. Then the sublevels of~$\Ent$ in~$\cone$ are complete w.r.t.~$\TrDis$.
\end{proposition}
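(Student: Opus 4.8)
The plan is to take a $\TrDis$-Cauchy sequence $(\mu^n)_{n\in\N_0}$ contained in a sublevel $\set{\mu\in\cone : \Ent(\mu)\le C}$ and to produce a limit lying in the same sublevel. Since the sequence is Cauchy, $D\coloneqq\sup_n\TrDis(\mu^n,\mu^1)<\infty$, and since each $\Ent(\mu^n)$ is finite, every $\mu^n_\Omega$ is absolutely continuous with a density $\rho^n$ such that $\E(\rho^n)<\infty$. The first and decisive task is to derive a uniform bound $\sup_n\norm{\mu^n}<\infty$; note that this cannot come from the sublevel condition alone, because $\Ent$ is unbounded from below and linear along directions supported on $\partial\Omega$, so its sublevels need not be bounded in total variation. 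The Cauchy property has to be exploited.

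To bound the total variation I would first control $\norm{\mu^n_\Omega}=\norm{\rho^n}_{L^1}$, denoting this quantity by $m^n$. On one side, since $V\in L^\infty(\Omega)$, a short computation gives $\E(\rho)\ge\tfrac12\int_\Omega\rho\log\rho\,\dif x-\const$ for every $\rho\in L^1_+(\Omega)$, and then Jensen's inequality applied to the convex function $t\mapsto t\log t$ yields $\E(\rho)\ge\tfrac12\norm{\rho}_{L^1}\log\!\bigl(\norm{\rho}_{L^1}/\abs{\Omega}\bigr)-\const$, which grows superlinearly in $\norm{\rho}_{L^1}$. On the other side, $\Ent(\mu^n)\le C$ gives $\E(\rho^n)\le C-\mu^n_{\partial\Omega}(\Psi)$, and writing $\mu^n_{\partial\Omega}(\Psi)=\mu^n(\Psi)-\mu^n_\Omega(\Psi)$ and applying \Cref{lemma:boundaryVSinteriorTrDis} with $\Phi=\Psi$ together with $\TrDis(\mu^n,\mu^1)\le D$, one obtains $\abs{\mu^n_{\partial\Omega}(\Psi)}\le\const\bigl(1+\sqrt{m^n}+m^n\bigr)$. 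Comparing the superlinear lower bound with this at most linear upper bound forces $\sup_n m^n<\infty$. Once this is known, I would pick $\gamma^n\in\Opt_\TrDis(\mu^n,\mu^1)$ for which the diagonal of $\partial\Omega\times\partial\Omega$ is $\gamma^n$-negligible (possible as in the proof of \Cref{lemma:optTrDis}); then \Cref{rmk:boundGamma2} gives $\norm{\gamma^n}\le\norm{\mu^n_\Omega}+\norm{\mu^1_\Omega}+\const\,\TrDis^2(\mu^n,\mu^1)\le\const(1+D^2)$, and Condition~\ref{(3)} in \Cref{def:TrDis} yields $\norm{\mu^n}\le\norm{\mu^1}+2\norm{\gamma^n}$, so $\sup_n\norm{\mu^n}<\infty$.

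With the total variation uniformly bounded, the Banach--Alaoglu and Riesz--Markov--Kakutani theorems provide a subsequence $\mu^{n_k}\rightharpoonup\mu$ weakly in duality with $C(\ovom)$, and since $\cone$ is weakly closed, $\mu\in\cone$. Because $d=1$ and $\Omega$ is a finite union of intervals, \Cref{lemma:convergCrit} applies and gives $\TrDis(\mu^{n_k},\mu)\to0$; as $(\mu^n)_n$ is $\TrDis$-Cauchy, the whole sequence converges, $\TrDis(\mu^n,\mu)\to0$. Finally, \Cref{prop:lscIbis} (lower semicontinuity of $\Ent$ with respect to $\TrDis$) gives $\Ent(\mu)\le\liminf_n\Ent(\mu^n)\le C$, so $\mu$ lies in the sublevel, and completeness of the sublevels of $\Ent$ follows. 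The main obstacle is the uniform total-variation estimate: the mass $m^n$ is a priori uncontrolled and must be extracted by playing the superlinear growth of $\E$ against the boundary term $\mu^n_{\partial\Omega}(\Psi)$, the latter being tamed only modulo the $\TrDis$-distances furnished by the Cauchy condition; the remaining steps (weak compactness, \Cref{lemma:convergCrit}, lower semicontinuity) are then routine.
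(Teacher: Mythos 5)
Your proof is correct, and all the ingredients you invoke are available in the stated one-dimensional setting (nonemptiness of $\Opt_\TrDis$, removal of the diagonal of $\partial\Omega\times\partial\Omega$ without loss of optimality, \Cref{rmk:boundGamma2}, \Cref{lemma:convergCrit}, \Cref{prop:lscIbis}). The first half of your argument is essentially the paper's: the paper also plays the superlinearity of $\lambda\mapsto\lambda\log\lambda$ against the boundary term $\mu^n_{\partial\Omega}(\Psi)$, tamed through \Cref{lemma:boundaryVSinteriorTrDis} and the boundedness of $\TrDis(\mu^n,\mu^0)$, to obtain uniform integrability (hence a uniform $L^1$ bound) of the densities. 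Where you diverge is in the construction of the limit: you upgrade this to a uniform bound on the \emph{full} total variation $\norm{\mu^n}$ via an optimal plan with no diagonal mass, \Cref{rmk:boundGamma2}, and Condition~\ref{(3)} in \Cref{def:TrDis}, and then conclude by weak-$*$ compactness in the dual of $C(\ovom)$ together with the weak closedness of $\cone$; the limit's membership in the sublevel (and, a posteriori, the absolute continuity of its restriction to $\Omega$) comes for free from \Cref{prop:lscIbis}. The paper instead never bounds $\norm{\mu^n}$: it extracts a weak $L^1$ limit of the densities by Dunford--Pettis and treats the finitely many boundary atoms by hand, showing that each sequence $\bigl(\mu^n(\bar x)\bigr)_n$ is Cauchy in $\R$ by testing against Lipschitz functions $\Phi_{\bar x}$ (again via \Cref{lemma:boundaryVSinteriorTrDis}, applied to pairs $\mu^n,\mu^m$), and assembles the limit measure explicitly before invoking \Cref{lemma:convergCrit} and \Cref{prop:lscIbis} exactly as you do. Your route is a bit softer and shorter, at the price of the extra total-variation estimate through the optimal plans; the paper's is more explicit about where the boundary mass goes and does not need that estimate. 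Both are valid.
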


\begin{proof}
	Take a Cauchy sequence~$(\mu^n)_{n \in \N_0} \subseteq \cone$ for~$\TrDis$ in a sublevel of~$\Ent$, that is,~$\Ent(\mu^n) \le M$ for some~$M \in \R$, for every~$n \in \N_0$. Thanks to \Cref{lemma:boundaryVSinteriorTrDis}, for every~$n \in \N_0$ we have
	\begin{align*}
		M &\ge \Ent(\mu^n) \ge \int_{\Omega} \rho^n \log \rho^n \dif x - \bigl(\norm{V}_{L^\infty}+1\bigr) \norm{\mu^n_\Omega} + \mu^n_{\partial \Omega}(\Psi) \\
		&\ge \int_{\Omega} \rho^n \log \rho^n \dif x - \bigl(\norm{V}_{L^\infty}+1\bigr) \norm{\mu^n_\Omega} + \mu^0(\Psi)-\mu^n_\Omega(\Psi) \\
		&\quad - \const \TrDis(\mu^n,\mu^0) \sqrt{\norm{\mu^n_\Omega} + \norm{\mu^0_\Omega}+\TrDis^2(\mu^n,\mu^0)} \comma
	\end{align*}
	and, since~$\TrDis(\mu^n,\mu^0)$ is bounded, the family~$(\rho^n)_{n \in \N_0}$ is uniformly integrable. Let~$(\rho^{n_k})_{k \in \N_0}$ be a subsequence that converges to some~$\rho$ weakly in~$L^1(\Omega)$. For each of the finitely many~$\bar x \in \partial \Omega$, let~$\Phi_{\bar x}$ be a Lipschitz continuous function such that
	\[
	\Phi_{\bar x}(\bar x) = 1 \quad \text{and} \quad \Phi_{\bar x}(x) = 0 \text{ if } x \in \partial \Omega \setminus \set{\bar x} \fstop
	\]
	Again by \Cref{lemma:boundaryVSinteriorTrDis}, for every~$\bar x \in \partial \Omega$ and~$n,m \in \N_0$, we have
	\begin{align*}
		\abs{\mu^n(\bar x) - \mu^m(\bar x)} &\le \abs{\mu^n_\Omega(\Phi_{\bar x}) - \mu^m_\Omega(\Phi_{\bar x})} \\ 
		&\quad + \const_{\Phi_{\bar x}} \TrDis(\mu^n,\mu^m) \sqrt{\norm{\mu^n_\Omega} + \norm{\mu^m_\Omega}+\TrDis^2(\mu^n,\mu^m)} \\
		&= \abs{\int_\Omega \Phi_{\bar x} \cdot (\rho^n - \rho^m) \dif x} \\
		&\quad+ \const_{\Phi_{\bar x}} \TrDis(\mu^n,\mu^m) \sqrt{\norm{\rho^n}_{L^1} + \norm{\rho^m}_{L^1}+\TrDis^2(\mu^n,\mu^m)} \comma
	\end{align*}
	which implies that~$(\mu^{n_k}(\bar x))_{k \in \N_0}$ is a Cauchy sequence in~$\R$, thus convergent to some number~$l_{\bar x}$. Define
	\[
	\mu \coloneqq \rho \dif x + \sum_{\bar x \in \partial \Omega} l_{\bar x} \delta_{\bar x}  \fstop
	\]
	It is easy to check that~$\mu^{n_k} \to_k \mu$ weakly; therefore, by \Cref{lemma:convergCrit}, also w.r.t.~$\TrDis$. The limit~$\mu$ also lies in the sublevel, i.e.,~$\Ent(\mu) \le M$, by \Cref{prop:lscIbis}.
\end{proof}

\subsection{If~$\Omega$ is an interval,~$\TrDis$ is geodesic, but~$\Ent$ is not geodesically convex} \label{sec:geod}
We prove that~$(\cone,\TrDis)$ is geodesic when~$\Omega = (0,1)$, by using the analogous well-known property of the classical $2$-Wasserstein distance. However, as we expect in light of~\cite[Remark~3.4]{FigalliGigli10},~$\Ent$ is \emph{not} geodesically~$\lambda$-convex for any~$\lambda$. We provide a short proof by adapting the aforementioned remark.

\begin{proposition} \label{prop:geodesic}
	If~$\Omega = (0,1)$, then~$(\cone,\TrDis)$ it is a geodesic metric space.
\end{proposition}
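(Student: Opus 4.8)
The plan is to reproduce McCann's displacement-interpolation construction from classical optimal transport, adding a linear correction supported on the (two-point) boundary $\partial\Omega=\{0,1\}$ so that the interpolating measures stay inside $\cone$. Fix $\mu,\nu\in\cone$. By \Cref{lemma:optTrDis} there is an optimal plan $\gamma\in\Opt_\TrDis(\mu,\nu)$, so that $\cost(\gamma)=\TrDis^2(\mu,\nu)<\infty$. For $t\in[0,1]$ let $e_t\colon\ovom\times\ovom\to\ovom$ be the affine map $e_t(x,y)\coloneqq(1-t)x+ty$ (well defined because $\ovom=[0,1]$ is convex), and set
\[
\mu_t\;\coloneqq\;(e_t)_\#\gamma\;+\;(1-t)\bigl(\mu_{\partial\Omega}-(\pi^1_\#\gamma)_{\partial\Omega}\bigr)\;+\;t\bigl(\nu_{\partial\Omega}-(\pi^2_\#\gamma)_{\partial\Omega}\bigr)\fstop
\]
I would then show that $t\mapsto\mu_t$ is a minimizing constant-speed geodesic joining $\mu$ and $\nu$, which together with \Cref{prop:distance} gives the claim.

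First I would verify that $t\mapsto\mu_t$ is a curve in $\cone$ from $\mu$ to $\nu$. Using Conditions (1)--(3) in \Cref{def:TrDis} (in particular $(\pi^1_\#\gamma)_\Omega=\mu_\Omega$, $(\pi^2_\#\gamma)_\Omega=\nu_\Omega$ and $\pi^1_\#\gamma-\pi^2_\#\gamma=\mu-\nu$), a direct computation gives $\mu_0=\mu$ and $\mu_1=\nu$, and $\mu_t(\ovom)=0$, since the total masses of the three summands are $\norm{\gamma}$, $-(1-t)\norm{\gamma}$ and $-t\norm{\gamma}$. Moreover the two correction terms are concentrated on $\partial\Omega$, so $(\mu_t)_\Omega=\bigl((e_t)_\#\gamma\bigr)_\Omega\ge0$. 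Hence $\mu_t\in\cone$ for all $t\in[0,1]$.

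Next I would prove the constant-speed property. Restricting Condition (3) of \Cref{def:TrDis} to $\partial\Omega$ yields $\mu_{\partial\Omega}-(\pi^1_\#\gamma)_{\partial\Omega}=\nu_{\partial\Omega}-(\pi^2_\#\gamma)_{\partial\Omega}$; substituting this into the definition shows that for $0\le s\le t\le1$ the boundary corrections cancel in the difference, so $\mu_s-\mu_t=(e_s)_\#\gamma-(e_t)_\#\gamma$. Therefore $\gamma_{s,t}\coloneqq(e_s,e_t)_\#\gamma$ is a nonnegative Borel measure that satisfies Conditions (1)--(3), i.e.\ $\gamma_{s,t}\in\Adm_\TrDis(\mu_s,\mu_t)$, with
\[
\cost(\gamma_{s,t})=\int\abs{(t-s)(x-y)}^2\dif\gamma(x,y)=(t-s)^2\,\TrDis^2(\mu,\nu)\fstop
\]
This gives $\TrDis(\mu_s,\mu_t)\le(t-s)\TrDis(\mu,\nu)$, and since $\TrDis$ obeys the triangle inequality this is forced to be an equality for all $0\le s\le t\le1$. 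Thus, after reparametrizing on $[0,\TrDis(\mu,\nu)]$, $t\mapsto\mu_t$ is a minimizing constant-speed geodesic between $\mu$ and $\nu$, and $(\cone,\TrDis)$ is a geodesic metric space.

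I do not expect a genuine obstacle: the construction is the classical one for the $2$-Wasserstein distance (push-forward of an optimal plan by $e_t$), so the only slightly delicate point will be the bookkeeping that makes the boundary correction simultaneously restore the endpoints and the null total mass while disappearing from $\mu_s-\mu_t$ — and this is exactly what the identity $\mu_{\partial\Omega}-(\pi^1_\#\gamma)_{\partial\Omega}=\nu_{\partial\Omega}-(\pi^2_\#\gamma)_{\partial\Omega}$ provides.
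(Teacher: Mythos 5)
Your proof is correct and is essentially the paper's own argument: both take an optimal plan $\gamma\in\Opt_\TrDis(\mu,\nu)$, displacement-interpolate it, observe (via Condition~(3) restricted to $\partial\Omega$) that the boundary correction is a fixed measure independent of $t$, bound $\TrDis(\mu_s,\mu_t)\le(t-s)\TrDis(\mu,\nu)$ by exhibiting an admissible plan, and upgrade to equality by the triangle inequality. The only cosmetic difference is that you verify admissibility of the explicit plans $(e_s,e_t)_\#\gamma$ directly, whereas the paper routes the same computation through the classical $W_2$ geodesic between the marginals of $\gamma$ (using \Cref{prop:trPlans}); the substance is identical.
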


\begin{proof}
	We already know from \Cref{prop:distance} that~$(\cone,\TrDis)$ is a metric space.
	
	For any two measures~$\mu_0,\mu_1 \in \cone$, we need to find a curve~$t \mapsto \mu_t$ such that
	\begin{equation} \label{eq:geodesicIneq}
		\TrDis(\mu_s,\mu_t) \le (t-s) \TrDis(\mu_0,\mu_1) \comma \qquad 0 \le s \le t \le 1 \fstop
	\end{equation}
	The opposite inequality follows from the triangle inequality and~\eqref{eq:geodesicIneq} itself. Indeed,
	\begin{align*}
		\TrDis(\mu_0,\mu_1) &\le \TrDis(\mu_0,\mu_s) + \TrDis(\mu_s,\mu_t) + \TrDis(\mu_t,\mu_1) \\
		&\stackrel{\eqref{eq:geodesicIneq}}{\le} (s+t-s+1-t)\TrDis(\mu_0,\mu_1) = \TrDis(\mu_0,\mu_1) \comma
	\end{align*}
	and, in order for the inequalities to be equalities, the identity~$\TrDis(\mu_s,\mu_t) = (t-s)\TrDis(\mu_0,\mu_1)$ must be true.
	
	Take~$\gamma \in \Opt_\TrDis(\mu_0,\mu_1)$. By~\Cref{prop:trPlans},~$\gamma$ is optimal, between its marginals, for the classical~$2$-Wasserstein distance. Since the set~$\ovom = [0,1]$, endowed with the Euclidean metric, is geodesic, the classical theory of optimal transport (see, e.g.,~\cite[Theorem 10.6]{AmbrosioBrueSemola21}) ensures the existence of a curve (geodesic)~$t \mapsto \nu_t$ of nonnegative measures on~$\ovom$ with constant total mass, such that
	\begin{equation} \label{eq:geodesicW2}
		W_2(\nu_s,\nu_t) \le (t-s)W_2(\pi^1_\# \gamma, \pi^2_\# \gamma) = (t-s)\sqrt{\cost( \gamma)} = (t-s) \TrDis(\mu_0,\mu_1)
	\end{equation}
	for~$0 \le s \le t \le 1$. After noticing that~$\nu_1-\nu_0 = \mu_1 - \mu_0$ by Condition~\ref{(3)} in \Cref{def:TrDis}, we define
	\[
	\mu_t \coloneqq \mu_0+\nu_t-\nu_0 \comma \qquad t \in (0,1) \fstop
	\]
	We claim that this is the sought curve. Firstly, since
	\[
	(\mu_t)_\Omega = (\mu_0)_{\Omega} + (\nu_t)_\Omega - (\nu_0)_{\Omega} = (\nu_t)_\Omega \ge 0
	\]
	by Condition~\ref{(1)} in \Cref{def:TrDis}, and since~$\nu_0(\ovom) = \nu_t(\ovom)$, we can be sure that~$\mu_t \in \cone$ for every~$t$. Secondly, every $W_2$-optimal transport plan~$ \gamma_{st}$ between~$\nu_s$ and~$\nu_t$ is $\TrDis$-admissible between~$\mu_s$ and~$\mu_t$. Hence,
	\[
	\TrDis(\mu_s,\mu_t) \le \sqrt{\cost( \gamma_{st})}=W_2(\nu_s,\nu_t) \stackrel{\eqref{eq:geodesicW2}}{\le} (t-s) \TrDis(\mu_0,\mu_1) \fstop \qedhere
	\]
\end{proof}

\begin{proposition} \label{prop:notconv}
	Let~$\Omega = (0,1)$. The functional~$\Ent$ is \emph{not} geodesically $\lambda$-convex on the metric space~$(\cone,\TrDis)$ for any~$\lambda \in \R$.
\end{proposition}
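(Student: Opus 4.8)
The plan is to produce, for the fixed case $\Omega=(0,1)$, a single constant-speed geodesic $(\mu_t)_{t\in[0,1]}$ in $(\cone,\TrDis)$ with $\Ent(\mu_0),\Ent(\mu_1)<\infty$ but $\Ent(\mu_t)\to+\infty$ as $t\to1^-$; since the right-hand side of the geodesic $\lambda$-convexity inequality is a bounded function of $t$ whatever the value of $\lambda$, this rules out $\lambda$-convexity for \emph{all} $\lambda\in\R$ at once. This is the mechanism of~\cite[Remark~3.4]{FigalliGigli10}: a $\TrDis$-geodesic can absorb an interior bump of mass into a boundary point, and the McCann-type interpolation used in~\Cref{prop:geodesic} forces the interior density to blow up logarithmically just before absorption.

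Concretely, I would fix $\epsilon\in(0,\tfrac12]$ and take
\[
\mu_0:=\tfrac1\epsilon\Leb^1_{(0,\epsilon)}-\delta_0\in\cone\comma\qquad\mu_1:=0\in\cone\fstop
\]
First I would check that $\gamma:=(\Id,0)_\#\bigl(\tfrac1\epsilon\Leb^1_{(0,\epsilon)}\bigr)$ is $\TrDis$-optimal between $\mu_0$ and $\mu_1$ with $\cost(\gamma)=\epsilon^2/3$: admissibility is a direct verification of the conditions in~\Cref{def:TrDis}, and optimality follows because in any competitor one may leave possible boundary mass in place at zero cost and, since $x^2\le(1-x)^2$ on $(0,\epsilon)$, must transport the interior part $\tfrac1\epsilon\Leb^1_{(0,\epsilon)}$ onto the point $0$. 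Feeding $\gamma$ into~\Cref{prop:geodesic} with the $W_2$-geodesic $\nu_t=\bigl((1-t)\Id\bigr)_\#\bigl(\tfrac1\epsilon\Leb^1_{(0,\epsilon)}\bigr)$ between $\pi^1_\#\gamma$ and $\pi^2_\#\gamma=\delta_0$, one obtains the geodesic
\[
\mu_t=\mu_0+\nu_t-\nu_0=\nu_t-\delta_0=\tfrac1{\epsilon(1-t)}\Leb^1_{(0,\epsilon(1-t))}-\delta_0\quad(t\in[0,1))\comma\qquad\mu_1=0\fstop
\]

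Then I would compute, with $s:=\epsilon(1-t)$,
\[
\Ent(\mu_t)=\E\bigl(\tfrac1s\1_{(0,s)}\bigr)-\Psi(0)=-\log s+\tfrac1s\int_0^s(V-1)\dif x+1-\Psi(0)\fstop
\]
Since $V\in W^{1,2}(0,1)$ is continuous, $\tfrac1s\int_0^s(V-1)\dif x\to V(0)-1$ as $s\to0^+$, hence $\Ent(\mu_t)\to+\infty$ as $t\to1^-$, whereas $\Ent(\mu_0)<\infty$ and $\Ent(\mu_1)=\Ent(0)=1<\infty$. If $\Ent$ were geodesically $\lambda$-convex for some $\lambda\in\R$, then along $(\mu_t)$ we would have
\[
\Ent(\mu_t)\le(1-t)\Ent(\mu_0)+t\,\Ent(\mu_1)-\tfrac\lambda2\,t(1-t)\,\tfrac{\epsilon^2}{3}\comma\qquad t\in[0,1]\comma
\]
whose right-hand side is bounded on $[0,1]$, contradicting the divergence of the left-hand side. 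The only points requiring any care are the optimality of $\gamma$ (so that~\Cref{prop:geodesic} applies) and the verification that the resulting curve is a geodesic in the sense of that proposition; everything else is elementary, and crucially one need not send $\epsilon\to0$—a single fixed $\epsilon$ defeats every $\lambda$ simultaneously.
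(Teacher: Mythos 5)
Your construction is correct and is essentially the paper's own proof: the paper uses the same family $\mu_t=\tfrac1t\Leb^1_{(0,t)}-\delta_0$ (your curve with $\epsilon=1$ and time reversed), the same logarithmic blow-up of $\Ent$ in the interior of the geodesic with finite values at the endpoints, and the same boundedness contradiction with $\lambda$-convexity, the only difference being that the paper certifies the geodesic property by a direct two-sided estimate on $\TrDis$ rather than by checking optimality of your plan $\gamma$ and invoking the construction of \Cref{prop:geodesic}. One cosmetic remark: continuity of $V$ is not needed for the blow-up, since the standing assumption $V\in L^\infty(\Omega)$ already keeps $\fint_0^s V\dif x$ bounded, so the argument does not require $V\in W^{1,2}(0,1)$.
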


\begin{proof}
	Consider the curve
	\[
	t \longmapsto \mu_t \coloneqq \begin{cases}
		\frac{1}{t} \Leb^1_{(0,t)} - \delta_0 &\text{if } t \in (0,1] \comma \\
		0 &\text{if } t = 0 \fstop
	\end{cases}
	\]
	Clearly,~$\mu_t \in \cone$ for every~$t \in [0,1]$. We claim that this curve is a geodesic, that~$\Ent(\mu_0) < \infty$, and that~$\lim_{t \to 0} \Ent(\mu_t) = \infty$,
	which would conclude the proof. The second claim, namely~$\Ent(\mu_0) < \infty$, is obvious. The third claim is true because
	\[
	\Ent(\mu_t) = - \log t + \fint_0^t V \dif x - \Psi(0) \comma \qquad t \in (0,1] \comma
	\]
	and, since~$V \in L^\infty(0,1)$, the right-hand side tends to~$\infty$ as~$t \to 0$. To prove the first claim, fix~$0 \le s < t \le 1$ and define
	\[
	\gamma_{st} \coloneqq \left( \Id, \frac{s}{t} \Id \right)_\# \mu_t \in \Adm_\TrDis(\mu_t,\mu_s) \comma
	\]
	which gives
	\begin{equation} \label{eq:notGeoC1}
		\TrDis^2(\mu_s,\mu_t) \le \cost(\gamma_{st}) = \int \abs{x-\frac{s}{t}x}^2 \dif \mu_t = \frac{(t-s)^2}{3} \fstop
	\end{equation}
	Conversely, for every~$\gamma \in \Opt_{\TrDis}(\mu_1,\mu_0)$, Condition~\ref{(3)} in \Cref{def:TrDis} implies
	\[
	\gamma(1,1) + \gamma(1,0) + \gamma(\set{1}\times \Omega) = \gamma(1,1) + \gamma(0,1) + \gamma(\Omega \times \set{1}) \comma
	\]
	and, since~$\gamma(\set{1} \times \Omega) = 0$ by Condition~\ref{(2)} in \Cref{def:TrDis}, we have~$\gamma(1,0) \ge \gamma(\Omega \times \set{1})$. Therefore,
	\begin{align*}
		\TrDis^2(\mu_1,\mu_0) = \cost(\gamma) &\ge \cost\bigl(\gamma_\Omega^{\set{0}}\bigr) + \int \abs{x-1}^2 \dif \pi^1_\# \gamma_\Omega^{\set{1}} + \gamma(1,0) \\
		&\ge \cost\bigl(\gamma_\Omega^{\set{0}}\bigr) + \int \bigl(\abs{x-1}^2 + 1\bigr) \dif \pi^1_\# \gamma_\Omega^{\set{1}}  \ge \int x^2 \dif \pi^1_\# \gamma_\Omega^{\partial \Omega} \fstop
	\end{align*}
	By Conditions~\ref{(1)} and~\ref{(2)} in \Cref{def:TrDis},
	\[
	\int x^2 \dif \pi^1_\# \gamma_\Omega^{\partial \Omega} = \int x^2 \dif \pi^1_\# \gamma_\Omega^\ovom = \int_0^1 x^2 \dif x = \frac{1}{3} \semicolon
	\]
	hence
	\[
	\TrDis^2(\mu_s,\mu_t) \stackrel{\eqref{eq:notGeoC1}}{\le} \frac{(t-s)^2}{3} \le (t-s)^2 \TrDis^2(\mu_1,\mu_0) \comma
	\]
	and this concludes the proof.
\end{proof}

\end{document}